\long\def\symbolfootnote[#1]#2{\begingroup%
\def\thefootnote{\fnsymbol{footnote}}\footnote[#1]{#2}\endgroup}
\theoremstyle{plain}
\numberwithin{equation}{section}
\newtheorem{theorem}{Theorem}[section]  
\newtheorem{lemma}[theorem]{Lemma}
\newtheorem{corollary}[theorem]{Corollary}
\newtheorem{proposition}[theorem]{Proposition}
\newtheorem{definition}[theorem]{Definition}
\newtheorem{remark}[theorem]{Remark}
\newcommand{\fracsm}[2]{\begin{matrix}\frac{#1}{#2}\end{matrix}}
\newcommand{\beq}{\begin{equation}}
\newcommand{\eeq}{\end{equation}}
\newcommand{\Reals}{\mathbb{R}}
\newcommand{\Naturals}{\mathbb{N}}
\newcommand{\Integers}{\mathbb{Z}}
\newcommand{\Id}{\mathrm{Id}}
\DeclareMathOperator{\trace}{tr}
\title{Non-compact families of complete, properly immersed minimal surfaces with fixed topology via desingularization}
\date{\today}
\author{Stephen J. Kleene}
\address{Department of Mathematics, MIT, Cambridge, MA 02139.}
\email{skleene@math.mit.edu}
\author {Niels Martin M\o{}ller}
\address{Department of Mathematics, Fine Hall, Princeton University, NJ 08540.}
\email{moller@math.princeton.edu}
\begin{document}
\maketitle

\begin{abstract}
For fixed large genus, we construct families  of complete immersed minimal surfaces in $\mathbb{R}^3$ with four ends and dihedral symmetries. The families exist for all large genus and at an appropriate scale degenerate to the plane.
\end{abstract}

\section{Introduction}

The main result of this article is:
\begin{theorem} \label{MainTheorem}
There exists a family of complete minimal surfaces $\{ \Sigma (\theta, g)\}$ in Euclidean three-space with genus $g$ and four asymptotically catenoidal ends depending on parameters $\theta \in (0, \pi/2)$, and $g \in \mathbb{N}$. The surfaces exist for all $g$ sufficiently large and all $\theta$ sufficiently small, and depend continuously on $\theta$. Additionally, they have the following properties:
\begin{itemize}
\item[(1)] Away from the origin and the circle of unit radius about the origin in the plane $\{ x = 0\}$ they converge smoothly on compact subsets of $\mathbb{R}^3$ to the plane $\{x = 0\}$ with multiplicity four, as $\theta$ tends to $0$.

\item[(2)] Each $\Sigma(\theta, g)$ is invariant under rotations about the $z$-axis through angles $2 \pi/(g + 1)$ and the inversion through the plane $\{z = 0\}$  and reflections through the planes $\tan(y/x) =  \pi k / (g  + 1)$.

\item[(3)] Each $\Sigma (\theta, g)$ has four horizontal catenoidal ends, $E_1, \ldots, E_4$, which we order by height. The union of the catenoidal ends is close the configuration of two coaxial catenoids of scale $g^{-1}$ that intersect transversally along the circle of unit radius about the origin in the plane $\{ x= 0\}$, and the angle of their intersection is close to $2 \theta$.
\item[(4)] Let $\theta$ be fixed. Then the surfaces 
\begin{align}
\tilde{\Sigma} (\theta, g) : = g\left\{\Sigma (\theta, g)  - e_2 \right\}
\end{align}
 converge smoothly on compact sets to Scherk's singly-periodic minimal surface. 
 \end{itemize}
\end{theorem}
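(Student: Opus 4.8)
The plan is to run a singular-perturbation (gluing) construction in the spirit of Kapouleas's desingularization of minimal surfaces, in which the core of the glued surface is a bent, rescaled copy of Scherk's singly periodic minimal surface and the four wings are ends of two coaxial catenoids. Fix a large integer $g$, a small angle $\theta$, and put $\tau=\tau(g,\theta)\asymp(g+1)^{-1}$, the period of the Scherk model that fits $g+1$ times around the unit circle $\Gamma$ about the origin. First I would build an approximate solution $\Sigma_{0}=\Sigma_{0}(\theta,g)$: take two coaxial catenoids $\mathcal{C}^{\pm}$ of neck-size $\asymp\tau$, with axis locations and scales (functions of $(\theta,g)$) arranged so that $\mathcal{C}^{+}\cup\mathcal{C}^{-}$ crosses itself transversally along $\Gamma$ at angle $2\theta$; excise a thin solid torus about $\Gamma$; and graft in, via cut-offs supported in intermediate annuli, a copy of Scherk's surface $\mathcal{S}_{\theta}$ that is rescaled by $\tau$ and \emph{bent} to wrap $g+1$ times around $\Gamma$, its two families of asymptotic half-planes matched to leading order with the two catenoids. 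All of this is carried out equivariantly under the group listed in (2) --- the rotations about the vertical axis through $2\pi/(g+1)$, the reflections in the vertical planes, and the inversion in $\{z=0\}$ --- so that (2) holds by construction.

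Two facts make $\Sigma_{0}$ a genuine approximate solution. First, a balancing condition: the flux transmitted across $\Gamma$ by the catenoidal wings must cancel the net force carried by the $g+1$ Scherk necks as wrapped around $\Gamma$, a single scalar equation in the catenoid scale, $\theta$ and $\tau$ whose solvability for all large $g$ and small $\theta$ is exactly what pins the catenoid scale at $\asymp g^{-1}$ and forces $g$ to be large. Second, the mean-curvature estimate: $H_{\Sigma_{0}}$ is supported in the gluing annuli, where its size is governed by (i) the curvature of $\Gamma$ times the transverse extent of the Scherk piece and (ii) the departure of each catenoid from its tangent cone along $\Gamma$; placing the transition at the correct intermediate scale yields $\|H_{\Sigma_{0}}\|\lesssim\tau^{\alpha}$ for some $\alpha>0$ in a suitable $(g,\theta)$-weighted H\"older norm on $\Sigma_{0}$, uniformly as $\theta\to 0$.

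The third step is the linear theory for the Jacobi operator $\mathcal{L}_{\Sigma_{0}}=\Delta_{\Sigma_{0}}+|A_{\Sigma_{0}}|^{2}$ on functions equivariant under the symmetry group, in weighted H\"older spaces with prescribed decay on the four ends --- the weights chosen so that solutions produce asymptotically catenoidal ends and so that the only obstruction is a finite-dimensional space of ``geometric'' Jacobi fields not already annihilated by the symmetry. On the two model surfaces --- Scherk's surface (where $|A|^{2}$ decays exponentially away from the necks) and the catenoid (whose indicial roots are explicit) --- one has the standard weighted Fredholm estimates; patching them across the gluing regions yields a right inverse $\mathcal{R}$ for $\mathcal{L}_{\Sigma_{0}}$ modulo a finite-dimensional approximate cokernel $\mathcal{K}$ of dimension equal to the number of residual free parameters (positions and scales of the four ends and relative dislocations of the Scherk necks, as constrained by the symmetry), with $\|\mathcal{R}\|$ bounded by a fixed power of $g+1$. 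Writing a competitor surface as $\mathrm{graph}_{\Sigma_{0}}(u)$, minimality becomes $\mathcal{L}_{\Sigma_{0}}u=-H_{\Sigma_{0}}+Q(u)$ with $Q$ quadratic in $(u,\nabla u,\nabla^{2}u)$; combining $\mathcal{R}$ with variation of the residual parameters gives a contraction on a small ball in the weighted space provided $g$ is large enough (the decay of $\|H_{\Sigma_{0}}\|$ must outpace the growth of $\|\mathcal{R}\|$), and the fixed point provides, for each admissible $(\theta,g)$, an exact minimal surface $\Sigma(\theta,g)$. Continuity in $\theta$ is inherited from the continuous dependence of every ingredient on $\theta$ together with uniqueness of the fixed point.

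It remains to read off (1), (3) and (4). For (3): $u$ is small in the weighted norm, so $\Sigma(\theta,g)$ is $C^{1}$-close to $\Sigma_{0}(\theta,g)$, whose end-part is by construction close to the two balanced coaxial catenoids of scale $\asymp g^{-1}$ crossing along $\Gamma$ at angle $\approx 2\theta$. For (1): fix $g$ and let $\theta\to 0$; the balanced catenoids flatten (their logarithmic growth $\to 0$) and $\mathcal{S}_{\theta}$ flattens to a multiplicity-two plane, so away from the origin (where the catenoid necks collapse) and away from $\Gamma$ (where the Scherk core collapses) $\Sigma(\theta,g)$ converges to the plane with multiplicity four, the $C^{\infty}_{\mathrm{loc}}$ convergence following from the mean-curvature bound and interior elliptic estimates. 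For (4): fix $\theta$, translate a point $e_{2}\in\Gamma$ to the origin and dilate by $g$; the catenoids become vertical planes, $\Gamma$ becomes a line, $u$ scales away, and the bent Scherk core converges smoothly on compact sets to $\mathcal{S}_{\theta}$, i.e.\ Scherk's singly periodic surface. I expect the main obstacle to be the linear step together with the balancing: one must establish the bounds on $\mathcal{R}$ \emph{uniformly} as $g\to\infty$ and $\theta\to 0$ on this non-compact surface --- the catenoidal ends and the $\theta\to 0$ degeneration of $\mathcal{S}_{\theta}$ being controllable only in carefully chosen weighted norms --- and one must check that the symmetry annihilates enough Jacobi fields for the residual finite-dimensional obstruction to be removable by the available geometric parameters, including solvability of the balancing equation throughout the claimed ranges of $(\theta,g)$.
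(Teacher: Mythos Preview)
Your proposal is essentially the paper's approach: bent Scherk core glued to coaxial catenoidal ends, a finite-dimensional obstruction absorbed by varying end parameters, and a fixed-point argument to close. A few technical points where the paper differs from your sketch are worth flagging.

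First, the paper does \emph{not} set up the linear theory on the approximate surface $\Sigma_0$. Instead the inverse $\mathcal{R}$ is constructed once and for all on the \emph{flat} Scherk quotient $\mathcal{S}$ (Proposition~\ref{FlatInverse}), by reducing via a cut-off to error terms supported on the catenoidal region $\mathcal{C}$ and then pushing the problem through the Gauss map to the round sphere (Proposition~\ref{CatenoidInverse}); the map $\mathcal{R}$ is then applied to the pullback $\varrho^* H^*[\underline{\varphi},u]$. This avoids having to control an inverse whose norm depends on $g$.

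Second, the paper uses the Schauder fixed-point theorem (Proposition~\ref{SchauderFixedPointTheorem}), not a contraction; in particular no uniqueness of the fixed point is claimed, so your appeal to ``uniqueness of the fixed point'' for continuity in $\theta$ would need a different justification (the paper simply asserts continuous dependence via the construction).

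Third, your ``balancing condition'' is not a separate preliminary step in the paper. The four residual parameters $\underline{\varphi}=(d_1,d_2,b_1,b_2)$ are carried along into the fixed-point map and determined by the orthogonality relations in Proposition~\ref{FlatInverse}(\ref{varphiRelations}); the crucial non-degeneracy is the explicit computation $\int_{\mathcal{S}}(\mathcal{L}\xi^{\perp})\phi_y>\epsilon_1\theta^2$ (Proposition~\ref{KernelProjection}), which also pins the sign of $\varphi_1+\varphi_2$ and hence the non-embeddedness. Finally, the $\theta\to 0$ degeneration of $\mathcal{S}_\theta$ is handled not just through the weights but by an explicit catenoid model near the axis (Proposition~\ref{ScherkNearAxisGraph}) and the identification of the limiting harmonic function $\dot f_{\mathcal{S}}$ (Proposition~\ref{SmoothConvergenceToPlane}); you should expect this to be where most of the work in making your uniform-in-$\theta$ estimates precise actually lives.
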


The modern theory of minimal surfaces with finite topology began with the discovery by C. Costa in his 1982 thesis, published in \cite{Co84}, of a genus one complete minimal surface with three ends that was apparently globally embedded. Hoffman-Meeks in \cite{HM85} discovered a family of analogous surfaces with three ends and positive genus and proved that these surfaces, including the Costa surface, were actually embedded. The Costa-Hoffman-Meeks surfaces were at the time the only complete embedded minimal surfaces with finite topology  other than the plane, the helicoid, and the catenoid, and the first with non-trivial topology.  If the logarithmic growth of each end of the Costa-Hoffman-Meeks surfaces is held fixed, then the surfaces converge up to rigid motions to the configuration of a catenoid intersecting a plane through the waist as the genus tends to infinity. If the surfaces are instead normalized by keeping the supremum of the norm of the second fundamental form fixed, the surfaces converge up to rigid motions to Scherk's singly-periodic minimal surface with infinite topology and four asymptotically flat ends. Motivated by this observation, Kapouleas in \cite{Ka97} constructed families of  minimal surfaces with many ends and high genus.

Hoffman-Meeks in \cite{HM90a} conjectured that the space $\mathcal{M}(g, r)$ of complete embedded minimal surfaces with genus $g$ and $r$ ends is empty when $g + 2 < r$, and Ros \cite{Ros06} conjectured for $r \geq 4$ that if $\mathcal{M}(g, r)$ is non-empty then it is non-compact.  The cases $r < 4$ are interesting and in many ways distinct from the general case. The space $\mathcal{M}(g, 3)$ has been classified entirely and has been shown to be non-compact (cf. \cite{HK}). In the case of two ends,  Schoen (\cite{Sc}) has shown that $\mathcal{M} (0, 2)$ contains only the catenoid, and that the spaces $\mathcal{M} (g, 2)$ are empty for $g > 0$. When $r = 1$, Meeks and  Rosenberg (\cite{MR1}) have shown that the helicoid is the unique genus zero surface with one end. Recently, Hoffman and White (\cite{HW}) constructed a genus one embedded minimal surface asymptotic to the helicoid, and Hoffman-White-Traizet in \cite{HTW1}--\cite{HTW2} constructed such surfaces for every positive genus.  

We emphasize three points about the embeddedness of the surfaces we construct:
\begin{enumerate}
\item A beautiful result of Ros (recorded as Theorem 3.3 in \cite{Ros06}) states that a complete embedded minimal surface with positive genus $g$ has at most $4g + 4$ symmetries in $O (3)$, with equality achieved only by the family of maximally symmetric 3-ended Costa-Hoffman-Meeks surfaces.  The surfaces which we construct achieve this equality, and are thus a-priori non-embedded. \\ \notag

\item This fact is, however, detectable also with our methods, and follows directly from Theorems \ref{MeanCurvatureStructure} and \ref{KernelProjection}. The non-embeddedness of the surfaces is a consequence of perturbations in the logarithmic growths of the ends of the initial configuration. A minor modification of our proof yields the perturbation term as a function of $g$ which determines up to first order the radius of the maximal  ball around the origin in which the surfaces are embedded. \\

\item A failure of the surfaces to be embedded is in this sense, and a modification of our construction is expected to produce families of \emph{embedded} minimal surfaces which leave every compact set of the interior of $\mathcal{M}(4, g)$. This can be achieved essentially by doubling the genus of the surfaces relative to the symmetry group at the penalty of losing the up-down reflectional symmetry of the configuration. The loss of this symmetry does not fundamentally change the analysis, and we expect our methods to apply more or less directly. 

\end{enumerate}

\section{Acknowledgements}

The authors would like to thank Nicos  Kapouleas for several helpful conversations. S.J. Kleene was partially supported by NSF award DMS-1004646. N.M. M\o{}ller was partially supported by NSF award DMS-1311795.

\section {Outline}
The starting point for our construction are configurations of two coaxial catenoids of the same scale, parametrized by their intersection angle, which we call $\theta$. We then follow the basic technique of \cite{Ka97} and obtain complete immersed minimal surfaces by removing small tubular neighborhoods of the intersection circle and replacing it with controlled deformations of Scherk's saddle towers, and perturbing the resulting smooth surface to minimality. The family of Scherk towers is naturally parametrized by $\theta$ and we write the corresponding surface as   $\Sigma_\theta$.  $\Sigma_\theta$ is then asymptotic to four affine half-planes and is symmetric with respect to the reflections through the coordinate planes, and the translation by $2 \pi e_x$.

 Essentially, the perturbation is achieved by solving the linear problem 
\begin{align}
\mathcal{L}[S] u = H
\end{align}
where $\mathcal{L}[S]  = \Delta[S] + |A[S]|^2$ is the stability operator on the initial surface $S$.  The graph $S+ u \nu[S]$ has mean curvature which is smaller in an appropriate sense.  One necessary consequence of the perturbing process is that  the initial surfaces undergo small changes in their asymptotics. Modulo a reflection across a plane, the surfaces have two ends, which we here refer to as the top and the bottom. In order for the surfaces  to be embedded,  the logarithmic growth of the bottom end must be  less than or equal to that of the top end, since otherwise they will eventually intersect. Since our initial configuration consists of two coaxial catenoids with the same scale, the logarithmic growth of both ends of the initial configuration are the same. Thus  in order to  determine the embeddness or non-embeddedness of the surfaces directly from the construction, we have to be able to predict with a high degree of accuracy the change in asymptotics induced by the perturbing process. We do this  by carefully studying the mean curvature of the initial surfaces, which is concentrated near the intersection circle. Modulo a discrete rotational symmetry, a dilation and  a rigid motion of $\Reals^3$, the initial surfaces are small perturbations of the fundamental domains of the Scherk towers $\Sigma$. Denoting the perturbing vector field by $\xi$, we express the mean curvature in linear and and higher order parts:
\begin{align}
H_{\xi}  = \mathcal{L}_\xi + \mathcal{R}_\xi
\end{align}
By the ``linear part'' we mean the linear change in the mean curvature of $\Sigma$ due to the addition of the vector field $\xi$. Since $\Sigma$ is minimal, the tangential part of the field amounts to a reparametrization of the  underlying surface, so that we can express
\begin{align}
\mathcal{L}_\xi = \tau \mathcal{L}_\Sigma \xi^\perp
\end{align} 
where $\mathcal{L}_\Sigma$ is the stability operator on $\Sigma$ and $\xi^\perp$  denotes the normal component of the variation field $\xi^\perp : = \xi \cdot \nu$. Predicting to which side the bottom ends of the initial surface ``want'' to change their logarithmic growth is then equivalent to determining the ``kernel content'' of the mean curvature. Let $\mathcal{S}$ be a fundamental domain for $\Sigma$ and let $\phi$ be a function satisfying $\mathcal{L}[\mathcal{S}] \phi = 0$ Then locally, the kernel content of the linear part is given by
\begin{align}
\int_{\mathcal{S}}\mathcal{L}[\mathcal{S}] \xi^\perp \phi = \int_{\partial \mathcal{S}} \phi\nabla \xi^\perp \cdot \eta - \xi^\perp \nabla \phi \cdot \eta,
\end{align} 
where $\eta$ is the outward pointing boundary unit  co-normal. The boundary terms can then in theory be computed exactly. Up to a quadratic remainder, the kernel content of the mean curvature can then be shown to be non-zero.  

There are technical difficulties in working with small-angle Scherk surfaces $\Sigma_\theta$. As $\theta$ tends to zero, the geometry of the surfaces degenerates (see Figure \ref{ScherkCollapse}), the curvature concentrates along the lattice $2 \pi  \mathbb{Z}$ on the $x$-axis away from which they converge to a plane of multiplicity two.  Understanding exactly how this degeneration takes place is important for obtaining workable bounds for the error term. The deformation field $\xi$ is large compared to the background geometry of regions of $\Sigma$ with high curvature. However, we show that most of the perturbing field is tangential, and effects the mean curvature to  a higher order which can be controlled.  Without separating out normal and tangential effects, we would produce estimates for the mean curvature which would be unstably large. 

\begin{center}
\begin{figure}[htb]\label{ScherkCollapse}
\includegraphics[width=\textwidth]{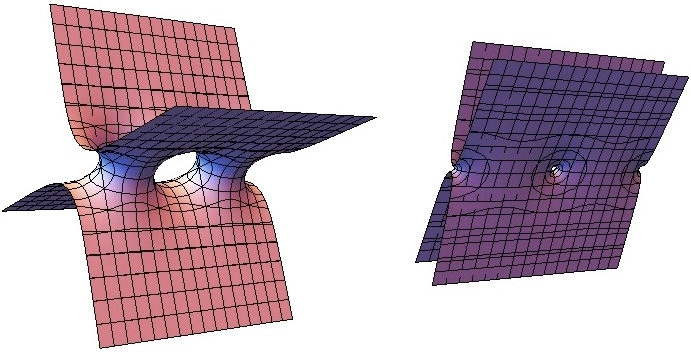}
\caption{Collapse of Scherk's singly-periodic minimal surface as the angle parameter $\theta\to 0$, or viewed from the right to the left showing the archetypical "doubling" of a flat plane as a complete embedded minimal surface in $\Reals^3$. With our normalization, the axis of periodicity is the $x$-axis. The $y$-axis points upwards and the plane to which the surfaces collapse is $\{ z = 0\}$.}
\end{figure}
\end{center}

In Section \ref{preliminaries} we introduce basic objects and notation which we use throughout. Additionally, we take some time to formalize various types of estimates which will arise repeatedly throughout the article. Specifically, we formalize the process of estimating the change, due to the addition of a vector field, of quantities defined on parametrizations that scale homogeneously, in terms of the scale of the parametrization and the  vector field. These include the mean curvature,  the unit normal, the components of the metric, the dual metric and the second fundamental form in coordinate charts,  and the coefficients of the Laplace operator  in coordinate charts.  In Section \ref{LaplacianOnCylinders}, we record several weighted invertiblity results for the Laplace operator on flat cylinders which we will use repeatedly throughout. In Section \ref{CatenoidalEnds}, we record a family of conformal parametrizations of catenoidal ends which we will use in our construction of the initial surfaces. In Section \ref{ScherkTowers}, we record properties and notation associated with the Scherk towers relevant to  our construction.  In particular, we record precisely how the geometry of the Scherk surfaces degenerates as the parameter $\theta$ tends to zero, which is needed for producing workable estimates for perturbations of geometric quantities on our approximate solutions. In Section \ref{StabilityOperatorOnCatenoid}, we record an invertibility result--Proposition \ref{CatenoidInverse}--for the stability operator on the catenoid in weighted H{\"o}lder spaces. Later, when we study the linear problem on the Scherk towers and record an analogous invertibilty statement in Proposition \ref{FlatInverse}, we will make use of Proposition \ref{CatenoidInverse}. In Sections \ref{BendingMapsSection}, \ref{MatchingSection} and \ref{TheInitialSurfaces}, we construct the initial surfaces  and record their basic geometric properties, including the 
criteria for smoothness and embeddedness, and their symmetry groups.  We break up the construction of the initial surfaces into these sections according to a natural set of independent technical considerations. The first--treated in Section \ref{BendingMapsSection}--has to do with the degeneration of the Scherk surfaces for small parameter values. In a fixed small ball about  the $z$-axis, they resemble large pieces of catenoids of scale approximately  equal to $\theta$. To avoid complicated geometric estimates on this region, we define bending maps which act as the identity in this region.  The second set of technical difficulties has to do with estimating the mean curvature of graphs over the catenoidal ends recorded in Section \ref{CatenoidalEnds}.   In Proposition \ref{InitialGraphProps} we record, among other things, precise conditions under which the initial surfaces are embedded and non-embedded.  In Section \ref{MeanCurvatureOfInitialSurfaces} we record a decomposition of the mean curvature of the initial surfaces and small normal graphs, into a ``linear'' and ``higher order'' part--Proposition \ref{MeanCurvatureStructure}. The ``linear'' part contains three principal parts: The linear change due to adding a graph, the linear change due to varying the controlling parameters on the initial surface, and the linear change due to ``bending'' the Scherk tower around a circle of large radius. The ``higher order'' part of the decomposition does not actually appear quadratically small in our estimates; nonetheless our estimates show that it is dominated by the terms constituting the linear part.  We also record Proposition \ref{KernelProjection}, which estimates the magnitude of the kernel content of the mean curvature. From this, the non-embeddedness of the surfaces could be deduced directly, without appealing to Ros's Theorem. In Section \ref{FlatScherkLinearProblem}, we record an invertibility statement for the stability operator on the Scherk towers. The construction of the initial surfaces is then concluded in Section \ref{FindingMinimalGraph} by a Schauder fixed point argument.

\section{Preliminaries} \label{preliminaries}

\subsection{Basic notation}
Throughout this article, $\mathbb{R}^3$ will denote Euclidean three-space, $X$ a point in $\mathbb{R}^3$ and $(x, y, z)$ the right-handed rectangular coordinates of that point, and $\{e_x, e_y, e_z \}$ the standard basis vectors.   We set\begin{align} \label{RadialField}
e_r (t) : = \sin (t) e_x + \cos (t) e_y,
\end{align}
The vectors $e_y[\beta]$ and $e_z[\beta]$ are given by
\begin{align} \label{RotatedFrame}
e_y[\beta] : = \cos (\beta) e_y + \sin (\beta) e_z, \quad e_z[\beta] : = \cos (\beta) e_z - \sin (\beta) e_y.
\end{align}
 
 We denote Euclidean two-space by $\mathbb{R}^2$, and take as coordinates $(x, s)$.

For real numbers $a$ and $b$, we set
\[
\psi[a, b] (s) = \psi_0 \left(\frac{s - a}{b - a}\right),
\]
where $\psi_0: \mathbb{R} \rightarrow [0, 1]$ is a fixed smooth, increasing function with $\psi_0\equiv 0$ on $(- \infty, 1/3)$ and $\psi_0\equiv 1$ on $(2/3, \infty)$. 

%Nonsense: and is symmetric about $1/2$, that is $\psi_0 (1/2 + x) = \psi_0 (1/2 - x)$.

The half-spaces $H^{\pm} \subseteq \mathbb{R}^2$ are obtained by restricting the $s$-coordinate to either the non-negative or to the non-positive real values, respectively. We denote the flat cylinder 
\[
\Omega=\Reals^2/\langle x \mapsto x + 2 \pi\rangle.
\]

For a given subset $U$ of  $\mathbb{R}^2$ or of $\Omega_0$, we set
\begin{align}
U_{\leq c} : = U \cap \{ s \leq c \}, \quad U_{\geq c}  = U \cap \{ s \geq c\},
\end{align}
Similarly for a surface $S$ parametrized by $\phi: U\to\Reals^3$, we denote $S_{\leq c}=\phi(U_{\leq c})$ and so on.

\subsection{Geometric quantities on surfaces}

Typically, for a parametrized surface, we will denote the first and second fundamental forms by $g = (g_{ij})$ and $A = (A_{ij})$ respectively and the unit normal field and Christoffel symbols by $\nu$ and $\Gamma = (\Gamma_{ij}^k)$, respectively. We take the trace of the second fundamental form of a surface to be its mean curvature, and denote it by $H:=\trace A=g^{ij}A_{ij}$. For clarity, when corresponding to a surface $S$, these quantities and their components will typically  be paired with the symbol $[S]$, so for example $g[S]$ denotes the metric on the surface $S$ and $g[S]_ { ij} $ denotes its components in a coordinate neighborhood.

\subsection{Isometries and quotients}

\begin{definition} \label{IsometryDefs}
We let $\mathfrak{R}_x$, $\mathfrak{R}_y$,  $\mathfrak{R}_z$ denote the reflections through the coordinate planes $\{ x = 0\}$, $\{  y = 0\}$, and $\{ z = 0\}$, respectively. We let $\mathfrak{T}_t$ denote the translation by $ t e_x$, and we let $\mathfrak{T}^*_t$ denote the rotation
\begin{align} \notag
\mathfrak{T}^*_t (x, y, z) =  (\tau^{-1} + y)e_r (\tau t) - \tau^{-1} e_y + z e_z.
\end{align}
\end{definition}

\begin{definition} \label{IsometryGroupDefs}
We let $\mathfrak{G}$ denote the group of isometries generated by $\mathfrak{R}_x$, $\mathfrak{R}_z$ and $\mathfrak{T}_{2 \pi}$, and we let $\mathfrak{G}^*$ denote the group of isometries generated by $\mathfrak{R}_x$, $\mathfrak{R}_z$ and $\mathfrak{T}^*_{2 \pi}$.

\end{definition}

\begin{definition}
We let $\mathbb{E}$ denote the quotient of $\mathbb{R}^3$ by $\mathfrak{G}$, and we let $\mathbb{E}^*$ denote the quotient of $\mathbb{R}^3$ by $\mathfrak{G}^*$.
\end{definition}

Let $\mathfrak{G}$ be the isometry subgroup of $\Sigma$   generated by and $\mathfrak{T}$. We  denote by $\mathcal{S}$ the  quotient of $\Sigma$ under $\mathfrak{G}$  in the space $\mathbb{E} : = \mathbb{R}^3/\mathfrak{G}$.

\subsection{Weighted norms and H\"older spaces}

\begin{definition}
Given a function $u: D \subseteq \mathbb{R}^n \rightarrow \mathbb{R}$ we let $\| u : {C^{k, \alpha}( D)} \|$ denote the $k^{th}$ H\"older norm with exponent $\alpha\in (0,1)$, on the domain $D$. That is, we set
\begin{align}
\| u : {C^{k, \alpha}( D)} \| = \| u \|_{C^k (D)} + \sup_{\beta = k} [D^{\beta}u]_{C^{0, \alpha} (D)},
\end{align}
where
\begin{align}
[u]_{C^{0, \alpha}(D)} = \sup_{x, y \in D , x \neq y}\frac{|u (x) - u(y) |}{|x - y|^\alpha}.
\end{align}
\end{definition}

\begin{definition}
Given a function $u \in C^{j, \alpha} (D)$, the   $(j, \alpha)$ localized H\"o{}lder norm is given by
\begin{align}
\| u\|_{j, \alpha} (p) : = \| u : C^{j, \alpha} (D \cap B_1 (p))\|.
\end{align}
We let $C^{j, \alpha}_{loc} (D)$ denote the space of functions for which the  $(j, \alpha)$ localized H\"o{}lder norms are point-wise finite. 
\end{definition}

\begin{definition}
Given  a positive function $f: D \rightarrow \Reals$,  we let  $C^{j, \alpha} (D, f)$ be the space of functions for which the weighted norm $\| -: C^{j, \alpha} (D, f)\|$  is finite, where we take
\begin{align}
\| u : C^{j, \alpha} (D, f) \| : = \sup_{p \in D} \frac{1}{f(p)} \| u\|_{j, \alpha} (p).
\end{align}
\end{definition}

\begin{definition} \label{BanachIntersectionSpaces}
Let $\mathcal{X}$ and $\mathcal{Y}$ be two normed spaces with norms $\| - : \mathcal{X}\|$ and $\| - : \mathcal{Y}\|$, respectively. Then $\mathcal{X} \cap \mathcal{Y}$ is naturally a normed space with norm $\| -:  \mathcal{X} \cap \mathcal{Y}\|$ given by
\begin{align}\notag
\| f : \mathcal{X} \cap \mathcal{Y} \| = \| f: \mathcal{X}\| + \| f : \mathcal{Y}\|. 
\end{align}
\end{definition}

Frequently, we will want to measure functions and tensors that appear on various surfaces $S$. In all cases, we will identify and fix an atlas $\{ \phi_i:  D_i \rightarrow \Omega_i\}_{i = 1}^n$ of coordinate charts on $S$. When this is the case, we will set
\[
\| u: C^{k, \alpha}(\Omega_i, w ) \| : = \| u \circ \phi_i: C^{k, \alpha} (D_i, w \circ \phi_i )\|, 
\]
and
\[
\|u: C^{k, \alpha} (S, w ) \| : = \Sigma_{i = 1}^n \| u: C^{k, \alpha}(\Omega_i, w) \|.
\]
where $u: S \rightarrow \mathbb{R}$ is given and $w: S \rightarrow \mathbb{R}$ is a fixed weight function. 
Once an atlas for a surface has been fixed, we can measure tensors by measuring the norms of their components in the coordinate charts. Thus, when a tensor $T$ on $S$ is given, we set
\[
\| T: C^{k, \alpha} (\Sigma, w) \| : = \sum_{i = 1}^n \sum_{a,b} \| T_a^b: C^{k, \alpha}(\Omega_i, w ) \|,
\]
where $T_a^b$ ranges over all components of $T$ in the coordinate chart $\phi_i: D_i \rightarrow \Omega_i$. 

\subsection{Estimates of homogeneous quantities}
In many places in this article, we will have to produce estimates for weighted $C^k$ and $C^{k, \alpha}$ norms of both tensorial as well as non-tensorial quantities, such as the first fundamental form $g_{ij}$, second fundamental form $A_{ij}$, the unit normal $\nu$, and the Christoffel symbols $\Gamma^k_{ij}$ on various families of surfaces. In order to streamline our computations, we will make use of a few general properties of homogeneous functions.

\begin{definition} \label{JetSpace}
Let $\mathcal{J}$ be the Euclidean space
\[
\mathcal{J} : = \mathcal{J}^{(1)} \times \mathcal{J}^{(2)} =   (\Reals^{3})^2 \times (\Reals^{3})^4,
\]
which we think of as the formal jets up to second order of $C^2$-maps of open subsets of $\Reals^2$ to $\Reals^3$. We denote points of $\mathcal{J}$ by $ J = (J^{(1)}, J^{(2)} )$, where $J^{(1)}$ has two $\Reals^3$-elements and $J^{(2)}$ has four,
\begin{align}
J = \big(J^{(1)}_1, J^{(1)}_2\big), \qquad J^{(2)} = (J^{(2)}_{1 \, 1}, J^{(2)}_{2 \, 2}, J^{(2)}_{1 \, 2}, J^{(2)}_{2 \, 1}),
\end{align}
which we will soon think of as place-holders for the gradients and Hessians of an immersion, respectively.

We denote the natural Euclidean norms on these quantities by $|J_1^{(1)}|,|J_2^{(2)}|$,
\begin{align*}
&|J^{(1)}|=\sqrt{|J_1^{(1)}|^2+|J_2^{(2)}|^2},\\
&|J^{(2)}|=\sqrt{\sum_{i,j}|J_{ij}^{(2)}|^2}.
\end{align*}

\end{definition}

\begin{definition}
Let $\Phi:\mathscr{C}\subseteq\mathcal{J}\to\Reals$, where $\mathscr{C}$ is an open conical subset, with the property that for some $d\in\Integers$,
\begin{align}
\Phi(c J) = c^d \Phi(J), \quad c\in\Reals_+, \quad J\in \mathscr{C},
\end{align}
Then $\Phi$ is said to be a homogeneous function of degree $d$.
\end{definition}
Note that such a function which is homogeneous of degree $d$ and sufficiently smooth, has the property that for any multi-index $\beta$ (in the variables on which $\Phi$ depends), the function $\Psi=D^\beta\Phi$ is also homogeneous, of degree $d - |\beta|$.

\begin{definition} \label{PhiJets}
 Given an immersion $\phi : D \subseteq\Reals^2 \rightarrow \Reals^3$, we set 
 \begin{align}
 J^{(k)}[\phi] : =   \nabla^{k} \phi, \quad J [\phi] :  = (\nabla \phi, \nabla^{2} \phi),
 \end{align}
where $\nabla\phi$ and $\nabla^2 \phi$ are respectively the component-wise Euclidean gradient and Hessian of $\phi$, viewed as a map from the coordinate chart in $\Reals^2$ into $\Reals^3$.

A homogeneous quantity of degree $d$ on the surface $\phi(D)$ is then a function of the form $ \Phi_\phi : = \Phi (J[\phi])$ for some homogeneous function $\Phi$ on $\mathcal{J}$.
 \end{definition}
Examples of such quantities are many in surface geometry, and we record a few in the lemma below.

\begin{lemma} \label{degrees_of_homogeneity}
Let $g, g^{-1}, \nu, H$,  $\Gamma_{ij}^k, A, |A|$, $\Delta$ and $\mathcal{L}$ be the metric, the dual metric, the unit normal, the mean curvature, and the Christoffel symbols, the second fundamental form, the norm of the second fundamental form, the Laplace operator and the stability operator on an orientable surface, respectively. Then the coefficients of each (computed in a local chart), are homogeneous quantities of order $2$, $-2$, $0$, $-1$, $1$, $-1$, $-2$, $-2$ respectively.
\end{lemma}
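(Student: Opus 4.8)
The statement is essentially a bookkeeping exercise in the scaling behavior of the standard formulas of surface geometry, so the plan is to verify the claimed degrees of homogeneity one quantity at a time, exploiting that the degree is additive under products and quotients and that differentiation in the jet variables lowers the degree by the order of the multi-index (as noted right after the definition of homogeneous function). First I would fix an immersion $\phi: D \subseteq \Reals^2 \to \Reals^3$ and recall that under the rescaling $\phi \mapsto c\phi$ one has $J^{(1)}[\phi] = \nabla\phi \mapsto c\,\nabla\phi$ and $J^{(2)}[\phi] = \nabla^2\phi \mapsto c\,\nabla^2\phi$; equivalently, replacing $J$ by $cJ$ in $\mathcal{J}$ corresponds to dilating the surface by $c$. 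Then the metric components $g_{ij} = \partial_i\phi \cdot \partial_j\phi$ are manifestly quadratic in $J^{(1)}$, hence homogeneous of degree $2$; the dual metric $g^{ij} = (\det g)^{-1}\,(\text{cofactor})$ is a ratio of a degree-$2$ cofactor over a degree-$4$ determinant, hence degree $-2$. The unit normal $\nu = (\partial_1\phi \times \partial_2\phi)/|\partial_1\phi \times \partial_2\phi|$ is degree-$1$ over degree-$1$, hence degree $0$.

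Next I would treat the second-order quantities. The second fundamental form $A_{ij} = \partial_{ij}\phi \cdot \nu$ is a product of a degree-$1$ factor ($J^{(2)}$) and a degree-$0$ factor ($\nu$), hence degree $1$ — wait, here one must be careful about the indexing convention in Lemma~\ref{degrees_of_homogeneity}, which lists $A$ as degree $-1$ while $A_{ij}$-components would be degree $1$; I would record $A_{ij}$ as the lower-index components (degree $1$) and interpret the degree $-1$ in the lemma as referring to $A$ as the shape operator $A^i_j = g^{ik}A_{kj}$ (degree $-2+1 = -1$), which is consistent with $H = \trace A = g^{ij}A_{ij}$ being degree $-2+1 = -1$ and $|A|^2 = A^i_j A^j_i$ being degree $-2$, so $|A|$ is degree $-1$. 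The Christoffel symbols $\Gamma^k_{ij} = \tfrac12 g^{k\ell}(\partial_i g_{j\ell} + \partial_j g_{i\ell} - \partial_\ell g_{ij})$ combine a degree-$(-2)$ dual metric with degree-$1$ derivatives of the degree-$2$ metric (each $\partial$ in coordinate variables does not change the jet-degree since $\partial g_{ij}$ is still quadratic-type in the jets of $\phi$, now involving $J^{(1)}$ and $J^{(3)}$—here I'd note the degree is still $2$ as a function of the extended jet), giving total degree $-2 + 2 = 0$; but the lemma asserts degree $1$ for $\Gamma^k_{ij}$, so the correct accounting is that $\partial_i g_{j\ell}$ is homogeneous of degree $2$ but when we track the intrinsic scaling $\Gamma$ carries an extra inverse-length from the coordinate derivative — I would make this precise by noting $\Gamma^k_{ij}$ is the unique degree-$1$ combination making $\nabla$ a connection, equivalently it scales like (length)$^{-1}$ in $\phi$'s gradient data, and double-check against the flat model. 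Finally, $\Delta = g^{ij}(\partial_{ij} - \Gamma^k_{ij}\partial_k)$ has leading coefficients $g^{ij}$ of degree $-2$, so $\Delta$ is a degree-$(-2)$ operator, and $\mathcal{L} = \Delta + |A|^2$ adds the degree-$(-2)$ potential $|A|^2$, consistently degree $-2$.

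The main obstacle — really the only one — is not computational difficulty but pinning down the \emph{convention} by which each object is assigned a degree, since a tensor's degree depends on whether one counts its covariant components, its contravariant components, or its value as an operator, and the differentiation operators $\partial_i$ in coordinate charts interact with this. I would resolve this by declaring explicitly, once and for all, that the degree of a quantity $\Phi_\phi = \Phi(J[\phi])$ is the integer $d$ with $\Phi(cJ) = c^d\Phi(J)$, and then reading off each entry of the list via the formulas above; the differentiation remark after the homogeneity definition handles the passage from $g_{ij}$ to its derivatives without leaving the class of homogeneous quantities (on a suitably enlarged jet space incorporating $\nabla^3\phi$), so the Christoffel symbols and the coefficients of $\Delta$ and $\mathcal{L}$ land in the stated degrees once the enlarged jet convention is in place. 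With the conventions fixed, each verification is a one-line scaling count, and the lemma follows by collecting these counts.
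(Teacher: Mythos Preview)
Your overall strategy---verify each degree by a direct scaling count on the standard coordinate formulas---is exactly right, and the paper in fact offers no proof of this lemma, treating it as routine. However, your execution contains a genuine error in the Christoffel-symbol step, which stems from a misreading of the lemma (admittedly abetted by a typo in the paper: nine quantities are listed but only eight degrees).

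The correct pairing, corroborated by Corollary~\ref{CatQuantities} where the paper explicitly uses that $|A|$ has degree $-1$, is that the degree list simply omits the entry for $\Gamma^k_{ij}$; the eight numbers $2,\,-2,\,0,\,-1,\,1,\,-1,\,-2,\,-2$ correspond to $g,\,g^{-1},\,\nu,\,H,\,A_{ij},\,|A|,\,\Delta,\,\mathcal{L}$ in order, with $A$ meaning the covariant components $A_{ij}=\partial_{ij}\phi\cdot\nu$ (degree $1$), not the shape operator. The Christoffel symbols are in fact homogeneous of degree $0$: from $\Gamma^k_{ij}=g^{kl}\,(\partial_{ij}\phi\cdot\partial_l\phi)$ one has a product of $g^{kl}$ (degree $-2$) with $J^{(2)}_{ij}\cdot J^{(1)}_l$ (degree $2$), giving degree $0$---consistent with the elementary fact that Christoffel symbols are invariant under constant rescaling of the metric. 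Your attempt to force degree $1$ via an ``extra inverse-length from the coordinate derivative'' does not hold up, and the appeal to a $J^{(3)}$-enlarged jet space is both unnecessary and mistaken: $\partial_i g_{jl}=J^{(2)}_{ij}\cdot J^{(1)}_l+J^{(1)}_j\cdot J^{(2)}_{il}$ already lives in the given jet space $\mathcal{J}=\mathcal{J}^{(1)}\times\mathcal{J}^{(2)}$. Once you correct the item-to-degree assignment and drop the attempted justification for $\Gamma$, your remaining one-line scaling counts for $g,\,g^{-1},\,\nu,\,H,\,A_{ij},\,|A|,\,\Delta,\,\mathcal{L}$ are all correct.
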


We want to estimate the linear and higher order changes of homogeneous quantities along $\phi$ due  to the addition of small vector fields. To do this concisely,   we refer to a map $J: D \subseteq\Reals^2 \rightarrow \mathcal{J}$ as $\mathfrak{a}$-regular if the quantity
\beq \label{fraka_def}
\begin{split}
\mathfrak{a} (J) = \mathfrak{a} \big(J^{(1)}\big) : &= 2 \sqrt{\det\left[(J^{(1)})^T \, J^{(1)}\right]}/|J^{(1)}|^2\\
&=2\frac{\sqrt{|J^{(1)}_1|^2 |J^{(1)}_2|^2 -(J^{(1)}_1 \cdot J^{(1)}_2)^2}}{|J^{(1)}_1|^2 + |J^{(1)}_2|^2}.
\end{split}
\eeq
is everywhere non-zero, and otherwise we refer to it simply as a vector field. We initially note that:
\begin{lemma}
The quantity $\mathfrak{a}:\mathcal{J}\to\Reals$ is homogeneous of degree 0, and it holds that $0\leq\mathfrak{a}(J)\leq 1$. The first inequality is sharp when $J(\varphi)$ is a regular parametrization. Moreover, the second equality holds if and only if $|J^{(1)}_1| = |J^{(1)}_2|$ and $J^{(1)}_1 \cdot J^{(1)}_2 = 0$. In particular, $\mathfrak{a} (J(\phi)) = 1$ if and only if $\phi$ is a conformal immersion.
\end{lemma}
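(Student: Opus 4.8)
The statement to prove is a basic computation about the quantity $\mathfrak{a}(J)$ defined in \eqref{fraka_def}, namely that it is homogeneous of degree $0$, satisfies $0 \leq \mathfrak{a}(J) \leq 1$, with the lower bound sharp exactly when $J^{(1)}$ has full rank (regular parametrization), and the upper bound attained exactly when $|J^{(1)}_1| = |J^{(1)}_2|$ and $J^{(1)}_1 \cdot J^{(1)}_2 = 0$, which for $J = J(\phi)$ is precisely conformality.

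\begin{proof}[Proof proposal]
The plan is to reduce everything to the elementary Cauchy--Schwarz inequality applied to the two vectors $v_1 := J^{(1)}_1$ and $v_2 := J^{(1)}_2$ in $\Reals^3$. First I would check homogeneity: replacing $J^{(1)}$ by $cJ^{(1)}$ with $c \in \Reals_+$ multiplies both $\sqrt{\det[(J^{(1)})^T J^{(1)}]}$ and $|J^{(1)}|^2$ by $c^2$, since the Gram determinant is quadratic in each of its two vector slots and $|J^{(1)}|^2 = |v_1|^2 + |v_2|^2$ is visibly degree $2$; hence the ratio is unchanged, so $\mathfrak{a}$ is homogeneous of degree $0$. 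Note also that $\mathfrak{a}$ depends only on $J^{(1)}$, consistent with the notation $\mathfrak{a}(J) = \mathfrak{a}(J^{(1)})$ in \eqref{fraka_def}.

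Next, for the two-sided bound, I would use the second displayed expression in \eqref{fraka_def}, writing $\mathfrak{a}(J) = 2\sqrt{|v_1|^2|v_2|^2 - (v_1 \cdot v_2)^2}\,/(|v_1|^2 + |v_2|^2)$. The numerator is real and nonnegative precisely because Cauchy--Schwarz gives $(v_1 \cdot v_2)^2 \leq |v_1|^2 |v_2|^2$, which shows $\mathfrak{a}(J) \geq 0$; equality $\mathfrak{a}(J) = 0$ holds iff $|v_1|^2|v_2|^2 = (v_1 \cdot v_2)^2$, i.e. iff $v_1, v_2$ are linearly dependent, i.e. iff $(J^{(1)})^T J^{(1)}$ is singular, which is exactly the failure of $J(\varphi)$ to be a regular parametrization; so the first inequality is sharp, and strict exactly for regular parametrizations. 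For the upper bound, discarding the term $-(v_1 \cdot v_2)^2 \leq 0$ and then applying the AM--GM inequality $2|v_1||v_2| \leq |v_1|^2 + |v_2|^2$ gives
\begin{align}\notag
\mathfrak{a}(J) \;\leq\; \frac{2|v_1||v_2|}{|v_1|^2 + |v_2|^2} \;\leq\; 1.
\end{align}
Tracking the equality cases in this chain: the first inequality is an equality iff $v_1 \cdot v_2 = 0$, and the second is an equality iff $|v_1| = |v_2|$; so $\mathfrak{a}(J) = 1$ iff both $J^{(1)}_1 \cdot J^{(1)}_2 = 0$ and $|J^{(1)}_1| = |J^{(1)}_2|$ hold simultaneously. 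Finally, specializing to $J = J[\phi]$ with $J^{(1)}_i = \partial_i \phi$, these two conditions read $g_{12} = 0$ and $g_{11} = g_{22}$, which is the definition of $\phi$ being a conformal immersion, giving the last assertion.

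There is essentially no genuine obstacle here; the only point requiring a little care is the bookkeeping of the equality cases through the two successive inequalities in the upper bound, making sure that "both equalities simultaneously" is correctly translated back to the stated pair of geometric conditions, and that the degenerate case $v_1 = v_2 = 0$ (where $\mathfrak{a}$ is not even defined, $\mathscr{C}$ being the relevant open conical domain on which $|J^{(1)}| \neq 0$) is excluded from the start. I would also remark in passing that the formula $\mathfrak{a}(J) = |v_1 \wedge v_2|\big/\big(\tfrac12(|v_1|^2+|v_2|^2)\big)$ makes the scaling and the bounds geometrically transparent, since the numerator is the area of the parallelogram spanned by $v_1, v_2$ and the denominator its arithmetic-mean upper bound, but this is not needed for the proof.
\end{proof}
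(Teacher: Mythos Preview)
Your proof is correct and follows essentially the same approach as the paper: both reduce to the elementary inequalities underlying Cauchy--Schwarz and AM--GM, the paper via an angle parametrization and normalization $|J^{(1)}_1|=1$, $|J^{(1)}_2|=r$, and you directly via the algebraic inequalities. Your version is in fact slightly more explicit in tracking the equality cases, which the paper's proof leaves somewhat implicit.
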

\begin{proof}
Since $\mathfrak{a}$ is clearly homogeneous degree of $0$, it suffices to consider the case that $|J^{(1)}_1|  = 1$ and $|J^{(1)}_1| : = r$. We can then write, if we let $\vartheta$ be the angle between $J^{(1)}_1$ and $J^{(1)}_2$,
\begin{align}
\big(\mathfrak{a}(J)\big)^2  = \big(\mathfrak{a} \big(J^{(1)}\big)\big)^2& = 4\frac{ |J^{(1)}_1|^2 |J^{(1)}_2|^2 -( J^{(1)}_1 \cdot J^{(1)}_2)^2}{(|J^{(1)}_1|^2 + |J^{(1)}_2|^2)^2}  \\ \notag
& = 4 \frac{\Reals^2 - \Reals^2\cos^2\vartheta}{(1 + \Reals^2)^2} = 4 (1 - \cos^2\vartheta) \frac{\Reals^2}{(1 + \Reals^2)^2}
\end{align}
For each $\vartheta$, the right hand side achieves a unique maximum at $r = 1$ with the value $(1 - \cos^2 \vartheta)$, which gives the claim.
\end{proof}

For the next definition, we identify $\mathcal{J}\simeq \Reals^{18}$, so that $\Phi: \Reals^{18}\to\Reals$, $E(p)\in\Reals^{18}$ for $p\in D$ and the multi-indices $\beta=(\beta_1,\ldots,\beta_k)$ and derivatives $D^{\beta}\Phi$ are w.r.t. these coordinates or components, e.g. also $(E(p))^\beta=(E(p))^{\beta_1}(E(p))^{\beta_2}\cdots(E(p))^{\beta_k}$.

 \begin{definition}
 Let $\Phi$ be a homogeneous quantity. Given an $\mathfrak{a}$-regular map $J:D\rightarrow \mathcal{J}$, and a vector field $E:D \rightarrow \mathcal{J}$, we let (for $k\in\mathbb{N})$
\begin{align} \notag
p\in D:\quad \left[R_{\Phi, E}^{(k)} (J)\right](p) : =  \sum_{|\beta|=k}(E(p))^\beta\int_0^1\frac{k}{\beta!}(1-\sigma)^{1-k}D^\beta\Phi_{\big| J(p)+ \sigma E(p)}d\sigma.
\end{align}
When $J$ and $E$ are of the form $J  = J [\phi]$ and $E  = J [V]$, for an immersion $\phi:D\to\Reals^3$ and a vector field $V:D\to\Reals^3$, we write:
 \begin{align} \notag
 R_{\Phi , V}^{(k)} (\phi) : = R_{\Phi , E }^{(k)} (J).
 \end{align}
 \end{definition}
 
 Note that $R_{\Phi , E}^{(k)} (J) $ is simply the Taylor remainder of order $k$, so that:
 \begin{proposition}
 We have, pointwise in $D$:
 \begin{align*}
 \Phi (J(p) + E(p)) = \Phi(J(p)) + \sum_{1\leq|\alpha|\leq k}\frac{D^{\alpha}\Phi(J(p))}{\alpha!}(E(p))^\alpha + R^{(k)}_{\Phi, E} (J)(p).
 \end{align*}
 \end{proposition}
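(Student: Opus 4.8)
The statement to prove is the Proposition asserting that $\Phi(J(p)+E(p))$ equals its Taylor polynomial of order $k$ about $J(p)$ plus the remainder term $R^{(k)}_{\Phi,E}(J)(p)$, with the remainder given by the stated integral formula. This is essentially the integral (Lagrange-type) form of Taylor's theorem applied pointwise.

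\medskip

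The plan is to fix $p\in D$ and reduce to a one-dimensional statement by considering the smooth scalar function $f(\sigma):=\Phi\bigl(J(p)+\sigma E(p)\bigr)$ on $\sigma\in[0,1]$, which is well-defined and $C^k$ provided the segment from $J(p)$ to $J(p)+E(p)$ stays in the open conical domain $\mathscr{C}$ on which $\Phi$ is defined and smooth; I would note this is where $\mathfrak{a}$-regularity and smallness of $E$ enter (the segment lies in $\mathscr{C}$). First I would apply the one-variable Taylor theorem with integral remainder to $f$ on $[0,1]$, namely
\begin{align*}
f(1)=\sum_{j=0}^{k-1}\frac{f^{(j)}(0)}{j!}+\int_0^1\frac{(1-\sigma)^{k-1}}{(k-1)!}f^{(k)}(\sigma)\,d\sigma.
\end{align*}
Then I would compute $f^{(j)}(\sigma)$ by the chain rule: since $f$ is the composition of the affine map $\sigma\mapsto J(p)+\sigma E(p)$ with $\Phi$, the $j$-th derivative is the multinomial expansion $f^{(j)}(\sigma)=\sum_{|\alpha|=j}\frac{j!}{\alpha!}D^\alpha\Phi\bigl(J(p)+\sigma E(p)\bigr)(E(p))^\alpha$, using the multi-index notation fixed just before the definition. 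Evaluating at $\sigma=0$ gives the polynomial terms $\sum_{|\alpha|=j}\frac{D^\alpha\Phi(J(p))}{\alpha!}(E(p))^\alpha$ and summing over $j=1,\dots,k-1$ (the $j=0$ term being $\Phi(J(p))$) matches the displayed polynomial up to order $k-1$; the $|\alpha|=k$ terms inside the polynomial come from splitting off the top-degree part of the integral remainder, or equivalently one writes Taylor to order $k$ and checks the two forms of the remainder agree.

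\medskip

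To finish I would identify the integral remainder with $R^{(k)}_{\Phi,E}(J)(p)$ as defined: substituting $f^{(k)}(\sigma)=\sum_{|\beta|=k}\frac{k!}{\beta!}D^\beta\Phi\bigl(J(p)+\sigma E(p)\bigr)(E(p))^\beta$ into $\int_0^1\frac{(1-\sigma)^{k-1}}{(k-1)!}f^{(k)}(\sigma)\,d\sigma$ and using $\frac{k!}{(k-1)!}=k$ gives exactly $\sum_{|\beta|=k}(E(p))^\beta\int_0^1\frac{k}{\beta!}(1-\sigma)^{k-1}D^\beta\Phi\big|_{J(p)+\sigma E(p)}\,d\sigma$, which is the defining expression (I would flag that the exponent $1-k$ in the paper's display should read $k-1$, i.e. $(1-\sigma)^{k-1}$, consistent with the standard integral remainder). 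Since the whole argument is pointwise in $p$, no uniformity is needed and the proposition follows. The only genuinely substantive point — and the one I would expect to be the main obstacle to state cleanly rather than to prove — is verifying that the line segment $\{J(p)+\sigma E(p):\sigma\in[0,1]\}$ remains inside the open conical set $\mathscr{C}$ where $\Phi$ is defined and $C^k$, so that $f$ is genuinely $C^k$ on $[0,1]$; in the applications this holds because $E=J[V]$ is small relative to $J=J[\phi]$ and $\phi$ is an immersion with $\mathfrak{a}(J[\phi])$ bounded below, keeping the perturbed jets regular. Everything else is the standard chain-rule computation and a bookkeeping check of multi-index constants.
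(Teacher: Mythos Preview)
Your proposal is correct and matches the paper's approach: the paper offers no proof at all, simply remarking that $R^{(k)}_{\Phi,E}(J)$ ``is simply the Taylor remainder of order $k$,'' and your reduction to the one-variable function $f(\sigma)=\Phi(J(p)+\sigma E(p))$ followed by the chain-rule identification of $f^{(k)}$ is exactly the standard justification. You are also right to flag the exponent typo: $(1-\sigma)^{1-k}$ in the definition should read $(1-\sigma)^{k-1}$; note there is a second, related inconsistency you glossed over---with $|\beta|=k$ in the definition of $R^{(k)}$, the Taylor polynomial it complements runs only over $1\le|\alpha|\le k-1$, not $1\le|\alpha|\le k$ as printed in the Proposition (this is also visible in the exponent mismatch in the subsequent Proposition~\ref{HQEstimates}, whose proof yields $\|E\|^k$ rather than the stated $\|E\|^{k+1}$).
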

 
We will assume throughout that any homogeneous function $\Phi$ considered is uniformly bounded in any $C^k$-norm on compact subsets of the set
\begin{align}
\mathcal{J}_0 : = \{J \in \mathcal{J} : \mathfrak{a} (J) > 0  \}\subseteq \Reals^{18}.
\end{align}

\begin{proposition} \label{still_regular}
Let $J^{(1)}$ and $E^{(1)}$ be points in $\mathcal{J}^{(1)}$ satisfying
\begin{align}
|E^{(1)}| \leq \varepsilon \mathfrak{a} \big(J^{(1)}\big) | J^{(1)}|.
\end{align}
Then for $\varepsilon>0$ sufficiently small , it holds that
\begin{align}
\left|\mathfrak{a} \big(J^{(1)} + E^{(1)}\big) - \mathfrak{a} \big(J^{(1)}\big) \right| \leq C |E^{(1)}|/ |J^{(1)}|,
\end{align}
where $C>0$ is a numerical constant (so independent of $J$ and $E$).
\end{proposition}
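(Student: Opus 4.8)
The plan is to rewrite $\mathfrak{a}$ as a ratio of two quantities that are Lipschitz on $\mathcal{J}^{(1)} \setminus \{0\}$, thereby avoiding any appeal to differentiability of $\mathfrak{a}$ itself --- which in fact fails along the degenerate locus where $J^{(1)}_1$ and $J^{(1)}_2$ are linearly dependent. Since $\mathfrak{a}$ is homogeneous of degree $0$ (and $J^{(1)} \neq 0$, as the statement tacitly requires), I would first replace $(J^{(1)}, E^{(1)})$ by $(J^{(1)}/|J^{(1)}|,\, E^{(1)}/|J^{(1)}|)$; this leaves both the asserted inequality and the hypothesis unchanged, the latter now reading $|E^{(1)}| \le \varepsilon\, \mathfrak{a}(J^{(1)}) \le \varepsilon$ (using $\mathfrak{a} \le 1$). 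So I may assume $|J^{(1)}| = 1$ and $|E^{(1)}| \le \varepsilon$. Writing $K^{(1)} := J^{(1)} + E^{(1)}$ with components $K^{(1)}_i = J^{(1)}_i + E^{(1)}_i$, and denoting by $\times$ the cross product on $\Reals^3$, the identity $|a \times b|^2 = |a|^2|b|^2 - (a\cdot b)^2$ gives $\det\big[(J^{(1)})^T J^{(1)}\big] = |J^{(1)}_1 \times J^{(1)}_2|^2$, and likewise for $K^{(1)}$; hence, since $|J^{(1)}| = 1$,
\begin{align*}
\mathfrak{a}(K^{(1)}) - \mathfrak{a}(J^{(1)}) = \frac{2\big(|K^{(1)}_1 \times K^{(1)}_2| - |J^{(1)}_1 \times J^{(1)}_2|\big)}{|K^{(1)}|^2} + 2\,|J^{(1)}_1 \times J^{(1)}_2|\,\Big(\frac{1}{|K^{(1)}|^2} - 1\Big).
\end{align*}

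Next I would estimate the two terms, taking $\varepsilon$ small enough that $|K^{(1)}| \ge 1 - \varepsilon \ge \tfrac12$, which keeps every denominator bounded below. For the numerator of the first term, bilinearity of $\times$ and the reverse triangle inequality $\big|\,|u| - |v|\,\big| \le |u - v|$ yield
\[
\big|\,|K^{(1)}_1 \times K^{(1)}_2| - |J^{(1)}_1 \times J^{(1)}_2|\,\big| \le |J^{(1)}_1 \times E^{(1)}_2 + E^{(1)}_1 \times J^{(1)}_2 + E^{(1)}_1 \times E^{(1)}_2| \le C\,|E^{(1)}|,
\]
using $|J^{(1)}_i| \le 1$ and $|E^{(1)}_i| \le |E^{(1)}| \le 1$. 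For the second term, $2|J^{(1)}_1 \times J^{(1)}_2| \le |J^{(1)}_1|^2 + |J^{(1)}_2|^2 = 1$, while $\big|\,|K^{(1)}|^2 - 1\,\big| = |2\, J^{(1)} \cdot E^{(1)} + |E^{(1)}|^2| \le (2 + \varepsilon)|E^{(1)}|$, so the bracketed factor has absolute value at most $(2+\varepsilon)|E^{(1)}|/(1-\varepsilon)^2$. Combining, $|\mathfrak{a}(K^{(1)}) - \mathfrak{a}(J^{(1)})| \le C\,|E^{(1)}|$ with $C$ numerical, and undoing the normalization turns this into the claimed bound $C\,|E^{(1)}|/|J^{(1)}|$.

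I do not expect a real obstacle here: this is the easiest instance of the homogeneous-quantity perturbation estimates developed later, and in fact $\mathfrak{a}$ is globally Lipschitz on each sphere $\{|J^{(1)}| = \mathrm{const}\}$. The only point worth flagging is that one should \emph{not} attempt a mean-value estimate for $\nabla \mathfrak{a}$ along the segment $t \mapsto J^{(1)} + tE^{(1)}$: apart from the non-smoothness of $\mathfrak{a}$ at degenerate frames, bounding $\nabla\mathfrak{a}$ along that segment would require knowing in advance that the segment remains $\mathfrak{a}$-regular --- essentially the conclusion one is after. Routing the argument through $|J^{(1)}_1 \times J^{(1)}_2|$, which is manifestly Lipschitz, removes this. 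One should also note that for $\mathfrak{a}$ alone the weaker hypothesis $|E^{(1)}| \le \varepsilon |J^{(1)}|$ would already do; the factor $\mathfrak{a}(J^{(1)})$ is retained in the statement so that a single hypothesis governs the later estimates as well, where degeneration of the parametrization genuinely matters.
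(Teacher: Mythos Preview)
Your argument is correct and clean. It is, however, genuinely different from the paper's. The paper proceeds by a mean-value estimate: it first uses the hypothesis $|E^{(1)}|\le\varepsilon\,\mathfrak{a}(J^{(1)})|J^{(1)}|$ together with the elementary bound $\mathfrak{a}(J^{(1)})/C\le |J^{(1)}_1|/|J^{(1)}_2|\le C/\mathfrak{a}(J^{(1)})$ to get $|E^{(1)}_i|\le \tfrac12|J^{(1)}_i|$ for each $i$ separately, deduces that $|J^{(1)}(\sigma)|$ and $|J^{(1)}|$ are mutually comparable along the segment $J^{(1)}(\sigma)=J^{(1)}+\sigma E^{(1)}$, and then asserts that $|J^{(1)}|\,|D\mathfrak{a}_{|J^{(1)}(\sigma)}|$ is uniformly bounded and integrates. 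Your route---normalize to $|J^{(1)}|=1$, write $\mathfrak{a}=2|J^{(1)}_1\times J^{(1)}_2|/|J^{(1)}|^2$, and split the difference into a numerator piece (controlled by bilinearity and the reverse triangle inequality) and a denominator piece---is more elementary and makes the Lipschitz nature of $\mathfrak{a}$ explicit rather than leaving it implicit in a gradient bound. Your closing caveat about the mean-value approach is a bit overstated, though: the paper does take exactly that route, and it works because the apparent $1/\sqrt{f}$ singularity in $\nabla\mathfrak{a}$ (with $f=|J_1\times J_2|^2$) is cancelled by a matching zero in $\nabla f$, so that in fact $|\nabla\mathfrak{a}|\le C/|J^{(1)}|$ holds uniformly away from $J^{(1)}=0$; one does not need the segment to stay $\mathfrak{a}$-regular, only to keep $|J^{(1)}(\sigma)|$ bounded below, which the paper does check. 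That said, the paper glosses over the differentiability issue at the degenerate locus, and your decomposition sidesteps it entirely. You also correctly note that for this particular estimate the factor $\mathfrak{a}(J^{(1)})$ in the hypothesis is not used; the paper, by contrast, uses it to obtain the componentwise control $|E^{(1)}_i|\lesssim |J^{(1)}_i|$.
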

\begin{proof}
We have directly from the definition of $\mathfrak{a} \big(J^{(1)}\big)$ that
\begin{align}
\frac{\mathfrak{a} \big(J^{(1)}\big)}{C} \leq \frac{|J^{(1)}_1|}{|J^{(1)}_2|} \leq \frac{C}{\mathfrak{a} \big(J^{(1)}\big)},
\end{align}
for some $C>0$ (where e.g. $C=2$ works).

This then gives (e.g. $C=4$ works)
\begin{align} \label{SR1}
\Big(1 - \frac{C}{\mathfrak{a}^2} \Big)|J^{(1)}_2|^2 \leq |J^{(1)}|^2 \leq \Big(1 + \frac{C}{\mathfrak{a}^2} \Big)|J^{(1)}_2|^2, \\ \notag
\Big(1 - \frac{C}{\mathfrak{a}^2} \Big)|J^{(1)}_1|^2 \leq |J^{(1)}|^2 \leq \Big(1 + \frac{C}{\mathfrak{a}^2} \Big)|J^{(1)}_1|^2, 
\end{align}
where we set $\mathfrak{a}=\mathfrak{a} \big(J^{(1)}\big)$. Then
\begin{align} \label{SR2}
|E^{(1)}_1|^2 & \leq \varepsilon^2 \mathfrak{a}^2\Big(1  + \frac{C}{\mathfrak{a}^2}\Big) |J^{(1)}_1|^2 \leq \fracsm{1}{4} |J^{(1)}_1|^2, \\ \notag
|E^{(1)}_2|^2 & \leq \fracsm{1}{4} |J^{(1)}_2|^2,
\end{align}
by taking (if $C=4$), $0<\varepsilon<\fracsm{1}{4\sqrt{2}}$ and also assuming (since $|\mathfrak{a}| \leq 1$) $\varepsilon^2 \leq \frac{1}{8}$.

Set $J (\sigma) : = J + \sigma E$, for $\sigma \in [0, 1]$. Then from (\ref{SR1}) and (\ref{SR2}) we get 
\beq
\begin{split}
&\fracsm{2}{3}|J^{(1)}_2|  \leq |J^{(1)}_2 (\sigma)| \leq \fracsm{3}{2}|J^{(1)}_2|,\\
&\fracsm{2}{3}|J^{(1)}_1|  \leq |J^{(1)}_1 (\sigma)| \leq \fracsm{3}{2}|J^{(1)}_1|,
\end{split}
\eeq
so that $\fracsm{2}{3}|J^{(1)}|\leq |J^{(1)} (\sigma)|\leq\fracsm{3}{2}|J^{(1)}|$, meaning that these two quantities mutually control each other.

It is then straightforward to check that owing to homogeneity of $\mathfrak{a}$ and the above mutual control property, $|J^{(1)}| |D^\beta \mathfrak{a}_{|J^{(1)} (\sigma)}|$, $|\beta|=1$, is uniformly bounded for $\sigma \in [0, 1]$, so that 
\begin{align}
\left|\mathfrak{a} (J^{(1)} + E^{(1)}) - \mathfrak{a} \big(J^{(1)}\big) \right| = \left| R^{(1)}_{\mathfrak{a}, E^{(1)}}(J^{(1)})\right| \leq C |E^{(1)}|/|J^{(1)}|,
\end{align}
which gives the claim.
\end{proof}

\begin{proposition} \label{HQEstimates}
Let $\Phi:\mathscr{C}\subseteq\mathcal{J}_0\to\Reals$ be homogeneous of degree $d$. There is some $\varepsilon>0$ such that if we suppose that $J: D \rightarrow \mathcal{J}_0$ and $E: D \rightarrow \mathcal{J}$ satisfy
\begin{align} \notag
\|E: C^{j, \alpha}(D,\mathfrak{a} (J) |J^{(1)}|)\| \leq C(j,\alpha)\varepsilon , \quad \ell_{j, \alpha} (J) : =  \| J: C^{j, \alpha}(D, |J^{(1)}|)\|  < \infty,
\end{align}
for $0<\alpha<1$ and $j\in\Naturals$, then it holds that for any $k\in\Naturals$:
\begin{align} \notag
\left\| R_{\Phi , E}^{(k)} (J): C^{j, \alpha} (D, |J^{(1)}|^d) \right\| \leq C(k,\Phi,  \ell_{j, \alpha}, \mathfrak{a}(J) )\left\| E : C^{j, \alpha} (D, |J^{(1)}| ) \right\|^{k+1}.
\end{align}
\end{proposition}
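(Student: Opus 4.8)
The plan is to reduce everything to the pointwise formula for the Taylor remainder and control it by a Gr\"onwall-type argument on the integral over $\sigma\in[0,1]$. First I would observe that by Proposition \ref{still_regular} (applied to $J^{(1)}(p)$ and $\sigma E^{(1)}(p)$ for each $\sigma\in[0,1]$ and each $p$), the smallness hypothesis $\|E:C^{j,\alpha}(D,\mathfrak{a}(J)|J^{(1)}|)\|\leq C(j,\alpha)\varepsilon$ guarantees that the segment $J(p)+\sigma E(p)$ stays inside $\mathcal{J}_0$ with $\mathfrak{a}$ bounded below by (say) $\tfrac12\mathfrak{a}(J(p))$, and moreover $|J^{(1)}(p)+\sigma E^{(1)}(p)|$ is mutually comparable to $|J^{(1)}(p)|$, uniformly in $\sigma$ and $p$. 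This is the step that makes $D^\beta\Phi$ evaluated along the segment meaningful and bounded: since $D^\beta\Phi$ is homogeneous of degree $d-|\beta|$ and smooth on the (relatively compact, after normalizing by $|J^{(1)}|$) set where $\mathfrak{a}$ is bounded below, we get $|D^\beta\Phi_{|J(p)+\sigma E(p)}|\leq C(\Phi,|\beta|,\mathfrak{a}(J(p)))\,|J^{(1)}(p)|^{d-|\beta|}$ pointwise, with the constant depending on $\mathfrak{a}(J)$ only through its lower bound.

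Next I would handle the $C^0$ (weighted sup) part of the norm. From the pointwise Taylor-remainder formula
\[
\big[R^{(k)}_{\Phi,E}(J)\big](p)=\sum_{|\beta|=k}(E(p))^\beta\int_0^1\frac{k}{\beta!}(1-\sigma)^{k-1}D^\beta\Phi_{|J(p)+\sigma E(p)}\,d\sigma,
\]
together with $|(E(p))^\beta|\leq |E(p)|^{k}$ (in the $\Reals^{18}$ componentwise sense) and the bound on $D^\beta\Phi$ above, we get
\[
\big|R^{(k)}_{\Phi,E}(J)(p)\big|\leq C(k,\Phi,\mathfrak{a}(J))\,|E(p)|^{k}\,|J^{(1)}(p)|^{d-k}.
\]
Since $|J^{(1)}(p)|^{d-k}=|J^{(1)}(p)|^{d}\cdot|J^{(1)}(p)|^{-k}$ and, by the smallness hypothesis, $|E(p)|\leq C(j,\alpha)\varepsilon\,\mathfrak{a}(J(p))|J^{(1)}(p)|\leq C(j,\alpha)\varepsilon|J^{(1)}(p)|$, one factor of $|E(p)|$ cancels the $|J^{(1)}(p)|^{-k}$ up to the weighted norm: more precisely $|E(p)|^{k}|J^{(1)}(p)|^{-k}\leq \big(\|E\|_{j,\alpha}(p)\big)^{k}$ after dividing by $|J^{(1)}(p)|$ appropriately. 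Dividing the displayed bound by $|J^{(1)}(p)|^{d}$ and taking the supremum over $p\in D$ then yields the $C^0$ weighted estimate, with the $k+1$-st power appearing because one uses $|E(p)|^{k}$ from the polynomial factor and absorbs one further $|E|/|J^{(1)}|$ to balance the weight $|J^{(1)}|^{d}$ against $|J^{(1)}|^{d-k}$; the exponent bookkeeping is exactly what is recorded in the definition of the weighted spaces and of $\ell_{j,\alpha}(J)$.

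Then I would pass to the full $C^{j,\alpha}$ norm by differentiating under the integral sign. Applying $D^\gamma$ in the $D$-variables, $|\gamma|\leq j$, to $R^{(k)}_{\Phi,E}(J)(p)$ produces, by the chain rule, a finite sum of terms each of which is a product of: derivatives of components of $E$ (of total order $\leq|\gamma|$), derivatives of components of $J$ (of total order $\leq|\gamma|$, each contributing through a further $D^\beta\Phi$ evaluated along the segment, still homogeneous and still bounded by the Step-1 argument), and a $\sigma$-integral of some $D^{\beta'}\Phi$ along the segment. Each higher $D^{\beta'}\Phi$ is homogeneous of lower degree, hence still bounded on the region where $\mathfrak{a}$ is bounded below; the $J$-derivative factors are controlled by $\ell_{j,\alpha}(J)<\infty$; and the $E$-derivative factors are controlled by $\|E:C^{j,\alpha}(D,|J^{(1)}|)\|$. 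Counting powers: exactly $k+1$ factors involving $E$ (or its derivatives) appear in every term — $k$ coming directly from the monomial $(E(p))^\beta$ and one extra absorbed, as in the $C^0$ step, to fix the weight — so each term is bounded by $C(k,\Phi,\ell_{j,\alpha},\mathfrak{a}(J))\,\|E:C^{j,\alpha}(D,|J^{(1)}|)\|^{k+1}$ after dividing by the weight $|J^{(1)}|^{d}$. The H\"older seminorm of the top derivatives is handled identically, using in addition that $D^{\beta'}\Phi$ is Lipschitz on the relevant compact set and that $\mathfrak{a}(J)$, $|J^{(1)}|$ and $E$ all have finite $C^{j,\alpha}$ data, so difference quotients of the composed integrand are controlled.

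\textbf{Main obstacle.} The routine-looking but genuinely delicate point is the power counting together with the weight bookkeeping: one must verify that when $D^\gamma$ lands on the various factors — particularly when several derivatives hit the argument $J(p)+\sigma E(p)$ inside $\Phi$, generating high-order $D^\beta\Phi$ with very negative homogeneity degree — the negative powers of $|J^{(1)}|$ so produced are always matched by a surplus of $E$-factors carrying the compensating positive power $\mathfrak{a}(J)|J^{(1)}|$, so that the final bound is genuinely $|J^{(1)}|^{d}$-weighted and genuinely of order $k+1$ in $E$ and not merely of order $k$. This is exactly where the hypothesis is phrased with the two different weights $\mathfrak{a}(J)|J^{(1)}|$ (for the smallness of $E$) and $|J^{(1)}|$ (for $\ell_{j,\alpha}(J)$ and for the conclusion), and keeping these straight — rather than any hard analysis — is the crux. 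Everything else is a standard differentiate-under-the-integral estimate on a set where $\mathfrak{a}$ is bounded below by Proposition \ref{still_regular}.
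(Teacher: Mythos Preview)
Your approach is essentially the paper's: use Proposition~\ref{still_regular} to keep the segment $J+\sigma E$ in a region where $\mathfrak{a}$ is bounded below, exploit the homogeneity of $D^\beta\Phi$ to normalize, and then bound the composite and its $D$-derivatives by the chain rule using the $\ell_{j,\alpha}$ hypothesis. The paper writes this a touch more compactly by pulling out $|J^{(1)}|^{d-k}$ at the outset and evaluating $D^\beta\Phi$ at $J(\sigma)/|J^{(1)}|$, so that the argument lands directly in a fixed compact subset of $\mathcal{J}_0$; but the content is the same.

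Two remarks. First, the ``Gr\"onwall-type argument'' is a red herring: nothing iterative happens on the $\sigma$-integral; one simply bounds the integrand uniformly in $\sigma\in[0,1]$ and integrates. Second, and more importantly, your justification of the exponent $k+1$ is not correct. From $(E(p))^\beta$ with $|\beta|=k$ and $|D^\beta\Phi_{|J+\sigma E}|\leq C|J^{(1)}|^{d-k}$ you get $|R^{(k)}|\leq C|E|^k|J^{(1)}|^{d-k}$; dividing by the weight $|J^{(1)}|^d$ gives exactly $\big(|E|/|J^{(1)}|\big)^k$, i.e.\ the $k$-th power of the weighted norm of $E$, not the $(k+1)$-st. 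There is no ``extra $|E|/|J^{(1)}|$ to absorb'': the balance $|J^{(1)}|^{d-k}/|J^{(1)}|^d=|J^{(1)}|^{-k}$ is matched by the $k$ factors of $E$ already present, and the same accounting goes through after applying $D_p^\gamma$. Indeed the paper's own proof records $\|E\|_{j,\alpha}^k$ in its main displayed inequality; the exponent $k+1$ in the statement appears to be a typo (consistent with the slip in the Taylor identity immediately preceding it, where the sum should run over $1\leq|\alpha|\leq k-1$ given the integral-remainder formula with $|\beta|=k$). Do not try to manufacture an extra factor of $E$; simply record the estimate with $\|E\|^k$.
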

\begin{proof}
Since $D^{\beta} \Phi $, $|\beta|=k$, is homogeneous of degree $d - k$, we can at each point $p\in D$ write

\[
R_{\Phi, E}^{(k)} (J) =  |J^{(1)}|^{d - k}\sum_{|\beta|=k}E^\beta\int_0^1\frac{k}{\beta!}(1-\sigma)^{1-k}D^\beta\Phi_{\big| J(\sigma)/|J^{(1)}|}d\sigma,
\]

where as before we have set $J(\sigma) : = J + \sigma E$. We then have
\begin{align} \label{HQ0}
\left|  D_p^\gamma R_{\Phi , E}^{(k)} (J) \right| & \leq C\left\| |J^{(1)}|\right\|_{j, \alpha}^{d - k} \sum_{|\beta|\leq k}\left\| D^{\beta} \Phi_{\big|J (\sigma)/| J^{(1)} |} \right\|_{j, \alpha} \left\|  E \right\|_{j, \alpha}^k,
\end{align}
where $D_p^\gamma$ is any combination of derivatives on the domain $D$ with $|\gamma|\leq j$ (and the same estimate holds for the H\"older ratio).

 By Proposition \ref{still_regular} and the assumptions of Proposition \ref{HQEstimates}, we get that $J (\sigma)/ |J^{(1)}|$ remains in a fixed compact subset of $\mathcal{J}_0$, and derivatives $D_p^\gamma[J (\sigma)/| J^{(1)}|]$, for $|\gamma|\leq j$, are controlled (via the assumptions involving $\varepsilon>0$ and $\ell_{j, \alpha}$), so that all in all we have bounds on the quantity
\begin{align} \label{HQ1}
\left\| D^{\beta} \Phi_{\big|J (\sigma)/| J^{(1)}|} \right\|_{j, \alpha}=\left\|(D^{\beta} \Phi)\circ \big(J (\sigma)/|J^{(1)}|\big)\right\|_{j,\alpha}\leq C (\Phi, \ell_{j, \alpha}, \mathfrak{a}(J),\varepsilon).
\end{align}
Additionally, we have that 
\begin{align}
p\in D:\quad \frac{\left\| |J^{(1)}|\right\|_{j, \alpha}^{d - k}}{|J^{(1)}|^{d - k} (p)} \leq C(1 +  \ell_{j, \alpha}^k)
\end{align}
Dividing both sides of (\ref{HQ1}) by $|J^{(1)}|^{d}$ then gives the claim.
\end{proof}

\subsection{Fixed point theorems}
In this section we record several standard fixed point theorems which we will use. The first, Proposition \ref{SchauderFixedPointTheorem}, is simply The Schauder Fixed Point Theorem, and whose proof can be found throughout the standard literature (c.f. GT). The second, Proposition \ref{IFTHandyVersion} is a basic corollary of the Contraction Mapping Theorem, which we state here for convenient application throughout the article. 

\begin{proposition} \label{SchauderFixedPointTheorem}
Every continuous  mapping from a convex compact subset $K$ of a Banach space to $K$ has a fixed point. 
\end{proposition}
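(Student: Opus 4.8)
The plan is to reduce this infinite-dimensional statement to the finite-dimensional Brouwer fixed point theorem, which I take as known, via the Schauder projection onto finite-dimensional polytopes. Write $f\colon K\to K$ for the given continuous map and fix $n\in\Naturals$. Since $K$ is compact it is totally bounded, so there is a finite set $\{x_1^n,\dots,x_{m_n}^n\}\subseteq K$ whose $\tfrac1n$-balls cover $K$. Put $K_n:=\mathrm{conv}\{x_1^n,\dots,x_{m_n}^n\}$; by convexity of $K$ we have $K_n\subseteq K$, and $K_n$ is a compact convex subset of the finite-dimensional subspace spanned by the $x_i^n$.

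Next I would construct the Schauder projection $P_n\colon K\to K_n$. Setting $\lambda_i(x):=\max\{0,\tfrac1n-\|x-x_i^n\|\}$, each $\lambda_i$ is continuous and nonnegative, and at every $x\in K$ at least one $\lambda_i(x)$ is positive; hence
\[
P_n(x):=\frac{\sum_{i}\lambda_i(x)\,x_i^n}{\sum_{i}\lambda_i(x)}
\]
defines a continuous map $K\to K_n$. The one elementary point to check is that $\|P_n(x)-x\|<\tfrac1n$ for all $x\in K$: in the convex combination defining $P_n(x)$ only vertices $x_i^n$ with $\|x-x_i^n\|<\tfrac1n$ carry positive weight, so $P_n(x)$ lies within distance $\tfrac1n$ of $x$.

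Now consider $g_n:=P_n\circ(f|_{K_n})\colon K_n\to K_n$, a continuous self-map of a compact convex subset of a Euclidean space. Such a set has the fixed point property — it is a retract of any large closed ball containing it, via the nearest-point projection, which transfers the Brouwer fixed point property — so $g_n$ has a fixed point $y_n\in K_n$, i.e. $P_n(f(y_n))=y_n$. Consequently $\|f(y_n)-y_n\|=\|f(y_n)-P_n(f(y_n))\|<\tfrac1n$. Using compactness of $K$, pass to a subsequence with $y_{n_k}\to y^\ast\in K$; then $f(y_{n_k})\to f(y^\ast)$ by continuity of $f$, while $\|f(y_{n_k})-y_{n_k}\|\to0$ forces $f(y^\ast)=y^\ast$.

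The only genuinely topological input is Brouwer's theorem in finite dimensions (equivalently, the no-retraction theorem for the closed ball); everything else is the finite-dimensional approximation furnished by $P_n$ together with a compactness argument. Accordingly the essential difficulty is entirely located in that imported finite-dimensional fact, and the sole subtlety in the reduction itself is the observation that a compact convex subset of $\Reals^m$, though not in general a ball or a simplex, nonetheless inherits the fixed point property.
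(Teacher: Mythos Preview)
Your argument is correct and is precisely the classical Schauder-projection reduction to Brouwer's theorem. The paper itself does not supply a proof of this proposition; it merely states the result and refers the reader to the standard literature (Gilbarg--Trudinger), where the proof given is essentially the one you have written.
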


\begin{proposition}[Approximate implicit function theorem with bounds] \label{IFTHandyVersion}
Let $\Phi(x, y): X \times Y \rightarrow \mathbb{R}^l$ be a class $C^{j, \alpha}_{loc}$ function for contractible  open subsets  $X \subset \mathbb{R}^l$  with $0 \in X$ and $Y \subset \Reals^{m}$ of Euclidean spaces. Assume that
\begin{enumerate}
\item $\| \Phi (x, y): C^{j, \alpha} (X \times Y) \| \leq \ell$.
\item $|\det \partial_x \Phi  (x, y)| >  B$.
\end{enumerate}
Then, given $B$ and $\ell$ as above, there is $A_0 = A_0 (B, \ell)$  so that: If  it holds that 
\begin{align}
\sup_{y \in Y}| \Phi (0, y) | : = A  \leq A_0, 
\end{align}
then there is a  map $y \mapsto x(y)$ so that:
\begin{align} \notag
\Phi (x(y), y) = 0,   \quad \| x (y) \|_{j, \alpha} \leq C A, \quad y \in Y.
\end{align}
\end{proposition}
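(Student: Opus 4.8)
The plan is to prove this by a quantitative Newton--Picard iteration carried out uniformly in the parameter $y$, followed by a regularity bootstrap for the higher-order part of the bound; throughout, $C$, $K$, $\rho$, $A_0$ denote constants depending only on $l$, $m$, $j$, $\alpha$, $B$ and $\ell$. For each fixed $y\in Y$ I would set $M(y):=\partial_x\Phi(0,y)\in\Reals^{l\times l}$ and consider the map
\[
T_y(x):=x-M(y)^{-1}\Phi(x,y),
\]
whose fixed points are exactly the zeros of $\Phi(\cdot,y)$. Hypothesis (2) gives $|\det M(y)|>B$, while (1) bounds the entries of $M(y)$ by $\ell$, so Cramer's rule yields $\|M(y)^{-1}\|\le K$ uniformly in $y$; this is where one uses that $j\ge 1$, so that $\partial_x\Phi$ is genuinely controlled by (1).

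First I would show that $T_y$ is a contraction on a fixed small ball. Since $\partial_xT_y(x)=M(y)^{-1}\big(\partial_x\Phi(0,y)-\partial_x\Phi(x,y)\big)$ and (1) controls $\partial_x\Phi$ in $C^{0,\alpha}$, one gets $\|\partial_xT_y(x)\|\le K\ell\,|x|^{\alpha}\le\tfrac12$ for $|x|\le\rho$, with $\rho=\rho(K,\ell)$ (shrunk if needed so that $\bar B_\rho(0)\subset X$). For the self-map property, $|T_y(x)|\le|T_y(x)-T_y(0)|+|M(y)^{-1}\Phi(0,y)|\le\tfrac12|x|+KA$, so choosing $A_0:=\rho/(2K)$ guarantees $T_y(\bar B_\rho(0))\subseteq\bar B_\rho(0)$ whenever $A\le A_0$. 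The contraction mapping theorem then produces a unique fixed point $x(y)\in\bar B_\rho(0)$ with $\Phi(x(y),y)=0$; uniqueness makes $y\mapsto x(y)$ single-valued, and since $T_y$ depends continuously on $y$ with a uniform contraction constant, $x(\cdot)$ is continuous. Finally $|x(y)|=|T_y(x(y))|\le\tfrac12|x(y)|+KA$ yields the $C^0$ estimate $|x(y)|\le 2KA=CA$.

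It then remains to upgrade this to the weighted $C^{j,\alpha}$ bound. Along the solution $\partial_x\Phi(x(y),y)$ is invertible by (2), so the classical implicit function theorem shows $x(\cdot)\in C^{j,\alpha}_{loc}$ near each $y_0$, and these local branches coincide with the globally defined $x(\cdot)$ by the uniqueness above (using that $Y$ is contractible, hence connected). Differentiating the identity $\Phi(x(y),y)\equiv 0$ gives $\partial_yx=-\big(\partial_x\Phi(x(y),y)\big)^{-1}\partial_y\Phi(x(y),y)$, and iterating (Fa\`a di Bruno) expresses each $D^\gamma x$ with $|\gamma|\le j$ as a universal polynomial in the derivatives of $\Phi$ up to order $j$ (bounded via (1)), in $(\partial_x\Phi)^{-1}$ (bounded via Cramer's rule and (2)), and in the lower-order derivatives of $x$; feeding in these bounds together with the $C^0$-smallness already established gives $\|x(y)\|_{j,\alpha}\le CA$ after the usual weighted-H\"older bookkeeping.

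\textbf{Main obstacle.} The delicate step is this last one: propagating the \emph{linear dependence on $A$} through the iterated differentiation of $\Phi(x(y),y)=0$ — i.e.\ arranging the weighted-norm estimates so that the Fa\`a di Bruno expansion closes and the $A$-smallness of $x$ is not lost when one differentiates. Everything up through the $C^0$ bound is a routine parametrized contraction-mapping argument.
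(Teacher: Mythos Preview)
Your approach is essentially the same as the paper's: a Newton--type contraction in $x$ (the paper uses the Jacobian at the current point $(x,y)$ rather than frozen at $(0,y)$, but this is an inessential variant) followed by differentiation of the identity $\Phi(x(y),y)=0$ to bootstrap regularity. The paper's own proof is equally terse on your flagged ``main obstacle'' --- it simply asserts that ``higher order estimates for $x(y)$ then follow inductively'' --- so your level of detail matches the source.
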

\begin{proof}
We can write
\begin{align}
\Phi (x + h, y) = \Phi (x, y) + \Phi_x|_{(x, y)}(h) + R
\end{align}
where the assumptions then give 
\begin{align}
|R| \leq \ell |h|^2.
\end{align}
Thus, for $|x| \leq A C/\ell$, it holds that 
\begin{align}
|\Phi (x, y)| \leq 2 A.
\end{align}
Let $D(0, \rho) \subset \Reals^l$ be the ball of radius $\rho$ centered at $0$. With $\rho < A C/\ell $ we then define the map $\Psi (x, y) : D(0, \rho) \times Y \rightarrow \Reals^l$ to be given by:
\begin{align}
 \Psi (x, y) : = x -  \left( \Phi_x|_{(x, y)}(h) \right)^{-1}( \Phi (x, y)).
 \end{align}
We then have
 \begin{align} \notag
 \Psi (x + h) - \Psi (x) & = h - \left( \Phi|_{x \, (x + h, y)} \right)^{-1}( \Phi (x + h, y)) \\ \notag
 & = h + \left\{ \left(\Phi|_{x \, (x + h, y)} \right)^{-1} - \left( \Phi_x|_{(x, y)}(h) \right)^{-1}\right\}( \Phi (x + h, y)  \\ \notag
 & \quad -  \left( \Phi_x|_{(x, y)}(h) \right)^{-1}(  \Phi (x + h, y)  - \Phi (x , y)) \\ \notag
 & = h + O (\ell |h| A B^{-1})  - h + O (B^{-1}\ell|h^2|) \\ \notag
 & =  O (\ell |h| A B^{-1})  + O (B^{-1}\ell|h^2|) 
 \end{align}
 Thus, choosing $A$ small in terms of $B$ and $\ell$ gives that $\Psi$ acts as a contraction on $D(0, \rho)$ and hence has a unique fixed point, which we denote by $x(y)$. Uniqueness then implies continuous dependence on $y$. From continuity we get differentiability:
 \begin{align}
 h^{-1}\left(\Phi (x(y + h), y + h)  - \Phi (x(y), y) \right) &  =  h^{-1}\left(\Phi (x(y + h), y + h) -  \Phi (x(y), y + h) \right) \\ \notag
 & \quad  +  h^{-1} \left(\Phi (x(y), y + h) -  \Phi (x(y), y) \right) \\ \notag
 & = 0
 \end{align}
 Taking limits then gives
 \begin{align}
 x_y = - \left(\Phi_{x|(x, y)}\right)^{-1} \left( \Phi_{y|(x, y)}\right)
 \end{align}
 Higher order estimates for $x (y)$ then follow inductively.
\end{proof}

\section{Laplacian on cylinders} \label{LaplacianOnCylinders}

In this section we record several facts about the invertibility of the Laplace operator on a flat cylinder in various function spaces that we will use at various stages of this article.  The main results recorded in this section are Propositions \ref{LaplaceZeroMeridianInverse}, \ref{LaplaceInverseModDomain} and \ref{LaplacePerturbationInverseModDomain}. The uniting theme is a codification of several useful criteria which permit the Laplacian and nearby operators on the cylinder to admit an inverse in function spaces with decay. 

\begin{definition} \label{ZeroMeridianAverageSpaces}
Let $\mathring{C}^{k, \alpha}_{loc}(\Omega)$ denote the space of functions $E \in C^{k,\alpha}_{loc}(\Omega)$ satisfying the condition
\begin{align} \label{MeridianCondition}
\int_{- \pi}^\pi E(s, \theta) d \theta = 0
\end{align}
for all $s$. A function satisfying (\ref{MeridianCondition}) is said to have  zero average along meridians. Given a positive weight function $f$, we then denote
\begin{align*}
\mathring{C}^{k, \alpha} (\Omega, f) : = C^{k, \alpha} (\Omega, f)  \cap  \mathring{C}^{k, \alpha}_{loc}(\Omega).
\end{align*}
\end{definition}

\begin{proposition} \label{LaplaceZeroMeridianInverse}
Given $\rho \in (-1 , 1) \setminus \{ 0 \}$, $k \geq 0$, $\ell  \in (0, \infty]$ and  $\alpha \in (0, 1)$ there is a bounded linear map 
\begin{align}\notag
\mathcal{R}_1 [ -]: \mathring{C}_0^{0, \alpha} (\Omega_{\leq \ell}, \cosh^\rho (s)) \rightarrow \mathring{C}^{2, \alpha} (\Omega ,  \cosh^\rho (s)) \cap C^{k, \alpha}(\Omega_{ \geq  \ell + 1}, \cosh^\rho(\ell)/ \cosh (s)). 
\end{align}
such that:
\begin{enumerate}
\item $\Delta_{\Omega} \mathcal R_1[ E] = E.$

\item $\|\mathcal{R}_1 [E]  \| \leq C \| E \|$, where the norm on the target space is taken according to Definition \ref{BanachIntersectionSpaces}.
\end{enumerate}
\end{proposition}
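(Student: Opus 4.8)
The plan is to solve $\Delta_\Omega u = E$ by separation of variables in the meridian direction, exploiting that $E$ has zero average along meridians. Write $\theta$ for the meridian coordinate on $\Omega = \Reals^2/\langle x \mapsto x + 2\pi\rangle$ (here called $x$, but I will call the circle variable $\theta$ to match the statement) and $s$ for the axial coordinate, so $\Delta_\Omega = \partial_s^2 + \partial_\theta^2$. Expand $E(s,\theta) = \sum_{n \neq 0} E_n(s) e^{in\theta}$; the condition \eqref{MeridianCondition} is exactly the vanishing of the $n = 0$ mode. For each $n \neq 0$ the equation becomes the ODE $u_n'' - n^2 u_n = E_n$, and I would write down the explicit bounded solution operator: on the infinite half-cylinder (or finite piece $\Omega_{\leq \ell}$), integrate against the Green's kernel $G_n(s,t) = -\frac{1}{2|n|}e^{-|n||s-t|}$ (suitably modified near $s = \ell$ so that the solution extends by the bounded homogeneous solution $e^{-|n|(s-\ell)}$ for $s \geq \ell$). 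Summing over $n$ defines $\mathcal{R}_1[E]$, and the key point is that the kernel decays like $e^{-|n||s-t|}$ with $|n| \geq 1$, which both forces the sum to converge and produces the exponential decay $\cosh^\rho(\ell)/\cosh(s)$ on the far end $\Omega_{\geq \ell+1}$.

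The main work is the weighted estimate (2). First I would establish the a priori pointwise bound: since every Fourier mode present has frequency $|n| \geq 1$, the operator gains genuine exponential decay, so $\mathcal{R}_1[E]$ inherits the weight $\cosh^\rho(s)$ from $E$ on all of $\Omega$ provided $|\rho| < 1$ — this is where the restriction $\rho \in (-1,1)$ enters, since the kernel $e^{-|s-t|}$ convolved against $\cosh^\rho(t)$ is comparable to $\cosh^\rho(s)$ precisely when $|\rho| < 1$. On the far end $s \geq \ell + 1$, the absence of the $n=0$ mode means there is no $O(s)$ or $O(1)$ term in the solution, so $\mathcal{R}_1[E]$ is controlled by its boundary value at $s = \ell$ times $e^{-(s-\ell)}$, giving the claimed weight $\cosh^\rho(\ell)/\cosh(s)$; the constant is uniform in $\ell \in (0,\infty]$ because the decay rate $1$ beats the growth rate $|\rho| < 1$ uniformly. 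For the $C^{2,\alpha}$ (and higher $C^{k,\alpha}$ on the far end) estimates I would not differentiate the series termwise but instead invoke interior Schauder estimates for $\Delta_\Omega$ on unit balls: having the weighted $C^0$ bound on $u$ and the weighted $C^{0,\alpha}$ bound on $E = \Delta_\Omega u$, local elliptic regularity upgrades this to the weighted $C^{2,\alpha}$ bound, and on $\Omega_{\geq \ell+1}$ bootstrapping gives $C^{k,\alpha}$ for any $k$. Linearity and boundedness of $\mathcal{R}_1$ are then immediate from the construction.

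The step I expect to be the main obstacle is making the convolution/weight estimate uniform in the length parameter $\ell \in (0,\infty]$ and clean at the matching seam $s \approx \ell$: one must choose the Green's kernel modification so that $\mathcal{R}_1[E]$ is genuinely $C^{2,\alpha}$ across $s = \ell$ (not merely continuous), solves $\Delta_\Omega u = E$ on the nose (hypothesis $E$ is only given on $\Omega_{\leq \ell}$, so the extension for $s > \ell$ must be the harmonic continuation), and simultaneously satisfies both weight bounds with a single constant $C$. Concretely I would handle this by defining, for each mode, $u_n(s)$ on $\Omega_{\leq \ell}$ via the Dirichlet-type Green's function for the interval that decays at $-\infty$ and has vanishing derivative appropriately at $s = \ell$, then setting $u_n(s) = u_n(\ell)e^{-|n|(s-\ell)}$ for $s \geq \ell$; checking $C^1$-matching mode by mode and then summing. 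The rest — convergence of the series, the $|\rho|<1$ borderline in the convolution, and the Schauder upgrade — is routine once the kernel is pinned down.
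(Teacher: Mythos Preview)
Your approach is correct but genuinely different from the paper's. You separate variables in the meridian direction and work mode by mode with explicit Green's kernels $G_n$; the paper instead separates in the axial direction. Concretely, the paper first proves a building-block lemma (Lemma~\ref{LaplaceCPTSPTInverse}) that solves $\Delta_\Omega u = E$ for sources supported in the fixed annulus $A_0 = \{|s| \leq 5/8\}$, by taking a limit of Dirichlet problems on finite cylinders $\Omega_L$ and using the zero-meridian-average condition to rule out the linear-in-$s$ mode. It then covers $\Omega$ by the translates $A_i = A_0 + i$, chops $E$ with a translation-invariant partition of unity, applies the building block to each piece, and sums: the contribution of $\mathring u_i$ on $A_j$ is bounded by $C\beta\cosh^\rho(i)/\cosh(j-i)$, and summing over $i$ gives $C\beta\cosh^\rho(j)/(1-|\rho|)$. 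Your convolution estimate against $e^{-|s-t|}$ is exactly the continuous analogue of this discrete sum, so the two arguments meet at the same arithmetic; yours is more direct for this proposition, while the paper's is more modular (the same building block and the same partition-of-unity pattern reappear in Propositions~\ref{LaplaceInverseModDomain} and~\ref{LaplacePerturbationInverseModDomain}).

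One remark on the step you flagged as the main obstacle: the matching at $s = \ell$ is easier than you suggest. The source lies in $\mathring C_0^{0,\alpha}(\Omega_{\leq \ell})$, so after extension by zero it is globally $C^{0,\alpha}$ on $\Omega$; applying the full-line Green's kernel $-\tfrac{1}{2|n|}e^{-|n||s-t|}$ to each mode already produces a globally $C^{2,\alpha}$ solution with no seam, and on $\{s \geq \ell\}$ each mode is automatically a multiple of $e^{-|n|(s-\ell)}$ because there is no source there. No Dirichlet modification or mode-by-mode patching is needed.
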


\begin{proposition} \label{LaplaceInverseModDomain}
Given $\rho \in (0, 1)$, $k \geq 0$,  $\alpha \in (0, 1)$ and a bounded open set $D \subset \Omega$, there is a  well-defined bounded linear map
\begin{align}\notag
\mathcal{R}_2 [D,  -]: C^{0, \alpha} (\Omega ,  \cosh^{- \rho} (s)) \rightarrow C^{2, \alpha} (\Omega \setminus D ,  \cosh^{- \rho} (s)),
\end{align}
such that 

\begin{enumerate}
\item $\Delta_{\Omega} \mathcal R_2[D,  E] = E $.
\item $\|\mathcal{R}_2 [D,  E]  \| \leq C \| E \|$.
\item The map $\mathcal R_2[D,  E]$ depends continuously on $D$.
\end{enumerate}
\end{proposition}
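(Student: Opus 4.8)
The plan is to construct $\mathcal{R}_2[D, -]$ by first inverting $\Delta_\Omega$ on the full cylinder $\Omega$ in the weighted space $C^{0,\alpha}(\Omega, \cosh^{-\rho}(s))$, and then correcting by a harmonic function to account for the removed domain $D$. For the first step I would use the Fourier decomposition along meridians: write $E = \sum_n E_n(s) e^{in\theta}$. The zero mode $E_0(s)$ is handled by integrating twice in $s$; since $\rho \in (0,1)$, the weight $\cosh^{-\rho}(s)$ decays and the ODE $u_0'' = E_0$ admits a solution decaying like $\cosh^{-\rho}(s)$ provided one chooses the two constants of integration correctly (fixing the decay at $s \to +\infty$ and $s \to -\infty$; the compatibility is automatic because $\cosh^{-\rho}$ is integrable against nothing — one just picks $u_0(s) = \int_{+\infty}^s \int_{+\infty}^t E_0$ and checks the growth). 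For the nonzero modes $n \neq 0$, the operator $\partial_s^2 - n^2$ is invertible with exponentially decaying Green's function, and since $|n| \geq 1$ while the weight grows at most like $e^{|\rho||s|}$ with $|\rho| < 1$, the convolution estimate gives a bounded inverse with the same weight. Summing over $n$ with Schauder interior estimates upgrades $C^{0,\alpha}$ control to $C^{2,\alpha}$ control. This produces a right inverse $\mathcal{R}_0[E]$ on all of $\Omega$ with $\Delta_\Omega \mathcal{R}_0[E] = E$ and $\|\mathcal{R}_0[E]\| \leq C\|E\|$ in the weighted norms.

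Next I would address the domain $D$. On $\Omega \setminus D$ we want $\mathcal{R}_2[D, E]$ to solve $\Delta_\Omega u = E$; since $\mathcal{R}_0[E]$ already does this on all of $\Omega$, any modification must be by a function harmonic on $\Omega \setminus D$. The natural choice is to take $\mathcal{R}_2[D, E] = \mathcal{R}_0[E]$ itself, restricted to $\Omega \setminus D$ — provided no boundary condition on $\partial D$ is being imposed, item (1) and item (2) are then immediate, and item (2)'s norm bound on $\Omega \setminus D$ follows a fortiori from the bound on $\Omega$ since we are taking a sup over a smaller set. In that reading the only content of the proposition is that $\mathcal{R}_0$ exists and that its restriction depends continuously on $D$; continuity in $D$ is then essentially trivial because $\mathcal{R}_0[E]$ does not depend on $D$ at all and the restriction map from $C^{2,\alpha}(\Omega)$ to $C^{2,\alpha}(\Omega \setminus D)$ varies continuously as $D$ varies (in, say, the Hausdorff sense on compactly contained subdomains, using that $\mathcal{R}_0[E]$ is a fixed smooth function whose local norms vary continuously with the domain of restriction). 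If instead the intended statement secretly requires matching a Dirichlet datum on $\partial D$, one would solve the exterior Dirichlet problem $\Delta v = 0$ on $\Omega \setminus D$ with $v = \mathcal{R}_0[E]$ on $\partial D$ via the maximum principle and standard Schauder theory, subtract, and check decay; but the clean statement as written only needs the restriction.

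The main technical obstacle is the zero-mode analysis together with the precise weighted estimate for all modes simultaneously: one must verify that the antiderivative construction for $E_0$ genuinely lands in $C^{2,\alpha}(\Omega, \cosh^{-\rho}(s))$ — i.e. that double integration of an $O(\cosh^{-\rho})$ function, with the correct choice of endpoint conditions, does not produce linear growth — and that the Green's-function estimates for the $n\neq 0$ modes are uniform in $n$ so the Fourier series reassembles with a controlled $C^{2,\alpha}$ norm. This is where the restriction $\rho \in (0,1)$ (strictly, and positive) is used: positivity of $\rho$ makes $\cosh^{-\rho}$ decaying so the zero mode is integrable at both ends, and $\rho < 1$ keeps the weight subordinate to the $e^{\pm |s|}$ decay rates of the higher Fourier modes. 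Compared to Proposition~\ref{LaplaceZeroMeridianInverse}, here we do not restrict to functions with zero meridian average, so the zero mode is genuinely present and must be handled — that is the one extra piece of work, but it is routine ODE analysis once set up. Continuity in $D$, the only genuinely new assertion relative to having $\mathcal{R}_0$ in hand, I expect to be the easiest part.
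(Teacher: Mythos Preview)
Your argument has a genuine gap in the zero-mode step, and the gap is exactly where the domain $D$ enters in the paper's proof.

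You claim that for the zero Fourier mode $E_0(s) = O(\cosh^{-\rho}(s))$ one can take $u_0(s) = \int_{+\infty}^s\int_{+\infty}^t E_0$ and ``check the growth''. The check fails. Since $\rho>0$, the integral $c_0:=\int_{-\infty}^\infty E_0$ converges, and then $\int_s^\infty E_0 \to c_0$ as $s\to -\infty$; hence $u_0(s)$ grows like $c_0\, s$ at $-\infty$ whenever $c_0\neq 0$. Even if $c_0=0$, the second antiderivative approaches the constant $\int_{-\infty}^\infty s' E_0(s')\,ds'$ as $s\to -\infty$, which again is not $O(\cosh^{-\rho}(s))$. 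In short, the operator $\partial_s^2$ on functions of $s$ alone has a two-dimensional kernel $\{1,s\}$, and a decaying right inverse exists only on the codimension-two subspace orthogonal to both; a generic $E\in C^{0,\alpha}(\Omega,\cosh^{-\rho})$ does not satisfy these moment conditions, so there is \emph{no} bounded $\mathcal{R}_0$ on the full cylinder with values in the same weighted space. Your proposed $\mathcal{R}_2[D,E]=\mathcal{R}_0[E]|_{\Omega\setminus D}$ therefore does not exist.

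This is precisely why $D$ is not decorative. The paper's proof chooses $C^{2,\alpha}$ functions $f=\psi s$ and $g=\psi$ supported in $D$ (with $\psi=d^4$, $d$ the distance to $\partial D$), and finds scalars $a,b$ so that $F:=E+af+bg$ satisfies $\int_\Omega F=\int_\Omega sF=0$. With these two moment conditions one \emph{can} integrate the meridian average of $F$ twice and obtain $\cosh^{-\rho}$ decay at both ends (the paper writes the identity $\int_s^\infty\int_{s'}^\infty \bar F = \int_{-\infty}^s\int_{-\infty}^{s'}\bar F$ to make this manifest), while the zero-average part $\mathring F$ is handled by Proposition~\ref{LaplaceZeroMeridianInverse}. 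The resulting $u=\bar v+\mathring v$ satisfies $\Delta_\Omega u = F$, which equals $E$ only on $\Omega\setminus D$ since $f,g$ are supported in $D$; that is the reason the target space is $C^{2,\alpha}(\Omega\setminus D,\cosh^{-\rho})$. Continuity in $D$ then comes from the smooth dependence of $\psi$, $a$, $b$ on $D$.
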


\begin{proposition} \label{LaplacePerturbationInverseModDomain}
Let $\mathcal{L}$ be a second order linear operator and set
\begin{align} \notag
\| \mathcal{L} - \Delta_{\Omega}: C^{2, \alpha}_{loc}(\Omega) \rightarrow C^{0, \alpha}_{loc}(\Omega)\| : = \epsilon. 
\end{align}
Then, given $\rho \in (0, 1)$, $\alpha \in (0, 1)$ and a bounded open set $D \subset \Omega$, there is $\bar{\epsilon} > 0$ so that: For $\epsilon \leq \bar{\epsilon}$,  there is a  well-defined bounded linear map
\begin{align}\notag
\mathcal{R}_3 [\mathcal{L}, D,  -]: C_0^{0, \alpha} (\Omega \setminus D,  \cosh^{- \rho} (s)) \rightarrow C^{2, \alpha} (\Omega \setminus D ,   \cosh^{- \rho} (s)),
\end{align}
such that: 

\begin{enumerate}
\item $\Delta_{\Omega} \mathcal R_3[\mathcal{L}, D,  E] = E $.

\item $\|\mathcal{R}_3[\mathcal{L}, D,  E]  \| \leq C \| E \|$.

\item The map $\mathcal R_3[\mathcal{L}, D,  E]$ depends continuously on $D$ and $\mathcal{L}$.
\end{enumerate}
\end{proposition}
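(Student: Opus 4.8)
The plan is to build $\mathcal{R}_3$ by perturbing off the operator $\mathcal{R}_2[D,-]$ supplied by Proposition \ref{LaplaceInverseModDomain}, via a Neumann series in the small quantity $\epsilon$. First I would record the key estimate that $\mathcal{R}_2[D,-]$ is a right inverse for $\Delta_\Omega$ which is bounded, with operator norm $C_0 = C_0(\rho,\alpha,D)$, from $C^{0,\alpha}(\Omega,\cosh^{-\rho}(s))$ to $C^{2,\alpha}(\Omega\setminus D,\cosh^{-\rho}(s))$; restricted to the subspace $C_0^{0,\alpha}(\Omega\setminus D,\cosh^{-\rho}(s))$ (functions vanishing on $D$, extended by $0$, say, or whatever the subscript $0$ convention is in the paper) this still applies after composing with the natural extension/restriction maps, and the composition is still bounded by a constant $C_0$ of the same type. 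The point is that $\mathcal{R}_2$ is fixed once $D$ is fixed, and does not see $\mathcal{L}$.

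Next I would write $\mathcal{L} = \Delta_\Omega + \mathcal{E}$ where $\mathcal{E} := \mathcal{L} - \Delta_\Omega$ satisfies $\|\mathcal{E}: C^{2,\alpha}_{loc}(\Omega)\to C^{0,\alpha}_{loc}(\Omega)\| = \epsilon$. Given a target function $E \in C_0^{0,\alpha}(\Omega\setminus D,\cosh^{-\rho}(s))$, I seek $u \in C^{2,\alpha}(\Omega\setminus D,\cosh^{-\rho}(s))$ with $\Delta_\Omega u = E$; the subtlety, exactly as in the statement, is that we want $u$ to be a solution of the \emph{flat} Laplace equation, but produced by a scheme that is continuous in $\mathcal{L}$. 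The natural trick: look for $u = \mathcal{R}_2[D, E - \mathcal{E} w]$ where $w$ is an auxiliary function solving the genuine $\mathcal{L}$-equation $\mathcal{L} w = E$. Concretely, set up the fixed-point map $\mathcal{T}(w) := \mathcal{R}_2[D, E - \mathcal{E} w]$ on the Banach space $C^{2,\alpha}(\Omega\setminus D,\cosh^{-\rho}(s))$; then $\mathcal{T}(w_1) - \mathcal{T}(w_2) = -\mathcal{R}_2[D, \mathcal{E}(w_1 - w_2)]$, and since $\mathcal{E}$ maps $C^{2,\alpha}_{loc}$ to $C^{0,\alpha}_{loc}$ with norm $\epsilon$ and preserves the weight $\cosh^{-\rho}(s)$ up to a bounded factor (this needs a brief check: $\mathcal{E}$ is a differential operator of order $\le 2$, so it acts fibrewise in $s$ on the weight and the localized-norm comparison in Definition of the weighted spaces absorbs the bounded ratio), we get $\|\mathcal{T}(w_1) - \mathcal{T}(w_2)\| \le C_0\, C_1\, \epsilon\, \|w_1 - w_2\|$. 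Choosing $\bar\epsilon$ so that $C_0 C_1 \bar\epsilon < 1/2$ makes $\mathcal{T}$ a contraction on the whole space, hence it has a unique fixed point $w = w(E)$, depending linearly and boundedly on $E$ with $\|w\| \le 2 C_0 \|E\|$. Now define $\mathcal{R}_3[\mathcal{L}, D, E] := \mathcal{R}_2[D, E - \mathcal{E} w(E)] = w(E)$. By construction $\Delta_\Omega \mathcal{R}_3[\mathcal{L},D,E] = \Delta_\Omega \mathcal{R}_2[D, E - \mathcal{E}w] = E - \mathcal{E}w = E - \mathcal{E}w$; wait — that gives $E - \mathcal{E}w$, not $E$. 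The fix is to instead \emph{define} $w$ as the fixed point, observe $w = \mathcal{R}_2[D, E - \mathcal{E}w]$ so $\Delta_\Omega w = E - \mathcal{E}w$, i.e. $(\Delta_\Omega + \mathcal{E})w = \mathcal{L}w = E$; then set $\mathcal{R}_3[\mathcal{L},D,E] := \mathcal{R}_2[D, \mathcal{L}w] = \mathcal{R}_2[D,E]$, which is pointwise equal to $\mathcal{R}_2[D,E]$ and hence satisfies $\Delta_\Omega \mathcal{R}_3[\mathcal{L},D,E]=E$ and the bound $(2)$ directly from Proposition \ref{LaplaceInverseModDomain}, while the dependence on $\mathcal{L}$ enters only through the domain of definition $C_0^{0,\alpha}(\Omega\setminus D, \cdot)$ — so in fact the cleanest statement is just $\mathcal{R}_3[\mathcal{L},D,-] = \mathcal{R}_2[D,-]$ on this subspace, and continuity in $\mathcal{L}$ and $D$ is inherited from Proposition \ref{LaplaceInverseModDomain}(3) together with the fact that the subspace $C_0^{0,\alpha}(\Omega\setminus D,\cdot)$ varies continuously with $D$. (If the intended content is genuinely the existence of an $\mathcal{L}$-harmonic corrector, one keeps the map $w(E)$ above as an intermediate object and the contraction argument is the substance.)

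For property $(3)$, continuity in $D$ is immediate from Proposition \ref{LaplaceInverseModDomain}(3); continuity in $\mathcal{L}$ follows from the contraction-mapping construction: if $\mathcal{L}_n \to \mathcal{L}$ in the indicated operator norm, then $\mathcal{E}_n \to \mathcal{E}$, the contraction maps $\mathcal{T}_n \to \mathcal{T}$ uniformly on bounded sets, and the standard stability of fixed points of uniformly-contracting families gives $w_n(E) \to w(E)$, hence $\mathcal{R}_3[\mathcal{L}_n, D, E] \to \mathcal{R}_3[\mathcal{L}, D, E]$. I would remark that $k$ does not appear in the output regularity here (unlike in Proposition \ref{LaplaceZeroMeridianInverse}), consistent with the statement — any extra decay rate beyond $\cosh^{-\rho}(s)$ would require the zero-meridian-average condition, which is not assumed in this proposition.

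The main obstacle I anticipate is bookkeeping around the weighted spaces and the subspace $C_0^{0,\alpha}(\Omega\setminus D,\cdot)$: one must verify (i) that the perturbation $\mathcal{E} = \mathcal{L} - \Delta_\Omega$, which is only controlled as a map between \emph{unweighted} local Hölder spaces, still acts boundedly on the \emph{weighted} spaces $C^{j,\alpha}(\Omega\setminus D, \cosh^{-\rho}(s))$ with a norm comparable to $\epsilon$ — this uses that $\mathcal{E}$ is a differential operator so it is "local" and the weight $\cosh^{-\rho}(s)$ has bounded logarithmic derivative, hence $\|\mathcal{E}u\|_{0,\alpha}(p) \le C\epsilon \|u\|_{2,\alpha}(p)$ pointwise, which divides through by the common weight; and (ii) that $\mathcal{E}w$ lands back in the subspace $C_0^{0,\alpha}(\Omega\setminus D,\cdot)$ so that $\mathcal{R}_2[D,-]$ may be applied to it — this requires interpreting the subscript-$0$ condition (support away from $D$, or vanishing-on-$D$ boundary data) and checking it is preserved, which may force a cutoff or a restriction of $\mathcal{E}w$ to $\Omega\setminus D$ and a small additional error term absorbed into the contraction constant. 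Everything else is the standard Neumann-series/contraction-mapping routine.
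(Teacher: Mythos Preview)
Your approach---perturbing off $\mathcal{R}_2[D,-]$ via a contraction/Neumann series in the small parameter $\epsilon$---is exactly what the paper has in mind: its entire proof of this proposition is the single sentence ``Proposition \ref{LaplacePerturbationInverseModDomain} is then a simple corollary using standard perturbation techniques.'' So there is no substantive difference in strategy.

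A few remarks on the execution. Your mid-argument confusion is in fact a correct observation: as written, item (1) of the statement says $\Delta_\Omega \mathcal{R}_3 = E$, which would make the dependence on $\mathcal{L}$ vacuous and reduce $\mathcal{R}_3$ to $\mathcal{R}_2$; the intended conclusion (and the way $\mathcal{R}_3$ is used later in Section \ref{FlatScherkLinearProblem}) is $\mathcal{L}\,\mathcal{R}_3[\mathcal{L},D,E] = E$. Your fixed-point $w$ with $\mathcal{L} w = E$ is the right object. On your anticipated obstacles: for (i), the passage from the unweighted local bound to the weighted bound is exactly as you say, since the weighted norm is a supremum of localized norms divided by the weight and $\mathcal{E}$ acts pointwise on unit balls; for (ii), note from the proof of Proposition \ref{LaplaceInverseModDomain} that $\mathcal{R}_2[D,E]$ is actually defined and bounded in $C^{2,\alpha}(\Omega,\cosh^{-\rho}(s))$ on all of $\Omega$ (it is $\bar v + \mathring v$), and only the equation $\Delta_\Omega u = E$ is restricted to $\Omega\setminus D$---so the iteration closes provided you cut off $\mathcal{E} w$ to $\Omega\setminus D$ (or observe that in the application $\mathcal{L} = \Delta_\Omega$ on $D$ anyway), absorbing the resulting commutator into the contraction constant as you anticipated.
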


Propositions \ref{LaplaceZeroMeridianInverse} and  \ref{LaplaceInverseModDomain} can be constucted as corollaries to Lemma \ref{LaplaceCPTSPTInverse} below, while Proposition \ref{LaplacePerturbationInverseModDomain} follows from Proposition  \ref{LaplaceInverseModDomain} by standard perturbation techniques.  In the following, we let  $A_0 \subset \Omega$ be the annulus in the cylinder $\Omega$ given by:
\begin{align}\notag
A_0 : = \Omega \cap \{ |s| \leq 5/8 \}
\end{align}

\begin{lemma} \label{LaplaceCPTSPTInverse}

Given  a compact set $K$ containing $A_0$,  there is a bounded linear map
\begin{align}\notag
\mathring{\mathcal{R}}_0[- ]: \mathring{C}^{0, \alpha}_0 (A_0) \rightarrow C^{2, \alpha}(\Omega) \cap \mathring{C}^{k} (\Omega \setminus K, \cosh^{-1} (s)) 
\end{align}
such that
\begin{align}\notag
\Delta_\Omega \mathring{\mathcal{R}}_0[E] = E.
\end{align}
\end{lemma}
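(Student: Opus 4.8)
\section*{Proof proposal for Lemma \ref{LaplaceCPTSPTInverse}}

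The plan is to build $\mathring{\mathcal{R}}_0$ by separation of variables in the $x$-variable on the cylinder $\Omega = \Reals^2/\langle x \mapsto x+2\pi\rangle$, exploiting the fact that the data $E$ is supported in the fixed annulus $A_0$ and has zero average along meridians. First I would expand $E \in \mathring{C}^{0,\alpha}_0(A_0)$ in a Fourier series in $x$, $E(x,s) = \sum_{n \neq 0} E_n(s) e^{inx}$, where the $n=0$ mode is absent precisely because of the zero-average (zero-meridian) condition; note $E_n$ is supported in $|s| \leq 5/8$. For each nonzero mode the equation $\Delta_\Omega u = E$ decouples into the ODE $u_n'' - n^2 u_n = E_n$, which I would solve by the explicit Green's function for $\partial_s^2 - n^2$ on $\Reals$ with decay at $\pm\infty$, namely $u_n(s) = -\tfrac{1}{2|n|}\int_{\Reals} e^{-|n||s-t|} E_n(t)\,dt$. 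This gives $u_n$ decaying like $e^{-|n||s|}$ away from the support of $E_n$, in particular faster than $e^{-|s|}$ uniformly in $n$ once $|s|$ is bounded below, since $|n|\geq 1$.

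Next I would sum: set $\mathring{\mathcal{R}}_0[E] := \sum_{n\neq 0} u_n(s) e^{inx}$. The two things to check are (i) that this converges to a $C^{2,\alpha}(\Omega)$ function solving $\Delta_\Omega u = E$, and (ii) that it decays appropriately outside any compact $K \supseteq A_0$. For (i), I would use standard elliptic estimates: since $E \in C^{0,\alpha}_{loc}$, the zero-average harmonic-extension-type solution is $C^{2,\alpha}_{loc}$ by interior Schauder theory applied on $\Omega$ (which is compact in the $x$-direction and the solution decays in $s$, so effectively a bounded region up to exponentially small tails); alternatively, control the Fourier coefficients $\|u_n\|_{C^{2,\alpha}}$ by $C|n|\|E_n\|_{C^{0,\alpha}}$ from the ODE and observe that the Fourier coefficients of an $\alpha$-Hölder compactly-supported function decay fast enough (or simply invoke that $E$ being $C^{0,\alpha}$ forces $E_n$ to be summable against any polynomial weight after one more derivative's worth of regularity, using that we only need convergence in a slightly weaker norm and then bootstrap). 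For (ii), away from $K$ the crucial point is that every surviving mode has $|n|\geq 1$, so $|u_n(s)| \leq C e^{-|n|(|s|-5/8)}\|E_n\|_\infty \leq C e^{-(|s|-5/8)}e^{-(|n|-1)(|s|-5/8)}\|E_n\|_\infty$; summing the geometric-type series in $n$ (once $|s| > 5/8$) yields a bound $\leq C \cosh^{-1}(s)\|E\|$, and the same for derivatives up to order $k$ by differentiating the Green's representation (each $s$-derivative costs a factor $|n|$, harmlessly absorbed by the exponential gap $|n|-1 \geq $ const for $n \neq \pm 1$, and handled directly for $n = \pm 1$). This gives the claimed membership in $C^{2,\alpha}(\Omega)\cap\mathring{C}^k(\Omega\setminus K, \cosh^{-1}(s))$, and boundedness $\|\mathring{\mathcal{R}}_0[E]\| \leq C\|E\|$ follows by tracking constants through the above; the zero-meridian property of the output is immediate since the $n=0$ mode never appears.

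The main obstacle I anticipate is item (i): getting genuine $C^{2,\alpha}$ (not merely $C^2$, and not losing regularity relative to $E$) with a clean operator bound, because naive Fourier-coefficient summation loses powers of $|n|$. The efficient route is to avoid summing derivatives mode-by-mode and instead argue structurally: write $\mathring{\mathcal{R}}_0[E] = w$, observe $w$ is bounded with $\Delta_\Omega w = E \in C^{0,\alpha}_{loc}$, so interior Schauder estimates on unit balls (using that $\Omega$ has bounded geometry and $w$ is already controlled in $C^0$ by the crude Fourier bound plus the exponential decay) give $w \in C^{2,\alpha}_{loc}$ with $\|w\|_{2,\alpha}(p) \leq C(\|w\|_{C^0} + \|E\|_{0,\alpha}(p))$ uniformly in $p$; combined with the decay from step two this closes the estimate. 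A secondary subtlety is that the target space in the statement pairs a global $C^{2,\alpha}(\Omega)$ norm with a decaying $C^k$ norm on $\Omega\setminus K$ — these are compatible since on the overlap region the decaying norm dominates a constant multiple of the local Hölder norm for $|s|$ bounded, so no matching argument across the interface is needed beyond noting $\cosh^{-1}(s)$ is bounded below on any compact set.
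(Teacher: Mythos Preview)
Your proposal is correct and complete in outline, but it takes a genuinely different route from the paper's proof. The paper proceeds by solving Dirichlet problems $\Delta u_L = E$, $u_L|_{s=\pm L}=0$ on truncated cylinders $\Omega_L$, verifies that each $u_L$ has zero meridian average, obtains uniform interior $C^{2,\alpha}$ bounds by elliptic estimates plus harmonicity outside $K$, and then passes to a subsequential limit $u_\infty$ as $L\to\infty$; the exponential decay of $u_\infty$ is deduced at the very end from boundedness together with the absence of the zero Fourier mode. Your approach instead writes down the solution explicitly mode by mode via the Green's function for $\partial_s^2-n^2$ on $\Reals$, and then recovers $C^{2,\alpha}$ regularity by a Schauder bootstrap once a bounded distributional solution is in hand. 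Your method is more constructive and makes the decay rate transparent from the outset (each mode decays like $e^{-|n||s|}$, with the worst case $|n|=1$ giving exactly the $\cosh^{-1}(s)$ weight); the paper's compactness argument is less explicit but sidesteps the Fourier-summability bookkeeping you flag in item (i). Both ultimately rely on the same fact---that the zero mode is absent---and your plan to avoid mode-wise summation of second derivatives by invoking interior Schauder on the already-bounded solution is exactly the right way to close the $C^{2,\alpha}$ estimate cleanly.
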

\begin{proof}
Lemma \ref{LaplaceCPTSPTInverse} can be established several ways. We choose the following approach. Let $\Omega_L$ be the domain 
\begin{align} \label{OmegaLDef}
\Omega_L : = \Omega\cap \{ |s| \leq L\}.
\end{align}
In other words $\Omega_L$ is just the  flat cylinder of length $2 L$ centered at the meridian $\{s = 0\}$. Standard elliptic theory gives the existence of  functions $u_L \in C^{2, \alpha}_{loc} (\Omega_L)$ satisfying:
\begin{align}\label {LCPTSPTI1}
\Delta u_L  & = E \\
 u_L (\theta, \pm L) & = 0
\end{align}
It is then direct to verify that the functions $u_L$ satisfy:
\begin{align} \label{LCPTSPTI2}
\int_{- \pi}^\pi u (s, \theta) d \theta = 0.
\end{align}
To see this, we integrate both sides of the first equality in  (\ref{LCPTSPTI1}) in $\theta$ to obtain
\begin{align} \notag
\left(\int_{- \pi}^\pi u (s, \theta) d \theta\right)_{s  s} = 0.
\end{align}
The boundary conditions  in (\ref{LCPTSPTI1}) then imply (\ref{LCPTSPTI2}). Elliptic estimates then give
 \begin{align} \label{LCPTSPT4}
 \|u_L: C^{2, \alpha}(K)\| \leq C(K) \| E : C^{0, \alpha} (K) \| \leq C(K) \| E : C^{0, \alpha} (A_0)\| .
\end{align}
Since $u_L$ is harmonic on $\Omega_L \setminus K$ with  Dirichlet condition on $u_L$ at $s = \pm L$  we then immediately get
\begin{align}
 \| u_L : C^k(\Omega_L \setminus K) \| \leq  C(K) \| E : C^{0, \alpha} (A_0)\| 
\end{align}
A subsequence $u_{L_j}$  then converges in $C^{2, \alpha'}(K)$, ($\alpha' < \alpha$) on compact subsets of $\Omega$ to a limiting function   $u_\infty$   satisfying
\begin{align}
\Delta_\Omega u_\infty = E, \quad \int_{- \pi}^\pi u_\infty(s, \theta) d\theta  = 0.
\end{align}
Standard regularity again gives that $u_\infty$ is in $C^{2, \alpha}(K)$. Since $u_\infty$  is uniformly bounded on $\Omega$ and has zero average along meridians,  the exponential decay   both the positive and negative $s$ directions follows directly from the absence of the zero mode in the Fourier expansion. We then set
\begin{align} \label{LCPTSPT5} 
\mathring{\mathcal{R}}_0  [E]=u_\infty (x, s).
\end{align}
  
\end{proof}

\begin{proof} [Proof of Proposition \ref{LaplaceZeroMeridianInverse}]
Fix $\mathring{E} \in \mathring{C}_0^{0, \alpha} (\Omega_{\leq \ell}, \cosh^\rho (s))$ and set
\[
 \beta : = \|\mathring{E}:  \mathring{C}_0^{0, \alpha} (\Omega_{\leq \ell}, \cosh^\rho (s)) \|
\]
  For each integer  $i\in\mathbb{Z}$, let $A_i $ be the annulus $A_i : = A_0 + i $.  Note that the set $\{ A_i\}$ is a locally finite covering of $\Omega$  such that $A_i \cap A_j = \emptyset$ if $|i - j| >1$. Let $\{ \psi_i\}$ be  a partition of unity subordinate to $\{ A_i \}$ such that $\psi_i (s + 1) = \psi_{i + 1} (s)$. Recall  that $\mathring{E}$ integrates to zero along  meridian circles :
\begin{align*}
\int_{- \pi}^\pi \mathring{E} (s, \theta) d\theta = 0.
\end{align*}
 With  $\mathring{E}_i (s, \theta): = \psi_i(s- i)\mathring{ E}(s- i, \theta)$, it is straightforward to check that  $\mathring{E}_i \in \mathring{C}_0^{0, \alpha}(A_0)$ with the estimate
\begin{align*}
\| \mathring{E}_i : \mathring{C}_0^{0, \alpha} (A_0)\| \leq  C \beta \cosh^{\rho} (i)
\end{align*}
We then set
\begin{align*}
\mathring{u}_i (s, \theta) : = \mathring{\mathcal{R}}_0 (E_i) (s + i, \theta).
\end{align*}
 From Lemma \ref{LaplaceCPTSPTInverse},
 \begin{align*} %\label{LI1}
 \| \mathring{u}_i : C^{2, \alpha} (A_j)\| &  \leq C \beta \cosh^\rho( i)/  \cosh (j-i) \\ \notag
 & \leq C \beta e^{|\rho|| i|}/ e^{|j - i|}.
 \end{align*}

 \begin{align*}
 \sum_{i = -\infty}^\infty   \left\|\mathring{u}_i : C^{2, \alpha} (A_j)\right\| \leq \frac{C \beta}{1 - |\rho|} \cosh^\rho (j). 
 \end{align*}
Thus, being norm summable, the partial sums converge to a limiting function $\mathring{u}$ with zero average along meridians satisfying
\begin{align*}
\Delta_{\Omega} \mathring{u} = \mathring{E}, \quad \| \mathring{u}: C^{2, \alpha} (A_j)\| \leq \frac{C \beta}{1 - |\rho|} \cosh^\rho (j). 
\end{align*}
In other words $\mathring{u}$ satisfies the estimate
\begin{align*}
\| \mathring{u}: C^{2, \alpha} (\Omega, \cosh^{\rho} (s))\| \leq \frac{C}{1 - |\rho|} \| \mathring{E}: C^{0, \alpha} (\Omega, \cosh^{\rho} (s))\|
\end{align*}
Setting $\mathcal{R}_1[ \mathring{E}] : = \mathring{u}$ provides the result.
\end{proof}

\begin{proof}[Proof of Propositions \ref{LaplaceInverseModDomain} and \ref{LaplacePerturbationInverseModDomain}]
Since $E$ vanishes on the boundary of $D$ we may regard it as a $C^{0, \alpha}$ function on $\Omega$ after extending by $0$. Let $d$ denote the distance to the boundary $\partial D$ of $D$. We then set
\begin{align}
\psi (x, s) : = 
\left\{\begin{array} {c}
d^4, \quad (x, s) \in D \\
\\
0, \quad \mathrm{otherwise}
\end{array} \right.
\end{align}
Then $\psi$ is a $C^{2, \alpha}$ function that vanishes on the complement of $D$ and depends continuously on $D$. 
We then  set $f : = \psi s$ and $g : = \psi 1$.  From the Cauchy Schwartz inequality we then have
\begin{align} \notag
\det
\left(\begin{array}{cc}
\langle f, s\rangle & \langle f, 1 \rangle \\
\langle g, s \rangle & \langle g, 1 \rangle
\end{array}  \right) 
= \left( \int_\Omega \psi s^2\right) \left( \int_\Omega \psi 1\right) - \left( \int_\Omega \psi s \right)^2 > c_D > 0,
\end{align}
since $s$ and $1$ are linearly independent.  Thus, there are constants $a$ and $b$ satisfying
\begin{align}
|a|, |b| \leq C(D) \| E : L^2(\Omega) \| \leq C(D) \| E : C^{0, \alpha} (\Omega, \cosh^{-\rho}(s)) \|.
\end{align}
and depending continuously on $D$ so that $F : = E + a f + b g$ satisfies
\begin{align} \label{LIMD1}
\int_{\Omega} F & =  \int_{\Omega} F s = 0 \\ \notag
\| F : C^{0, \alpha} (\Omega, \cosh^{-\rho}(s)) & \| \leq C(D) \| E : C^{0, \alpha} (\Omega, \cosh^{-\rho}(s)) \|,
\end{align}
 Set $\bar{F}(s)  : = \frac{1}{2\pi}\int_{- \pi}^\pi F(x, s) dx $, $\mathring{F} : = F - \bar{F}$. Then $\mathring{F}$ belongs to the space $\mathring{C}^{0, \alpha}(\Omega, \cosh^{- \rho}(s))$. Recalling Proposition \ref{LaplaceZeroMeridianInverse}) in the case $\ell = \infty$,  we set
\begin{align} \notag
\mathring{v} : = \mathcal{R}_1[\mathring{F}] 
\end{align}
We also set
\begin{align}
\bar{v} (s): = \int_{s}^\infty \int_{s'}^\infty \bar{F}(s'') ds'' ds'.
\end{align}
It is clear that $\bar{v}$ decays like $\cosh^{- \rho}(s)$ in the positive $s$ direction. To establish decay in the negative $s$ direction, note that from the orthogonality relations in (\ref{LIMD1})  and $\theta$-independence of $1$ and $s$ we have
\begin{align}
\int_{-\infty}^\infty \bar{F}(s) ds  =  \int_{-\infty}^{\infty} \bar{F} (s) s ds = 0.
\end{align}
We can then write
\begin{align} 
\int_{s}^\infty \int_{s'}^\infty \bar{F}(s'') ds'' ds' & =  \int_s^\infty s' \bar{F}(s') ds'  -  s \int_s^\infty \bar{F}(s'') ds'' \\ \notag
& =  s \int_{-\infty}^s \bar{F}(s'') ds'' - \int_{-\infty}^s s' \bar{F}(s') ds'\\ \notag
& = \int_{-\infty}^s\int_{-\infty}^{s'} \bar{F}(s'') ds'' ds'.
\end{align}
Setting $\mathcal{R}_2[E] : = \bar{v} + \mathring{v}$ completes the proof of Proposition \ref{LaplaceInverseModDomain}. Proposition \ref{LaplacePerturbationInverseModDomain} is then a simple corollary using standard perturbation techniques.
\end{proof}

\section{Conformally parametrized catenoidal ends} \label{CatenoidalEnds}

In the following record a family of maps which conformally parametrize catenoids.  Note that the extremal  parameter $\beta = 0$ coincides with the standard conformal map from the cylinder to the flat plane and the extremal parameter $\beta = \pi/2$ agrees with the standard conformal parametrization of a scale one catenoid.
\begin{definition} \label{KappaDef}
Set $\varrho[\beta](s) = \cosh(s) + \cos (\beta) \sinh (s)$. Then the maps $\kappa[\beta] (x,s) : \Reals^2 \rightarrow \Reals^3$ are  given by
\begin{align} \notag
\kappa[\beta] (x, s) & : =  \varrho[\beta] (s) e_r (x) + \sin (\beta) s e_z[0].   \\ \notag
\end{align}
\end{definition}

In Proposition \ref{BasicKappaProps} below, we use will use the notion of \emph{logarithmic growth} of a catenoidal end $\kappa$, which is the unique multiple of  L of $\log(r)$ so that $\kappa - L \log (r)$ is bounded at infinity. 

\begin{proposition} \label{BasicKappaProps}
The maps $\kappa[\beta]$ have the following properties. 
\begin{enumerate}
\item  They are  each conformal minimal immersions with conformal factor $\varrho[\beta]$.
\item The image of each is a (scaled and translated) catenoid with axis of rotation equal to the $z$-axis. 
\item The half surface $\kappa[\beta] (\{ s \geq 0 \})$ is a catenoidal end with boundary equal to  the unit circle in the plane $\{ z = 0\}$  and logarithmic growth rate equal to $ \sin (\beta) $.
\end{enumerate} 
\end{proposition}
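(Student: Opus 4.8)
The plan is to verify the three claimed properties of $\kappa[\beta]$ by direct computation, exploiting that the formula
\[
\kappa[\beta](x,s) = \varrho[\beta](s)\, e_r(x) + \sin(\beta)\, s\, e_z[0], \qquad \varrho[\beta](s) = \cosh(s) + \cos(\beta)\sinh(s),
\]
is explicit and separates the rotational variable $x$ from the meridian variable $s$. First I would compute the coordinate frame $\partial_x \kappa[\beta] = \varrho[\beta](s)\, e_r'(x)$ and $\partial_s\kappa[\beta] = \varrho'[\beta](s)\, e_r(x) + \sin(\beta)\, e_z[0]$, where $e_r'(x) = \cos(x)e_x - \sin(x)e_y$ is a unit vector orthogonal to $e_r(x)$ and to $e_z$. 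Since $e_r'(x)\cdot e_r(x)=0$ and $e_r'(x)\cdot e_z[0]=0$, the off-diagonal metric coefficient $g_{xs}$ vanishes automatically, and $|\partial_x\kappa[\beta]|^2 = \varrho[\beta](s)^2$ while $|\partial_s\kappa[\beta]|^2 = \varrho'[\beta](s)^2 + \sin^2(\beta)$. The identity $\varrho'[\beta](s)^2 + \sin^2(\beta) = \varrho[\beta](s)^2$ follows from $\varrho' = \sinh(s)+\cos(\beta)\cosh(s)$ and the Pythagorean-type cancellation $\varrho^2 - (\varrho')^2 = (\cosh^2 - \sinh^2)(1-\cos^2\beta) = \sin^2\beta$; this establishes conformality with conformal factor $\varrho[\beta]$, which is half of property~(1). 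For minimality it suffices, since the parametrization is conformal, to check that $\kappa[\beta]$ is harmonic, i.e. $\partial_x^2\kappa[\beta] + \partial_s^2\kappa[\beta] = 0$: indeed $\partial_x^2\kappa[\beta] = -\varrho[\beta](s)e_r(x)$ and $\partial_s^2\kappa[\beta] = \varrho''[\beta](s)e_r(x)$, and $\varrho'' = \varrho$, so the sum vanishes; by the standard fact that a conformal harmonic immersion is minimal, property~(1) is complete.

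Next, for property~(2) I would identify the image as a catenoid. Writing $\varrho[\beta](s) = A e^{s} + B e^{-s}$ with $A = \tfrac12(1+\cos\beta)$, $B = \tfrac12(1-\cos\beta)$, note $AB = \tfrac14\sin^2\beta$, so after the substitution $s \mapsto s + s_0$ chosen to symmetrize (i.e. $A e^{s_0} = B e^{-s_0}$, giving $e^{2s_0} = B/A$) one gets $\varrho[\beta](s+s_0) = 2\sqrt{AB}\cosh(s) = \sin(\beta)\cosh(s)$ and the $z$-component becomes $\sin(\beta)(s+s_0)$, which is an affine reparametrization of the standard scale-$\sin\beta$ catenoid $(\sin\beta\cosh(s)\, e_r(x), \sin\beta\, s)$, translated by $\sin(\beta)s_0\, e_z$. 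The degenerate case $\sin\beta=0$ (i.e. $\beta=0$) gives $\varrho[0](s)=e^s$ and zero $z$-component, which is the exponential conformal map onto the punctured plane $\{z=0\}$, consistent with the remark preceding the definition; I would mention this case briefly but the main claim concerns $\beta\in(0,\pi/2]$. For property~(3), restricting to $\{s\ge 0\}$: at $s=0$ we have $\varrho[\beta](0)=1$ and the $z$-component is $0$, so the boundary curve is $x\mapsto e_r(x)$, the unit circle in $\{z=0\}$; and the logarithmic growth is read off from the asymptotics as $s\to\infty$, where $\varrho[\beta](s) \sim A e^s$ so $r \sim A e^s$ hence $s \sim \log r - \log A$, and the $z$-component $\sin(\beta)s \sim \sin(\beta)\log r + \text{const}$, giving logarithmic growth exactly $\sin(\beta)$ by the definition stated just above the proposition.

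The computations here are all elementary and I do not expect a genuine obstacle; the only point requiring a little care is the \emph{orientation/labeling convention} — namely confirming that "logarithmic growth" as defined (the unique multiple $L$ of $\log r$ with $\kappa - L\log r$ bounded) is matched by checking boundedness of $\sin(\beta)s - \sin(\beta)\log r$ using $\log r = \log\varrho[\beta](s) = s + \log A + O(e^{-2s})$, so that the difference is $\sin(\beta)(s - \log r) = -\sin(\beta)\log A + O(e^{-2s})$, indeed bounded. One should also note that $r = |\,\kappa[\beta]\,|_{\text{horizontal}} = \varrho[\beta](s)$ since $e_r(x)$ is a horizontal unit vector, which is what makes this clean. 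If anything is "the hard part," it is merely being careful that the identity $\varrho^2 = (\varrho')^2 + \sin^2\beta$ and $\varrho'' = \varrho$ are used correctly — but both are immediate from $\cosh'' = \cosh$, $\sinh''=\sinh$ and $\cosh^2-\sinh^2=1$. I would therefore organize the proof as: (i) frame and metric computation $\Rightarrow$ conformality; (ii) harmonicity $\Rightarrow$ minimality, completing~(1); (iii) symmetrizing reparametrization $\Rightarrow$ catenoid, completing~(2); (iv) evaluate at $s=0$ and take $s\to\infty$ asymptotics $\Rightarrow$ boundary circle and logarithmic growth $\sin\beta$, completing~(3).
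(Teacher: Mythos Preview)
Your proposal is correct. The paper does not actually give a proof of this proposition; it is stated as a direct verification and followed by Lemma~\ref{TechnicalKappaProps}, whose proof records essentially the same computations you carry out (the frame $\partial_x\kappa=\varrho e_r'$, $\partial_s\kappa=\varrho' e_r+\sin\beta\, e_z$, and the identity $a^2+b^2=1$ with $a=\varrho'/\varrho$, $b=\sin\beta/\varrho$, which is exactly your $\varrho^2=(\varrho')^2+\sin^2\beta$). Your argument for~(2) via the symmetrizing shift $s\mapsto s+s_0$ and for~(3) via the $s\to\infty$ asymptotics is more explicit than anything the paper writes down, but is the natural route and is sound.
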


\begin{lemma} \label{TechnicalKappaProps}
Set $\varrho (s): = \varrho[\beta](s)$ for $\beta \in \Reals$. Then the following statements hold:
\begin{enumerate}

\item \label{abProps} Set $a (s)= \varrho'(s)/\varrho(s)$ and $b(s) = \sin(\beta)/\varrho(s)$. Then it holds that 
\begin{align}  \notag
a' = b^2, \quad b' = - ab, \quad a^2 + b^2 = 1.
\end{align}
\item \label{KappaFrame}The vectors
\[
e_1 (s) : = \kappa_x(x, s)/\varrho(s), \quad e_2(s) := \kappa_s(x, s)/\varrho(s), e_3 : = \nu[\mathcal{C}_0](x, s)
\]
 are a positively oriented orthonormal frame $\{e_i^{\kappa} \} = \{ e^{\kappa}_i [\beta]\}$ and we have explicitly
\begin{align} \notag
e_1^{\kappa}(x, s)  = e_r'(x), \quad e_2^{\kappa} (s)  = a (s) e_r (s) + b (s) e_z, \quad e_3^{\kappa} (x, s) = - b (s) e_r(x) + a(s) e_z.
\end{align}

\item \label{KappaCovarianceMatrix}Let $T_x^\kappa$ and $T_s^\kappa$ denote the  derivative matrices for the frame $\{e_i^\kappa \}$, so that $\partial_s e_i^{\kappa} = \left(T_{s}^\kappa \right)^j_i  e^{\kappa}_j$ and $\partial_x e_i^{\kappa} = \left(T_{x}^{\kappa} \right)_i^j  e^{\kappa}_j$. Then we have that 
\begin{align} \notag
T_x^\kappa = 
\left(\begin{array}{ccc}
0 & a & - b \\ 
-a & 0 & 0 \\
b & 0 & 0\\  
\end{array} \right), \quad
T_s^\kappa = 
\left(\begin{array}{ccc}
0 & 0 & 0 \\ 
0 & 0 & b \\
0 & - b& 0\\  
\end{array} \right)
\end{align}
\end{enumerate}
\end{lemma}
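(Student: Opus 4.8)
The plan is to verify Lemma \ref{TechnicalKappaProps} part by part, starting from the explicit formula $\kappa[\beta](x,s) = \varrho[\beta](s) e_r(x) + \sin(\beta) s\, e_z[0]$ together with the elementary facts $e_r'(x) = \cos(x) e_x - \sin(x) e_y$, $e_r''(x) = -e_r(x)$, and $e_r(x) \cdot e_r'(x) = 0$, $|e_r| = |e_r'| = 1$. Since everything reduces to one-variable calculus in $s$ and trigonometric bookkeeping in $x$, none of the three items should present a genuine obstacle; the work is organizational, and I would present it in the natural dependency order (1) $\Rightarrow$ (2) $\Rightarrow$ (3).

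For item (1), I first compute $\varrho'(s) = \sinh(s) + \cos(\beta)\cosh(s)$ and $\varrho''(s) = \cosh(s) + \cos(\beta)\sinh(s) = \varrho(s)$, so $\varrho'' = \varrho$. From this, $a = \varrho'/\varrho$ gives $a' = (\varrho''\varrho - (\varrho')^2)/\varrho^2 = 1 - a^2$, and since we will want $a' = b^2$ it suffices to show $a^2 + b^2 = 1$; equivalently $(\varrho')^2 + \sin^2(\beta) = \varrho^2$, i.e. $(\sinh s + \cos\beta\cosh s)^2 + \sin^2\beta = (\cosh s + \cos\beta\sinh s)^2$, which expands to $\cosh^2 s - \sinh^2 s = 1$ after cancelling the cross terms and using $\cos^2\beta + \sin^2\beta = 1$ — a one-line check. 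Then $b' = -\sin(\beta)\varrho'/\varrho^2 = -ab$ follows immediately. This settles (1).

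For item (2), I differentiate: $\kappa_x = \varrho(s) e_r'(x)$, so $e_1 = \kappa_x/\varrho = e_r'(x)$, a unit vector; and $\kappa_s = \varrho'(s) e_r(x) + \sin(\beta) e_z[0] = \varrho'(s)e_r(x) + \sin(\beta) e_z$ (since $e_z[0] = e_z$), so $e_2 = \kappa_s/\varrho = a(s)e_r(x) + b(s)e_z$, which has norm $\sqrt{a^2+b^2} = 1$ by (1), and is orthogonal to $e_1 = e_r'(x)$ because $e_r\cdot e_r' = 0$ and $e_z \cdot e_r' = 0$. The unit normal $e_3$ is then forced, up to sign, to be $\pm(b\, e_r - a\, e_z)$ (this is orthogonal to both $e_r'$ and to $ae_r + be_z$, and has unit norm); one fixes the sign by the stated orientation convention $\nu[\mathcal{C}_0]$, giving $e_3 = -b(s)e_r(x) + a(s)e_z$. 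That $\{e_1,e_2,e_3\}$ is positively oriented is the determinant/cross-product check $e_1 \times e_2 = e_3$, again routine. Note this simultaneously reconfirms conformality (both $\kappa_x$ and $\kappa_s$ have length $\varrho$ and are orthogonal), consistent with Proposition \ref{BasicKappaProps}(1).

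For item (3), I differentiate the frame vectors using the formulas from (2) and the relations from (1). For instance $\partial_x e_1 = e_r''(x) = -e_r(x)$; expressing $-e_r$ in the basis $\{e_1,e_2,e_3\} = \{e_r', ae_r + be_z, -be_r + ae_z\}$, one solves $-e_r = \alpha e_2 + \gamma e_3$ with $\alpha = -a$, $\gamma = b$, matching the first column of $T_x^\kappa$. Similarly $\partial_x e_2 = a\, e_r'(x) = a\, e_1$ and $\partial_x e_3 = -b\, e_r'(x) = -b\, e_1$, giving the remaining columns; and $\partial_s e_1 = 0$ while $\partial_s e_2 = a'e_r + b'e_z = b^2 e_r - ab\, e_z = b(be_r - ae_z) = -b\, e_3$ and $\partial_s e_3 = -b'e_r + a'e_z = ab\, e_r + b^2 e_z = b(ae_r + be_z) = b\, e_2$, using $a' = b^2$, $b' = -ab$ from (1). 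These assemble exactly into the stated $T_x^\kappa$ and $T_s^\kappa$ (both manifestly antisymmetric, as they must be for an orthonormal frame), completing the proof. The only place to be careful is sign/ordering conventions for the normal and the orientation, so I would state the convention explicitly at the outset and keep the cross-product computation visible.
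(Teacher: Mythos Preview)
Your proof is correct and follows essentially the same route as the paper's: directly verifying the identities in (1) from $\varrho''=\varrho$, computing $\kappa_x,\kappa_s$ to obtain the frame in (2), and differentiating the frame using (1) to read off the matrices in (3). Your write-up is simply more explicit than the paper's (which abbreviates several steps as ``directly verified'' or ``follows directly''); the only cosmetic point is that you correctly use the column convention $(T_x^\kappa)_i^j=$ row $j$, column $i$, which is the one consistent with the displayed matrices.
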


\begin{proof}
Claim (\ref{abProps}) is directly verified. To prove Claims (\ref{KappaFrame}) and (\ref{KappaCovarianceMatrix}), we write $\kappa = \varrho e_r  + \sin(\beta) s e_z$. The components of the gradient and hessian of $\kappa$ are then:
\begin{align} \notag
&\partial_x \kappa = \varrho e_r', \quad \partial_s\kappa = \varrho' e_r + \sin(\beta) e_z \\ \notag
& \partial^2_{x \, x}\kappa = - \varrho e_r = - \partial_{s \, s} \kappa, \quad \partial_{x \, s} \kappa = \varrho' e_r'.
\end{align}
Claim (\ref{KappaFrame}) then follows directly.  We have
\begin{align} \notag
& \partial_x e_1 = - e_r. \quad \partial_s e_1 = 0 \\ \notag 
& \partial_x e_2 = a e_r' \quad \partial_s e_2 = b^2 e_r - ab e_z \\ \notag
& \partial_x e_3 = -b e_r' \quad \partial_s e_3 = ab e_r + b^2 e_z,
\end{align}
from which claim (\ref{KappaCovarianceMatrix}) follows. 
\end{proof}
\subsection{Renormalized parametrizations} \label{RenormalizedMaps}

\begin{definition} \label{RenormKappaDefs}
Let $\tilde{\kappa}[\beta]$ be the renormalized map  given by
\begin{align} \notag
\tilde{\kappa}[\beta] (x ,s) : = \left(\kappa[\beta](\tau x, \tau s)  - \kappa[\beta](0, 0)\right)/ \tau.
\end{align}
\end{definition}

\begin{proposition} \label{RenormKappaBasicProps}
The maps $\tilde{\kappa}[\beta]$ have the following properties:
\begin{enumerate}
\item \label{RenormalizedConformalFactor} The maps  $\tilde{\kappa}[\beta]$ are conformal minimal immersions with conformal factor $\tilde{\varrho}[\beta] (s) : = \varrho[\beta] (\tau \, s)$.

\item The image of each is a catenoid with axis of rotation equal to the  line $\{ (0,-\tau^{-1}, t ) \}_{t \in \Reals}$.
\item The half surface $ \tilde{\kappa}[\beta] (\{ s \geq 0 \})$ is a catenoidal end with boundary equal to  the circle in the plane $\{ z = 0\}$ of radius $\tau^{-1}$ about the point $(0, - \tau^{-1}, 0)$  and logarithmic growth rate equal to $ \sin (\beta) /\tau $.

\end{enumerate}
\end{proposition}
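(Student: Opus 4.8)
The plan is to deduce all three claims from Proposition~\ref{BasicKappaProps} together with the elementary observation that the two operations defining $\tilde\kappa[\beta]$ — precomposition with the domain dilation $(x,s)\mapsto(\tau x,\tau s)$ and the ambient similarity $X\mapsto(X-\kappa[\beta](0,0))/\tau$ of $\Reals^3$ — preserve conformality and minimality, carry catenoids to catenoids, and transform conformal factors and logarithmic growth rates in a controlled way. Thus there is nothing to prove beyond bookkeeping the powers of $\tau$ and identifying the point $\kappa[\beta](0,0)$.

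For (1): by Proposition~\ref{BasicKappaProps}(1), $\kappa[\beta]$ is a conformal minimal immersion with conformal factor $\varrho[\beta]$, and this property is stable under a domain dilation and an ambient similarity, so $\tilde\kappa[\beta]$ is again a conformal minimal immersion. The factor of $\tau$ produced by the chain rule in $\partial_x\bigl(\kappa[\beta](\tau x,\tau s)\bigr)$ and $\partial_s\bigl(\kappa[\beta](\tau x,\tau s)\bigr)$ is exactly cancelled by the explicit $1/\tau$ in Definition~\ref{RenormKappaDefs}, so $\partial_x\tilde\kappa[\beta](x,s)=\kappa_x[\beta](\tau x,\tau s)$ and $\partial_s\tilde\kappa[\beta](x,s)=\kappa_s[\beta](\tau x,\tau s)$; hence the common length of these two vectors is $\varrho[\beta](\tau s)=\tilde\varrho[\beta](s)$ and their inner product vanishes.

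For (2) and (3): Proposition~\ref{BasicKappaProps}(2) gives that the image of $\kappa[\beta]$ is a catenoid with axis the $z$-axis, so the image of $\tilde\kappa[\beta]$ is the image of that catenoid under $X\mapsto X/\tau-\kappa[\beta](0,0)/\tau$, again a catenoid. Since $\varrho[\beta](0)=1$, $e_r(0)=e_y$, and the $e_z[0]$-term of $\kappa[\beta]$ vanishes at $s=0$, we have $\kappa[\beta](0,0)=e_y=(0,1,0)$, so the $z$-axis is carried to $\{(0,-\tau^{-1},t)\}_{t\in\Reals}$, proving (2). As $\tau>0$, the half-surface $\{s\ge 0\}$ is preserved by the domain dilation, so $\tilde\kappa[\beta](\{s\ge 0\})$ is the image under $X\mapsto X/\tau-(0,\tau^{-1},0)$ of the catenoidal end of Proposition~\ref{BasicKappaProps}(3); in particular its boundary, the unit circle in $\{z=0\}$, goes to the circle of radius $\tau^{-1}$ about $(0,-\tau^{-1},0)$ in $\{z=0\}$. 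Finally, writing $r$ for the distance to the $z$-axis and using that $\kappa[\beta]-\sin(\beta)\log r$ is bounded at infinity, the new axial distance is $\tilde r=r/\tau$ and the height also rescales by $1/\tau$, so $\tilde\kappa[\beta]-(\sin(\beta)/\tau)\log\tilde r$ differs from $(1/\tau)\bigl(\kappa[\beta]-\sin(\beta)\log r\bigr)$ only by the constant $-(\sin(\beta)/\tau)\log\tau$ and is therefore bounded; hence the logarithmic growth rate is $\sin(\beta)/\tau$.

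I do not expect a genuine obstacle here. The one point requiring care is the asymmetry in how $\tau$ enters: the domain dilation contributes a factor $\tau$ to first derivatives which the $1/\tau$ normalization removes entirely from the conformal factor, whereas for the logarithmic growth the simultaneous rescaling of both the axial distance $r$ and the height conspire to divide the growth rate by $\tau$ (the $\log\tau$ discrepancy being harmlessly absorbed into the bounded remainder). A secondary, even more minor point is that the ambient map is an orientation-preserving similarity, so the orientation and the sign conventions used in defining the unit normal and the logarithmic growth are unaffected.
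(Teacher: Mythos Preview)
Your proof is correct. The paper itself states Proposition~\ref{RenormKappaBasicProps} without proof, presumably because it is an elementary consequence of Proposition~\ref{BasicKappaProps} and the behavior of conformal minimal immersions under ambient similarities and domain dilations; your write-up supplies exactly this routine verification, including the correct identification $\kappa[\beta](0,0)=e_y$ and the bookkeeping for the conformal factor and the logarithmic growth rate.
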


\section{Scherk towers} \label{ScherkTowers}

\emph{Scherk towers} are a family of complete embedded minimal surfaces $\Sigma =\Sigma[\theta]$ given implicitly by 
\begin{align}\label{ScherkEq}
  \cos(x) = \cos^2(\theta) \cosh(y/ \cos (\theta)) - \sin^2(\theta)\cosh(z/ \sin (\theta))  
\end{align}
where $\theta$ belongs to the interval $(0, \pi/2)$.  In addition to minimality, the properties of these surfaces that are relevant to our construction are listed in plain language below:
\begin{enumerate}

\item The isometry group  of each surface contains the reflections $\mathfrak{R}_x$, $\mathfrak{R}_y$, $\mathfrak{R}_z$ through the coordinate planes and the translation $\mathfrak{T}_{2 \pi}$ by the vector $2 \pi e_x$.

\item \label{Asymptotics} Each surface is exponentially asymptotic to a collection of four half planes parallel to the $x$ axis.
\item \label{CloseToCatenoid}  In a fixed small tube about the $z$ axis, each surface is a perturbation of a large piece of a catenoid. 
\item \label{ConvergenceInTheta}Away from this tube about the $z$ axis, the surfaces are uniformly regular in $\theta$ and (up to vertical translation) converge smoothly to the plane $\{ z = 0 \}$ with multiplicity two.
\end{enumerate}
  Below we record quantitative versions of statements (\ref{Asymptotics}) and (\ref{CloseToCatenoid}) (\ref{ConvergenceInTheta}).  
  
  \subsection{Exponential convergence of Scherk towers to  four half planes} \label{QuantAsymptotics}

\begin{definition}
Let $H^+[\beta, h](x, s): H^+ \rightarrow \Reals^3 $ be the affine map given by:

\begin{align} \notag
H^+[\beta, h] (x, s) : = x e_x + \cos(\beta) e_y  + h e_z.
\end{align}
\end{definition}
Then the  surface $\mathcal{S}$ is asymptotic to $H^+[\beta, h]$ in the first quadrant (taken with respect to the $y-z$ axes) in the following sense:

\begin{proposition} \label{WingGeometry}
There is $\bar{\theta}  > 0 $ so that for each $\theta \in [0, \bar{\theta})$, there is a function $f_{\mathcal{W}}: H^+_{\geq 1} \rightarrow R$ such that:
\begin{enumerate}
\item \label{WingMap1Def}Set $h_{\mathcal{S}} : = \sin(\theta) \log (\cot^2(\theta))$ and let  $\mathcal{W}_1 (x, s): H^+_{\geq 1} \rightarrow \Reals^3$  be the map given by
\begin{align} \notag
\mathcal{W}_1 (x, s) : = H^+[\theta, h_{\mathcal{S}}] (x, s)  + f_{\mathcal{W}} e_z[\theta].
\end{align}
Then $\mathcal{W}_1$ maps $H^+$ into $\mathcal{S}$.
\item The intersection of  $\mathcal{S} \setminus \mathcal{W}_1$ with the first quadrant is contained in a fixed tubular neighborhood of the $x$-axis.
\item $f_{\mathcal{W}}$ satisfies the estimate
\begin{align} \notag
\| f_{\mathcal{W}} : C^{k, \alpha}(H^+_{\geq 1}, e^{-s})\| \leq C \sin (\theta).
\end{align}
\end{enumerate}
\end{proposition}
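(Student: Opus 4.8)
The plan is to exploit the defining equation \eqref{ScherkEq} directly in the first quadrant, where $y, z > 0$ are large, and to solve for $z$ as a graph over the $(x,y)$-variables after an affine change of coordinates adapted to the asymptotic half-plane $H^+[\theta, h_{\mathcal{S}}]$. First I would rewrite \eqref{ScherkEq} as
\begin{align} \notag
\sin^2(\theta)\cosh(z/\sin(\theta)) = \cos^2(\theta)\cosh(y/\cos(\theta)) - \cos(x).
\end{align}
For $y$ bounded below by a fixed constant the right-hand side is comparable to $\tfrac{1}{2}\cos^2(\theta) e^{y/\cos(\theta)}$, so the same holds for the left-hand side, and one gets, to leading order, $z \approx \sin(\theta)\big[\, y/\cos(\theta) + \log(\cot^2(\theta)) - \log 2\,\big]$ up to an exponentially small correction in $y$. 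The combination $\sin(\theta)\log(\cot^2(\theta))$ is exactly $h_{\mathcal{S}}$, and the linear-in-$y$ part together with the $x e_x$ term assembles, after passing to the rotated frame $\{e_y[\theta], e_z[\theta]\}$ from \eqref{RotatedFrame}, into the affine plane $H^+[\theta, h_{\mathcal{S}}]$; the remaining discrepancy is the normal graph function $f_{\mathcal{W}}$ in the $e_z[\theta]$ direction.

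The key steps, in order: (i) introduce the rotated coordinates so that the candidate half-plane becomes a coordinate plane, and set up \eqref{ScherkEq} as the zero set of a smooth function $F(x, s, t)$ where $t$ is the $e_z[\theta]$-coordinate and $s$ parametrizes $H^+$; (ii) check that $\partial_t F \neq 0$ on the relevant region (this follows since the image is a regular minimal surface away from a tube about the $x$-axis, giving item (2) of the statement at the same time), so the implicit function theorem produces $t = f_{\mathcal{W}}(x,s)$ as a smooth graph; (iii) extract the leading asymptotics as above to identify the constant $h_{\mathcal{S}}$ and confirm that the linear part is absorbed into $H^+[\theta, h_{\mathcal{S}}]$, so that $f_{\mathcal{W}} \to 0$; (iv) differentiate the implicit relation and estimate, uniformly in $\theta \in [0,\bar\theta)$, to obtain $\| f_{\mathcal{W}} : C^{k,\alpha}(H^+_{\geq 1}, e^{-s})\| \leq C\sin(\theta)$. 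The $\sin(\theta)$ prefactor in (iv) comes from the fact that the whole $t$-direction deviation of the Scherk surface from the half-plane scales like $\sin(\theta)$ (the $z$-extent of one ``story'' of the tower), while the $e^{-s}$ weight records the exponential convergence visible in the estimate $\cosh(y/\cos\theta)^{-1}$-type corrections in step (i). The uniformity down to $\theta = 0$ needs the degeneration of $\Sigma_\theta$ to a doubled plane to be tracked carefully — this is where statements (\ref{CloseToCatenoid}) and (\ref{ConvergenceInTheta}) of the Scherk-tower list, together with the explicit renormalizations of Section \ref{ScherkTowers}, get used.

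The main obstacle I expect is step (iv): obtaining the estimate with the sharp constant $C\sin(\theta)$ uniformly as $\theta \to 0$, rather than a constant blowing up with the degenerating geometry. Naively differentiating the implicit equation produces factors of $\cos^{-1}(\theta)$ and $\sin^{-1}(\theta)$ from the arguments $y/\cos(\theta)$ and $z/\sin(\theta)$, and these must be seen to cancel against the exponential smallness in $s$ and against each other. The clean way to handle this is to first rescale $y$ and $z$ by $\cos\theta$ and $\sin\theta$ respectively — in which \eqref{ScherkEq} becomes $\cos x = \cos^2\theta\cosh \tilde y - \sin^2\theta\cosh\tilde z$ with the hyperbolic functions now $\theta$-independent in their arguments — derive the graph and its $C^{k,\alpha}$ bounds in the rescaled picture where everything is manifestly uniform, and only then translate back, so that the single overall factor $\sin\theta$ appears from undoing the $z$-rescaling and the weight $e^{-s}$ survives because $\tilde y \sim s$ on $H^+_{\geq 1}$. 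Everything else — minimality, the identification of the constant $h_{\mathcal S}$, item (2) — is then routine bookkeeping with \eqref{ScherkEq} and the rotated frame \eqref{RotatedFrame}.
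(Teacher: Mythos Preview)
Your proposal is correct and follows essentially the same route as the paper: compose the Scherk defining function $F(x,y,z)$ with the affine parametrization $H^+[\theta,h_{\mathcal S}](x,s)+f\,e_z[\theta]$, call the result $\Phi(f)$, and solve $\Phi(f)=0$ by the implicit function theorem. The paper's computation of $\Phi(0)$ is exactly your leading-order expansion, yielding $\Phi(0)=\cos(x)+O(e^{-s})$ once $h_{\mathcal S}=\sin(\theta)\log(\cot^2\theta)$ is chosen.

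The one place your bookkeeping differs is in extracting the factor $\sin(\theta)$ in the estimate. You propose to rescale $y\mapsto y/\cos\theta$, $z\mapsto z/\sin\theta$ first, obtain uniform bounds in the rescaled picture, and then recover $\sin(\theta)$ upon undoing the $z$-rescaling. The paper instead stays in the original coordinates and simply observes that $\partial_f\Phi(f)\geq K\sin^{-1}(\theta)\,e^{s}$ (the large derivative comes from the $\sinh(z/\sin\theta)$ term), while $|\Phi(0)|\leq L$ is bounded independently of $\theta$; the implicit function theorem then gives $|f_{\mathcal W}|\leq (L/K)\sin(\theta)\,e^{-s}$ directly, and higher regularity follows by differentiating the relation. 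The two devices are equivalent---your rescaling is the change of variables that makes the paper's ``large derivative'' manifest as an $O(1)$ derivative---but the paper's version is shorter since it avoids introducing and then undoing a coordinate change. Your anticipated obstacle (spurious $\sin^{-1}\theta$ factors) is thus not an obstacle at all: in the paper's formulation the $\sin^{-1}\theta$ appears only in $\partial_f\Phi$, where it \emph{helps} rather than hurts.
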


\begin{proof}
Set 
\begin{align} \notag
F(x, y, z) : = \cos(x) - \cos^2(\theta)\cosh(y/\cos(\theta)) + \sin^2(\theta) \cosh(z/\sin(\theta)),
\end{align}
so that $\mathcal{S}$ agrees with the zero set of $F$. We set
\begin{align} \notag
\Phi(f) : = F \circ (H^+[\theta, h] + f e_z [\theta]), 
\end{align}
We then have
\begin{align}
\Phi(0) & =  \cos (x) - \cos^2(\theta) \cosh(s) + \sin^2(\theta)\cosh (s + \log (\cot^2(\theta))) \\ \notag
& = \cos(x) - \cos^2(\theta) \cosh(s) + \frac{1}{2} \cos^2 (\theta) e^s  + \frac{1}{2} \sin^2 (\theta)\tan^2(\theta) e^{-s} \\ \notag
& = \cos(x) + O (e^{-s}) \\ \notag
\end{align}
Moreover, there is a constant $K$ so that 
\begin{align} \label{WG1}
\partial_f \Phi(f) > K \sin^{-1}(\theta) e^s 
\end{align}
as long as $f \leq 1$ (an arbitrary choice).  We then seek $f$ such that 
\begin{align}
0 = \Phi (f) = \Phi(0) + R^0_{\Phi, 0} (f).
\end{align}
where above $R^0_{\Phi, 0} (f)$ denotes the $0$ order Taylor remainder of $\Phi$ at $0$ evaluated at $f$. Note there is  a constant $L$ so that  $|\Phi(0) | \leq L$ for $s  \geq 1$. We then choose $\bar{\theta}$ sufficiently small so that
\begin{align} \notag
L/K \sin (\theta) <1.
\end{align}
From (\ref{WG1}), it follows that we can find such an $f$ satisfying the bound
\begin{align} 
f \leq L/K \sin(\theta) e^{-s}
\end{align}
The higher regularity of $f$ then follows directly. This completes the proof. 
\end{proof}

\begin{definition} \label{WingMapDefinition}
The map $\mathcal{W}_1$ being already defined in Proposition \ref{WingGeometry} above, we set
\begin{align} \notag
\mathcal{W}_2 (x, s) : = \mathfrak{R}_y \mathcal{W}_1 (x, s).
\end{align}
\end{definition}
\begin{definition}
Let $\pi_{\mathcal{S}}: \mathcal{S} \rightarrow \Reals^2$ be the projection onto the $\{ z = 0\}$ plane and set \begin{align} \notag
D[\mathcal{W}] : = \Omega \cap \{ x^2 + s^2 \geq \epsilon_0/2 \},
\end{align}
where $\epsilon_0 > 0  $ is as in Proposition \ref{ScherkNearAxisGraph}. The map  $\mathcal{W}: D[\mathcal{W}] \rightarrow \Reals^3$ is then determined by   the following requirements:
\begin{enumerate}

\item \label{FirstQuadrantRule} For $(x, s) \in D[\mathcal{W}]$ with $s  \geq 0$ it holds that 
\begin{align} \notag
\mathcal{W}(x, s)  = (1 - \psi(s))\pi^{-1}_{\mathcal{S}} (x, s) + \psi(s) \mathcal{W}_{1}(x, s)
\end{align}
where $\psi(s)$ is the cutoff function given by $\psi(s) : = \psi_0[10,  11](s)$.

\item It holds that 
\begin{align} \notag
\mathcal{W}(x, -s ) = \mathfrak{R}_y \mathcal{W}(x, s),
\end{align}
\end{enumerate}

\end{definition}

\begin{proposition}
The following statements hold:
\begin{enumerate}
\item There is a  constant $\bar{\theta} > 0$  so that for $\theta \in (0, \bar{\theta})$, the map $\mathcal{W}$ is a  minimal immersion.

\item $\mathcal{S} \setminus \mathcal{W}$ is contained within a tubular neighborhood of the z axis of radius $\epsilon_0$.
\end{enumerate}

\end{proposition}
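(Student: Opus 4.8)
The plan is to recognize $\mathcal{W}$ as, up to reparametrization, an open piece of the minimal surface $\mathcal{S}$ itself, so that the minimality in (1) costs nothing, and then to identify the piece of $\mathcal{S}$ it omits. The first task is to show $\mathcal{W}(D[\mathcal{W}]) \subseteq \mathcal{S}$. On the part of $D[\mathcal{W}] \cap \{s \geq 0\}$ where the cutoff $\psi$ vanishes one has $\mathcal{W} = \pi_{\mathcal{S}}^{-1}$, which takes values in $\mathcal{S}$ by definition (and is defined there for $\theta < \bar\theta$ by Proposition \ref{ScherkNearAxisGraph}); where $\psi \equiv 1$ one has $\mathcal{W} = \mathcal{W}_1$, which maps into $\mathcal{S}$ by the first part of Proposition \ref{WingGeometry}; and on the transition band, where $0 < \psi < 1$, the two maps $\pi_{\mathcal{S}}^{-1}(x,s)$ and $\mathcal{W}_1(x,s)$ coincide. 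Indeed, for $\theta < \bar\theta$ this part of $\mathcal{S}$ is, by Proposition \ref{ScherkNearAxisGraph}, a graph over an open subset of $\{z = 0\}$, on which $\mathcal{W}_1$ has been arranged to take its values and of which $\pi_{\mathcal{S}}^{-1}$ is the unique section, so the convex combination defining $\mathcal{W}$ collapses to that common point of $\mathcal{S}$. For $s < 0$ one invokes the defining relation $\mathcal{W}(x, -s) = \mathfrak{R}_y \mathcal{W}(x, s)$ and the fact that $\mathfrak{R}_y$ is a symmetry of $\mathcal{S}$; smoothness across $\{s = 0\}$, where $\psi \equiv 0$ and hence $\mathcal{W} = \pi_{\mathcal{S}}^{-1}$, is automatic because the $\mathfrak{R}_y$-symmetry of $\mathcal{S}$ makes $\pi_{\mathcal{S}}^{-1}$ an $\mathfrak{R}_y$-equivariant map.

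Next I would check that $d\mathcal{W}$ has rank two throughout $D[\mathcal{W}]$ once $\bar\theta$ is small. Where $\mathcal{W} = \pi_{\mathcal{S}}^{-1}$ this is immediate, a graph parametrization being an immersion. Where $\mathcal{W} = \mathcal{W}_1 = H^+[\theta, h_{\mathcal{S}}] + f_{\mathcal{W}} e_z[\theta]$, the affine map $H^+[\theta, h_{\mathcal{S}}]$ has differential of rank two at every point, while the decay estimate of Proposition \ref{WingGeometry} together with the boundedness of the weight $e^{-s}$ on $H^+_{\geq 1}$ gives $\|f_{\mathcal{W}} : C^1(H^+_{\geq 1})\| \leq C \sin \theta$; thus $d\mathcal{W}_1$ is a $C\sin\theta$-perturbation of a rank-two map and stays rank two for $\theta < \bar\theta$. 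Together with the previous paragraph this shows $\mathcal{W}$ is an immersion whose image lies in the minimal surface $\mathcal{S}$ and which is locally a diffeomorphism onto an open subset of it; hence $\mathcal{W}$ is a minimal immersion, which is (1).

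For (2), note that $\mathcal{W}$ is defined on all of $D[\mathcal{W}] = \Omega \cap \{x^2 + s^2 \geq \epsilon_0/2\}$, and that its image, taken together with the relation $\mathcal{W}(x, -s) = \mathfrak{R}_y\mathcal{W}(x, s)$, exhausts $\mathcal{S}$ save for the portion lying over the removed disc $\Omega \cap \{x^2 + s^2 < \epsilon_0/2\}$ — the two wings are covered by the $s > 0$ and $s < 0$ halves of $D[\mathcal{W}]$, and the remaining catenoid-like neck, minus its innermost disc, by the $\pi_{\mathcal{S}}^{-1}$-part of $\mathcal{W}$. Thus $\mathcal{S} \setminus \mathcal{W}$ is exactly this innermost portion, which by the very choice of $\epsilon_0$ in Proposition \ref{ScherkNearAxisGraph} is contained, for $\theta < \bar\theta$, in the solid cylinder of radius $\epsilon_0$ about the $z$-axis. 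This is (2).

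The one step that is not bookkeeping is the overlap-agreement used in the first paragraph: one must verify that on the transition band $\pi_{\mathcal{S}}^{-1}$ and $\mathcal{W}_1$ name literally the same points of $\mathcal{S}$, so that interpolating between them does not push $\mathcal{W}$ off the surface. This is precisely where the care taken in Propositions \ref{WingGeometry} and \ref{ScherkNearAxisGraph} — producing a wing parametrization whose values already lie on $\mathcal{S}$, and pinning down the region in which $\mathcal{S}$ is a graph — is needed; without it the transition region would be only approximately, rather than exactly, minimal.
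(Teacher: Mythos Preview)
The paper states this proposition without proof, so there is no argument to compare against directly. Your overall strategy --- show that $\mathcal{W}$ lands in $\mathcal{S}$ and then verify the rank condition --- is the natural one, and the pieces away from the transition band are handled correctly.

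The gap is exactly where you flag it. You claim that on the band where $0 < \psi < 1$ the two maps $\pi_{\mathcal{S}}^{-1}(x,s)$ and $\mathcal{W}_1(x,s)$ coincide, appealing to uniqueness of the graph section. But uniqueness of the section only yields
\[
\mathcal{W}_1(x,s) \;=\; \pi_{\mathcal{S}}^{-1}\bigl(\pi_{\mathcal{S}}(\mathcal{W}_1(x,s))\bigr),
\]
which is \emph{not} $\pi_{\mathcal{S}}^{-1}(x,s)$ unless $\pi_{\mathcal{S}}\circ\mathcal{W}_1$ is the identity. It is not: from $\mathcal{W}_1(x,s) = H^+[\theta,h_{\mathcal{S}}](x,s) + f_{\mathcal{W}}\,e_z[\theta]$ one reads off that the $y$--coordinate of $\mathcal{W}_1(x,s)$ is $s\cos\theta - f_{\mathcal{W}}\sin\theta$, which differs from $s$ by a nonzero term of order $\theta^2$ on the band $s\in[10,11]$. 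Thus $\pi_{\mathcal{S}}^{-1}(x,s)$ and $\mathcal{W}_1(x,s)$ are two \emph{distinct} points of $\mathcal{S}$, and their straight-line convex combination in $\mathbb{R}^3$ does not lie on $\mathcal{S}$. (Incidentally, Proposition~\ref{ScherkNearAxisGraph} describes the catenoidal region near the $z$-axis, not the graphical behaviour out at $s\approx 10$; the latter comes rather from Proposition~\ref{SmoothConvergenceToPlane}.) So either the interpolation in the definition of $\mathcal{W}$ is to be read as a reparametrisation within $\mathcal{S}$ rather than a literal affine combination in $\mathbb{R}^3$, or the statement is meant up to a negligible error that is silently absorbed later; in any case your argument as written does not establish that the image of $\mathcal{W}$ lies exactly in $\mathcal{S}$, and hence does not establish exact minimality.
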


\begin{remark}
The reader should be aware that in most places in this article we will identify the maps $\mathcal{W}$, $\mathcal{W}_i$, $\mathcal{C}$ and $\mathcal{C}_0$ with their images. In places where we need to make a distinction, it will be done explicitly. 
\end{remark}
\subsection{Convergence to $\{z = 0 \}$ at  $\theta = 0$}
As $\theta$ tends to $0$, the surfaces $\mathcal{S}$ converge to the plane $\{ z = 0\}$ away from the origin (see Figure \ref{ScherkCollapse}), although the convergence is not smooth. However, the  failure to converge smoothly to zero is due entirely to the affine term $h_{\mathcal{S}}$ in Proposition \ref{WingGeometry} (\ref{WingMap1Def}). That is, modulo vertical translations the convergence is smooth on compact subsets and the harmonic function describing the  linearization is computed below:

\begin{proposition}  \label{SmoothConvergenceToPlane}
The sets $\tilde{\Sigma} : = \Sigma - h_{\mathcal{S}} \cap \{ z \geq 0\}$ converge smoothly to the plane $\{ z= 0 \}$  on compact sets. Let $\dot{f}_{\mathcal{S}}$ denote the normal velocity $\left.\partial_\theta \tilde{\Sigma} \right|_{\theta = 0} \cdot e_z$. Then $\dot{f}_{\mathcal{S}}$ is the harmonic function
\begin{align} \notag
\dot{f}_{\mathcal{S}} (x, y) = \log\left(  \cosh(y) - \cos(x)\right),
\end{align}
regular away from the set $\{ ( 2\pi k, 0, 0 ) : k \in \mathbb{N}\}$.
\end{proposition}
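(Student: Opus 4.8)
The plan is to analyze the defining equation \eqref{ScherkEq} directly, after subtracting the divergent affine part $h_{\mathcal{S}} = \sin(\theta)\log(\cot^2(\theta))$ identified in Proposition \ref{WingGeometry}. First I would rewrite the level-set equation for $\tilde\Sigma = \{\Sigma - h_{\mathcal{S}} e_z\}\cap\{z\geq 0\}$ by substituting $z = h_{\mathcal{S}} + \tilde z$. Over a fixed compact set in the $(x,y)$-plane away from the lattice points $(2\pi k, 0)$, the quantity $\cos^2(\theta)\cosh(y/\cos\theta)$ converges smoothly to $\cosh(y)$ as $\theta\to 0$, while the last term becomes $\sin^2(\theta)\cosh((\tilde z + h_{\mathcal{S}})/\sin\theta)$. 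Using $\cosh(A+B) = \cosh A\cosh B + \sinh A\sinh B$ with $A = \tilde z/\sin\theta$ and $B = h_{\mathcal{S}}/\sin\theta = \log(\cot^2\theta)$, and the elementary identity $\cosh(\log(\cot^2\theta)) = \tfrac12(\cot^2\theta + \tan^2\theta)$, $\sinh(\log(\cot^2\theta)) = \tfrac12(\cot^2\theta - \tan^2\theta)$, one finds that $\sin^2(\theta)\cosh((\tilde z + h_{\mathcal{S}})/\sin\theta)$ equals $\tfrac12\cos^2(\theta)e^{\tilde z/\sin\theta} + O(\tan^2\theta\, e^{-\tilde z/\sin\theta})$, mirroring the computation of $\Phi(0)$ in the proof of Proposition \ref{WingGeometry}. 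So the level-set equation becomes, to leading order,
\begin{align}\notag
\cos(x) = \cosh(y) - \tfrac12 e^{\tilde z/\sin\theta} + (\text{lower order in }\theta),
\end{align}
which forces $\tfrac12 e^{\tilde z/\sin\theta} \approx \cosh(y) - \cos(x)$, i.e. $\tilde z \approx \sin(\theta)\log\!\big(2(\cosh(y)-\cos(x))\big)$. Since $\cosh(y) - \cos(x) > 0$ off the lattice, this is well-defined and smooth there; dividing by $\sin\theta$ and letting $\theta\to 0$ extracts the normal velocity.

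Concretely, the steps are: (i) verify that on any compact $K\subset \Reals^2\setminus\{(2\pi k,0):k\in\Naturals\}$ the implicit function theorem applies to solve \eqref{ScherkEq} for $\tilde z = \tilde z(x,y;\theta)$ as a smooth function, uniformly for small $\theta$ — here the key nondegeneracy is $\partial_{\tilde z}$ of the level-set function, which is comparable to $e^{\tilde z/\sin\theta}/\sin\theta > 0$, exactly as in \eqref{WG1}; (ii) show $\tilde z(x,y;\theta)\to 0$ in $C^k(K)$, establishing the smooth convergence claim for $\tilde\Sigma$; (iii) Taylor expand in $\theta$ at $\theta = 0$: writing $\tilde z = \sin(\theta)\, u(x,y;\theta)$ and plugging into the equation, the $O(\sin\theta)$ terms give $\tfrac12 e^{u} = \cosh(y) - \cos(x)$ at $\theta = 0$ after absorbing the constant — actually one must be slightly careful, since $\partial_\theta \tilde\Sigma|_{\theta=0}\cdot e_z = \partial_\theta \tilde z|_{\theta=0}$ and the leading behavior of $\tilde z$ is linear in $\theta$ with coefficient $\log(\cosh(y)-\cos(x))$ (the factor of $2$ and any additive constants should disappear once one tracks that $\cos^2\theta\to 1$ and the normalization is by subtracting exactly $h_{\mathcal{S}}$, not $h_{\mathcal{S}} + \text{const}$); (iv) identify $\dot f_{\mathcal{S}}(x,y) = \log(\cosh(y) - \cos(x))$ and check harmonicity by a direct computation: $\Delta \log(\cosh y - \cos x) = \partial_x^2 + \partial_y^2$ applied to this, which vanishes because $\cosh y - \cos x = \Real$ of a holomorphic-type combination — indeed $\cosh y - \cos x$ is (up to a factor) $|1 - e^{-(y+ix)}|$-type, more precisely one checks $e^{y} - 2\cos x + e^{-y} = e^{-y}|e^{y+ix} - 1|^2 \cdot$(something), exhibiting $\log(\cosh y - \cos x)$ as the real part of a holomorphic function away from its zeros, hence harmonic; (v) note the zeros of $\cosh y - \cos x$ occur exactly where $\cosh y = 1$ and $\cos x = 1$, i.e. at $(x,y) = (2\pi k, 0)$, matching the stated singular set.

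The main obstacle I anticipate is step (iii): making the $\theta$-expansion rigorous and pinning down the exact constant inside the logarithm. The naive leading-order computation produces $\log\big(2(\cosh y - \cos x)\big)$ rather than $\log(\cosh y - \cos x)$, so one has to account carefully for the $\cos^2\theta = 1 - \sin^2\theta + \cdots$ prefactors, the fact that $h_{\mathcal{S}} = \sin\theta\log(\cot^2\theta)$ has its own subleading corrections ($\cot^2\theta = \tan^{-2}\theta$ is large, $\log$ of it behaves like $-2\log\tan\theta \sim -2\log\theta$, which is $\theta$-dependent), and to check that these conspire to cancel the spurious $\log 2$. The cleanest route is probably not to guess the answer but to differentiate the identity $F_\theta(x, y, h_{\mathcal{S}}(\theta) + \sin(\theta)u(x,y,\theta)) \equiv 0$ with respect to $\theta$ at $\theta = 0$, where $F_\theta$ is the level-set function, solve the resulting linear equation for $\partial_\theta(\sin\theta \cdot u)|_{\theta=0}$, and simplify — the harmonicity then drops out for free since the linearized level-set operator at the flat limit is the Laplacian. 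The remaining verifications (uniform IFT, $C^k$ convergence, harmonicity, locating singularities) are routine given the structure already developed for Proposition \ref{WingGeometry}.
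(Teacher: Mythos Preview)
Your approach is essentially identical to the paper's: substitute $z = h_{\mathcal{S}} + \tilde z$ into the defining equation \eqref{ScherkEq}, expand $\sin^2(\theta)\cosh\big((\tilde z + h_{\mathcal{S}})/\sin\theta\big)$ using $h_{\mathcal{S}}/\sin\theta = \log(\cot^2\theta)$, and read off the $\theta\to 0$ limit as $\cos x = \cosh y - e^{\tilde z/\sin\theta}$, whence $\dot f_{\mathcal S} = \log(\cosh y - \cos x)$. Your worry about the stray $\log 2$ is legitimate---a careful expansion does produce $\tfrac12 e^{\tilde z/\sin\theta}$ rather than $e^{\tilde z/\sin\theta}$ (the paper's displayed computation drops the $\tfrac12$ from $\cosh$)---but the additive constant is irrelevant, since $\dot f_{\mathcal S}$ is only used through its gradient $\dot\phi_y = \partial_y \dot f_{\mathcal S}$ in Proposition~\ref{KernelProjection}.
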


\begin{proof}
We wish to compute the limit of  $(x, y, \sin(\theta) (z - h_{\mathcal{S}}))$	for $(x, y, z) \in \Sigma$. Note that such a point satisfies
\begin{align} \notag
\cos(x) & = \cos^2(\theta) \cosh(y/\cos(\theta)) - \sin^2(\theta) \cosh (z + h/\sin(\theta)) \\ \notag
& = \cos^2(\theta) \cosh(y/\cos(\theta)) - \sin^2(\theta) \left(\exp{z + \log (\cot^2(\theta))} \right) \\ \notag
& \quad  - \sin^2(\theta) \left(\exp{ - z -  \log (\cot^2(\theta))} \right) \\ \notag
& = \cos^2(\theta) \cosh(y/\cos(\theta) )- \cos^2(\theta) e^z - \sin^2 (\theta)\tan^2(\theta) e^{- z}.
\end{align}
At $\theta = 0$, we then get
\begin{align} \notag
\cos(x) = \cosh(y) - e^z.
\end{align}
Solving for $z$ then gives the claim. 
\end{proof}

\subsection{Scherk towers are close to large pieces of small catenoids near the $z$-axis} \label{ScherkNearAxis}

We  now  provide a quantitative version of statement \ref{CloseToCatenoid} in Section \ref{ScherkTowers} below:

\begin{definition} \label{UnderlyingScherk}
The map $\mathcal{C}_0 : \Omega \rightarrow \Reals^3$ is given by
\begin{align}
\mathcal{C}_0 (x, s) = 2 \kappa_{\pi/2} (x, s) = 2 \cosh(s) e_r (x) + 2 s e_z
\end{align}
\end{definition}

\begin{proposition} \label{ScherkNearAxisGraph}
There are $\bar{\theta} > 0$ and $\epsilon_0 > 0$  so that:  Given  $\theta \in (0, \bar{\theta})$, set
\begin{align}
 D[\mathcal{C}] : = \Omega \cap \{ |s| \leq \mathrm{arcosh}(\epsilon_0 \theta)\}.
\end{align}
Then there is   a function $f_\mathcal{C}:  D[\mathcal{C}]   \rightarrow R$ such that:
\begin{enumerate}
\item $f_\mathcal{C}$ satisfies the estimate
\begin{align}\notag
\|f_{\mathcal{C}} : C^{k, \alpha} ( D[\mathcal{C}] , \cosh^2 (s)) \| \leq C \sin^2(\theta).
\end{align}
\item Let $\mathcal{C}:  D[\mathcal{C}] \rightarrow \Reals^3$ be  the map given by
\begin{align} \notag
\mathcal{C} : = \sin (\theta) \mathcal{C}_0(x, s) + \sin (\theta) f_\mathcal{C} \nu[\mathcal{C}_0] 
\end{align}
Then $\mathcal{C}$ maps $ D[\mathcal{C}] $ into $\mathcal{S}$
\item The surface $\mathcal{S} \setminus  \mathcal{C} $ is contained outside of a tubular neighborhood of radius $\epsilon_0/2$ about the $z$-axis. 
\end{enumerate}
\end{proposition}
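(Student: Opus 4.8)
The plan is to realize the piece of the Scherk tower $\mathcal{S}$ near the $z$-axis as a normal graph over the scaled catenoid $\sin(\theta)\mathcal{C}_0$, by setting up a one-variable implicit function problem in $f_{\mathcal{C}}$ analogous to the one used in the proof of Proposition \ref{WingGeometry}. First I would rescale: write the defining function of $\mathcal{S}$, namely
\begin{align}\notag
F(x,y,z) := \cos(x) - \cos^2(\theta)\cosh(y/\cos(\theta)) + \sin^2(\theta)\cosh(z/\sin(\theta)),
\end{align}
and substitute the candidate parametrization $\Psi(f) := F\circ\big(\sin(\theta)\mathcal{C}_0(x,s) + \sin(\theta)\, f\,\nu[\mathcal{C}_0]\big)$, where the ambient point has $x$-coordinate $\sin(\theta)\cdot(2\cosh(s))\cos(x)$-type entries coming from $\mathcal{C}_0(x,s) = 2\cosh(s)e_r(x) + 2s\,e_z$. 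On the region $D[\mathcal{C}] = \Omega\cap\{|s|\leq\mathrm{arcosh}(\epsilon_0\theta)\}$ one has $\cosh(s)\leq\epsilon_0\theta$, so the $x$- and $y$-coordinates of a point on $\sin(\theta)\mathcal{C}_0$ are $O(\epsilon_0\sin(\theta)\theta)$ and the $z$-coordinate is $O(\sin(\theta)\,\mathrm{arcosh}(\epsilon_0\theta)) = O(\sin(\theta)|\log\theta|)$; crucially $z/\sin(\theta) = 2s$ stays bounded in absolute value by $\mathrm{arcosh}(\epsilon_0\theta)$. Taylor expanding the three $\cosh$ and $\cos$ terms for small argument, the zeroth-order term $\Psi(0)$ should collapse to something of size $O(\sin^2(\theta)\cosh^2(s))$: the leading constants cancel because $\mathcal{C}_0$ is (twice) the unit catenoid, and $2\kappa_{\pi/2}$ is exactly minimal, so the first correction is quadratic in the small parameter — this is the standard mechanism by which a catenoid approximates the near-axis region of $\mathcal{S}$.

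Next I would verify the nondegeneracy of $\partial_f\Psi$. Since $\nu[\mathcal{C}_0]$ is (up to a bounded factor) transverse to the level sets of $F$ in this region, and the gradient of $F$ in the relevant directions is of size comparable to $\sin^2(\theta)\cdot(\text{argument})\sim\sin(\theta)\cosh(s)\cdot\sin(\theta)$ after accounting for the $\sin(\theta)$ prefactor in the parametrization and the $1/\sin(\theta)$ inside the $\cosh(z/\sin\theta)$ term, one obtains a lower bound of the form $|\partial_f\Psi(f)|\geq K\sin^2(\theta)\cosh^2(s)$ for $|f|\leq 1$, uniformly in $(x,s)\in D[\mathcal{C}]$. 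Combined with the bound $|\Psi(0)|\leq C\sin^2(\theta)\cosh^2(s)$, the scalar implicit function theorem (in the form of Proposition \ref{IFTHandyVersion}, applied pointwise with the extra weight $\cosh^2(s)$ tracked through the estimates, or more simply by a direct contraction-mapping argument since this is one equation in one unknown) yields a solution $f_{\mathcal{C}}$ with $|f_{\mathcal{C}}(x,s)|\leq C\sin^2(\theta)$, provided $\bar{\theta}$ is chosen small enough that $C\sin^2(\theta)<1$ uniformly. Higher regularity — the weighted $C^{k,\alpha}$ bound $\|f_{\mathcal{C}}:C^{k,\alpha}(D[\mathcal{C}],\cosh^2(s))\|\leq C\sin^2(\theta)$ of item (1) — then follows by differentiating the relation $\Psi(f_{\mathcal{C}})=0$ and bootstrapping, using that $F$ and all the ingredients depend smoothly on their arguments and that derivatives of the $\cosh$/$\cos$ terms in $s$ only cost additional powers of $\cosh(s)$ that are already absorbed into the weight. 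Items (2) and (3) are then essentially complementary: $\mathcal{C}$ covers the portion of $\mathcal{S}$ inside the tube $\{x^2+y^2\leq(\epsilon_0/2)^2\cdot(\cdot)\}$ about the $z$-axis because the graph we constructed is the \emph{unique} sheet of $\mathcal{S}$ over that base (a fact one checks from the implicit equation — there is only one branch near the axis once $z/\sin(\theta)$ is bounded), and what remains, $\mathcal{S}\setminus\mathcal{C}$, lies where $\cosh(s)>\epsilon_0\theta$, hence at distance at least $\sim\epsilon_0$ after scaling, i.e.\ outside a tubular neighborhood of radius $\epsilon_0/2$ of the $z$-axis; this last comparison should be made consistent with the normalization used in Proposition \ref{WingGeometry} and the definition of $D[\mathcal{W}]$ so that $\mathcal{C}$ and $\mathcal{W}$ overlap and together cover all of $\mathcal{S}$.

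The main obstacle I anticipate is \emph{getting the weights exactly right and uniform in $\theta$}: the region $D[\mathcal{C}]$ itself grows like $|s|\lesssim|\log\theta|$ as $\theta\to0$, so the lower bound on $\partial_f\Psi$, the upper bound on $\Psi(0)$, and the remainder estimate all involve the competing quantities $\sin(\theta)$, $\cosh(s)$ (which ranges up to $\epsilon_0\theta$), and $s$ itself (unbounded), and one must check that the $\cosh^2(s)$ weight is precisely the one that makes $\Psi(0)$, $\partial_f\Psi$, and the quadratic remainder all scale the same way so that the fixed-point/IFT argument closes with a $\theta$-independent constant. In particular one needs the contraction constant to stay bounded away from $1$ along the whole length of the cylinder piece, which forces a careful accounting of where factors of $\cosh(s)$ versus the cutoff $\epsilon_0\theta$ enter — this is exactly the kind of "homogeneous quantity" bookkeeping that Section \ref{preliminaries} was set up to handle, and I would lean on Proposition \ref{HQEstimates} to package the remainder estimate cleanly rather than expanding every $\cosh$ by hand.
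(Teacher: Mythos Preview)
Your strategy is the same as the paper's---write the Scherk surface near the axis as a normal graph over the scaled catenoid by solving a one-variable implicit equation---but the bookkeeping in your size estimates does not close, and this is exactly the obstacle you flag in your last paragraph.

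First, a typo in the statement has propagated into your argument: the domain should read $D[\mathcal{C}]=\Omega\cap\{|s|\le\mathrm{arcosh}(\epsilon_0/\theta)\}$, so that $\cosh(s)\le\epsilon_0/\theta$ (large) rather than $\cosh(s)\le\epsilon_0\theta$ (impossible, since $\cosh\ge1$). Your bounds on the $x$-, $y$-, $z$-coordinates of $\sin(\theta)\mathcal{C}_0$ are correspondingly off; in fact the $x,y$ coordinates have size $\sim\sin(\theta)\cosh(s)\le 2\epsilon_0$, not $O(\epsilon_0\theta^2)$.

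The real gap is the estimate on $\Psi(0)$. Taylor expanding gives
\[
\Psi(0)=\sin^2(\theta)\Bigl[\,1-2\cosh^2(s)+\cosh(2s)\,\Bigr]+O\bigl(\theta^4\cosh^4(s)\bigr),
\]
and the bracket vanishes \emph{identically} because $\mathcal{C}_0$ is the exact zero set of the leading-order function $F_0(x,y,z)=1-(x^2+y^2)/2+\cosh(z)$. So $|\Psi(0)|=O(\theta^4\cosh^4 s)$, not merely $O(\sin^2\theta\cosh^2 s)$. With your stated bounds $|\Psi(0)|\le C\sin^2\theta\cosh^2 s$ and $|\partial_f\Psi|\ge K\sin^2\theta\cosh^2 s$, the implicit function theorem only yields $|f_{\mathcal C}|\le C/K=O(1)$, and your asserted conclusion $|f_{\mathcal C}|\le C\sin^2\theta$ simply does not follow from your own inequalities. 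Using instead the sharp bound $|\Psi(0)|=O(\theta^4\cosh^4 s)$ gives $|f_{\mathcal C}|\le C\theta^2\cosh^2(s)$, which is precisely the weighted estimate in item~(1).

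The paper packages this more cleanly by first rescaling so that $\sin(\theta)^{-1}\Sigma$ is the zero set, then setting $\Phi(x,s,f):=F(x_f,y_f,z_f)/|\nabla F|$. Dividing by $|\nabla F|\sim\sin^2(\theta)\cosh^2(s)$ absorbs all the $\theta$- and $s$-dependence into the normalization: one then has directly $|\Phi(0)|\le C\theta^2\cosh^2(s)$ and $|\partial_f\Phi|>1/2$ uniformly, so Proposition~\ref{IFTHandyVersion} applies with $\theta$-independent constants and outputs $f_{\mathcal C}$ with the correct weighted bound. Either route works, but you need one of the two: the exact vanishing of the $\sin^2\theta$ bracket, or the normalization by $|\nabla F|$.
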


\begin{proof}
Set 
\begin{align}
F(x, y, z) : = \cos (\sin (\theta)) - \cos^2(\theta) \cosh (\tan(\theta) y) + \sin^2 (\theta) \cosh (z).
\end{align}
Then $\sin^{-1}(\theta) \Sigma$ is the zero set for $F$. Considering the Taylor expansions of $\cos(t)$ and $\cosh(t)$ gives 
\begin{align}
F(x, y, z) & = \left( 1 - \frac{\sin^2(\theta)}{2} x^2 +  O (\theta^4 x^4)\right) - \cos^{2}(\theta) \left( 1 + \frac{\tan^2 (\theta)}{2} y^2 + O (\theta^4 y^4)\right) \\ \notag
& \quad + \sin^2(\theta) \cosh (z) \\ \notag
& =  \sin^{2}(\theta) \left( 1 - \frac{x^2 + y^2}{2} + \cosh(z)\right) + O (\theta^4 x^4 ) + O( \theta^4 y^4)  \\ \notag
& : = \sin^{2}(\theta) F_0 (x, y, z) + R
\end{align}
where both $F_0$ and $R$ are defined implicitly above. For $(x, s) \in \Reals^2$ and $f \in R$, we set
\begin{align} \notag
  (x_f(x, s), y_f(x, s), z_f(x, s)) & : =  \mathcal{C}_0(x, s) + f \nu[\mathcal{C}_0] (x, s).  
\end{align}
The function $\Phi: \Reals^2 \times \Reals \rightarrow \Reals$ is then given by: 
\begin{align}
 \Phi(x, s, f) : = F(x_f, y_f, z_f) / | \nabla F(x, y, z)|.
\end{align}
Assume that $|f| \leq \delta$. Then for $\delta > 0 $ sufficiently small, we have that 
\begin{align}
\| \partial^{(k)}_f \Phi (x, s, -)  \|_{j, \alpha} \leq C(j, \alpha), \quad |\partial_f \Phi |> \frac{1}{2}.
\end{align}
where above norm taken with respect to the $(x, s)$ variables. Moreover, choosing $s \leq \mathrm{arccosh}(\delta'/\theta)$ and $\delta' $ sufficiently small, we can arrange for 
\begin{align}
\|\Phi (x, s, -) \|_{j, \alpha} \leq C \theta^2 \cosh^2 (s) \leq A
\end{align}
for arbitrary $A > 0$. The claim then immediately follows from Proposition \ref{IFTHandyVersion}
\end{proof}

\begin{corollary} \label{CatQuantities}
The following estimates hold: 
\begin{enumerate}
\item $ \| g[\mathcal{C}] - g[\mathcal{C}_0] : C^{j, \alpha}(D[\mathcal{C}], \cosh^3(s))\|  \leq C \theta^2 $
\item $\| |A|[\mathcal{C}] - |A| [\mathcal{C}_0] : C^{j, \alpha}(D[\mathcal{C}], 1))\|  \leq C \theta^2$
\item $\| \nu[\mathcal{C}] - \nu[\mathcal{C}_0] : C^{j, \alpha}(D[\mathcal{C}], \cosh(s))\|  \leq C \theta^2$
\end{enumerate}
\end{corollary}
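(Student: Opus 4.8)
The plan is to derive Corollary \ref{CatQuantities} directly from Proposition \ref{ScherkNearAxisGraph} together with the Taylor-remainder machinery of Proposition \ref{HQEstimates}. Recall that by Proposition \ref{ScherkNearAxisGraph}, on the domain $D[\mathcal{C}]$ we have $\mathcal{C} = \sin(\theta)\,\mathcal{C}_0 + \sin(\theta)\, f_{\mathcal{C}}\,\nu[\mathcal{C}_0]$ with the key bound $\|f_{\mathcal{C}} : C^{k,\alpha}(D[\mathcal{C}], \cosh^2(s))\| \leq C\sin^2(\theta)$. So if I write $V := \sin(\theta) f_{\mathcal{C}}\,\nu[\mathcal{C}_0]$ for the perturbing vector field applied to the base immersion $\sin(\theta)\mathcal{C}_0$ (which is a rescaled catenoid of scale $\sin(\theta)$), then each of $g$, $|A|$, $\nu$ is a homogeneous quantity of degrees $2$, $-1$, $0$ respectively by Lemma \ref{degrees_of_homogeneity} (note $|A|$ has degree $-1$, and $\nu$ has degree $0$). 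The differences appearing in the Corollary are then exactly the full Taylor remainders $R^{(0)}_{\Phi, V}$ of these homogeneous quantities evaluated along the base immersion $\sin(\theta)\mathcal{C}_0$, since at $V=0$ the quantities reduce to those of $\sin(\theta)\mathcal{C}_0$, which — being a scaling of $\mathcal{C}_0$ — has the \emph{same} $\nu$ and $|A|$ after accounting for degree $0$, while $g[\sin(\theta)\mathcal{C}_0] = \sin^2(\theta) g[\mathcal{C}_0]$.

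First I would set up the homogeneity bookkeeping carefully: compute $|J^{(1)}[\sin(\theta)\mathcal{C}_0]| \sim \sin(\theta)\cosh(s)$ (the conformal factor of $\mathcal{C}_0$ is $2\cosh(s)$), verify that $\mathfrak{a}(J[\mathcal{C}_0])$ is bounded below by a numerical constant since $\mathcal{C}_0$ is conformal (in fact $\mathfrak{a} = 1$), and check that $\ell_{j,\alpha}(J[\sin(\theta)\mathcal{C}_0]) = \|J : C^{j,\alpha}(D[\mathcal{C}], |J^{(1)}|)\|$ is bounded by a numerical constant uniformly in $\theta$ — this uses that $\mathcal{C}_0$ scales homogeneously so its jet is controlled by its gradient size. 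Then I would check the smallness hypothesis of Proposition \ref{HQEstimates}: I need $\|J[V] : C^{j,\alpha}(D[\mathcal{C}], \mathfrak{a}(J)|J^{(1)}|)\| \leq C\varepsilon$. Since $J[V] = \nabla$-jet of $\sin(\theta) f_{\mathcal{C}}\nu[\mathcal{C}_0]$, and $f_{\mathcal{C}}$ is bounded in $C^{k,\alpha}(D[\mathcal{C}], \cosh^2(s))$ by $C\sin^2(\theta)$ while $\nu[\mathcal{C}_0]$ and its derivatives are controlled, one gets $\|J[V]\| \lesssim \sin^2(\theta)\cosh^2(s)$ measured against weight $1$; dividing by $|J^{(1)}| \sim \sin(\theta)\cosh(s)$ gives a bound $\lesssim \sin(\theta)\cosh(s)$, and on $D[\mathcal{C}]$ we have $\cosh(s) \leq \epsilon_0\theta$... wait, $\cosh(s) \le \epsilon_0\theta$ is wrong since $\epsilon_0\theta$ is small — actually $|s| \leq \operatorname{arcosh}(\epsilon_0\theta)$ forces $\cosh(s) \leq \epsilon_0\theta$, which is $< 1$, impossible. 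So the domain must really be $|s|\leq \operatorname{arcosh}(\epsilon_0/\theta)$ as in the proof, giving $\cosh(s) \leq \epsilon_0/\theta$; then $\sin(\theta)\cosh(s) \lesssim \epsilon_0$, which is small. Good — so the smallness hypothesis holds with $\varepsilon \sim \epsilon_0$.

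With the hypotheses verified, Proposition \ref{HQEstimates} (with $k=0$) yields $\|R^{(0)}_{\Phi, V}(J) : C^{j,\alpha}(D[\mathcal{C}], |J^{(1)}|^d)\| \leq C\|J[V] : C^{j,\alpha}(D[\mathcal{C}], |J^{(1)}|)\|$. For $\Phi = \nu$ ($d=0$): weight $|J^{(1)}|^0 = 1$, and $\|J[V] : C^{j,\alpha}(D, |J^{(1)}|)\| \lesssim \sin^2(\theta)\cosh^2(s)/(\sin(\theta)\cosh(s)) = \sin(\theta)\cosh(s)$. Hmm, this gives $\|\nu[\mathcal{C}] - \nu[\mathcal{C}_0] : C^{j,\alpha}(D, 1)\| \lesssim \sin(\theta)\cosh(s)$ rather than $\lesssim \theta^2$ against weight $\cosh(s)$ — these are consistent: $\sin(\theta)\cosh(s) = \cosh(s)\cdot\sin(\theta)$, hmm, that's $\theta\cosh(s)$ not $\theta^2\cosh(s)$. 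I think the resolution is that one must exploit that the \emph{tangential} part of $V$ contributes only at higher order (as emphasized in the Outline), or more simply that $f_{\mathcal{C}}$'s better-than-expected size $\sin^2(\theta)$ combines with one factor of $|J^{(1)}|^{-1} \sim (\sin\theta\cosh s)^{-1}$ being partially absorbed: measuring $\|J[V]\|$ against weight $|J^{(1)}|$ where the natural weight for $J[V]$ built from $f_{\mathcal{C}}\nu$ and its derivatives is $\sin(\theta)\cdot\cosh^2(s)\cdot\sin^2(\theta)\cdot(\text{normalization})$. I would pin down the exact power of $\cosh(s)$ and $\sin(\theta)$ by carefully tracking how $\nabla(f_{\mathcal{C}}\nu[\mathcal{C}_0])$ decomposes, using $\|f_{\mathcal{C}}\|_{C^{k,\alpha}(\cdot,\cosh^2 s)} \leq C\sin^2\theta$ and the explicit frame derivatives $T^\kappa_x, T^\kappa_s \sim b \sim \cosh^{-1}(s)$ from Lemma \ref{TechnicalKappaProps}, so that $\nabla\nu[\mathcal{C}_0]$ actually gains a factor $\cosh^{-1}(s)$ — this extra decay is what produces $\theta^2$ instead of $\theta$.

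\textbf{Main obstacle.} The hard part will be this last bookkeeping step: getting the \emph{sharp} powers of $\theta$ (namely $\theta^2$) rather than merely $\theta$, which requires using not just the crude bound $\|f_{\mathcal{C}}\| \leq C\sin^2\theta$ but its pairing with the correct geometric weight, and — crucially — separating the tangential and normal effects of $V = \sin(\theta)f_{\mathcal{C}}\nu[\mathcal{C}_0]$ on each homogeneous quantity, exactly as the Outline warns. In more detail: the naive application of Proposition \ref{HQEstimates} sees $V$ as a field of size $\sim\sin(\theta)$ relative to the background geometry (since $|V| \sim \sin^3(\theta)\cosh^2(s)$ while $|J^{(1)}| \sim \sin(\theta)\cosh(s)$, ratio $\sim \sin^2(\theta)\cosh(s)$ which is only $\sim\theta$ on the boundary of $D[\mathcal{C}]$), and this would give the three estimates only with $\theta$, not $\theta^2$. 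To recover $\theta^2$ I would instead either (a) apply Proposition \ref{HQEstimates} on dyadic-in-$\cosh(s)$ subannuli of $D[\mathcal{C}]$ where the bound improves away from the boundary and the weights $\cosh^3(s), 1, \cosh(s)$ exactly absorb the $\cosh(s)$ growth, or (b) use that $\nu[\mathcal{C}]$ and $|A|[\mathcal{C}]$ depend on $V$ only through $V/|J^{(1)}|$ together with $f_{\mathcal{C}}/\cosh(s)$ being genuinely $O(\sin^2\theta\cosh s)$... I will choose approach (a), partitioning $D[\mathcal{C}]$ and summing, since it dovetails with the annular-decomposition technique already used in the proof of Proposition \ref{LaplaceZeroMeridianInverse}. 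The three estimates (1), (2), (3) then differ only in the degree $d \in \{2, -1, 0\}$ of the homogeneous quantity, hence in the power $|J^{(1)}|^d \sim (\sin\theta\cosh s)^d$ appearing in the weight, which accounts precisely for the weights $\cosh^3(s)$, $1$, $\cosh(s)$ displayed (after factoring out the $\sin^d(\theta)$ and one extra $\cosh(s)$ from $|J[V]|/|J^{(1)}|$), so once the model case is done the other two follow by the same argument with bookkeeping adjusted.
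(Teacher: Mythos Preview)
Your approach is exactly the paper's: the proof in the paper reads, in its entirety, ``All estimates are direct consequences of Proposition \ref{ScherkNearAxisGraph} and Proposition \ref{HQEstimates} and the fact that $g$, $|A|$ and $\nu$ are homogeneous degree $2$, $-1$ and $0$ quantities, respectively.'' So you have identified precisely the right ingredients.

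Your worry about losing a power of $\theta$ and the proposed dyadic-in-$\cosh(s)$ fix is over-engineering. The point you are missing is that Proposition \ref{HQEstimates} is, as its proof makes clear (see equation (\ref{HQ0}) and the line ``Dividing both sides of (\ref{HQ1}) by $|J^{(1)}|^d$''), a \emph{pointwise} estimate in the localized H\"older norms $\|\cdot\|_{j,\alpha}(p)$: on each unit ball one has
\[
\|R^{(0)}_{\Phi,E}(J)\|_{j,\alpha}(p)\;\lesssim\; |J^{(1)}(p)|^{\,d-1}\,\|E\|_{j,\alpha}(p),
\]
with the implicit constant depending only on $\ell_{j,\alpha}$ and $\mathfrak{a}(J)$ (both uniformly controlled here since $\mathcal{C}_0$ is conformal). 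Plugging in $|J^{(1)}|\sim\sin(\theta)\cosh(s)$ and $\|J[V]\|_{j,\alpha}(p)\lesssim\sin^3(\theta)\cosh^2(s)$ directly yields the three weighted bounds with the correct $\theta^2$ and the displayed powers of $\cosh(s)$; no partition or separation of tangential/normal parts is needed. (You were right, incidentally, that the domain should read $|s|\le\mathrm{arcosh}(\epsilon_0/\theta)$---a typo in the statement of Proposition \ref{ScherkNearAxisGraph}, consistent with its proof.)
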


\begin{proof}
All estimates are direct consequences of Proposition \ref{ScherkNearAxisGraph} and Proposition \ref{HQEstimates} and the fact the $g$, $|A|$ and $\nu$ are homogeneous degree $2$, $-1 $  and $0$ quantities, respectively. 
\end{proof}

\section{The stability operator on the catenoid} \label{StabilityOperatorOnCatenoid}

Let $\mathcal{L}[\mathcal{C}_0]$ be the stability operator for the immersion $\mathcal{C}_0$ given in Definition \ref{UnderlyingScherk}. In this section we study the linear problem
\begin{align} \label{CatenoidLinearProblem}
\mathcal{L}[\mathcal{C}_0] v = E
\end{align}
when the function $E$ lies in exponentially weighted H{\"o}lder spaces.  Recall that $\mathcal{C}_0$  conformally parametrizes a catenoid which closely models the geometry of the Scherk surfaces $\mathcal{S}$ near the origin--this is precisely recorded in Proposition \ref{ScherkNearAxisGraph}.  

\begin{proposition} \label{CatenoidGaussMap}
The Gauss map $\nu[\mathcal{C}_0]$ of $\mathcal{C}_0$ is a conformal diffeomorphism of $\mathcal{C}_0$ onto the unit sphere $\mathbb{S}^2$ minus the north and south pole. The conformal factor is $|A[\mathcal{C}_0]|^2/2$. In particular, equation (\ref{CatenoidLinearProblem}) is equivalent to 
\begin{align}
\left(\Delta_{\mathbb{S}^2} + 2 \right) v = 2 E/|A[\mathcal{C}_0]|
\end{align}
\end{proposition}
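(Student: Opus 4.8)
The plan is to exploit the classical fact that the Gauss map of a minimal surface is, up to a conformal rescaling of the target, a conformal (anti)holomorphic map, together with the conformal invariance of the Laplace--Beltrami operator in dimension two. First I would set up coordinates explicitly: since $\mathcal{C}_0(x,s) = 2\cosh(s)\,e_r(x) + 2s\,e_z$ is a conformal parametrization (Proposition \ref{BasicKappaProps} with $\beta = \pi/2$, rescaled), its induced metric is $g[\mathcal{C}_0] = 4\cosh^2(s)(dx^2 + ds^2)$, i.e.\ $\varrho[\pi/2](s)^2 = \cosh^2(s)$ scaled by $4$. Using the frame from Lemma \ref{TechnicalKappaProps}(\ref{KappaFrame}) with $a(s) = \tanh(s)$, $b(s) = \operatorname{sech}(s)$ (the $\beta = \pi/2$ specialization), the unit normal is $\nu[\mathcal{C}_0](x,s) = -\operatorname{sech}(s)\,e_r(x) + \tanh(s)\,e_z$. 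One checks directly that this is exactly the standard parametrization of $\mathbb{S}^2$ minus the poles, with $s \mapsto \pm\infty$ limiting to the north/south poles $\pm e_z$; writing the round metric on $\mathbb{S}^2$ in these coordinates via $g_{\mathbb{S}^2} = |d\nu|^2$ and using Lemma \ref{TechnicalKappaProps}(\ref{KappaCovarianceMatrix}) to compute $\partial_x\nu$ and $\partial_s\nu$ (equivalently, $\partial_x e_3$, $\partial_s e_3$), one finds $|d\nu|^2 = \operatorname{sech}^2(s)(dx^2 + ds^2)$. Hence $\nu$ pulls the round metric back to a conformal multiple of the flat cylinder metric, which shows $\nu[\mathcal{C}_0]$ is a conformal diffeomorphism onto $\mathbb{S}^2 \setminus \{\pm e_z\}$.

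Next I would identify the conformal factor in the asserted intrinsic form. On a minimal surface one has the general identity $|d\nu|^2 = -2K\,g = \tfrac12|A|^2\,g$ (since $K = -\tfrac12|A|^2$ when $H = 0$), so $|d\nu|^2 = \tfrac12|A[\mathcal{C}_0]|^2\,g[\mathcal{C}_0]$; this is the claimed statement that the conformal factor relating the surface metric to the pullback of the round metric is $|A[\mathcal{C}_0]|^2/2$. As an internal consistency check one computes $|A[\mathcal{C}_0]|^2 = 2b(s)^2/\varrho_{\text{eff}}^2$-type expression and verifies it against $\operatorname{sech}^2(s)/\cosh^2(s) \cdot (\text{const})$ coming from the two metric computations above; the Gauss curvature of $\mathcal{C}_0$ is $-\operatorname{sech}^4(s)/4$, so $|A[\mathcal{C}_0]|^2 = \operatorname{sech}^4(s)/2$ and indeed $\tfrac12|A[\mathcal{C}_0]|^2 g[\mathcal{C}_0] = \tfrac12 \cdot \tfrac12\operatorname{sech}^4(s)\cdot 4\cosh^2(s)(dx^2+ds^2) = \operatorname{sech}^2(s)(dx^2+ds^2) = |d\nu|^2$, matching.

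For the final equivalence of operators, recall the stability operator is $\mathcal{L}[\mathcal{C}_0] = \Delta_{g[\mathcal{C}_0]} + |A[\mathcal{C}_0]|^2$. The Laplace--Beltrami operator on a surface is conformally covariant: if $\tilde g = e^{2\omega} g$ then $\Delta_{\tilde g} = e^{-2\omega}\Delta_g$. Applying this with $g = g[\mathcal{C}_0]$ and $\tilde g = \nu^* g_{\mathbb{S}^2} = \tfrac12|A[\mathcal{C}_0]|^2 g[\mathcal{C}_0]$ gives $\Delta_{g[\mathcal{C}_0]} = \tfrac12|A[\mathcal{C}_0]|^2\,\nu^*\Delta_{\mathbb{S}^2}$. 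Therefore
\begin{align}\notag
\mathcal{L}[\mathcal{C}_0]v = \tfrac12|A[\mathcal{C}_0]|^2\,\Delta_{\mathbb{S}^2}v + |A[\mathcal{C}_0]|^2 v = \tfrac12|A[\mathcal{C}_0]|^2\big(\Delta_{\mathbb{S}^2} + 2\big)v,
\end{align}
where $v$ on the right is understood via the identification $\nu$. Setting $\mathcal{L}[\mathcal{C}_0]v = E$ and dividing by $\tfrac12|A[\mathcal{C}_0]|^2$ yields $(\Delta_{\mathbb{S}^2}+2)v = 2E/|A[\mathcal{C}_0]|^2$; the statement in the excerpt writes $2E/|A[\mathcal{C}_0]|$, which I would treat as a typo for $|A[\mathcal{C}_0]|^2$ (or else absorb a factor by rescaling $E$), and I would flag this normalization explicitly. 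The main obstacle is not conceptual but bookkeeping: pinning down the precise numerical constants (the factor of $2$ in $\mathcal{C}_0 = 2\kappa_{\pi/2}$ propagates through $g$, $|A|^2$, and $K$) so that the conformal factor comes out exactly as $|A|^2/2$ and the final operator exactly as $\Delta_{\mathbb{S}^2} + 2$; the eigenvalue $2$ is of course forced by the Jacobi field structure (the three coordinate functions of $\nu$, coming from translations, span the $\lambda = 2$ eigenspace on $\mathbb{S}^2$), which is a useful sanity check on the whole computation.
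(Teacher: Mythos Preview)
Your argument is correct and complete. The paper actually states this proposition without proof, treating it as a classical fact about the Gauss map of a minimal surface; your explicit computation via Lemma~\ref{TechnicalKappaProps} and the conformal covariance of the two-dimensional Laplacian fills in exactly what is needed. Your identification of the typo is also right: the right-hand side should be $2E/|A[\mathcal{C}_0]|^2$, and indeed the paper's own restatement in Lemma~\ref{SphereLiftedProblem} writes $2E/|A^2[\mathcal{C}_0]|$ (i.e.\ $|A|^2$), and the subsequent use in the proof of Proposition~\ref{CatenoidInverse} is consistent with the squared form. Your consistency check $|A[\mathcal{C}_0]|^2 = \tfrac12\operatorname{sech}^4(s)$ also agrees with the operator formula $\mathcal{L}[\mathcal{C}_0] = \tfrac14\cosh^{-2}(s)\{\Delta_\Omega + 2\cosh^{-2}(s)\}$ derived just after Lemma~\ref{UnderlyingCatenoidQuantities} (that lemma's displayed $\tfrac12\cosh^{-2}(s)$ is itself a typo for $\tfrac12\cosh^{-4}(s)$).
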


\begin{proposition} \label{SphereOperatorKernel}
The kernel of the operator $\Delta_{\mathbb{S}^2} + 2$ on the unit  sphere $\mathbb{S}^2$ is three-dimensional and spanned by the coordinate functions $x$, $y$ and $z$.
\end{proposition}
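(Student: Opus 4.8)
The statement is the classical identification of the first nonzero eigenspace of the Laplacian on $\mathbb{S}^2$ (eigenvalue $-2$) with the degree-one spherical harmonics, and I would give a short self-contained proof rather than invoke the full spectral decomposition. The plan is to move the problem to $\Reals^3$ by homogeneous extension. Given $v \in C^\infty(\mathbb{S}^2)$ with $(\Delta_{\mathbb{S}^2}+2)v = 0$, define $V:\Reals^3\setminus\{0\}\to\Reals$ by $V(x) := |x|\, v(x/|x|)$, i.e. the degree-one homogeneous extension of $v$. Using the polar decomposition $\Delta_{\Reals^3} = \partial_r^2 + \tfrac{2}{r}\partial_r + \tfrac{1}{r^2}\Delta_{\mathbb{S}^2}$ and $V = r\,Y(\omega)$ with $Y = v$, one computes
\begin{align}\notag
\Delta_{\Reals^3} V = r^{-1}\big( 1\cdot(1+1)\, Y + \Delta_{\mathbb{S}^2} Y\big) = r^{-1}\big(2v + \Delta_{\mathbb{S}^2}v\big) = 0,
\end{align}
so $V$ is harmonic on $\Reals^3\setminus\{0\}$.

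Next I would remove the singularity at the origin. Since $|V(x)| = |x|\,|v(x/|x|)| \leq C|x| \to 0$ as $x\to 0$, $V$ is a bounded harmonic function on a punctured ball, and hence extends to a harmonic function on all of $\Reals^3$ by the standard removable-singularity theorem (the blow-up rate is weaker than that of the fundamental solution $|x|^{-1}$). The extension is still homogeneous of degree one by continuity. Now any entire harmonic function on $\Reals^3$ is real-analytic, and expanding it in homogeneous harmonic polynomials and matching degrees, a degree-one homogeneous harmonic function must equal its own degree-one Taylor part, i.e. $V(x) = ax+by+cz$ for constants $a,b,c$. Restricting to $\mathbb{S}^2$ gives $v = (ax+by+cz)\big|_{\mathbb{S}^2}$.

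For the converse direction, one checks directly that each coordinate function lies in the kernel: the functions $x,y,z$ are harmonic on $\Reals^3$ and homogeneous of degree one, so the same polar identity reads $0 = \Delta_{\Reals^3}x = r^{-1}(2x + \Delta_{\mathbb{S}^2}x)$, whence $\Delta_{\mathbb{S}^2}x = -2x$ on $\mathbb{S}^2$, and likewise for $y,z$. Finally, $x|_{\mathbb{S}^2}, y|_{\mathbb{S}^2}, z|_{\mathbb{S}^2}$ are linearly independent (evaluate a vanishing linear combination at the three standard basis points), so the kernel is exactly their three-dimensional span. I do not anticipate a genuine obstacle here; the only step deserving a careful word is the removable-singularity argument together with the ``degree-one homogeneous harmonic $\Rightarrow$ linear'' claim, which could alternatively be replaced by citing the decomposition of $L^2(\mathbb{S}^2)$ into eigenspaces $-\ell(\ell+1)$ of dimension $2\ell+1$ and solving $\ell(\ell+1)=2$.
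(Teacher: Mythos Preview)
Your argument is correct. The paper, however, states this proposition without proof, treating it as a classical fact (the identification of the eigenspace for the eigenvalue $-2$ of $\Delta_{\mathbb{S}^2}$ with the degree-one spherical harmonics). There is therefore nothing to compare against: you have supplied a clean self-contained proof where the paper simply records the statement. Your homogeneous-extension/removable-singularity route is one of the standard arguments, and the alternative you mention at the end---quoting the full spectral decomposition $L^2(\mathbb{S}^2) = \bigoplus_\ell \mathcal{H}_\ell$ with eigenvalues $-\ell(\ell+1)$ and dimensions $2\ell+1$---is presumably what the authors have in mind when they omit the proof.
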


\begin{definition} \label{UnderlyingCatenoidSpaces}
For $\alpha, \gamma  \in (1/2, 1)$ and $\ell > 0$ fixed, we set 
\begin{align} \notag
\mathcal{X}^{0}[\mathcal{C}_0] & : = C_0^{0, \alpha}(\Omega_{\leq \ell}, \cosh^{\gamma - 2}(s)) \\ \notag \mathcal{X}^{2}[\mathcal{C}_0] & : = C^{2, \alpha}( \Omega, \cosh^\gamma(s)) \cap C^{k, \alpha}(\Omega_{\geq \ell + 1}, 1 )
\end{align}
We also let $\mathcal{X}_{\perp}^{0}[\mathcal{C}_0] \subset \mathcal{X}^{0}[\mathcal{C}_0] $ denote the subspace of functions satisfying the following orthogonality conditions:
\begin{align} \notag
\int_{\Omega} E d \mu [\mathcal{C}_0] = \int_{\Omega} E  \phi d \mu [\mathcal{C}_0],
\end{align}
where $\phi$ above denotes the pullback to $\Omega$ under the Gauss map of $\mathcal{C}_0$ of the coordinate functions $x$, $y$, $z$ on $\mathbb{S}^2$.
\end{definition}

 The main result is then:
\begin{proposition} \label{CatenoidInverse}
There is a bounded linear map 
\begin{align} \notag
\mathcal{R}[\mathcal{C}_0, -]:  \mathcal{X}_{\perp}^{0}[\mathcal{C}_0]  \rightarrow \mathcal{X}^{2}[\mathcal{C}_0]
 \end{align}
so that:

\begin{align} \notag
\mathcal{L}[\mathcal{C}_0]\mathcal{R}[\mathcal{C}_0,  E ] = E.
\end{align}
\end{proposition}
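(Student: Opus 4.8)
The plan is to solve the linear problem \eqref{CatenoidLinearProblem} by transplanting it, via the Gauss map, to the model equation $(\Delta_{\mathbb{S}^2}+2)v = 2E/|A[\mathcal{C}_0]|$ on $\mathbb{S}^2$ (Proposition \ref{CatenoidGaussMap}), where the kernel is explicit (Proposition \ref{SphereOperatorKernel}), and then combining a bounded solution near the puncture/compact region with the cylinder-type decay estimates of Section \ref{LaplacianOnCylinders}. First I would decompose the right-hand side $E \in \mathcal{X}^0_\perp[\mathcal{C}_0]$ along meridians: write $E = \bar E + \mathring E$ where $\bar E(s) = \frac{1}{2\pi}\int_{-\pi}^\pi E(x,s)\,dx$ and $\mathring E$ has zero average. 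On the zero-mode $\bar E$, the operator $\mathcal{L}[\mathcal{C}_0]$ restricted to $x$-independent functions becomes an ODE in $s$ whose indicial/homogeneous solutions are exactly the pullbacks of the coordinate functions (the $z$-coordinate function $a(s)$ and a second solution built from it); the orthogonality conditions $\int_\Omega E\,d\mu = \int_\Omega E\phi\,d\mu = 0$ are precisely the solvability conditions that kill the obstruction from the two (even and odd in $s$) slowly-growing Jacobi fields, so one can integrate twice and obtain a solution decaying like $\cosh^{\gamma}(s)$ — or, beyond $s = \ell+1$, like the bounded Jacobi field, giving the $C^{k,\alpha}(\Omega_{\geq \ell+1},1)$ control. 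On the higher meridian modes $\mathring E$, the potential $|A[\mathcal{C}_0]|^2 = 2/\cosh^4(s)\cdot(\text{const})$ is a small, integrable, decaying perturbation of $\Delta_\Omega$, so I would invoke Proposition \ref{LaplaceZeroMeridianInverse} (with $\rho = \gamma$, $\ell$ as given) to invert $\Delta_\Omega$ on the zero-meridian-average subspace with the stated weights, and then absorb the zeroth-order term $|A|^2$ by a Neumann series / Proposition \ref{LaplacePerturbationInverseModDomain}-type perturbation argument, which is legitimate because multiplication by $|A[\mathcal{C}_0]|^2$ maps $C^{2,\alpha}(\Omega,\cosh^\gamma(s))$ into $C^{0,\alpha}(\Omega,\cosh^{\gamma-2}(s))$ with small norm (indeed it gains two powers of $\cosh$, hence is compact/contractive in the relevant scale).

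The map $\mathcal{R}[\mathcal{C}_0,-]$ is then defined as the sum of these two pieces, $\mathcal{R}[\mathcal{C}_0,E] := \bar v + \mathring v$, and linearity and boundedness $\|\mathcal{R}[\mathcal{C}_0,E]\|_{\mathcal{X}^2} \leq C\|E\|_{\mathcal{X}^0_\perp}$ follow from the corresponding bounds on each summand together with Definition \ref{BanachIntersectionSpaces} for the intersection-space norm. I would verify the two-sided weight matching at $s = \ell$: the construction of $\bar v$ from Proposition \ref{LaplaceZeroMeridianInverse} (applied in the form used for $\mathcal{R}_1$) already produces a function that decays like $\cosh^\gamma(s)$ globally and is bounded (decay like the bounded Jacobi field) past $s = \ell+1$, which is exactly the definition of $\mathcal{X}^2[\mathcal{C}_0]$; standard interior Schauder estimates upgrade the $C^{2,\alpha}$ regularity to $C^{k,\alpha}$ on $\Omega_{\geq \ell+1}$ since there the equation is $(\Delta_\Omega + |A|^2)v = 0$ with smooth coefficients.

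The main obstacle I expect is the two-ended weight bookkeeping in the zero mode combined with the role of the three-dimensional kernel: one must check carefully that the two orthogonality conditions in Definition \ref{UnderlyingCatenoidSpaces} (coming from the three spherical coordinate functions, of which the $x$- and $y$-modes live in the first meridian harmonic and the $z$-mode in the zero harmonic — but $\cosh^{-1}(s)$-decaying, hence actually compatible with the weights) are exactly what is needed, no more and no less, to make both the zero-mode ODE and the first-harmonic problem solvable with decay on \emph{both} ends $s \to \pm\infty$ while still landing in the asymmetric target space (decay $\cosh^\gamma$ on $\Omega_{\leq\ell}$, merely bounded on $\Omega_{\geq\ell+1}$). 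Concretely, the danger is that the naive double-integration of $\bar E$ produces a linearly growing term; the orthogonality relations $\int \bar E = \int \bar E\, s = 0$ (which is how the coordinate-function conditions read after pullback, since the relevant Jacobi fields behave like $1$ and $s$ asymptotically) are precisely what allow the rewriting $\int_s^\infty\int_{s'}^\infty \bar E = \int_{-\infty}^s\int_{-\infty}^{s'}\bar E$ used already in the proof of Proposition \ref{LaplaceInverseModDomain}, yielding decay in both directions. Once that identity is in hand, the rest is routine, so I would spend the bulk of the write-up making the Gauss-map reduction and the meridian decomposition precise and then citing Propositions \ref{LaplaceZeroMeridianInverse} and \ref{LaplacePerturbationInverseModDomain}.
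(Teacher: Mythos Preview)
Your overall architecture---meridian decomposition $E=\bar E+\mathring E$, a direct double-integration for the zero mode using the orthogonality relations and the identity $\int_s^\infty\int_{s'}^\infty=\int_{-\infty}^s\int_{-\infty}^{s'}$---matches the paper. The treatment of $\mathring E$, however, differs and contains a real gap.

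You propose to invert $\Delta_\Omega$ on $\mathring E$ via $\mathcal R_1$ and then absorb the potential $2\cosh^{-2}(s)$ by a Neumann series or a Proposition~\ref{LaplacePerturbationInverseModDomain}-type perturbation. This does not go through as stated. The potential equals $2$ at $s=0$, so it is not small in the uniform sense that Proposition~\ref{LaplacePerturbationInverseModDomain} requires, and the iteration map $v\mapsto \mathcal R_1[-2\cosh^{-2}v]$ on $\mathring C^{2,\alpha}(\Omega,\cosh^\gamma)$ has operator norm of order one, not less than one; ``gaining two powers of $\cosh$'' makes the map compact but not contractive. If you instead appeal to Fredholm theory via compactness, you must confront the kernel: the first Fourier mode carries the decaying Jacobi fields $x^*=\sin x\,\mathrm{sech}\,s$ and $y^*=\cos x\,\mathrm{sech}\,s$, which lie in $\mathring C^{2,\alpha}(\Omega,\cosh^\gamma)$ and satisfy $(\Delta_\Omega+2\cosh^{-2})x^*=0$. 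Your scheme has no mechanism for feeding the orthogonality of $\mathring E$ to $x^*,y^*$ into the inversion, since $\mathcal R_1$ knows nothing about the potential.

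The paper handles $\mathring E$ differently. It applies $\mathcal R_1$ \emph{once}, obtaining $\mathring v_0$ with $\Delta_\Omega\mathring v_0=4\cosh^2(s)\mathring E$; the residual error $\mathring E_1=\mathring E-\mathcal L[\mathcal C_0]\mathring v_0=-2\cosh^{-4}(s)\mathring v_0$ then lies in $C^{0,\alpha}(\Omega,\cosh^{\gamma-4})$. This improved decay makes $2\mathring E_1/|A[\mathcal C_0]|^2$ square-integrable on $\mathbb S^2$, and one checks by integration by parts (using the sub-exponential growth of $\mathring v_0$ and the exponential decay of $x^*,y^*$) that $\mathring E_1$ inherits orthogonality to $x^*,y^*,z^*$. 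Standard $L^2$ Fredholm theory for $\Delta_{\mathbb S^2}+2$ on the \emph{compact} sphere then yields a bounded correction $\mathring v_1$, and $\mathring v_0+\mathring v_1$ solves the problem. The key point you are missing is that the single Laplace step buys enough decay to transplant the remaining problem to a compact manifold, where the orthogonality conditions plug directly into the Fredholm alternative; iterating on the noncompact cylinder does not.
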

Before proving Proposition \ref{CatenoidInverse}, we first record a few useful observations. 

\begin{lemma} \label{UnderlyingCatenoidQuantities}
The conformal factor $\varrho[\mathcal{C}_0] (s)$ for the conformal immersion $\mathcal{C}_0$ is given by
\begin{align} \notag
\varrho[\mathcal{C}_0] (s) = 2 \cosh (s).
\end{align}
The square length of the second fundamental form  $|A[\mathcal{C}_0]|^2$ is given by
\begin{align} \notag
|A[\mathcal{C}_0]|^2(s) = \frac{1}{2} \cosh^{-2} (s). 
\end{align}
\end{lemma}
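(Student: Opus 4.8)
The statement to prove is Lemma~\ref{UnderlyingCatenoidQuantities}, computing the conformal factor and $|A|^2$ for the immersion $\mathcal{C}_0(x,s) = 2\cosh(s) e_r(x) + 2s e_z$.

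The plan is to compute everything directly from Definition~\ref{UnderlyingScherk}, which already tells us $\mathcal{C}_0 = 2\kappa_{\pi/2}$, and then invoke the general facts recorded about $\kappa[\beta]$ in Proposition~\ref{BasicKappaProps} and Lemma~\ref{TechnicalKappaProps}. First I would observe that $\varrho[\beta](s) = \cosh(s) + \cos(\beta)\sinh(s)$ specializes at $\beta = \pi/2$ to $\varrho[\pi/2](s) = \cosh(s)$, since $\cos(\pi/2) = 0$. Since $\kappa[\pi/2]$ is a conformal immersion with conformal factor $\varrho[\pi/2]$ by Proposition~\ref{BasicKappaProps}(1), and scaling the immersion by $2$ multiplies the conformal factor by $2$, we immediately get $\varrho[\mathcal{C}_0](s) = 2\cosh(s)$. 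Alternatively, one can just compute $|\partial_x \mathcal{C}_0|^2$ directly: $\partial_x \mathcal{C}_0 = 2\cosh(s) e_r'(x)$, so $|\partial_x \mathcal{C}_0|^2 = 4\cosh^2(s)$, giving conformal factor $2\cosh(s)$, and check orthogonality $\partial_x \mathcal{C}_0 \cdot \partial_s \mathcal{C}_0 = 0$ with $|\partial_s \mathcal{C}_0|^2 = 4\sinh^2(s) + 4 = 4\cosh^2(s)$ as well.

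For $|A[\mathcal{C}_0]|^2$, the cleanest route is to use the scaling law together with the known geometry of the unit catenoid. Scaling a surface by a factor $\lambda$ divides the second fundamental form's eigenvalues by $\lambda$, hence divides $|A|^2$ by $\lambda^2$. So $|A[\mathcal{C}_0]|^2 = \frac{1}{4}|A[\kappa[\pi/2]]|^2$. For the standard scale-one catenoid in conformal coordinates with conformal factor $\cosh(s)$, the principal curvatures are $\pm\cosh^{-2}(s)$ (this is classical, and follows from $A_{xx} = -\varrho, A_{ss} = \varrho, A_{xs}=0$ via the Hessian formulas $\partial_{xx}\kappa = -\varrho e_r$, $\partial_{ss}\kappa = \varrho e_r$ recorded in the proof of Lemma~\ref{TechnicalKappaProps}, combined with $g_{ij} = \varrho^2 \delta_{ij}$), so $|A|^2 = 2\cosh^{-4}(s) \cdot \cosh^2(s)$... let me instead just say: $|A[\kappa[\pi/2]]|^2 = g^{ik}g^{jl}A_{ij}A_{kl}$ with $g^{ij} = \cosh^{-2}(s)\delta^{ij}$ and $A_{xx} = -\cosh(s)$, $A_{ss} = \cosh(s)$, $A_{xs} = 0$ (reading off $\nu[\mathcal{C}_0]\cdot\partial^2_{ij}\kappa$ using $e_3^\kappa$ from Lemma~\ref{TechnicalKappaProps}), which gives $|A[\kappa[\pi/2]]|^2 = \cosh^{-4}(s)(\cosh^2(s) + \cosh^2(s)) = 2\cosh^{-2}(s)$. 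Dividing by $4$ yields $|A[\mathcal{C}_0]|^2 = \frac{1}{2}\cosh^{-2}(s)$, as claimed.

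This lemma is entirely computational and I do not expect any genuine obstacle; the only thing to be careful about is the scaling conventions (how $\varrho$, $g$, $A$, and $|A|^2$ transform under $\mathcal{C}_0 = 2\kappa$), and making sure one reads off $A_{ij}$ with the correct sign of the unit normal $e_3^\kappa = -b e_r + a e_z$, which at $\beta = \pi/2$ has $b = \sin(\pi/2)/\cosh(s) = \mathrm{sech}(s)$ and $a = \tanh(s)$. Everything then falls out of the explicit formulas in Lemma~\ref{TechnicalKappaProps}. I would keep the writeup short, essentially just doing the $\beta = \pi/2$ specialization and the factor-of-$2$ bookkeeping.
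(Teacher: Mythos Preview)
The paper states this lemma without proof, so there is nothing to compare approaches against; your plan of specializing Lemma~\ref{TechnicalKappaProps} to $\beta=\pi/2$ and tracking the factor of $2$ is exactly the right one, and your computation of the conformal factor $\varrho[\mathcal{C}_0]=2\cosh(s)$ is correct.

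However, your computation of $|A|^2$ contains a slip. You write $A_{xx}=-\cosh(s)$, $A_{ss}=\cosh(s)$, but this is the $e_r$-component of the Hessian, not its normal component. Using $e_3^\kappa=-b\,e_r+a\,e_z$ with $b=\operatorname{sech}(s)$ at $\beta=\pi/2$, one gets
\[
A_{xx}=\partial_{xx}\kappa\cdot e_3^\kappa=(-\varrho\,e_r)\cdot(-b\,e_r+a\,e_z)=\varrho\,b=1,
\]
and similarly $A_{ss}=-1$, $A_{xs}=0$. Hence $|A[\kappa_{\pi/2}]|^2=g^{ik}g^{jl}A_{ij}A_{kl}=\cosh^{-4}(s)\cdot 2=2\cosh^{-4}(s)$, and after scaling by $2$,
\[
|A[\mathcal{C}_0]|^2=\tfrac{1}{4}\,|A[\kappa_{\pi/2}]|^2=\tfrac{1}{2}\cosh^{-4}(s).
\]
In fact the exponent $-2$ in the lemma as printed appears to be a typo: the formula derived immediately after it,
\[
\mathcal{L}[\mathcal{C}_0]=\tfrac{1}{4}\cosh^{-2}(s)\bigl\{\Delta_\Omega+2\cosh^{-2}(s)\bigr\},
\]
requires precisely $|A[\mathcal{C}_0]|^2=\tfrac{1}{2}\cosh^{-4}(s)$ (since $\mathcal{L}=\varrho^{-2}\Delta_\Omega+|A|^2$ in conformal coordinates). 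So your final answer happened to match the stated lemma only because two errors cancelled; the honest computation gives $\cosh^{-4}$, and that is what the rest of the paper actually uses.
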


From Lemma \ref{UnderlyingCatenoidQuantities} it then follows directly that we can write
\begin{align} \notag
\mathcal{L}[\mathcal{C}_0]  & = \frac{1}{4}\cosh^{-2}(s) \left\{ \Delta_{\Omega} + 2 \cosh^{-2}(s)\right\} \\ \notag
&  = :  \frac{1}{4}\cosh^{-2} (s) \tilde{\mathcal{L}}[\mathcal{C}_{0}],
\end{align}
so that the linear problem (\ref{CatenoidLinearProblem}) can be written in the equivalent form
\begin{align}
\tilde{\mathcal{L}}[\mathcal{C}_{0}]v = 4 \cosh^{2}(s) E  = :  \tilde{E}. 
\end{align}
Note that by definition it holds that 
\begin{align}
\|\tilde{E} : C^{0, \alpha}(\Omega, \cosh^{\gamma}(s))\| = \|E : \mathcal{X}^{0}[\mathcal{C}_0]\|.
\end{align}
We then have

\begin{lemma}\label{SphereLiftedProblem}
Equation (\ref{CatenoidLinearProblem}) can be equivalently stated on the sphere as:
\begin{align} \notag
\left(\Delta_{S^2} + 2 \right) v = 2 E/|A^2[\mathcal{C}_0]|
\end{align}
where we have identified functions with there lifts to $\mathbb{S}^2$ under the Gauss map of $\mathcal{C}_0$.
\end{lemma}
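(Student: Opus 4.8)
The plan is to obtain the stated equivalence as an immediate consequence of Proposition \ref{CatenoidGaussMap} together with the conformal transformation law for the Laplace--Beltrami operator, which in dimension two rescales by the reciprocal of the conformal factor (with no gradient term). Concretely, for a surface, if $\tilde{g} = \lambda g$ with $\lambda > 0$, then $\Delta_{\tilde{g}} = \lambda^{-1}\Delta_{g}$, equivalently $\Delta_{g} = \lambda\,\Delta_{\lambda g}$.

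First I would invoke Proposition \ref{CatenoidGaussMap}: the Gauss map $\nu[\mathcal{C}_0]$ is a conformal diffeomorphism of $\mathcal{C}_0$ onto $\mathbb{S}^2$ minus the two poles, with conformal factor $|A[\mathcal{C}_0]|^2/2$; that is, $(\nu[\mathcal{C}_0])^{*} g_{\mathbb{S}^2} = \tfrac{1}{2}|A[\mathcal{C}_0]|^2\, g[\mathcal{C}_0]$. (This is the classical fact that the Gauss map of a minimal surface is anti-conformal with conformal factor $\tfrac{1}{2}|A|^2$; the orientation-reversing nature is irrelevant for the Laplacian, which depends only on the metric.) Identifying functions on $\mathcal{C}_0$ with their transports to $\mathbb{S}^2$ under this diffeomorphism, the transformation law gives
\[
\Delta[\mathcal{C}_0] = \tfrac{1}{2}|A[\mathcal{C}_0]|^2\,\Delta_{\mathbb{S}^2}.
\]
Since $\mathcal{L}[\mathcal{C}_0] = \Delta[\mathcal{C}_0] + |A[\mathcal{C}_0]|^2$, I would then factor
\[
\mathcal{L}[\mathcal{C}_0] v = \tfrac{1}{2}|A[\mathcal{C}_0]|^2\,\Delta_{\mathbb{S}^2} v + |A[\mathcal{C}_0]|^2 v = \tfrac{1}{2}|A[\mathcal{C}_0]|^2\bigl(\Delta_{\mathbb{S}^2} + 2\bigr) v,
\]
so that $\mathcal{L}[\mathcal{C}_0] v = E$ holds if and only if $(\Delta_{\mathbb{S}^2} + 2) v = 2E/|A[\mathcal{C}_0]|^2$, which is the assertion.

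As a self-contained alternative (and a consistency check), one can bypass the general minimal-surface fact and compute directly: using the explicit conformal factor $\varrho[\mathcal{C}_0](s) = 2\cosh(s)$ and $|A[\mathcal{C}_0]|^2(s) = \tfrac{1}{2}\cosh^{-2}(s)$ from Lemma \ref{UnderlyingCatenoidQuantities}, together with the orthonormal frame $\{e_i^{\kappa}[\pi/2]\}$ and the derivative matrices $T_x^{\kappa}, T_s^{\kappa}$ from Lemma \ref{TechnicalKappaProps}, one evaluates $(\nu[\mathcal{C}_0])^{*}g_{\mathbb{S}^2}$ in the $(x,s)$-chart, checks it equals $\tfrac{1}{2}|A[\mathcal{C}_0]|^2 g[\mathcal{C}_0]$, and then verifies that $\tfrac{1}{4}\cosh^{-2}(s)\{\Delta_\Omega + 2\cosh^{-2}(s)\}$ agrees with $\tfrac{1}{2}|A[\mathcal{C}_0]|^2(\Delta_{\mathbb{S}^2}+2)$ after the change of variables induced by $\nu[\mathcal{C}_0]$.

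I do not expect a substantive obstacle here; the statement is essentially a reformulation of Proposition \ref{CatenoidGaussMap}. The only points requiring care are the bookkeeping of the factor $\tfrac12$ in the conformal weight and the remark that the conclusion is insensitive to orientation, since $\Delta_{\mathbb{S}^2}$ and the spherical coordinate functions appearing in Proposition \ref{SphereOperatorKernel} are defined purely through the round metric.
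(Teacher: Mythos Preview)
Your proposal is correct and essentially coincides with the paper's own treatment: the lemma is stated without a separate proof, being simply a restatement of Proposition~\ref{CatenoidGaussMap} via the two-dimensional conformal law $\Delta_{\lambda g}=\lambda^{-1}\Delta_g$, exactly as you write. Your explicit factoring $\mathcal{L}[\mathcal{C}_0]=\tfrac{1}{2}|A[\mathcal{C}_0]|^2(\Delta_{\mathbb{S}^2}+2)$ and the consistency check through Lemma~\ref{UnderlyingCatenoidQuantities} match the paper's surrounding discussion.
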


\begin{proof}[Proof of Proposition \ref{CatenoidInverse}]

We write $E$ in the orthogonal decomposition:
\begin{align} \notag
E (x,s) : =  \bar{E}(s) + \mathring{E}(x, s)
\end{align}
where
\begin{align} \notag
\bar{E}(s) : = \frac{1}{2\pi} \int_{-\pi}^\pi E(x, s) dx.
\end{align}
 denotes the meridian average of $E$.  Let $x^*$, $y^*$ and $z^*$ denote the pullbacks to $\Omega$ under $\nu[\mathcal{C}_0]$ of $x$, $y$ and $z$, respectively. We then have directly that 
  \begin{align} \notag
z^*(x, s) & =  - \tanh(s), \\ \notag
x^*(x, s) & =  \sin(x)\cosh^{-1}(s), \\ \notag
 y^*(x, s) & =  \cos(x)\cosh^{-1}(s).
\end{align}
It then follows directly that $\bar{E}(s)$ is automatically $d \mu[\mathcal{C}_0] = 4 \cosh^2(s) dx ds$-orthogonal to $x^*$ and $y^*$ independent of any orthogonality assumptions on $E$, and that $\mathring{E}$ is then likewise necessarily orthogonal to $z^*$. From the orthogonality condition on $E$ we then additionally get that 
\begin{align}
\int_\Omega \bar{E}  z^* d \mu[\mathcal{C}_0] &  =  \int_\Omega E  z^*   d \mu[\mathcal{C}_0] = 0 \\ \notag
\int_{\Omega} \mathring{E} x^* d \mu[\mathcal{C}_0] & = \int_\Omega E x^* d \mu[\mathcal{C}_0] = 0  \\ \notag 
\int_{\Omega} \mathring{E} y^* d \mu[\mathcal{C}_0] & = \int_\Omega E y^* d \mu[\mathcal{C}_0] = 0
\end{align}
  As a preliminary step, we first improve the asymptotic behavior of the error term by solving the flat laplacian for $\mathring{F}= 4 \cosh^2 (s) \mathring{E} $. That, is we set
\begin{align} \notag
\mathring{v}_0  : = \mathcal{R}_1[\mathring{F}], \quad  \mathring{E}_{1} & : = \mathring{E} - \mathcal{L}[\mathcal{C}_0] \mathring{v}_0  =  - 2 \cosh^{- 4}(s) \mathring{v}_0
\end{align}
Thus, the function $\mathring{v}_0$ satisfies the equation
\begin{align}
\Delta_{\Omega} \mathring{v}_0 = \mathring{F}.
\end{align}
Additionally, the function $\mathring{v}_0$ has zero average along meridians by Proposition \ref{LaplaceZeroMeridianInverse} so that $\mathring{E}_1$ also has zero average along meridians. This then immediately gives that $\mathring{E}_1$ satisfies the orthogonality conditions in Definition \ref{UnderlyingCatenoidSpaces}. Then we have directly that 

It then immediately follows that 
\begin{align} \notag
\int_{\Omega} \mathring{E}_1  z^*(x, s) d \mu[\mathcal{C}_0]  &= 4 \int_\Omega \mathring{E}_1 (x,s) z^*(x, s) \cosh^2 (s) dx ds \\ \notag
&  =  4\int_{-\infty}^\infty z^*(s) \cosh^2(s) \left(\int_{-\pi}^\pi  \mathring{E}_1 (x,s) dx \right) ds \\ \notag
& = 0.
\end{align}
We then have
\begin{align} \notag
\int_{\Omega} \mathring{E}_{1} x^* d \mu[\mathcal{C}_0] & = \int_{\Omega} \mathring{E} x^*  d \mu[\mathcal{C}_0] - \int_{\Omega} \left(\mathcal{L}[\mathcal{C}_0] v_0\right) x^*  d \mu[\mathcal{C}_0] \\ \notag
& = 0   -  \int_{\Omega} \left(\tilde{\mathcal{L}}[\mathcal{C}_0] v_0\right) x^* \\ \notag
& = 0 + \lim_{{N \rightarrow \infty}}\int_{- \pi}^\pi \partial_s v_{0} (x, N) x^*(x, N) -v_0(x, N) \left(\partial_s x^* \right) (x, N) \\ \notag
& = 0.
\end{align}
The first term on the right hand side of the first line above is zero due to the initial orthogonality  property of  $\mathring{E}$ and the second term is zero by integrating by parts and considering the subexponential growth rate of $\mathring{v}_0$ and the exponential decay of $x^*$. Thus, we conclude that $\mathring{E}_1$ is orthogonal to $x^*$. A similar argument shows that $\mathring{E}_1$ is orthogonal to $y^*$ as well.  Moreover, $\mathring{E}_1$ satisfies the improved weighted estimate:
\begin{align} \notag
\| \mathring{E}_1 : C^{0, \alpha}(\Omega, \cosh^{\gamma - 4} (s))\| \leq C \| E : \mathcal{X}^{0}[\mathcal{C}_0]\|.
\end{align}
 To solve for the meridian average $\bar{E}$, we simply set
\begin{align} \notag
\bar{v}(s) & = 4 \int_{s}^\infty \int_{s'}^\infty \bar{E} (s'') \cosh^2(s'') ds'' \\ \notag
\end{align}
The orthogonality conditions on $E$ imply that 
\begin{align}
\int_{-\infty}^\infty\bar{E}(s') \cosh^2(s') ds' = 0.
\end{align}
Thus, as in the proof of Proposition \ref{LaplaceZeroMeridianInverse}, we get that
\begin{align}
\bar{v} (s) = \bar{v}(s) & = 4 \int_{-\infty}^s \int_{-\infty}^s  \bar{E} (s'') \cosh^2(s'') ds''.
\end{align}
In order to solve for $ \mathring{E}_1$, we note first that
\begin{align} \notag
\int_{S^2} \left( 2 E_1/| A[\mathcal{C}_0]^2 |\right) x d\mu[S^2] = \int_{\mathcal{C}_0} \mathring{E}_1 x^* d \mu[\mathcal{C}_0] = 0, 
\end{align}
and similarly for integrating against $y$ and $z$. Thus Proposition \ref{SphereOperatorKernel} and standard theory give a function  $\mathring{v}_1$ on $S^2$ solving the equation
\begin{align}
\left(\Delta_{\mathbb{S}^2} + 2 \right) \mathring{v}_1 = 2 \mathring{E}_1/| A[\mathcal{C}_0]^2 |
\end{align}
 and satisfying the estimate
\begin{align}
\| \mathring{v}_1 : W^{2, 2}(S^2)\| \leq C \| 2 \mathring{E}_1 / |A[\mathcal{C}_0]|^2: L^2(S^2) \|.
\end{align}
It remains to produce an estimate for the $L^2$-norm of the right hand side. To do this, we write
\begin{align} \notag
& \int_{S^2} \left( 2 \mathring{E}_1/ |A[\mathcal{C}_0]|^2 \right)^2 d \mu[S^2]   = \int_{\Omega} \left( \mathring{E}_1^2/ |A[\mathcal{C}_0]|^2  \right) d \mu[\mathcal{C}_0] \\  \notag
& \leq  \| \mathring{E}_1 : C^{0, \alpha} (\Omega, \cosh^{\gamma - 4})(s) \|^2 \int_{\Omega} \left( \cosh^{2 \gamma - 4}(s) \right) \left( \cosh^2 (s) d \mu[\Omega]\right) \\ \notag
& \leq C  \| E_1 : C^{0, \alpha} (\Omega, \cosh^{\gamma - 4})(s) \|^2
\end{align}
where the last line above follows from the fact that $ \gamma \in (0, 1)$. It then immediately follows that 
\begin{align}
\| \mathring{v}_1 : W^{2, 2}(S^2)\| \leq C   \| E : \mathcal{X}^{0}[\mathcal{C}_0]\|.
\end{align}
We abuse notation slightly by identifying $\mathring{v}_1$ with its pull back to $\Omega$. We then have that 
\begin{align} \notag
\sup_{\Omega} |\mathring{v}_1| \leq C\| E : \mathcal{X}^{0}[\mathcal{C}_0]\|.
\end{align}
Standard elliptic theory then gives the higher estimate 
\begin{align} \notag
\| \mathring{v}_1 : C^{2, \alpha} (\Omega, 1)\| \leq C \| E : \mathcal{X}^{0}[\mathcal{C}_0]\|.
\end{align}
We conclude by setting $\mathring{v} : = \mathring{v}_0 + \mathring{v}_1$, which completes the proof in the case that $E$ has zero average along meridians.\end{proof}

\section{Bending Scherk towers around circles} \label{BendingMapsSection}

We wish to use the surfaces $\mathcal{S}$ to construct minimal surfaces with a discrete rotational symmetry in place of  a translational invariance. We do this  essentially by deforming each surface by a diffeomorphism which introduces small constant curvature to the axis of periodicity.
\subsection{The bending maps and their properties}

\begin{definition} \label{BendingMap} 
The map $B: \Reals^3 \rightarrow \Reals^3$ is given below:
\begin{align} \notag
B(x, y, z) : = (\tau^{-1} + y) \sin(\tau  x) e_x + (\tau^{-1} + y)( \cos(\tau  x)  - 1)e_y + z e_z.
\end{align}
\end{definition}

In Proposition \ref{BendingMapProps} and Definition \ref{ModifiedBendingMaps} below, the reader may wish to recall Definitions \ref{IsometryDefs} and \ref{IsometryGroupDefs}.
\begin{proposition}\label{BendingMapProps}
 The map  $B$ has the following properties:
\begin{enumerate}
\item It holds that 
\begin{align} \notag
B \circ \mathfrak{T}_t = \mathfrak{T}^*_t \circ B,  \quad B \circ \mathfrak{R}_x = \mathfrak{R}_x \circ B, \quad B \circ \mathfrak{R}_z = \mathfrak{R}_z \circ B
\end{align}
for $t \in \Reals$. In particular, the maps $B$ are $\mathfrak{G}$ equivariant. 
\item The maps $B$ depend smoothly on $\tau$ on compact subsets and agree with the identity at $\tau = 0$.
\item The linearization of $B$ at the origin is the identity. 
\end{enumerate}

\end{proposition}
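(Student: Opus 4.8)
The plan is to obtain all three items by elementary direct computation from Definition~\ref{BendingMap}, the one useful preliminary being to regroup the defining formula as
\begin{align}\notag
B(x,y,z) = (\tau^{-1}+y)\,e_r(\tau x) \;-\; \tau^{-1}e_y \;+\; z\,e_z,
\end{align}
with $e_r$ as in (\ref{RadialField}); this form makes both the equivariance and the limit $\tau\to 0$ transparent.

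\emph{Item (1).} The identities for $\mathfrak{R}_x$ and $\mathfrak{R}_z$ are one-line checks: $\mathfrak{R}_x$ replaces $x$ by $-x$, and since $\sin$ is odd and $\cos$ even this negates exactly the $e_x$-component of $B$ and fixes the rest --- precisely the action of $\mathfrak{R}_x$ on $\Reals^3$ --- while $\mathfrak{R}_z$ replaces $z$ by $-z$ and $B$ sees $z$ only through the affine summand $z\,e_z$. For $B\circ\mathfrak{T}_t=\mathfrak{T}^*_t\circ B$, precomposing with the translation $x\mapsto x+t$ replaces $e_r(\tau x)$ by $e_r(\tau(x+t))$ in the regrouped formula, i.e. rotates the angular coordinate by $\tau t$ about the line $\{(0,-\tau^{-1},z):z\in\Reals\}$; comparing with Definition~\ref{IsometryDefs} this is postcomposition with the rotation $\mathfrak{T}^*_t$. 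Since $\mathfrak{G}$ is generated by $\mathfrak{R}_x,\mathfrak{R}_z,\mathfrak{T}_{2\pi}$ (Definition~\ref{IsometryGroupDefs}) and intertwining relations are stable under composition and inversion, the maps $B$ are $\mathfrak{G}$-equivariant, intertwining the $\mathfrak{G}$- and $\mathfrak{G}^*$-actions.

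\emph{Items (2) and (3).} For $\tau\neq 0$ the map $B=B_\tau$ is visibly real-analytic in $(\tau,x,y,z)$, being built from $\sin$, $\cos$ and $\tau^{\pm 1}$. To cross $\tau=0$ I would isolate the only apparent singularity, the factor $\tau^{-1}$, by writing the $e_x$- and $e_y$-components of $B$ through $\tfrac{\sin(\tau x)}{\tau}$ and $\tfrac{\cos(\tau x)-1}{\tau}$: these are restrictions of the entire functions $w\mapsto\sin w/w$ and $w\mapsto(\cos w-1)/w$, hence extend to smooth functions of $(\tau,x)$ with all derivatives bounded uniformly on compact sets, which yields joint smoothness of $B$ on compact subsets of $\Reals\times\Reals^3$. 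Setting $\tau=0$ collapses these factors to $x$ and $0$, and the leftover terms $y\sin(\tau x)$, $y\cos(\tau x)$ to $0$ and $y$, so $B|_{\tau=0}=\Id$; this is item (2). For item (3) one simply differentiates the components at the origin: $\partial_x B|_0=e_x$ since $\partial_x\big(\tfrac{\sin(\tau x)}{\tau}\big)\big|_0=1$, $\partial_y B|_0=e_y$ since the $e_y$-component has $y$-derivative $\cos(\tau x)$, which equals $1$ at the origin, and $\partial_z B|_0=e_z$ trivially, so $DB|_0=\Id$.

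I expect no serious obstacle. The one point that warrants care is the passage across $\tau=0$ in item (2): one must use that $\sin w/w$ and $(\cos w-1)/w$ are \emph{entire}, not merely continuous, in order to get genuine smoothness together with the uniform bounds on derivatives needed for smoothness on compact subsets; and in item (1) one must keep the orientation convention of $e_r$ fixed, so that translation in $x$ is matched with $\mathfrak{T}^*_t$ and not $\mathfrak{T}^*_{-t}$.
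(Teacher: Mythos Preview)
The paper states Proposition~\ref{BendingMapProps} without proof, treating it as a direct verification from Definition~\ref{BendingMap}; your elementary computation is exactly the intended approach and is carried out correctly.

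One small remark worth flagging: your regrouped formula
\[
B(x,y,z)=(\tau^{-1}+y)\,e_r(\tau x)-\tau^{-1}e_y+z\,e_z
\]
differs from the literal formula in Definition~\ref{BendingMap} by $+y\,e_y$ (the paper's $e_y$-component is $(\tau^{-1}+y)(\cos(\tau x)-1)$, which expands to $(\tau^{-1}+y)\cos(\tau x)-\tau^{-1}-y$, not $-\tau^{-1}$). This appears to be a typo in the paper rather than an error on your part: with the formula as printed one computes $B|_{\tau=0}(x,y,z)=(x,0,z)$, which contradicts item~(2), whereas your version gives the identity and also matches the form of $\mathfrak{T}^*_t$ in Definition~\ref{IsometryDefs}. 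Your regrouping is the geometrically natural bending map and is the one for which the proposition actually holds; it would be worth noting this discrepancy explicitly.
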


For technical reasons, it is convenient to modify the maps $B$, preserving $\frak{G}$ equivariance and so that they agree with the identity map in small neighborhoods about the origin.

\begin{definition} \label{ModifiedBendingMaps}
We  let $\tilde{B}$ be maps determined as follows: Recall the constant $\epsilon_0 > 0$ in Proposition \ref{ScherkNearAxisGraph}. Then:
\begin{enumerate}
\item The map $\tilde{B}$ is $\mathfrak{G}$-equivariant.
\item On compact subsets the maps $\tilde{B}$  depend smoothly  on $\tau$ and agree with the identity  map when $\tau = 0$.
\item On the set $\{ |x| \leq 2 \pi, y^2 + z^2 \leq 4 \epsilon_0^2\}$, the maps $\tilde{B}$ agree with the identity map for all $\tau$.
\item On the set $\{ |x| \leq 2 \pi, y^2 + z^2 \geq 16 \epsilon_0^2\}$, the maps $\tilde{B}$  agree with $B$.

\end{enumerate}
 
\end{definition}

\section{Matching bent Scherk towers with catenoidal ends} \label{MatchingSection}

\begin{definition}\label{BetaiDefs}
We set 
\begin{align} \notag
\beta_{\mathcal{S}, 1} : = \theta, \quad \beta_{\mathcal{S}, 2} : = \pi - \theta.
\end{align}

\end{definition}
Note that the asymptotic planes for the wing $\mathcal{W}_i$ are then $H^+[\beta_{\mathcal{S}, i}, h_{\mathcal{S}}]$.

Before continuing the reader may wish to recall the definition of the functions $\tilde{\varrho}[\beta]$ in Proposition \ref{RenormKappaBasicProps} and the Definition of $f_\mathcal{W}$ in Proposition \ref{WingGeometry}.  In Definition \ref{BentWingMaps} below, we construct immersions by adding a weighted normal graph of the function $f_\mathcal{W}$ to the catenoidal ends $\tilde{\kappa}[\beta]$. We do this in a  separate step before defining the initial surfaces because there are some technicalities involved in properly estimating their mean curvature, which are simpler to treat independently. 

\begin{definition} \label{BentWingMaps}
Given $b, d \in \Reals$  the maps $\mathcal{K}_i[b, d] (x, s): \Omega^+_{\geq 1} \rightarrow \Reals^3$, $i = 1, 2$ are given by:
\begin{align} \notag
\mathcal{K}_i[b, d] (x, s) : = \tilde{\kappa}[\beta_{\mathcal{S}, i} + d] (x, s)+ (h_{\mathcal{S}} + b) e_z + \tilde{\varrho}[\beta_{\mathcal{S}, i} + d] f_\mathcal{W}(x, s)\nu[\tilde{\kappa}[\beta_{\mathcal{S}, i} + d] ] (x, s)
\end{align}
\end{definition}

\begin{proposition} \label{BentWingGeometry}
 Set $\mathcal{K}_i = \mathcal{K}_i[b, d]$, $\tilde{\varrho}_i = \tilde{\varrho}[\beta_{\mathcal{S}, i} + d]$ . Then there is $\epsilon > 0$  so that for $ \tau, \theta \in (0, \epsilon]$ we have: 

\begin{enumerate}
\item \label{WeightedRegularity}The maps $\mathcal{K}_i$ are smooth immersions, depending smoothly on $\tau$, $\theta$, $b$ and $d$. 
\item  \label{BentWingMC} There is   $C = C(j, \alpha)$ so that:
\begin{align} \notag
 \| \tilde{\varrho}_i  H[\mathcal{K}_i] : C^{j, \alpha}(\Omega^+_{\geq 1}, \cosh^{-1}(s)) \| \leq C\tau \theta.
\end{align}

\item  \label{BentWingSO} It holds that
\begin{align} \notag
\| \tilde{\varrho}^2_i \mathcal{L}[\mathcal{K}_i] - \mathcal{L}[\mathcal{W}_i]: C^{j, \alpha} (\Omega^+_{\geq 1}, 1) \| \leq C \tau,
\end{align}
where the norm above is applied to the coefficients of the operator $\tilde{\varrho}^2_i \mathcal{L}[\mathcal{K}_i] - \mathcal{L}[\mathcal{W}_i]$.
\end{enumerate}
\end{proposition}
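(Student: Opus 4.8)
\textbf{Proof proposal for Proposition \ref{BentWingGeometry}.}

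The plan is to treat the three claims in order, building each on the previous one, with the whole argument resting on the homogeneous-quantity estimates of Proposition \ref{HQEstimates} together with the catenoid parametrization bounds of Section \ref{CatenoidalEnds} and the wing estimate of Proposition \ref{WingGeometry}. For claim (\ref{WeightedRegularity}), the key point is that $\mathcal{K}_i[b,d]$ is a smooth normal graph of the weighted function $\tilde{\varrho}_i f_{\mathcal{W}}$ over the catenoidal end $\tilde{\kappa}[\beta_{\mathcal{S},i}+d]$. Since $f_{\mathcal{W}} \in C^{k,\alpha}(H^+_{\geq 1}, e^{-s})$ by Proposition \ref{WingGeometry} and $\tilde{\varrho}_i(s) = \varrho[\beta_{\mathcal{S},i}+d](\tau s)$ grows like $e^{\tau s}$, the product $\tilde{\varrho}_i f_{\mathcal{W}}$ is small in the relevant weighted norm for $\tau,\theta$ small, and one invokes Proposition \ref{still_regular} to conclude that the perturbed parametrization remains $\mathfrak{a}$-regular, hence an immersion; smooth dependence on the parameters is inherited from the explicit formulas for $\tilde{\kappa}$ and $\varrho$ and from $f_{\mathcal{W}}$ (whose parameter dependence comes through the implicit function theorem argument in Proposition \ref{WingGeometry}).

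For claim (\ref{BentWingMC}), I would write the mean curvature of $\mathcal{K}_i$ as the mean curvature of the minimal end $\tilde{\kappa}[\beta_{\mathcal{S},i}+d]$ — which is zero — plus the change induced by adding the normal graph $\tilde{\varrho}_i f_{\mathcal{W}} \nu$. Because $H$ is a homogeneous quantity of degree $-1$ (Lemma \ref{degrees_of_homogeneity}), the weighted mean curvature $\tilde{\varrho}_i H[\mathcal{K}_i]$ is a degree-zero expression, so its $C^{j,\alpha}$ norm with weight $\cosh^{-1}(s)$ is controlled by Proposition \ref{HQEstimates} in terms of the weighted norm of the perturbing jet $J[\tilde{\varrho}_i f_{\mathcal{W}}\nu]$. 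The latter is, up to harmless constants, the weighted norm of $f_{\mathcal{W}}$ times the scale-mismatch factor between the catenoid scale (order $\tau$, since $\tilde{\kappa}$ has conformal factor $\tilde{\varrho}$ of size $\sim e^{\tau s}$) and the plane scale; tracking the $\tau$ and $\theta$ dependence carefully — $f_{\mathcal{W}}$ contributes a factor $\sin(\theta) \sim \theta$ by Proposition \ref{WingGeometry}(3), and the bending/scale factor contributes $\tau$ — yields the bound $C\tau\theta$. The linear part of the change contributes exactly the stability-operator image $\mathcal{L}[\tilde\kappa](\tilde\varrho_i f_{\mathcal{W}})$, which is $O(\tau\theta)$ by the same weighted estimate plus the size of $f_{\mathcal{W}}$, and the higher-order Taylor remainder is controlled by $\|J[\tilde\varrho_i f_{\mathcal{W}}\nu]\|^2$, hence even smaller.

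For claim (\ref{BentWingSO}), I would compare the coefficients of $\tilde{\varrho}_i^2 \mathcal{L}[\mathcal{K}_i]$ with those of $\mathcal{L}[\mathcal{W}_i]$ in the fixed coordinate chart on $\Omega^+_{\geq 1}$. Both operators are, up to the degree-$(-2)$ rescaling that the $\tilde\varrho_i^2$ prefactor removes, homogeneous of degree $-2$; the surface $\mathcal{W}_i$ is the affine half-plane $H^+[\beta_{\mathcal{S},i},h_{\mathcal{S}}]$ perturbed by $f_{\mathcal{W}} e_z[\theta]$, while $\mathcal{K}_i$ is the catenoidal end $\tilde\kappa[\beta_{\mathcal{S},i}+d]$ — itself a degree-one bending of the plane with scale $\sim \tau$ — perturbed by the same order normal graph and shifted in the angle parameter by $d$. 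So the difference of coefficients splits into: (a) the difference between the flat-plane Laplacian and the catenoid stability operator $\tilde\varrho_i^2\mathcal{L}[\tilde\kappa]$, which is $O(\tau)$ because $|A[\tilde\kappa]|^2 \tilde\varrho_i^2$ and the metric deviation from flat are $O(\tau)$ on $\Omega^+_{\geq 1}$ by the explicit formulas of Lemma \ref{TechnicalKappaProps} and Proposition \ref{RenormKappaBasicProps}; (b) the difference in the angle parameter $d$, whose effect on the operator is controlled by $|d|$ and can be absorbed since the construction will take $d = O(\tau)$ or smaller (here I would note that if $d$ is not a priori $O(\tau)$ one instead states the estimate with an extra $+C|d|$, but in the intended application $d$ is small); and (c) the normal-graph perturbation $\tilde\varrho_i f_{\mathcal{W}}\nu$, whose linear effect on the operator coefficients is $O(\tau\theta)$ by Proposition \ref{HQEstimates} applied to the degree-$(-2)$ coefficient functions, with higher-order remainder again quadratically small. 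Summing (a)--(c) gives the $C\tau$ bound.

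\textbf{Main obstacle.} The delicate point is bookkeeping the precise powers of $\tau$ and $\theta$ in claim (\ref{BentWingMC}): the perturbing field $\tilde\varrho_i f_{\mathcal{W}}\nu$ is \emph{not} small relative to the background catenoid geometry in an absolute sense (the catenoid has scale $\tau$ while the graph is only weighted by $e^{-s}$), so one must be careful to measure everything in the correctly weighted norms — weight $\cosh^{-1}(s)$ for $H$, weight $|J^{(1)}|^d$ for a degree-$d$ quantity — so that the homogeneity machinery of Proposition \ref{HQEstimates} applies and the scale factors cancel as designed. This is exactly the ``separating normal and tangential effects'' issue flagged in the Outline; the clean resolution is that $f_{\mathcal{W}}$ carries an honest $\sin(\theta)$ and the bending carries an honest $\tau$, and these multiply rather than merely add because the linear-in-$f_{\mathcal{W}}$ term of $H$ already vanishes on the (minimal) unbent end, so the leading surviving term is the cross term of order $\tau\theta$.
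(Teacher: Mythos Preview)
Your overall plan for claims (\ref{WeightedRegularity}) and (\ref{BentWingSO}) is essentially the paper's, and those parts are fine. The gap is in claim (\ref{BentWingMC}).

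You expand $H[\mathcal{K}_i]$ around the minimal catenoid $\tilde\kappa$, writing it as the linear term $\mathcal{L}[\tilde\kappa](\tilde\varrho_i f_{\mathcal{W}})$ plus a quadratic remainder, and you assert that the linear term is $O(\tau\theta)$ and the remainder is ``even smaller.'' Neither claim is correct as stated. The linear term, after rescaling by $\tilde\varrho_i$, is a priori only $O(\|f_{\mathcal{W}}\|_{2,\alpha}) = O(\theta)$; there is no $\tau$ in it, because the catenoid stability operator acting on a size-$\theta$ function is size $\theta$. The quadratic remainder is $O(\theta^2)$, which is \emph{not} dominated by $\tau\theta$ unless $\theta\leq C\tau$, an assumption the proposition does not make. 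Your ``Main obstacle'' paragraph correctly senses that a product $\tau\cdot\theta$ must emerge from a cancellation, but the sentence ``the linear-in-$f_{\mathcal{W}}$ term of $H$ already vanishes on the (minimal) unbent end'' mis-identifies what vanishes: on the unbent ($\tau=0$) end it is the \emph{full nonlinear} $H$ that vanishes (Scherk minimality), not the linear-in-$f_{\mathcal{W}}$ term $\Delta f_{\mathcal{W}}$, which is $O(\theta^2)$, not zero.

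The paper's device is to work not with a one-variable Taylor expansion in $f_{\mathcal{W}}$ but with the two-variable function $F(J,R):=H(J+R)-H(J)$ evaluated at the normalized catenoid jet $J=\underline{\tilde M}_\kappa[\tau,0]$ and the graph-perturbation jet $R=R_\kappa[\tau,f_{\mathcal{W}}]$. This $F$ vanishes identically on the axis $R=0$ (trivially) \emph{and} at the single point $(\underline{\tilde M}_\kappa[0,0],\,R_\kappa[0,f_{\mathcal{W}}])$ (Scherk minimality at $\tau=0$). The double vanishing yields
\[
\|F(J,R)\|_{j,\alpha}\;\leq\; C\|R\|\,\|J-\underline{\tilde M}_\kappa[0,0]\|\;+\;C\|R-R_\kappa[0,f_{\mathcal{W}}]\|,
\]
and both right-hand terms are $O(\tau\theta)$: the first because $\|R\|=O(\theta)$ and the jet moves $O(\tau)$ from its $\tau=0$ value, the second because the explicit dependence of $R_\kappa[\tau,f]$ on $\tau$ is through terms of the form $\tau\, a f,\ \tau\, b f$. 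To make these jet estimates uniform on $\Omega^+_{\geq 1}$ the paper also introduces the moving-frame normalized jets $\tilde M_\kappa^{(k)}[\tau,f]$ (Lemma \ref{BentWingJets}), dividing out the growing conformal factor $\tilde\varrho$ and expressing everything in the orthonormal frame $\{\tilde e^\kappa\}$; without this step the raw jets are unbounded and Proposition \ref{HQEstimates} does not apply directly. Your argument can be repaired by incorporating exactly this double-vanishing structure and the frame normalization.
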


\begin{definition} 
Set $\tilde{\kappa} = \tilde{\kappa}[\beta]$, $\tilde{\varrho} : = \tilde{\varrho}[\beta]$,  $\beta \in \mathbb{R}$. Then for $k \in \mathbb{N}$ we set
\begin{align} \notag
\tilde{M}_{\kappa }^{(k)}[\tau, f] (x, s) : = \left[\tilde{\varrho}^{-1}J^{(k)}[ \tilde{\kappa} +  \tilde{\varrho} f \nu[\tilde{\kappa}] ](x, s) \right]_{\{ \tilde{e}^\kappa\}} 
\end{align}
where the $k$-jet quantity $J^{(k)}[-]$ is given in Definition \ref{PhiJets}. That is, at each point $(x, s)$,  $\tilde{M}_{\kappa }^{(k)}[\tau, f] (x, s)$ records the components of $\tilde{\varrho}^{-1}J^{(k)}[\tilde{\kappa} + f \nu[\tilde{\kappa}] ](x, s)$ expressed in the basis $\{\tilde{e}^\kappa\}(x, s) : =  \{e^\kappa \} (\tau x, \tau s)$.
\end{definition}
The reader may wish to recall the definition of the orthonormal basis $\{ e^\kappa_i\}$ in Proposition \ref{TechnicalKappaProps} (\ref{KappaFrame}).
\begin{lemma} \label{BentWingJets}
There is a constant $ C =  C (k, j, \alpha)$ independent of $x$ and $s$ so that:
\begin{enumerate} \label{WingJetEstimates}
\item $\| \tilde{M}_{\kappa }^{(k)}[\tau, f] - \tilde{M}^{(k)}[\tau, 0] \|_{j, \alpha} \leq C \| f\|_{j, \alpha}$

\item  $\| \tilde{M}_{\kappa }^{(k)}[\tau, 0] -\tilde{M}_{\kappa }^{(k)}[0, 0]  \|_{j, \alpha} \leq C \tau$. 
\end{enumerate}
\end{lemma}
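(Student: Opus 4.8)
The plan is to recognize $\tilde M^{(k)}_\kappa[\tau,f]$ as (essentially) a homogeneous quantity evaluated along the immersion $\tilde\kappa$, perturbed by the addition of the normal graph $\tilde\varrho f\nu[\tilde\kappa]$, and then to invoke the machinery of Section \ref{preliminaries}, in particular Proposition \ref{HQEstimates}. First I would set up the comparison at $\tau = 0$: since $\tilde\kappa[\beta](x,s) = (\kappa[\beta](\tau x,\tau s) - \kappa[\beta](0,0))/\tau$ and the frame $\{\tilde e^\kappa\}(x,s) = \{e^\kappa\}(\tau x,\tau s)$, a direct computation (using $\tilde\varrho[\beta](s) = \varrho[\beta](\tau s)$ and Lemma \ref{TechnicalKappaProps} (\ref{KappaCovarianceMatrix}), which records $T^\kappa_x$, $T^\kappa_s$) shows that $\tilde M^{(k)}_\kappa[0,0]$ is given by explicit $\tau$-independent matrices with entries built from $a(s)$ and $b(s)$; at $\tau=0$ one simply has $e_r(\tau\,\cdot)\to e_r(0)$ etc. This identifies the reference object the two estimates compare against.

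For estimate (2), I would differentiate the expression $\tilde M^{(k)}_\kappa[\tau,0] = [\tilde\varrho^{-1}J^{(k)}[\tilde\kappa]]_{\{\tilde e^\kappa\}}$ in $\tau$: the $\tau$-dependence enters only through the rescaling $(x,s)\mapsto(\tau x,\tau s)$ inside $\varrho$, $e_r$ and the frame vectors, all of which are smooth in their arguments on the relevant compact-in-$s$ range $\Omega^+_{\geq 1}$ intersected with the region where $\cosh(s)\lesssim 1/\tau$ (i.e. the range on which $\tilde\kappa$ is the geometrically relevant piece). Since the conformal normalization $\tilde\varrho^{-1}$ exactly cancels the homogeneous scaling of $J^{(k)}[\tilde\kappa]$ — this is the content of $\kappa[\beta]$ being conformal with factor $\varrho[\beta]$, Proposition \ref{BasicKappaProps} — the resulting bound is uniform in $x,s$ and linear in $\tau$, giving the claimed $\|\tilde M^{(k)}_\kappa[\tau,0] - \tilde M^{(k)}_\kappa[0,0]\|_{j,\alpha}\leq C\tau$. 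The key point is that without the $\tilde\varrho^{-1}$ factor the jets would be exponentially large in $s$; the normalization is precisely what makes the estimate stable.

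For estimate (1), I would view $f\mapsto \tilde M^{(k)}_\kappa[\tau,f]$ as the composition of the map sending $f$ to the jet $J^{(k)}[\tilde\kappa + \tilde\varrho f\nu[\tilde\kappa]]$ with the (linear, bounded) operation of dividing by $\tilde\varrho$ and expressing in the frame $\{\tilde e^\kappa\}$. Writing $\tilde\kappa + \tilde\varrho f\nu[\tilde\kappa]$ and using that $J^{(k)}$ is affine-plus-differential in $f$ — its $\tau$-renormalized version is a smooth (indeed polynomial in the derivatives of $f$) function of $f$ and its first $k$ derivatives, with coefficients controlled by the frame derivative matrices $T^\kappa_x,T^\kappa_s$ from Lemma \ref{TechnicalKappaProps} — the difference $\tilde M^{(k)}_\kappa[\tau,f] - \tilde M^{(k)}_\kappa[\tau,0]$ is, to leading order, linear in $f$ with bounded coefficients, plus a Taylor remainder handled by Proposition \ref{HQEstimates} (applied with the homogeneous quantity being the components of the normalized $k$-jet, $J = J[\tilde\kappa]$, $E = J[\tilde\varrho f\nu[\tilde\kappa]]$, and the $\mathfrak a$-regularity of $\tilde\kappa$ supplied by conformality, $\mathfrak a(J[\tilde\kappa]) = 1$). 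Provided $\|f\|_{j,\alpha}$ is small enough that Proposition \ref{still_regular}'s hypothesis holds, this yields $\|\tilde M^{(k)}_\kappa[\tau,f] - \tilde M^{(k)}_\kappa[\tau,0]\|_{j,\alpha}\leq C\|f\|_{j,\alpha}$, with $C$ depending only on $k,j,\alpha$ since all background quantities are bounded uniformly in $\tau,x,s$ after the conformal normalization.

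The main obstacle I anticipate is bookkeeping the frame-change and the $\tilde\varrho^{-1}$ normalization carefully enough to see that \emph{all} constants are genuinely independent of $x$ and $s$: the raw jets $J^{(k)}[\tilde\kappa]$ grow like $\tilde\varrho^k\sim\cosh^k(\tau s)$, and it is only after dividing by $\tilde\varrho$ per jet order and re-expressing in the moving orthonormal frame (whose derivative matrices $T^\kappa_x,T^\kappa_s$ are bounded by Lemma \ref{TechnicalKappaProps}, since $|a|,|b|\leq 1$) that one gets the claimed uniformity. Verifying this reduction — rather than the subsequent application of Proposition \ref{HQEstimates}, which is by now routine — is where the real work lies.
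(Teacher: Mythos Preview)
Your route is genuinely different from the paper's. The paper does not invoke Proposition~\ref{HQEstimates} at all here: it computes $\tilde M^{(1)}_\kappa[\tau,f]$ explicitly (obtaining the matrix with entries $1\pm\tau b f$, $f_x$, $f_s+\tau a f$), reads off both estimates for $k=1$ directly, and then proceeds by induction on $k$ via the recursion
\[
\tilde M^{(k+1)}_\kappa[\tau,f] \;=\; (\nabla\tilde\varrho/\tilde\varrho)\otimes\tilde M^{(k)}_\kappa[\tau,f] \;+\; \nabla\tilde M^{(k)}_\kappa[\tau,f] \;+\; \tilde T^\kappa * \tilde M^{(k)}_\kappa[\tau,f],
\]
the point being that $\nabla\tilde\varrho/\tilde\varrho = (0,\tau a)$ and $\tilde T^\kappa(s)=\tau\,T^\kappa(\tau s)$ each carry an explicit factor of $\tau$ from the chain rule, with $|a|,|b|\le 1$.

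Your plan, while in the right spirit, has several concrete gaps. First, invoking Proposition~\ref{HQEstimates} for a ``Taylor remainder'' in estimate~(1) is misplaced: since $\tilde\varrho$ and $\nu[\tilde\kappa]$ are the \emph{unperturbed} quantities, the map $f\mapsto\tilde M^{(k)}_\kappa[\tau,f]$ is exactly affine in $(f,\nabla f,\dots,\nabla^k f)$; there is no nonlinear remainder, and the entire content of (1) is that the affine coefficients are bounded---which is what the paper's recursion supplies. (Separately, $\tilde M^{(k)}_\kappa$ is not a homogeneous quantity in the sense of Section~\ref{preliminaries}: the frame $\{\tilde e^\kappa\}$ and the divisor $\tilde\varrho$ are tied to the base immersion, not the perturbed one, and for $k\ge 3$ the jet space $\mathcal{J}$ does not even see $J^{(k)}$.) Second, your growth claim $J^{(k)}[\tilde\kappa]\sim\tilde\varrho^k$ is incorrect: the chain rule gives $\nabla^k\tilde\kappa=\tau^{k-1}(\nabla^k\kappa)|_{(\tau x,\tau s)}$, and since $|\nabla^k\kappa|\sim\varrho$ one has $J^{(k)}[\tilde\kappa]\sim\tau^{k-1}\tilde\varrho$. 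Third, appealing to a ``compact-in-$s$ range'' for uniformity is beside the point---the localized norm is pointwise, and uniformity comes structurally from $|a|,|b|\le 1$ together with the $O(\tau)$ factor carried by $\tilde T^\kappa$ and $\nabla\tilde\varrho/\tilde\varrho$, which is precisely the mechanism you do not name explicitly.
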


\begin{proof}
Recalling the quantities $a$ and $b$ given in Lemma \ref{TechnicalKappaProps} (\ref{abProps}) and computing directly gives
\begin{align} \notag
& ( \tilde{\kappa} +  \tilde{\varrho} f \nu[\tilde{\kappa}] )_x = \tilde{\kappa}_x + \tilde{\varrho} f_x \nu + \tilde{\varrho} f \nu_x = \tilde{\varrho} (1 - \tau b f) \tilde{e}_1^\kappa + \tilde{\varrho} f_x \nu. \\ \notag
& ( \tilde{\kappa} +  \tilde{\varrho} f \nu[\tilde{\kappa}])_s = \tilde{\kappa}_s + \tilde{\varrho}' f \nu + \tilde{\varrho} f_s + \tilde{\varrho} f \nu_s =  \tilde{\varrho} (1 + \tau b f)  \tilde{e}_2^\kappa + \tilde{\varrho} f_s \tilde{e}_3^\kappa + \tilde{\varrho}' f \tilde{e}^\kappa_3.
\end{align}
Thus, we have
\begin{align} \notag
 \tilde{M}_{\kappa }^{(1)}[\tau, f]   = 
\left( \begin{array} {cc}
1 - \tau b f & 0 \\
0 & 1 + \tau b f \\ 
f_x & f_s + \tau a f
\end{array} \right).
\end{align}
The estimates in (\ref{WingJetEstimates}) then follow directly in the case $k = 1$ and $j$ is arbitrary.  For higher $k$, we proceed by induction and derive an explicit expression which relates $\tilde{M}_\kappa^{(k + 1)}$ with $\tilde{M}_\kappa^{(k )}$.

Note that the derivative matrix of the frame $\tilde{e}^\kappa$ is given by $\tilde{T}^\kappa(s) : = \tau T^\kappa (\tau \, s)$ (Recall Lemma \ref{TechnicalKappaProps}). Now, let $V$ be a vector in $J^{(k)}(\tilde{\kappa} + \tilde{\varrho} f \nu)$. That is, $V$ is of the form
\begin{align} \notag
V : = \partial^{\alpha}_{x} \partial^{\beta}_s \left( \tilde{\kappa}+  \tilde{\varrho} f \nu[\tilde{\kappa}]\right)
\end{align}
where  $\partial^{\alpha}_{x}$ and $\partial^{\beta}_s$ denote pure derivatives in $x$ and $s$ of order $\alpha$ and $\beta$ respectively  and so that $\alpha + \beta = k$.  We can then write
\begin{align}
V : = V^i \tilde{e}^\kappa_i  
\end{align}
where the coefficients $V_i$ belong to the matrix $\tilde{\varrho} \tilde{M}_\kappa^{(k)}[\tau, f]$.
Every vector in $J^{(k + 1)}[\tilde{\kappa} +   \tilde{\varrho}f \nu]$ is then one of the following forms for some $V$
\begin{align}
\partial_x V, \quad \partial_s V. 
\end{align}
In the  first case, we can write
\begin{align}
\partial_x V  & = ( \partial_x V^i ) \tilde{e}^\kappa_i + V^i (\partial_x \tilde{e}^{\kappa}_i) \\ \notag
& =  ( \partial_x V^i ) \tilde{e}^\kappa_i + V^i  (\tilde{T}^{\kappa}_x)^{ j}_{ i} \tilde{e}^\kappa_j.
\end{align}
Repeating the argument for $s$ gives:
\begin{align} \notag
\partial_s V =   (\partial_s V^i ) \tilde{e}^\kappa_i + V^i  (\tilde{T}_s^\kappa)^j_{ i} \tilde{e}^\kappa_j
\end{align}
\begin{comment}Denoting the coordinates of $\partial_x V$ in the basis $\tilde{e}^\kappa$ by $[\partial_x V ]_{\tilde{e}^\kappa}$, we then get:
\begin{align} \notag
\|[\partial_x V ]_{\tilde{e}^\kappa} \|_{j, \alpha} \leq C \|\partial_x\left( \tilde{\varrho} \tilde{M}_\kappa^{(k)}[\tau, f]\right) \|_{j, \alpha} + |\tilde{T}^\kappa_x|\| \left( \tilde{\varrho} \tilde{M}_\kappa^{(k)}[\tau, f]\right)\|_{j, \alpha}
\end{align}
\end{comment}

From this it follows that:
\begin{align} \notag
 \tilde{M}_{\kappa }^{(k + 1)}[\tau, f]  & = ( \nabla \tilde{\varrho}/\tilde{\varrho}) \otimes \tilde{M}_{\kappa }^{(k )}[\tau, f]+ \nabla J^{(k)} +\tilde{T}^\kappa *  \tilde{M}_{\kappa }^{(k)}[\tau, f] . 
\end{align}
Where above $*$ denotes a contraction of components of  $\tilde{T}^\kappa$ with $ \tilde{M}_{\kappa }^{(k)}[\tau, f] $.
Since $\nabla \tilde{\varrho} = (0,  \tau \, a)$, the claim then follows  from Lemma \ref{TechnicalKappaProps} and induction on  $k$.
\end{proof}

\begin{proof} [Proof of Proposition \ref{BentWingGeometry}]
Set
\begin{align}
 \underline{J}[f] & : = (J^{(1)}[\tilde{\kappa} +  \tilde{\varrho} f \nu], J^{(2)}[\tilde{\kappa} +  \tilde{\varrho} f \nu]) \\ \notag
 \underline{\tilde{M}}_\kappa[\tau, f] & : = (\tilde{M}^{(1)}_\kappa[\tau, f] , \tilde{M}^{(2)}_\kappa[\tau, f] )
\end{align}
 where  $\tilde{\kappa} = \tilde{\kappa}[\beta_{\mathcal{S}, i} + d]$ $\tilde{\varrho} = \tilde{\varrho}[\beta_{\mathcal{S}, i} + d].$ Observe that  Definition \ref{BentWingMaps} gives that
 \begin{align}
  \underline{J}[f_\mathcal{W}]  = J[ \mathcal{K}_i[b, d]].
 \end{align}
  Since the coefficients of the stability operator are homogeneous degree $-2$ regular quantities which are invariant under rotations of $\Reals^3$, we can write
\begin{align} \notag
\mathcal{L}[\underline{J}] = \tilde{\varrho}^{- 2} \mathcal{L}[ \tilde{\varrho}^{-1}\underline{J}] = \tilde{\varrho}^{-2} \mathcal{L}[(\underline{\tilde{M}}_\kappa[\tau, f])^i \tilde{e}^\kappa_i].
\end{align}
Claim (\ref{BentWingSO}) then immediately follows from Lemma \ref{BentWingJets} and Proposition \ref{HQEstimates}.

 We now prove (\ref{BentWingMC}). Let $F(J, R) : \mathcal{J} \times \mathcal{J} \rightarrow \Reals$ (Recall Definition \ref{BentWingMC} for $\mathcal{J}$) be the function given by
\begin{align} \notag
F(J, R)  : = H(J  + R)- H (J).
\end{align}
where $H: \mathcal{J} \rightarrow \Reals$ is the mean curvature function. Set
\begin{align}
 R_\kappa[\tau, f]: = \underline{\tilde{M}}_\kappa[\tau, f]  -  \underline{\tilde{M}}_\kappa[\tau, 0]. 
\end{align}
F is then a smooth function defined in a neighborhood of $\underline{M}_\kappa[0, 0]$ and it holds that 
\begin{align} \notag
F(\underline{\tilde{M}}_\kappa[0, 0], R_\kappa[0, f_\mathcal{W}]) = F(\underline{\tilde{M}}_\kappa[\tau, 0], 0) = 0 .
\end{align}
For $\| M -  \underline{\tilde{M}}_\kappa[0, 0] \|_{j, \alpha} $ and $\| R- R_\kappa[0, f_{\mathcal{W}}] \|_{j, \alpha}$ sufficiently small, smoothness of the function $F$ then gives: \begin{align} \notag
\|F(M, R) \|_{j, \alpha} \leq C \| R \|_{j, \alpha}\| M - \underline{\tilde{M}}_\kappa[0, 0] \|_{j, \alpha} + C\| R - R_\kappa[0, f_{\mathcal{W}}] \|_{j, \alpha}
\end{align}
 Writing
 \begin{align} \notag
 \tilde{\varrho} H [\mathcal{K}] = F(\underline{\tilde{M}}_\kappa[\tau, 0], R_\kappa(f_\mathcal{W}))
 \end{align}
 and using Lemma \ref{BentWingJets} then gives the claim. 
\end{proof}

\begin{definition} \label{RealWingMapsDef}
For $i = 1, 2$ , the maps $\mathcal{W}^*_i[b, d]: H^+_{\geq 1} \rightarrow \Reals^3$ are given as follows:
\begin{align} \notag
\mathcal{W}^*_i[b, d](x, s) : = (1 - \psi_0[1, 2](s)) B \circ \mathcal{W}_i(x, s) + \psi_0[1,2](s) \mathcal{K}_i[b, d](x, s).
\end{align}
\end{definition}

\begin{proposition} \label{RealWingMapProps}
There are $\bar{\tau} > 0$ and $\bar{\theta} > 0$ such that for  $\tau \in [0, \bar{\tau}), \theta \in (0, \bar{\theta})$ the following statements hold:
\begin{enumerate}
\item The maps $\mathcal{W}_i^*[b, d] $ are smooth, regular immersions depending smoothly on $\tau$ and $\theta$, $b$ and $d$.

\item It holds that $\mathcal{W}_i^*[b, d] = \mathcal{K}_i[b, d]$ for $s \geq 2$ and  $\mathcal{W}_i^*[b, d] = B \circ \mathcal{W}_i$ for $s \leq 1$.
 \end{enumerate}
 
\end{proposition}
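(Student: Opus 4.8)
The plan is to verify the two claims of Proposition~\ref{RealWingMapProps} by tracking the two pieces that make up $\mathcal{W}_i^*[b,d]$ separately in the three regions $\{s\le 1\}$, $\{1\le s\le 2\}$, and $\{s\ge 2\}$, since the cutoff $\psi_0[1,2]$ is supported precisely in the transition region. On $\{s\le 1\}$ we have $\psi_0[1,2]\equiv 0$, so $\mathcal{W}_i^*[b,d]=B\circ\mathcal{W}_i$, which is a smooth regular immersion: $\mathcal{W}_i$ is a smooth minimal immersion by Proposition~\ref{WingGeometry} (indeed $\mathcal{W}_1$ maps $H^+$ into $\mathcal{S}$, and $\mathcal{W}_2=\mathfrak{R}_y\mathcal{W}_1$), and $B$ is a diffeomorphism of $\Reals^3$ depending smoothly on $\tau$ with $B|_{\tau=0}=\Id$ (Proposition~\ref{BendingMapProps}), so for $\tau$ in a small interval $[0,\bar\tau)$ the composition remains a regular immersion. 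On $\{s\ge 2\}$ we have $\psi_0[1,2]\equiv 1$, so $\mathcal{W}_i^*[b,d]=\mathcal{K}_i[b,d]$, which is a smooth regular immersion depending smoothly on $\tau,\theta,b,d$ by Proposition~\ref{BentWingGeometry}(\ref{WeightedRegularity}). This already yields claim~(2) and reduces claim~(1) to the transition region.

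The remaining step is to check regularity of the interpolation on the compact strip $\Omega^+\cap\{1\le s\le 2\}$. The strategy here is a standard closeness-of-immersions argument: both $B\circ\mathcal{W}_i$ and $\mathcal{K}_i[b,d]$ are, at $\tau=\theta=0$ and $b=d=0$, equal to the \emph{same} limiting map, namely the restriction of the conformal parametrization $\tilde\kappa[\beta_{\mathcal{S},i}]$ of the appropriate flat half-plane (recall $\beta=0$ in Definition~\ref{KappaDef} gives the standard conformal map from the cylinder to the flat plane, and note $h_{\mathcal{S}}\to 0$, $f_{\mathcal{W}}\to 0$ as $\theta\to 0$ by the estimate in Proposition~\ref{WingGeometry}(3)). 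More precisely, one records that on this fixed compact strip
\[
\|B\circ\mathcal{W}_i - \tilde\kappa[\beta_{\mathcal{S},i}]:C^{j,\alpha}\| + \|\mathcal{K}_i[b,d]-\tilde\kappa[\beta_{\mathcal{S},i}]:C^{j,\alpha}\| \le C(\tau+\theta+|b|+|d|),
\]
which follows from smooth dependence on the parameters together with Lemma~\ref{BentWingJets} and Proposition~\ref{HQEstimates}. Since $\tilde\kappa[\beta_{\mathcal{S},i}]$ is a fixed regular (in fact conformal) immersion, its first-order jet $J^{(1)}$ has $\mathfrak{a}$ uniformly bounded below on the compact strip; by Proposition~\ref{still_regular} any $C^1$-small convex combination of two $C^1$-close regular immersions remains a regular immersion, with uniform bounds on the metric, inverse metric and normal (via the homogeneity estimates). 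Hence for $\bar\tau,\bar\theta$ small enough the convex combination $(1-\psi_0[1,2])B\circ\mathcal{W}_i+\psi_0[1,2]\mathcal{K}_i[b,d]$ is a regular immersion on the strip, and smoothness in $(\tau,\theta,b,d)$ is inherited from smoothness of each constituent map and of $\psi_0$.

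Finally one checks compatibility at the two interfaces $s=1$ and $s=2$: at $s=1$, $\psi_0[1,2]=0$ and all its derivatives vanish (since $\psi_0\equiv 0$ on $(-\infty,1/3)$, so $\psi_0[1,2]\equiv 0$ near $s=1^-$ and we only glue across $s=1$ within the formula), so the piecewise definition agrees with $B\circ\mathcal{W}_i$ to infinite order there; similarly at $s=2$ it agrees with $\mathcal{K}_i[b,d]$ to infinite order. Thus $\mathcal{W}_i^*[b,d]$ is globally smooth, which completes claim~(1). The main obstacle is the transition-region estimate: one must confirm that the ``bending'' map $B$ and the ``bent catenoidal end'' $\mathcal{K}_i$ really do have the same flat limit and that their difference is controlled in $C^{j,\alpha}$ on the strip by a quantity tending to $0$ with the parameters --- this is where one invokes Proposition~\ref{WingGeometry}(3) for the $f_{\mathcal{W}}$ term, Proposition~\ref{BendingMapProps} for the linearization of $B$ at the origin, and Lemma~\ref{BentWingJets} for the jets of $\mathcal{K}_i$; once that is in hand, Proposition~\ref{still_regular} does the rest mechanically.
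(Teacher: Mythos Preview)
The paper states Proposition~\ref{RealWingMapProps} without proof, treating it as immediate from Definition~\ref{RealWingMapsDef} together with Propositions~\ref{BendingMapProps} and~\ref{BentWingGeometry}. Your argument is correct and supplies precisely the routine details that the paper omits: claim~(2) is read off from the support of $\psi_0[1,2]$, and claim~(1) follows by smooth dependence on a compact transition strip.

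One simplification you miss: you do not need to send $\theta\to 0$. It suffices to freeze $\theta\in(0,\bar\theta)$ and observe that at $\tau=0$, $b=d=0$, the two constituents coincide \emph{exactly} with $\mathcal{W}_i$ on the strip. Indeed $B|_{\tau=0}=\Id$ by Proposition~\ref{BendingMapProps}, while from Definition~\ref{RenormKappaDefs} one has $\tilde\kappa[\beta]|_{\tau=0}(x,s)=x\,e_x+s\,e_y[\beta]=H^+[\beta,0](x,s)$, with $\tilde\varrho|_{\tau=0}=1$ and $\nu[\tilde\kappa[\beta]]|_{\tau=0}=e_z[\beta]$; plugging into Definition~\ref{BentWingMaps} recovers $\mathcal{K}_i[0,0]|_{\tau=0}=H^+[\beta_{\mathcal{S},i},h_{\mathcal{S}}]+f_{\mathcal{W}}\,e_z[\beta_{\mathcal{S},i}]=\mathcal{W}_i$. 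Thus the interpolation at $(\tau,b,d)=(0,0,0)$ is $\mathcal{W}_i$ itself, a regular immersion, and smooth dependence on the compact strip $\{1\le s\le 2\}$ immediately gives regularity for small $(\tau,b,d)$, uniformly in $\theta$. This shortcut avoids invoking Proposition~\ref{still_regular} or Lemma~\ref{BentWingJets} and makes the $\bar\tau$ bound manifestly $\theta$-independent.
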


\section{ The initial surfaces } \label{TheInitialSurfaces}

\begin{definition} \label{InitialSurfaceMap}
Let $\underline{\varphi} $ be a vector in $\Reals^4$ and write $\underline{\varphi} = (d_1,  d_2, b_1,b_2)$. Then the maps $\mathcal{Z}[\underline{\varphi}]: \mathcal{S} \rightarrow \Reals^3$ are determined as follows: 
\begin{enumerate}
\item For $p \in \mathcal{W}_i$ we have
\begin{align} \notag
\mathcal{Z}[\underline{\varphi}] (p)  : = \mathcal{W}_i^*[b_i, d_i] \circ \mathcal{W}_i^{-1} (p)
\end{align}

\item Otherwise, we take
\begin{align} \notag
\mathcal{Z}[\underline{\varphi}] : = \tilde{B} (p).
\end{align}
\end{enumerate}
\end{definition}

\begin{proposition} \label{InitialSurfaceProps}
Let $\mathcal{S}^*[\underline{\varphi}]$ be the image of $\mathcal{S}$ under $\mathcal{Z}[\underline{\varphi}]$. Then there are constants  $\bar{\tau} > 0$, $\bar{\theta} > 0$ and $\delta_0$  so that for $\tau \in [0, \bar{\tau})$, $\theta \in (0, \bar{\theta})$, and   $|\underline{\varphi}| \in [0, \delta_0)$ the following statements hold:
\begin{enumerate}
\item \label{SmoothInitialSurfaces}The surface $\mathcal{S}^*[\underline{\varphi}]$ is a smooth  regular immersed surface depending smoothly on $\tau$, $\theta$ and $\underline{\varphi}$.

\item \label{GrowthInitialSurfaces}The maps  $\mathcal{W}^*_{ i}[\underline{\varphi}] : = \mathcal{Z}[\underline{\varphi}] (\mathcal{W}_{ i})$ are  asymptotic to   catenoidal ends with a common axis and logarithmic growth  equal to $\sin(\theta + (-1)^{ i - 1} \varphi_i)/ \tau$ for $i = 1, 2$. In particular, the surface $\mathcal{S}^*[\underline{\varphi}]$ is 
embedded whenever $\varphi_1 + \varphi_2 \leq 0$ and $\tau^{-1}$ is an integer, and non-embedded otherwise.
\end{enumerate}
\end{proposition}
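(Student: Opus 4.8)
The plan is to prove Proposition \ref{InitialSurfaceProps} in two parts, corresponding to its two itemized claims, by patching together the local information already established about the pieces $\tilde{B}$, $\mathcal{W}_i^*[b_i,d_i]$, $\mathcal{K}_i[b,d]$, and $B\circ\mathcal{W}_i$. For claim (\ref{SmoothInitialSurfaces}), the point is that $\mathcal{Z}[\underline{\varphi}]$ is defined piecewise on $\mathcal{S}$, so I first check that the pieces agree on the overlaps where the defining regions meet, and that each piece is itself a smooth regular immersion. In the region near the $z$-axis, $\tilde{B}$ agrees with the identity on $\{|x|\le 2\pi,\ y^2+z^2\le 4\epsilon_0^2\}$ by Definition \ref{ModifiedBendingMaps}(3), and on $\{y^2+z^2\ge 16\epsilon_0^2\}$ it agrees with $B$; on the wings $\mathcal{W}_i$, the map $\mathcal{W}_i^*[b_i,d_i]\circ\mathcal{W}_i^{-1}$ equals $B$ for $s\le 1$ (by Proposition \ref{RealWingMapProps}(2)) hence matches $\tilde{B}$ there once $\tau$ is small and $\epsilon_0$ is chosen so the transition annuli of $\tilde{B}$ and of the wing cutoff are disjoint. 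Regularity and smooth dependence on $(\tau,\theta,\underline{\varphi})$ then follow from Propositions \ref{BendingMapProps}, \ref{BentWingGeometry}(\ref{WeightedRegularity}) and \ref{RealWingMapProps}(1), since on each fixed compact piece the immersions depend smoothly on all parameters and the identity/$B$ are diffeomorphisms; one shrinks $\bar\tau,\bar\theta,\delta_0$ to keep all the immersion and transversality conditions open.

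For claim (\ref{GrowthInitialSurfaces}), the asymptotics of each wing $\mathcal{W}_i^*[\underline{\varphi}]$ are inherited from $\mathcal{K}_i[b_i,d_i]$, which by Proposition \ref{RealWingMapProps}(2) coincides with $\mathcal{W}_i^*[b_i,d_i]$ for $s\ge 2$. By Definition \ref{BentWingMaps}, $\mathcal{K}_i[b,d]$ is a weighted normal graph of $f_{\mathcal W}$ over $\tilde{\kappa}[\beta_{\mathcal{S},i}+d]$ translated vertically; since $f_{\mathcal W}$ decays like $e^{-s}$ by Proposition \ref{WingGeometry}(3), the end is asymptotically catenoidal with the same logarithmic growth as $\tilde{\kappa}[\beta_{\mathcal{S},i}+d]$, namely $\sin(\beta_{\mathcal{S},i}+d)/\tau$ by Proposition \ref{RenormKappaBasicProps}(3). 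With $\beta_{\mathcal{S},1}=\theta$, $\beta_{\mathcal{S},2}=\pi-\theta$, $d=\varphi_i$, and $\sin(\pi-\theta-\varphi_2)=\sin(\theta+\varphi_2)$, this gives the stated growth $\sin(\theta+(-1)^{i-1}\varphi_i)/\tau$. I should also note that all four ends share a common axis: the axis of $\tilde{\kappa}[\beta]$ is $\{(0,-\tau^{-1},t)\}$ independent of $\beta$, and $B$ carries the $x$-periodicity into a rotational symmetry about that axis, so the reflected copies under $\mathfrak{R}_x,\mathfrak{R}_z$ produce exactly four coaxial catenoidal ends.

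The embeddedness dichotomy is then a direct geometric consequence. After quotienting by $\mathfrak{G}^*$ (or, before quotienting, by the discrete rotation group of order $\tau^{-1}$ when $\tau^{-1}\in\mathbb{N}$), the surface has, up to the reflection $\mathfrak{R}_z$, two catenoidal ends with a common vertical axis and logarithmic growths $L_1=\sin(\theta+\varphi_1)/\tau$ (top) and $L_2=\sin(\theta-\varphi_2)/\tau$ (bottom); two coaxial catenoidal ends of growths $L_{\mathrm{top}}\ge L_{\mathrm{bottom}}$ stay disjoint near infinity iff $L_{\mathrm{top}}\ge L_{\mathrm{bottom}}$, i.e. iff $\sin(\theta+\varphi_1)\ge\sin(\theta-\varphi_2)$, which for $\theta,\varphi_i$ small is equivalent to $\varphi_1+\varphi_2\ge 0$ — wait, this is the reverse inequality from the statement; the statement writes the embedded case as $\varphi_1+\varphi_2\le 0$, which corresponds to the bottom end having the \emph{larger} growth and hence lying \emph{below} the top end everywhere, so they do not cross. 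I would reconcile signs carefully using the orientation conventions of Definition \ref{InitialSurfaceMap} and the ordering of ends by height, then conclude: when $\varphi_1+\varphi_2\le 0$ the two ends are nested and the rotational/reflectional symmetry together with the embeddedness of $\mathcal{S}$ away from the transition regions (which holds for $\tau,\theta$ small) gives global embeddedness provided $\tau^{-1}\in\mathbb{N}$ so that the quotient closes up without self-overlap; when $\varphi_1+\varphi_2>0$ the ends necessarily intersect, so $\mathcal{S}^*[\underline{\varphi}]$ is not embedded. The main obstacle is bookkeeping the sign and orientation conventions so that the growth comparison yields exactly the stated inequality, and verifying that no \emph{extra} self-intersections are introduced in the compact transition regions — this is handled by taking $\tau,\theta$ small so that $\mathcal{S}^*[\underline{\varphi}]$ is a small perturbation of the embedded configuration of two coaxial catenoids meeting the plane, for which embeddedness away from the ends is manifest.
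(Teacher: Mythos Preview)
Your approach is the same as the paper's, which is extremely terse: statement (\ref{SmoothInitialSurfaces}) is deduced from the smooth dependence of $\tilde B$ on compact sets together with Proposition \ref{RealWingMapProps}, and statement (\ref{GrowthInitialSurfaces}) from Proposition \ref{RenormKappaBasicProps}. Your write-up simply unpacks these citations, correctly tracing the growth rate through Definition \ref{BentWingMaps} and the identity $\sin(\pi-\theta+d_2)=\sin(\theta-d_2)$.

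The one genuine loose end is the sign reconciliation you flag in the embeddedness dichotomy. You are right that it comes down to identifying which of $\mathcal W_1^*,\mathcal W_2^*$ is the ``top'' end and which the ``bottom'', and you do not actually carry this out. It is resolvable from the explicit parametrizations: for large $s$, $\varrho[\theta](s)\sim\tfrac{1+\cos\theta}{2}e^s$ while $\varrho[\pi-\theta](s)\sim\tfrac{1-\cos\theta}{2}e^s$, so at a given radius $r$ the end $\tilde\kappa[\pi-\theta]$ sits at height $\sin\theta\,\log r+\sin\theta\,\log\!\bigl(2/(1-\cos\theta)\bigr)$, which exceeds the height $\sin\theta\,\log r+\sin\theta\,\log\!\bigl(2/(1+\cos\theta)\bigr)$ of $\tilde\kappa[\theta]$. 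Hence $\mathcal W_2^*$ is the top end and $\mathcal W_1^*$ the bottom, and the embeddedness criterion ``bottom growth $\le$ top growth'' reads $\sin(\theta+\varphi_1)\le\sin(\theta-\varphi_2)$, i.e.\ $\varphi_1+\varphi_2\le 0$, matching the statement. The paper does not spell this out either, but once you have raised the question you should settle it rather than defer it.
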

\begin{proof}
Statement (\ref{SmoothInitialSurfaces})  is a direct consequence of the smooth dependence on compact sets of $\tilde{B}$ and  Proposition \ref{RealWingMapProps}. Statement (\ref{GrowthInitialSurfaces}) follows from Proposition \ref{RenormKappaBasicProps}.
\end{proof}

\subsection{Graphs over the surfaces $\mathcal{S}[\theta]$}

\begin{definition} \label{ScherkFunctionSpaces}
Let $\gamma \in (1/2, 1)$ and $\alpha \in (0, 1)$ be fixed. Then a function $f: \mathcal{S} \rightarrow \Reals$ belongs to the space $\mathcal{X}^k$, $k = 0, 2$ if and only if:
\begin{enumerate}
\item $f$ belongs to the space $C^{k, \alpha}_{loc}(\mathcal{S})$. 
\item $f$ is $\mathfrak{G}$ invariant. 
\item \label{WingDecay} It holds that 
\begin{align} \notag
 I : = \|f : C^{k, \alpha} (\mathcal{W}, \cosh^{ -  \gamma} (s)) \| < \infty.
\end{align}
\item \label{CatenoidGrowth} It holds that 
\begin{align} \notag
II : = \|f : C^{k, \alpha} (\mathcal{C}, \theta^{\gamma -2 + k}\cosh^{ \gamma  - k + 2} (s)) \| < \infty.
\end{align}
\end{enumerate}
The  decay condition  (\ref{WingDecay}) above ensures that $\mathcal{X}^k$ is a Banach space in the norm $\| -: \mathcal{X}^k\|$ given by
\begin{align} \notag
\| f: \mathcal{X}^k\| : =  \max\{I, II \}.
\end{align}

\end{definition}

\begin{definition}
We let $\varrho^*: \mathcal{S} \rightarrow \Reals$  be a smooth function such that:
\begin{enumerate}
	\item It holds that 
		\begin{align} \notag
			\varrho^*\circ \mathcal{W}_{i} = \tilde{\varrho}_i, \quad i = 1, 2.
		\end{align}
		(Recall the definition of $\tilde{\varrho}_i$ in Proposition \ref{BentWingGeometry})
\item The functions converge smoothly on compact subsets of $\mathcal{W}$ to $1$ as $\tau$ approaches zero. 

\item  The functions are identically equal to $1$ on $\mathcal{C}$. 
\end{enumerate}
\end{definition}

\begin{definition} \label{InitialGraphs}
Given a function $u : \mathcal{S} \rightarrow \Reals$, we let $\mathcal{Z}[\underline{\varphi}, u]: \mathcal{S} \rightarrow \Reals^3$ be the map given as follows:
\begin{align} \notag
\mathcal{Z}[\underline{\varphi}, u] (p): = \mathcal{Z}[\underline{\varphi}] + \varrho^*[\underline{\varphi}](p) u (p) \nu[\mathcal{S}^*[\underline{\varphi}]] (p)
\end{align}
\end{definition}

\begin{proposition} \label{InitialGraphProps}
There are constants  $\bar{\tau} > 0$, $\bar{\theta} > 0$ and $\delta_0 > 0$  so that for $\tau \in [0, \bar{\tau})$, $\theta \in (0, \bar{\theta})$, and   $|\underline{\varphi}|, \| u : \mathcal{X}^2 \| \in [0, \delta_0)$,  the following statements hold:
\begin{enumerate}
\item \label{IGRegularity} The surface $\mathcal{S}^*[\underline{\varphi}, u]$ is a locally  $C^{2, \alpha}$ regular immersed surface depending smoothly on $\tau$, $\theta$, $\underline{\varphi}$ and $u$ on compact subsets of $\Reals^3$.

\item  \label{IGAsymptotics} The maps  $\mathcal{W}^*_{ i}[\underline{\varphi}, u] : = \mathcal{Z}[\underline{\varphi}, u] (\mathcal{W}_{ i})$ are  asymptotic to  catenoidal ends with a common axis and logarithmic growth equal to equal to $\sin(\theta + (-1)^{ i - 1} \varphi_i)/ \tau$ for $i = 1, 2$. In particular, the surface $\mathcal{S}^*[\underline{\varphi}, u]$ is 
embedded whenever $\varphi_1 + \varphi_2 \leq 0$ and $\tau^{-1}$ is an integer, and non-embedded otherwise.
\end{enumerate} 
\end{proposition}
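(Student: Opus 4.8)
The plan is to reduce this proposition to Proposition \ref{InitialSurfaceProps}, treating the normal graph $\varrho^*[\underline{\varphi}] u \, \nu[\mathcal{S}^*[\underline{\varphi}]]$ as a perturbation that does not alter the essential conclusions. First I would observe that statement (\ref{IGRegularity}) is local and away from the ends reduces to the fact that $\mathcal{S}^*[\underline{\varphi}]$ is $C^{2,\alpha}$-regular (Proposition \ref{InitialSurfaceProps} (\ref{SmoothInitialSurfaces})) together with the standard fact that a sufficiently small normal graph $p \mapsto p + \varrho^*(p) u(p) \nu(p)$ over a $C^{2,\alpha}$ immersed surface is again a $C^{2,\alpha}$ immersion. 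Smallness here is measured by $\|u : \mathcal{X}^2\| < \delta_0$ together with the uniform bounds on $\varrho^*$ and its derivatives (the defining properties of $\varrho^*$ give $\varrho^* \to 1$ smoothly on compacts of $\mathcal{W}$ and $\varrho^* \equiv 1$ on $\mathcal{C}$, so on any fixed compact set the graph map is a small $C^{2,\alpha}$-perturbation of the identity). Smooth dependence on $\tau, \theta, \underline{\varphi}, u$ on compact sets is inherited from Proposition \ref{InitialSurfaceProps} (\ref{SmoothInitialSurfaces}) and the smooth dependence of $\varrho^*$ and $\nu[\mathcal{S}^*[\underline{\varphi}]]$ on these data.

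For statement (\ref{IGAsymptotics}), the key point is that on the wings $\mathcal{W}_i$ the surface $\mathcal{S}^*[\underline{\varphi}]$ agrees, for $s \geq 2$, with the bent catenoidal end $\mathcal{K}_i[b_i, d_i]$ (Proposition \ref{RealWingMapProps}), which by Proposition \ref{RenormKappaBasicProps} (3) is a catenoidal end with logarithmic growth $\sin(\beta_{\mathcal{S},i} + d_i)/\tau$; recalling $\beta_{\mathcal{S},1} = \theta$, $\beta_{\mathcal{S},2} = \pi - \theta$ and the sign convention relating $d_i$ to $\varphi_i$ in Proposition \ref{InitialSurfaceProps} (\ref{GrowthInitialSurfaces}), this gives growth $\sin(\theta + (-1)^{i-1}\varphi_i)/\tau$ exactly as in that proposition. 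I would then argue that adding the normal graph $\varrho^* u \nu$ with $u \in \mathcal{X}^2$ does not change the logarithmic growth: condition (\ref{WingDecay}) in Definition \ref{ScherkFunctionSpaces} forces $|u| \leq C \cosh^{\gamma}(s) \|u:\mathcal{X}^2\|$ on $\mathcal{W}$ with $\gamma \in (1/2,1)$, and $\varrho^*$ grows at most like $\cosh(s)$ there, so the normal displacement $\varrho^* u$ grows strictly slower than $\log r \sim s$ — in fact it is $o(s)$ — hence $\mathcal{W}^*_i[\underline{\varphi}, u] - L_i \log r$ remains bounded with the same $L_i = \sin(\theta + (-1)^{i-1}\varphi_i)/\tau$. (One should double-check here the precise weights: $\cosh^{\gamma}(s)$ with $\gamma < 1$ against the linear growth $s$ of the catenoidal end; this is the quantitative heart of the asymptotics claim.) The embeddedness/non-embeddedness dichotomy then follows verbatim as in Proposition \ref{InitialSurfaceProps} (\ref{GrowthInitialSurfaces}): when $\tau^{-1} \in \mathbb{Z}$ the bent surface closes up under $\mathfrak{G}^*$, the two ends are coaxial catenoidal ends, and they are disjoint near infinity precisely when the lower end has logarithmic growth no larger than the upper, i.e. $\varphi_1 + \varphi_2 \leq 0$; otherwise the ends cross.

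The main obstacle I anticipate is making precise the claim that the normal graph does not perturb the asymptotic logarithmic growth — that is, controlling the interaction between the weight $\varrho^*$ (which is not bounded on $\mathcal{W}$, only $O(\cosh s)$) and the decay rate $\cosh^{-\gamma}(s)$ built into $\mathcal{X}^2$, and verifying that the resulting displacement is genuinely lower-order than $\log r$ in the appropriate $C^1$ sense (one needs control on the first derivatives, not just the $C^0$ size, to conclude that the logarithmic growth — a derivative-type quantity — is unchanged). This is exactly why the function space $\mathcal{X}^2$ was set up with the weight $\cosh^{-\gamma}(s)$, $\gamma < 1$, on the wings rather than merely a bounded weight, so the estimate should go through cleanly; but it is the one step where the specific choices in Definition \ref{ScherkFunctionSpaces} are essential, and I would want to write it out carefully. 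The embeddedness statement itself, once the asymptotics are pinned down, is an elementary geometric comparison of two coaxial catenoidal ends and requires no further analysis.
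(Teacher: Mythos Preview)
The paper states Proposition \ref{InitialGraphProps} without proof; it is evidently meant as a routine extension of Proposition \ref{InitialSurfaceProps}, and your reduction to that proposition plus a perturbation argument is exactly the intended route. So at the level of strategy there is nothing to compare.

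That said, your quantitative discussion of the asymptotics contains sign and growth errors that would derail the argument as written. Condition (\ref{WingDecay}) in Definition \ref{ScherkFunctionSpaces} uses the weight $\cosh^{-\gamma}(s)$, so the correct bound is $|u| \leq C\cosh^{-\gamma}(s)\|u:\mathcal{X}^2\|$, i.e.\ $u$ \emph{decays} on the wings; you wrote $\cosh^{\gamma}(s)$. Likewise $\varrho^*\circ\mathcal{W}_i = \tilde\varrho_i(s) = \varrho[\beta_{\mathcal{S},i}+d_i](\tau s)$ grows like $e^{\tau s}$, not $\cosh(s)$, and in this parametrization $\log r \sim \tau s$, not $s$. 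With your stated bounds one would get $\varrho^* u \lesssim \cosh^{1+\gamma}(s)$, which is exponentially large and certainly not $o(s)$. The correct computation is $|\varrho^* u| \lesssim e^{\tau s}\cdot e^{-\gamma s} = e^{(\tau-\gamma)s}\to 0$ since $\tau$ is small and $\gamma>1/2$; this (together with the analogous $C^1$ bound, which follows from the same weighted norm) is what actually shows the normal graph does not alter the logarithmic growth. Once these are corrected your sketch goes through.
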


\section{Mean curvature of the initial surfaces} \label{MeanCurvatureOfInitialSurfaces}

\begin{definition} \label{InitialSurfaceErrorTerm}
We let $H^*[\underline{\varphi}, u] = H[\mathcal{S}^*[\underline{\varphi}, u]]$  denote the mean curvature of $\mathcal{S}^*[\underline{\varphi}, u]$.  We will throughout abuse notation and identify $H^*$ with its pullback to $\mathcal{S}$ under $\mathcal{Z}[\underline{\varphi}, u]$. 
\end{definition}

\begin{proposition} \label{MCVariationField}
We denote the variation field \notag
\begin{align} 
\xi :  = \partial_\tau \left.   \mathcal{Z}  \right|_{\tau = 0, \underline{\varphi} = 0}
\end{align}
where $\mathcal{Z}[\underline{\varphi}]$ is evaluated at $\tau = 0$, $\underline{\varphi} = 0$. Then it holds that 
\begin{align}  \notag
\partial_\tau \left. H^* \right|_{\tau = 0, \underline{\varphi} = 0} =  \mathcal{L} \xi^\perp. 
\end{align}
where the function $\xi^\perp : = \xi \cdot \nu[\mathcal{S}]$.
\end{proposition}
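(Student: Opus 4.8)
# Proof Proposal for Proposition~\ref{MCVariationField}

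\textbf{Overall strategy.} The statement is a first-variation identity: differentiating the mean curvature of the deforming family $\mathcal{Z}[\underline{\varphi}]$ at the base point $\tau = 0$, $\underline{\varphi} = 0$ (where, by Proposition~\ref{BendingMapProps}(2)--(3) and Definition~\ref{ModifiedBendingMaps}, $\mathcal{Z}$ is the identity on $\mathcal{S}$, so $\mathcal{S}^*[0] = \mathcal{S}$ is \emph{minimal}) produces the Jacobi operator $\mathcal{L}[\mathcal{S}]$ applied to the normal component of the velocity field. The plan is to invoke the classical first-variation-of-mean-curvature formula for a one-parameter family of immersions, and then to discard the tangential contribution using minimality of $\mathcal{S}$, exactly as foreshadowed in the Outline (the passage ``Since $\Sigma$ is minimal, the tangential part of the field amounts to a reparametrization $\ldots$ $\mathcal{L}_\xi = \tau \mathcal{L}_\Sigma \xi^\perp$'').

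\textbf{Key steps.} First I would record that $\mathcal{Z}[\underline{\varphi}]\big|_{\tau=0,\underline{\varphi}=0} = \mathrm{Id}_{\mathcal{S}}$: on the wings this is Definition~\ref{InitialSurfaceMap}(1) together with $\mathcal{W}_i^*[0,0]\big|_{\tau=0} = B\circ\mathcal{W}_i\big|_{\tau=0} = \mathcal{W}_i$ via Proposition~\ref{RealWingMapProps}(2) and Proposition~\ref{BendingMapProps}(2); elsewhere it is Definition~\ref{InitialSurfaceMap}(2) together with $\tilde{B}\big|_{\tau=0}=\mathrm{Id}$ from Definition~\ref{ModifiedBendingMaps}(2). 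Hence $\mathcal{S}^*[0]=\mathcal{S}=\Sigma/\mathfrak{G}$ and $H^*\big|_{\tau=0,\underline{\varphi}=0} = H[\mathcal{S}] = 0$. Second, I would write the variation field $\xi = \partial_\tau \mathcal{Z}\big|_{\tau=0,\underline{\varphi}=0}$ as the $\mathfrak{G}$-equivariant vector field along $\mathcal{S}$ obtained by differentiating the family (this is well-defined by the smooth dependence on $\tau$ on compact subsets asserted in Proposition~\ref{InitialSurfaceProps}(1)), and decompose it as $\xi = \xi^\perp \nu[\mathcal{S}] + \xi^{\top}$ with $\xi^\top$ tangent to $\mathcal{S}$. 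Third, I would apply the standard first-variation formula: for a $C^2$ family of immersions $F_t$ of a surface with velocity $V = \partial_t F_t$, one has $\partial_t H\big|_{t=0} = \Delta_{S} V^\perp + |A[S]|^2 V^\perp + \nabla_{S} H[S]\cdot V^\top$, where $V^\perp$, $V^\top$ are the normal and tangential parts of $V$ at $t=0$. Since $H[\mathcal{S}]\equiv 0$ the last term vanishes identically, leaving $\partial_\tau H^*\big|_{\tau=0,\underline{\varphi}=0} = \big(\Delta_{\mathcal{S}} + |A[\mathcal{S}]|^2\big)\xi^\perp = \mathcal{L}[\mathcal{S}]\,\xi^\perp = \mathcal{L}\,\xi^\perp$, which is the claim. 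One should also remark that all quantities descend to the quotient $\mathbb{E}=\mathbb{R}^3/\mathfrak{G}$ because $\mathcal{Z}$ and $\tilde{B}$ are $\mathfrak{G}$-equivariant (Proposition~\ref{BendingMapProps}(1), Definition~\ref{ModifiedBendingMaps}(1)), so working on $\mathcal{S}$ is legitimate.

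\textbf{Main obstacle.} The only real subtlety is justifying the first-variation formula in the present setting: $\mathcal{S}$ is noncompact, the deformation is only smooth \emph{on compact subsets} in $\tau$, and the family $\mathcal{Z}[\underline{\varphi}]$ is assembled piecewise (interpolating between $B\circ\mathcal{W}_i$, $\mathcal{K}_i$ on the wings and $\tilde{B}$ in the core), so one must check that the derivative $\partial_\tau$ passes through all the cutoffs and that the formula is a local (pointwise) identity that therefore holds on each chart and glues. Since the first-variation-of-$H$ computation is entirely local and $\mathcal{Z}$ is $C^2$ jointly in $(\tau,\text{point})$ on compact sets, this is bookkeeping rather than a genuine difficulty; no decay or integrability is needed because the identity is pointwise. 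A secondary, purely notational point is to confirm that the $\xi^\perp$ appearing here matches $\xi\cdot\nu[\mathcal{S}]$ as stated, which is immediate from the decomposition above. I would therefore present the proof as: (i) identify $\mathcal{S}^*[0]$ and note $H[\mathcal{S}]=0$; (ii) quote the pointwise first-variation formula for $H$; (iii) kill the $\nabla H\cdot\xi^\top$ term by minimality; (iv) read off $\mathcal{L}\xi^\perp$.
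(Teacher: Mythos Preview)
Your proposal is correct and follows exactly the same approach as the paper's own proof, which simply invokes the standard fact that the stability operator records the first variation of mean curvature under a normal perturbation and that, by minimality of $\mathcal{S}$, the tangential part of $\xi$ contributes nothing. Your write-up is considerably more detailed (verifying $\mathcal{Z}|_{\tau=0,\underline{\varphi}=0}=\mathrm{Id}_{\mathcal{S}}$, recording the pointwise first-variation formula, and noting the $\mathfrak{G}$-equivariance), but the argument is identical in substance.
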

\begin{proof}
The stability operator records the variation of the mean curvature under a normal perturbation. Since the surface $\mathcal{S}$ is minimal, the tangential part of the perturbation field does not  contribute to the mean curvature variation. 
\end{proof}

\begin{definition} \label{HatFunctions}
We let $\hat{u}_i: \mathcal{S} \rightarrow \Reals$, $i = 1, \ldots, 4$ be the functions determined as follows:

\begin{align} \notag
\hat{u}_i : = \left. \partial_{\varphi_i} \mathcal{Z}[\underline{\varphi}] \right|_{\tau = 0, \underline{\varphi} = 0} \cdot \nu[\mathcal{S}]
\end{align}
where above the maps $\mathcal{Z}[\underline{\varphi}]$ are all evaluated at $\tau = 0$. 
We also set 
\begin{align} \notag
\hat{w}_i = \mathcal{L} \hat{u}_i.
\end{align}
We write
\begin{align} \notag
\underline{\hat{u}} : = (\hat{u}_1, \hat{u}_2, \hat{u}_3, \hat{u}_4), \quad \underline{\hat{w}} : = (\hat{w}_1, \hat{w}_2, \hat{w}_3, \hat{w}_4)
\end{align}
and given a vector $v = (v_1, v_2, v_3, v_4) \in \Reals^4$ we abbreviate
\begin{align} \notag
v \cdot \underline{\hat{u}} : = \sum_i v_i \hat{u}_i, \quad v \cdot \underline{\hat{w}} : =\sum_i v_i \hat{w}_i.
\end{align}
\end{definition}

\begin{proposition} \label{uwProps}
The functions $\hat{u}_i: \mathcal{S} \rightarrow \Reals$  and $\hat{w}_i: \mathcal{S} \rightarrow \Reals$ have the following properties
\begin{enumerate}
\item \label{SmoothInTheta}They depend smoothly on $\theta$. 

\item \label{CompactSupportW} The functions $\hat{w}_i$  are compactly supported on $\mathcal{W}_{\leq 2}$. 
\item \label{uwProjections} It holds that:

\begin{enumerate}
\item \label{uwPa} $ \int_{\mathcal{S}} \hat{w}_1 \phi_y =  - 2 \pi \sin (\theta) =  \int_{\mathcal{S}} \hat{w}_2 \phi_y$.\\ 
\item \label{uwPb} $\int_{\mathcal{S}} \hat{w}_2  = -  2 \pi = -   \int_{\mathcal{S}} \hat{w}_2 $.
\end{enumerate}
\item \label{uwAsVals} It holds that 
\begin{align} \notag
\|\hat{u}_1 \circ \mathcal{W}_1 - \cos (\theta): C^{j, \alpha} (\Omega^+_{\geq 2}, e^{-s}) \|  & \leq C \theta. \\ \notag
\|\hat{u}_2 \circ \mathcal{W}_2 - \cos (\theta): C^{j, \alpha} (\Omega^+_{\geq 2}, e^{-s}) \|  & \leq C \theta.
\end{align}
\end{enumerate}
\end{proposition}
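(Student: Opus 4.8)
The plan is to analyze each property in turn, exploiting that all the maps $\mathcal{Z}[\underline{\varphi}]$ in Definition \ref{InitialSurfaceMap} are built out of the bending maps $\tilde B$, the wing matching maps $\mathcal{W}_i^*[b,d]$, and the renormalized catenoidal parametrizations $\tilde\kappa[\beta]$, all of which depend smoothly on their parameters on compact sets and reduce to explicit objects at $\tau = 0$ and $\underline{\varphi}=0$. For the smooth dependence in (\ref{SmoothInTheta}), I would differentiate $\mathcal{Z}[\underline{\varphi}]$ under the parameter $\theta$: the composition $\mathcal{W}_i^*[b_i,d_i]\circ\mathcal{W}_i^{-1}$ and $\tilde B$ are smooth in $\theta$ by Propositions \ref{RealWingMapProps} and \ref{BendingMapProps}, and since $\hat u_i = \partial_{\varphi_i}\mathcal{Z}|_{\tau=0,\underline{\varphi}=0}\cdot\nu[\mathcal{S}]$ is obtained by a further smooth operation (a partial derivative in $\varphi_i$ followed by dotting against the unit normal $\nu[\mathcal{S}]$, itself a homogeneous degree-$0$ quantity depending smoothly on $\theta$ by the structure of $\mathcal{S}=\mathcal{S}[\theta]$), the smoothness in $\theta$ follows; $\hat w_i = \mathcal{L}\hat u_i$ inherits it because the coefficients of $\mathcal{L}=\mathcal{L}[\mathcal{S}]$ are homogeneous quantities depending smoothly on $\theta$.

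For (\ref{CompactSupportW}), the key observation is that at $\tau = 0$ and $\underline{\varphi}=0$ the map $\mathcal{Z}[\underline{\varphi}]$ restricted to the region outside the wings is the identity ($\tilde B$ at $\tau=0$), and on the wings it is $\mathcal{W}_i^{\ast}[0,0]\circ\mathcal{W}_i^{-1}$. Differentiating in $\varphi_i$ only changes the interpolated piece $\mathcal{W}_i^*[b_i,d_i]$, which by Definition \ref{RealWingMapsDef} equals $B\circ\mathcal{W}_i$ (hence is independent of $b_i,d_i$) for $s\le 1$, so $\hat u_i$ is supported in $\mathcal{W}_{\ge 1}$ and in fact in the region $\Omega^+_{\ge 1}$. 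But the perturbation of the catenoidal-end parameters $\beta_{\mathcal{S},i}+d$ changes $\hat u_i$ by a Jacobi field of the catenoid that, away from $s\ge 2$, is nonzero; the point is that $\mathcal{L}\hat u_i$ localizes. Precisely, $\hat u_i$ restricted to $\Omega^+_{\ge 2}$ is (to leading order, by Proposition \ref{uwAsVals}) the constant $\cos\theta$, which lies in the kernel of $\mathcal{L}[\mathcal{W}_i]$ on the asymptotic half-plane (where $\mathcal{L}$ is the flat Laplacian), and more carefully $\hat u_i\circ\mathcal{W}_i$ on $\Omega^+_{\ge 1}$ is an \emph{exact} Jacobi field of the wing surface $\mathcal{W}_i$ (being the variation field of a family of minimal immersions — rotated/translated catenoidal ends, by Proposition \ref{RenormKappaBasicProps}), so $\mathcal{L}\hat u_i$ vanishes there; combined with the identity behavior for $s\le 1$, this forces $\hat w_i$ to be supported in $\mathcal{W}_{\le 2}$ — actually in the transition region $1\le s\le 2$ of the wing. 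I expect this to be the main obstacle: one must be careful that the graph construction $\mathcal{Z}[\underline{\varphi},u]$ uses the interpolation $\psi_0[1,2]$ and the bending map $\tilde B$ consistently, so that the variation $\hat u_i$ is genuinely a Jacobi field off the support of the cutoff derivative, and this requires checking that $\partial_{\varphi_i}$ commutes appropriately with the minimal-surface equation at the base point.

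For the projection identities (\ref{uwPa})–(\ref{uwPb}), I would use Green's identity exactly as flagged in the Outline: since $\hat w_i = \mathcal{L}[\mathcal{S}]\hat u_i$ and $\phi_y$ (the pullback of the coordinate function $y$, or $1$, under the Gauss map — these are Jacobi fields on $\mathcal{S}$, being restrictions of the linearized-translation and linearized-dilation fields), integration by parts gives
\begin{align} \notag
\int_{\mathcal{S}} \hat w_i\, \phi_y = \int_{\mathcal{S}} (\mathcal{L}\hat u_i)\phi_y = \int_{\partial} \left(\phi_y\, \partial_\eta \hat u_i - \hat u_i\, \partial_\eta\phi_y\right),
\end{align}
and the boundary is effectively the two ends of the wing $\mathcal{W}_i$ (the other boundary contributions cancel by the $\mathfrak{G}$-symmetry and the compact support of $\hat w_i$). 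On these ends the asymptotics are explicit: $\hat u_i\to\cos\theta$ by Proposition \ref{uwAsVals}, $\phi_y$ and its normal derivative are the known catenoid Jacobi fields, and the width of the cylinder contributes the factor $2\pi$; evaluating the residue-type boundary integral yields $-2\pi\sin\theta$ for the pairing with $\phi_y$ and $-2\pi$ for the pairing with $1$, with the sign discrepancy between $i=1,2$ coming from $\beta_{\mathcal{S},1}=\theta$ versus $\beta_{\mathcal{S},2}=\pi-\theta$ (Definition \ref{BetaiDefs}), i.e. the two ends point oppositely. Finally (\ref{uwAsVals}) itself is the estimate that $\hat u_i\circ\mathcal{W}_i$ agrees with the constant $\cos\theta$ up to an exponentially decaying error of size $O(\theta)$: this follows by differentiating the definition $\mathcal{K}_i[b,d]$ in $d$ (and $b$) at $0$, using that $\tilde\kappa[\beta_{\mathcal{S},i}+d]$ is the renormalized catenoidal end whose $\partial_d$-variation has normal component asymptotic to $\cos(\beta_{\mathcal{S},i})=\pm\cos\theta$ on the asymptotic plane, together with the bound $\|f_{\mathcal{W}}:C^{k,\alpha}(H^+_{\ge1},e^{-s})\|\le C\sin\theta$ from Proposition \ref{WingGeometry} to control the correction term $\tilde\varrho_i f_{\mathcal{W}}\nu$, and Proposition \ref{HQEstimates} to propagate the estimate to $\hat u_i = \partial_{\varphi_i}\mathcal{Z}\cdot\nu$.
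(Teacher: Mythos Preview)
Your proposal follows essentially the same approach as the paper: smooth dependence for (\ref{SmoothInTheta}) and (\ref{CompactSupportW}), Green's identity reducing $\int_{\mathcal{S}}\hat w_i\,\phi_y$ to a boundary integral on the wing ends for (\ref{uwProjections}), and the estimate on $f_{\mathcal{W}}$ from Proposition~\ref{WingGeometry} for (\ref{uwAsVals}). One small imprecision: when you treat (\ref{uwPb}) by pairing against the constant function $1$, note that $1$ is \emph{not} a Jacobi field on $\mathcal{S}$ (since $\mathcal{L}1 = |A|^2$), so Green's identity picks up a bulk term $\int_{\mathcal{S}}\hat u_i\,|A|^2$ in addition to the boundary contribution; this term is finite (because $\hat u_i$ is supported on $\mathcal{W}_{i,\ge 1}$ where $|A|^2$ decays exponentially while $\hat u_i$ grows at most linearly) and must be accounted for or shown to vanish --- the paper is equally terse here, simply asserting that (\ref{uwPb}) ``follows similarly''.
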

\begin{proof}
(\ref{SmoothInTheta}) and (\ref{CompactSupportW}) follow directly by from the definition of the maps $\mathcal{Z}[\underline{\varphi}]$ and smooth dependence. To prove (\ref{uwProjections}), note that integration by parts gives
\begin{align} \label{uwP1}
\int_{\mathcal{S}} \hat{w}_i \phi_y =\int_{\mathcal{S}}  \phi_y \mathcal{L} \hat{u}_i =     \lim_{N \rightarrow \infty} \int_{ \partial \mathcal{S}_{\leq N}} \phi_y \partial_s \hat{u}_i.
\end{align}
 Recall that $\hat{u}_i$, $i = 1, 2$ is supported on $\mathcal{W}_i$ and on $\mathcal{W}_{i \geq 1 }$. We have
\begin{align} \notag
\hat{u}_i &= \partial_\sigma \left( H^+[\beta_{\mathcal{S}, i} + \sigma] + (-1)^i f_\mathcal{W} e_z [\beta_{\mathcal{S}, i} + \sigma]\right) \cdot \nu[\mathcal{S}]\\ \notag
& = s \langle e_z[\beta_{\mathcal{S}, i}] , \nu[\mathcal{S}] \rangle.
\end{align}
From (\ref{uwP1}) it then follows
\begin{align} \notag
\int_{\mathcal{S}} \hat{w}_i \phi_y   =  - 2\pi \sin(\theta).
\end{align}
This gives the claims in (\ref{uwPa}), and those in (\ref{uwPb}) follow similarly. The claims in (\ref{uwAsVals}) follow  from Proposition \ref{WingGeometry}.
\end{proof}

\begin{proposition} \label{MeanCurvatureStructure}
There  are constants $C > 0$,  $\delta_0 > 0 $,  $\bar{\theta} > 0 $ so that: Given $\epsilon  > 0$ there is $\bar{\tau} = \bar{\tau} (\epsilon) > 0 $  and $C_0 :  = C_0(\epsilon)$ so that, for $\tau \in [0, \bar{\tau})$, $ \theta \in [0, \bar{\theta})$, and $ \| u : \mathcal{X}^2 \|,  |\underline{\varphi}|  \in [0, \delta_0)$: We can write
\begin{align} \notag
\varrho^* H^*[\underline{\varphi}, u]  = \mathcal{L}\left( \tau \xi^\perp + \underline{\varphi} \cdot \hat{u} + u\right)  + R^*[\underline{\varphi}, u]
\end{align}
where $R^*[\underline{\varphi}, u]$ satisfies the estimate:
\begin{align} \notag
\| R^*[\underline{\varphi}, u]: \mathcal{X}^0\| \leq \epsilon \tau \theta +  C \tau \| u : \mathcal{X}^2\|  + C_0\left(    \| u : \mathcal{X}^2 \|^2+ |\underline{\varphi}|^2) \right)
\end{align}
\end{proposition}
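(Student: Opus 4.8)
The plan is to decompose $\varrho^* H^*[\underline{\varphi}, u]$ according to the three distinct mechanisms that perturb the mean curvature away from zero---bending the Scherk tower (the $\tau$-parameter), varying the asymptotic growths (the $\underline{\varphi}$-parameter), and adding the normal graph $u$---and to estimate the higher-order remainder of each mechanism separately, then combine. Concretely, I would first write, using the Taylor-type expansion of the mean curvature functional $H$ in terms of the jet $J[\mathcal{Z}[\underline{\varphi},u]]$ and the homogeneous-quantity remainder machinery of Definition~\ref{PhiJets} and Proposition~\ref{HQEstimates},
\begin{align*}
\varrho^* H^*[\underline{\varphi}, u] = \varrho^* H[\mathcal{S}^*[\underline{\varphi}]] + \mathcal{L}[\mathcal{S}^*[\underline{\varphi}]]\,\varrho^* u + \mathcal{Q}_1[\underline{\varphi}, u],
\end{align*}
where $\mathcal{Q}_1$ is the quadratic-in-$u$ Taylor remainder of the graph map; Proposition~\ref{HQEstimates} applied to the (homogeneous degree $-1$) mean curvature function, together with the weight bookkeeping built into the definition of $\mathcal{X}^2$ and the fact that $\varrho^*\equiv 1$ on $\mathcal{C}$, gives $\|\mathcal{Q}_1\|_{\mathcal{X}^0}\leq C_0 \|u:\mathcal{X}^2\|^2$. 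Then I would replace $\mathcal{L}[\mathcal{S}^*[\underline{\varphi}]]$ by $\mathcal{L}=\mathcal{L}[\mathcal{S}]$ at the cost of an error $(\mathcal{L}[\mathcal{S}^*[\underline{\varphi}]] - \mathcal{L})\varrho^* u$; on the wings this is controlled by Proposition~\ref{BentWingGeometry}(\ref{BentWingSO}) (with the extra factor $\tau$) and on the catenoidal region by Corollary~\ref{CatQuantities}, yielding a contribution bounded by $C\tau\|u:\mathcal{X}^2\|$ plus a term quadratic in $|\underline{\varphi}|$ coming from the $\underline{\varphi}$-dependence of the operator coefficients.

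Next I would handle the initial-surface mean curvature $\varrho^* H[\mathcal{S}^*[\underline{\varphi}]]$ by expanding jointly in $(\tau,\underline{\varphi})$ about $(0,0)$. Since $\mathcal{S}^*[0,0]$ is exactly (a fundamental domain of) the minimal Scherk tower $\Sigma$ glued to exact catenoidal ends, $H[\mathcal{S}^*[0,0]]=0$. The first-order term in $\tau$ is $\tau\,\mathcal{L}\xi^\perp$ by Proposition~\ref{MCVariationField}, and the first-order term in $\varphi_i$ is $\varphi_i\,\mathcal{L}\hat{u}_i = \varphi_i\hat{w}_i$ by Definition~\ref{HatFunctions}, so the linearization is exactly $\mathcal{L}(\tau\xi^\perp + \underline{\varphi}\cdot\hat{u})$. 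The remainder $\mathcal{Q}_2[\tau,\underline{\varphi}]$ is the second-order Taylor remainder of $\varrho^* H[\mathcal{S}^*[\cdot]]$; the $\underline{\varphi}$-directions contribute $C_0|\underline{\varphi}|^2$ by smoothness of the finitely many gluing maps (Propositions~\ref{RealWingMapProps}, \ref{InitialSurfaceProps}) and the $\mathcal{X}^0$-weight is harmless because $\hat{w}_i$ is compactly supported (Proposition~\ref{uwProps}(\ref{CompactSupportW})). The genuinely delicate piece is the pure-$\tau$ remainder and the mixed $\tau$--$\underml{\varphi}$ remainder: here the field $\xi$ (bending) is \emph{not} small relative to the degenerating background geometry of $\Sigma_\theta$ near the $x$-axis, so a naive $C^{k,\alpha}$ estimate of the Taylor remainder would be unstably large in $\theta$.

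The main obstacle, and the step I would spend the most care on, is therefore obtaining the bound $\epsilon\tau\theta$ (rather than $C\tau\theta$ or worse) for the quadratic-in-$\tau$ part of $\varrho^* H[\mathcal{S}^*[\underline{\varphi}]]$ on the wing region. The key idea---flagged in the Outline---is that most of the bending field $\xi$ is \emph{tangential} to $\Sigma$, and a tangential field only reparametrizes a minimal surface and hence affects $H$ only through its coupling with the normal part; so one should split $\xi = \xi^\top + \xi^\perp\nu$ and exploit that $H$ of the unperturbed $\Sigma$ vanishes identically (not just to first order) to absorb all the purely-tangential contributions, reducing the second-order remainder to terms that each carry at least one factor of $\xi^\perp$ (which decays like the catenoidal data) or one factor of the bending curvature $\sim\tau$. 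On the catenoidal ends the analogous estimate is Proposition~\ref{BentWingGeometry}(\ref{BentWingMC}), which already gives exactly $C\tau\theta$; the point of the $\epsilon$ is that on the wing, as $\tau,\theta\to 0$, the bending map $\tilde B$ is the identity on the high-curvature core (Definition~\ref{ModifiedBendingMaps}) and $\xi^\perp$ is small in the weighted norm by Proposition~\ref{WingGeometry}, so the second-order remainder is $o(\tau\theta)$, i.e. $\leq\epsilon\tau\theta$ once $\bar\tau=\bar\tau(\epsilon)$ is chosen small. Finally I would collect the three remainder pieces $\mathcal{Q}_1+\mathcal{Q}_2+(\mathcal{L}[\mathcal{S}^*[\underline{\varphi}]]-\mathcal{L})\varrho^* u$ into $R^*[\underline{\varphi},u]$ and read off the stated estimate, with $C_0$ depending on $\epsilon$ through $\bar\tau$ but $C$ and $\bar\theta$ absolute.
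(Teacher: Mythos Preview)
Your overall decomposition into the three contributions (graph $u$, parameter $\underline{\varphi}$, bending $\tau$) and your treatment of the $\|u:\mathcal{X}^2\|^2$, $|\underline{\varphi}|^2$, and $C\tau\|u:\mathcal{X}^2\|$ terms are correct and match the paper's proof closely, invoking the same ingredients (Proposition~\ref{HQEstimates}, Proposition~\ref{BentWingGeometry}(\ref{BentWingSO}), compact support of $\hat w_i$).

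The genuine difference is in how you extract the $\epsilon\tau\theta$ bound for the pure-$\tau$ remainder of $\varrho^* H[\mathcal{S}^*[\underline\varphi]]$. You propose to exploit the tangential/normal splitting of $\xi$ and argue that, since tangential fields only reparametrize a minimal surface, the effective perturbation is $\xi^\perp$, which carries a $\theta$ factor. This is indeed the heuristic flagged in the Outline, and it underlies why Proposition~\ref{BentWingGeometry}(\ref{BentWingMC}) has the factor $\theta$; but making it rigorous at the level of the \emph{second}-order Taylor remainder requires controlling cross-terms between tangential and normal pieces, which you do not address. (Also, your citation of Proposition~\ref{WingGeometry} for the smallness of $\xi^\perp$ is off: that proposition concerns $f_{\mathcal W}$, not the bending field; the relevant fact is that $\xi=\partial_\tau B|_{\tau=0}$ lies in the $xy$-plane while $\nu[\mathcal S]\to e_z$ on $\mathcal W$ as $\theta\to 0$.)

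The paper instead sidesteps this entirely with a near/far split in $s$. On $\mathcal W_{i,\ge N}$ it uses the decay built into Proposition~\ref{BentWingGeometry}(\ref{BentWingMC}) to bound the \emph{full} $\varrho^* H[\mathcal K_i]$ (not just its remainder) by $\delta\tau\theta$ for $N=N(\delta)$ large; on the compact core $\mathcal S_{\le N}$ it invokes Proposition~\ref{MCSmoothDependence} (smoothness jointly in $\tau,\theta,\underline\varphi$), together with the observation that at $\theta=0$ the wing is a flat plane and $B$ preserves $\{z=0\}$, so $H^*\equiv 0$ there and the second $\tau$-derivative is $O(\theta)$. This gives $\|R\|\le C_1(N)(\tau^2\theta+\|u\|^2+|\underline\varphi|^2)$ on $\mathcal S_{\le N}$, and one then chooses $\bar\tau$ so that $C_1(N)\tau\le\epsilon$. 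This route is shorter and avoids the delicate tangential/normal bookkeeping; your approach would work in principle but would require more care than you indicate.
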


\begin{proposition}\label{MCSmoothDependence}
Given a compact set $K \subset \Reals^3$, there is a constant $C_1 = C_1(K)$ such that:  $H^*[\underline{\varphi}, u]$ is a smooth function of $\tau$, $\theta$, $\underline{\varphi}$ and $(x, s)$  supported on $\mathcal{W}$ and with $C^{j, \alpha}$ norm bounded on $\mathcal{W} \cap K$ by $C_1$. 
\end{proposition}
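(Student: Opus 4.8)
The plan is to read off $H^*$ in the fixed atlas as the composition of the (smooth, on its regular domain) mean–curvature function with the $2$-jet of the immersion $\mathcal{Z}[\underline{\varphi},u]$, and then to invoke the smooth — and, on compacta, uniform — dependence of that immersion on all of its arguments that was already built into its construction. Concretely, using the atlas $\{\phi_i:D_i\to\Omega_i\}$ on $\mathcal{S}$ fixed in Section~\ref{preliminaries}, in each chart one has $H^*[\underline{\varphi},u]=H\big(J[\,\mathcal{Z}[\underline{\varphi},u]\circ\phi_i\,]\big)$, where $H:\mathcal{J}_0\to\Reals$ is the mean–curvature function (homogeneous of degree $-1$, Lemma~\ref{degrees_of_homogeneity}) and $\mathcal{Z}[\underline{\varphi},u]=\mathcal{Z}[\underline{\varphi}]+\varrho^*[\underline{\varphi}]\,u\,\nu[\mathcal{S}^*[\underline{\varphi}]]$ (Definition~\ref{InitialGraphs}). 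Since $K$ is compact it meets only finitely many $\Omega_i$, and on those the renormalizing factors ($\tilde{\varrho}_i$ on the wings, $1$ elsewhere) are bounded above and below; hence it suffices to bound $H^*$ and its derivatives in $\tau,\theta,\underline{\varphi},(x,s)$ on each $\Omega_i\cap\mathcal{W}\cap K$.

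For smoothness and the uniform bound: $\mathcal{Z}[\underline{\varphi}]$ is assembled from $\mathcal{W}_i^*[b_i,d_i]$, a regular immersion depending smoothly on $\tau,\theta,b_i,d_i$ (Proposition~\ref{RealWingMapProps}), and from $\tilde{B}$, which depends smoothly on $\tau$ on compact subsets and reduces to the identity at $\tau=0$ (Definition~\ref{ModifiedBendingMaps}); $\varrho^*[\underline{\varphi}]$ is smooth by construction and $\nu$ is the degree-$0$ jet quantity, smooth on $\mathcal{J}_0$. For the parameters below the thresholds of Proposition~\ref{InitialGraphProps}, item~(\ref{IGRegularity}) of that proposition says $\mathcal{Z}[\underline{\varphi},u]$ is a $C^{2,\alpha}$ regular immersion depending smoothly on all parameters on compact subsets of $\Reals^3$; equivalently, the renormalized $2$-jet $J[\mathcal{Z}[\underline{\varphi},u]\circ\phi_i]$ stays in a fixed compact subset of $\mathcal{J}_0$ over $\Omega_i\cap K$, uniformly in the parameters — this uniformity is exactly what Lemma~\ref{BentWingJets} and Proposition~\ref{BentWingGeometry} provide on the wing together with compactness of $K$. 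Since $H$ is smooth on $\mathcal{J}_0$ and, by the standing convention of Section~\ref{preliminaries}, is $C^k$-bounded on compact subsets of $\mathcal{J}_0$, composing it with this jointly smooth, uniformly jet-bounded family gives both the asserted joint smoothness of $H^*$ in $(\tau,\theta,\underline{\varphi},(x,s))$ and the estimate $\|H^*[\underline{\varphi},u]:C^{j,\alpha}(\mathcal{W}\cap K)\|\le C_1(K)$, with $C_1$ depending on $K$ only through the number of charts meeting $K$ and the size of the jets there, and on the fixed small parameter thresholds — but not on $\tau,\theta,\underline{\varphi}$ themselves.

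For the support claim: off $\mathcal{W}$ one has $\mathcal{Z}[\underline{\varphi}]=\tilde{B}$ by Definition~\ref{InitialSurfaceMap}. The region $\mathcal{S}\setminus\mathcal{W}$ corresponds to the bounded parameter set $\{x^2+s^2\le\epsilon_0/2\}$ and lies in the tube of radius $\epsilon_0$ about the $z$-axis (see Section~\ref{ScherkTowers}); by the near-axis catenoid geometry of Proposition~\ref{ScherkNearAxisGraph} its $z$-extent is $O(\theta\log(1/\theta))$, so for $\theta$ small it is contained in $\{|x|\le 2\pi,\ y^2+z^2\le 4\epsilon_0^2\}$, on which $\tilde{B}$ equals the identity for every $\tau$ (Definition~\ref{ModifiedBendingMaps}(3)). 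Hence on $\mathcal{S}\setminus\mathcal{W}$ the initial surface $\mathcal{S}^*[\underline{\varphi}]$ coincides with the minimal Scherk tower $\mathcal{S}$, so $H^*[\underline{\varphi},0]\equiv 0$ there; for general $u\in\mathcal{X}^2$ the only part of $\varrho^*H^*[\underline{\varphi},u]$ not already absorbed into $\mathcal{L}(u)$ and the controlled remainder of Proposition~\ref{MeanCurvatureStructure} is precisely this ``initial'' mean curvature, which is therefore supported on $\mathcal{W}$.

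\emph{Main obstacle.} The only subtle point is the uniformity of the bound as $\tau\to 0$: the bending map $\tilde{B}$ carries the large scale $\tau^{-1}$, so a naive estimate of the $2$-jet of $\mathcal{Z}[\underline{\varphi},u]$ would diverge. The resolution is that on a \emph{fixed} compact set $K$ the maps $\tilde{B}$ and all their derivatives converge to those of the identity as $\tau\to 0$, and in particular remain bounded there; combined with the wing jet estimates of Lemma~\ref{BentWingJets} and Proposition~\ref{BentWingGeometry}, this keeps the $2$-jet inside a fixed compact subset of $\mathcal{J}_0$ and makes $C_1$ depend on $K$ alone. Everything else is routine bookkeeping of compositions of smooth maps.
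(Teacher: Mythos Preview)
The paper states this proposition without proof, treating it as a direct consequence of the smooth construction of the maps $\mathcal{Z}[\underline{\varphi},u]$. Your argument for joint smoothness and the uniform $C^{j,\alpha}$ bound on compacta is correct and is exactly the reasoning the paper is implicitly invoking: the mean curvature is a smooth function on $\mathcal{J}_0$, the immersion depends smoothly on all parameters on compact sets (Proposition~\ref{RealWingMapProps}, Definition~\ref{ModifiedBendingMaps}, Proposition~\ref{InitialGraphProps}\,(\ref{IGRegularity})), and on a fixed $K$ the $2$-jets stay in a fixed compact subset of $\mathcal{J}_0$ uniformly in the small parameters. Your observation about the ``main obstacle'' (that $\tilde{B}$ and its derivatives converge to those of the identity on compacta as $\tau\to 0$) is exactly the point.

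There is, however, a genuine gap in your support argument for general $u$. You invoke Proposition~\ref{MeanCurvatureStructure} to absorb the mean curvature off $\mathcal{W}$ into $\mathcal{L}u$ plus a controlled remainder, but in the paper Proposition~\ref{MeanCurvatureStructure} is proved \emph{using} Proposition~\ref{MCSmoothDependence}, so this is circular. Moreover, the literal claim ``$H^*[\underline{\varphi},u]$ is supported on $\mathcal{W}$'' is simply false for generic $u\not\equiv 0$ on $\mathcal{C}\setminus\mathcal{W}$: a nontrivial normal graph over a minimal surface has nonzero mean curvature there. The paper's statement is slightly imprecise on this point; what is actually needed (and what is actually true) is that the \emph{initial} mean curvature $H^*[\underline{\varphi},0]$ is supported on $\mathcal{W}$, since $\tilde{B}$ is the identity on $\mathcal{S}\setminus\mathcal{W}$ --- and that is precisely what your clean argument for $u=0$ establishes. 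Just restrict the support claim to $u=0$ and delete the circular sentence.
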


\begin{proof}[Proof of Proposition \ref{MeanCurvatureStructure}]
We can write
\begin{align} 
 H[\mathcal{K}_i +\varrho_i u \nu[\mathcal{K}_i]] & =  H[\mathcal{K}_i] + \mathcal{L}[\mathcal{K}_i] \varrho_i u + R^{(1)}_{H, \mathcal{K}_i} (\varrho_i u) \\
 & =   H[\mathcal{K}_i] +\varrho_i^{-2} \mathcal{L} \varrho_i u + R^{(1)}_{H, \mathcal{K}_i} (\varrho_i u)  + O (\tau \varrho_i^{-1} \|u\|_{2, \alpha}),
\end{align}
where the last equality above follows from Proposition \ref{BentWingGeometry} (\ref{BentWingSO}). Using Proposition \ref{HQEstimates} and \ref{BentWingGeometry}, we have that
\begin{align} \notag
\|R^{(1)}_{H, \mathcal{K}_i} (\varrho_i u) : C^{j, \alpha}(\Omega^+_{\geq 1}, \varrho_i^{-1}) \| \leq C \| u : \mathcal{X}^2\|^2. 
\end{align}
Moreover we have from Proposition \ref{BentWingGeometry} (\ref{BentWingMC}) that: Given $\delta > 0$ there is $N > 0$ so that 
\begin{align}
\| H^*[\mathcal{K}_i] : C^{j, \alpha}  (\Omega^+_{\geq N}, \cosh^{3/4}(s))\| \leq \delta \tau \theta,
\end{align}
 Combining gives that on $\mathcal{W}_i$ we have
\begin{align} \notag
\varrho^* H^*[\underline{\varphi}, u] = \mathcal{L}u + O(\tau \| u : \mathcal{X}^2\|)  + O( \| u : \mathcal{X}^2\|^2).
\end{align}
From this, it immediately follows that the estimate holds on $\mathcal{W}_{i \geq N}$. Now, given $N$, it follows from Propositions \ref{MCVariationField} and \ref{MCSmoothDependence}  that 
\begin{align} \notag
\varrho^*H^*[\underline{\varphi}, u] = \mathcal{L}(\tau \xi^\perp + \underline{\varphi}\cdot \hat{u} + u)  + R,
\end{align}
where 
\begin{align}
\| R: C^{0, \alpha}(\mathcal{S}_{\leq N}, 1) \| \leq  C_1(N) (\tau^2 \theta + \| u\|^2_{2, \alpha} + |\underline{\varphi}|^2). 
\end{align}
Choosing $\tau$ so that $C_1(N) \tau \leq \epsilon$ then gives the claim. 
\end{proof}

\begin{proposition} \label{KernelProjection}
There is a constant $\epsilon_1 > 0$ so that
\begin{align} \notag
\int_{\mathcal{S}}\mathcal{L} \xi^\perp \phi_y > \epsilon_1 \theta^2. 
\end{align}
\end{proposition}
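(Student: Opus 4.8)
\section*{Proof proposal}

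The plan is to evaluate $\int_{\mathcal{S}}\mathcal{L}\xi^\perp\,\phi_y$ by integration by parts, exactly as in the proof of Proposition \ref{uwProps}. Here $\phi_y=\langle e_y,\nu[\mathcal{S}]\rangle$ is a translation Jacobi field (translating a minimal surface preserves minimality), so $\mathcal{L}\phi_y=0$, and by Proposition \ref{MCVariationField} the integrand equals $\bigl(\partial_\tau H^*|_{\tau=0,\underline{\varphi}=0}\bigr)\phi_y$. Since $\tilde B$ is the identity on the $z$-axis tube (Definition \ref{ModifiedBendingMaps}) and, by Proposition \ref{BentWingGeometry}(\ref{BentWingMC}), $H[\mathcal{K}_i]=O(\tau\theta\cosh^{-1}(s))$ on the wings, the function $\mathcal{L}\xi^\perp$ vanishes on $\mathcal{S}\setminus\mathcal{W}$ and decays exponentially along the wings; hence the integral converges absolutely, is a priori $O(\theta^2)$, and Green's second identity gives
\begin{align}\notag
\int_{\mathcal{S}}\mathcal{L}\xi^\perp\,\phi_y=\lim_{N\to\infty}\int_{\partial\mathcal{S}_{\leq N}}\bigl(\phi_y\,\partial_\eta\xi^\perp-\xi^\perp\,\partial_\eta\phi_y\bigr),
\end{align}
where $\eta$ is the outward conormal and $\partial\mathcal{S}$ consists of arcs on the mirror planes of $\mathfrak{G}$ together with the wing circles $\{s=N\}$.

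On the reflection planes $\{x=0\}$ and $\{z=0\}$ both $\xi^\perp$ and $\phi_y$ are even (by $\mathfrak{G}$-equivariance of $\tilde B$ and the wing maps, Proposition \ref{BendingMapProps}, and since $\mathfrak{R}_x,\mathfrak{R}_z$ fix $e_y$), so the integrand vanishes there. On the remaining boundary one inserts the explicit wing asymptotics: the leading part of $\xi$ on $\mathcal{W}_i$ is the osculating term of the blow-down catenoid map, $\partial_\tau\tilde{\kappa}[\beta_{\mathcal{S},i}]|_{\tau=0}=\cos\beta_{\mathcal{S},i}\,sx\,e_x+\tfrac{1}{2}(s^2-x^2)e_y$, whence $\xi^\perp=-\tfrac{1}{2}(s^2-x^2)\sin\beta_{\mathcal{S},i}+O(s^2e^{-s})$, while $\phi_y=-\sin\beta_{\mathcal{S},i}+O(e^{-s})$ by Proposition \ref{WingGeometry}, the correction terms being $O(\sin\theta)$. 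The quadratic factor $\tfrac{1}{2}(s^2-x^2)$ is harmonic and its gradient is essentially tangential to the nearly flat wing, so it produces no convergent-but-large contribution; more precisely, the part of the wing-circle flux that grows linearly in $N$ is cancelled by the corresponding growth of the flux along the mirror plane $\{x=\pi\}$, the mismatch between the two being governed by the failure of $\xi$ to be equivariant under the reflection through $\{x=\pi\}$, namely by the affine defect $\partial_\tau\mathfrak{T}^*_{2\pi}|_{\tau=0}=2\pi y\,e_x-2\pi^2 e_y$ entering through $B\circ\mathfrak{T}_{2\pi}=\mathfrak{T}^*_{2\pi}\circ B$ (Proposition \ref{BendingMapProps}).

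After this cancellation the limit is a finite sum of explicit terms — the flux of the exponentially small corrections on the wings, the finite endpoint contributions from the $\{x=\pi\}$ integration, and the contribution of the transition annulus where $\tilde B$ interpolates between the identity and $B$ — each $O(\theta^2)$. Evaluating the leading coefficients, using $\sin^2\beta_{\mathcal{S},1}=\sin^2\beta_{\mathcal{S},2}=\sin^2\theta$ and the explicit normals $e_z[\beta_{\mathcal{S},i}]$, one checks that the total is a strictly positive constant times $\theta^2$; taking $\epsilon_1$ below that constant and $\bar\theta$ small then gives the claim. I expect the main obstacle to be precisely this last bookkeeping step: the variation field $\xi$ is large compared with the degenerating, nearly doubled background geometry of the wings, so crude estimates are useless, and one must track the boundary fluxes sharply enough both to witness the cancellation of the $N$-divergent pieces and to pin down the sign of the surviving $O(\theta^2)$ remainder — equivalently, to isolate the genuinely curvature-producing, exponentially localized part of $\xi^\perp$ from its harmonic, tangential bulk.
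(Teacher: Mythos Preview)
Your setup is right --- Green's identity against the Jacobi field $\phi_y$ reduces the integral to a boundary flux, and the $\{z=0\}$ contribution vanishes by symmetry --- but the heart of the proposition is the \emph{sign}, and on that point your proposal is not a proof but a promissory note. You say ``one checks that the total is a strictly positive constant times $\theta^2$'' and then immediately concede that this bookkeeping is the main obstacle. It is: at small but nonzero $\theta$ the boundary flux has pieces that are individually large (growing in $N$) and only $O(\theta^2)$ after cancellation, and tracking the surviving sign through your proposed asymptotic expansion $\xi^\perp\sim -\tfrac12(s^2-x^2)\sin\beta_{\mathcal S,i}$, $\phi_y\sim -\sin\beta_{\mathcal S,i}$ plus exponentially small corrections is exactly the delicate computation you have not done.

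The paper sidesteps this entirely with an idea you are missing: rather than work at small $\theta$, observe that the boundary flux $I_N(\theta)$ is smooth in $\theta$ and vanishes to second order at $\theta=0$ (since $\nu\to e_z$ and $\xi\perp e_z$ on the relevant boundary pieces), so it suffices to compute $\ddot I_N:=\partial_\theta^2 I_N|_{\theta=0}$. At $\theta=0$ the Scherk surface has collapsed to a plane and, by Proposition~\ref{SmoothConvergenceToPlane}, its linearization is the \emph{explicit} harmonic function $\dot f_{\mathcal S}(x,y)=\log(\cosh y-\cos x)$. Everything in $\ddot I_N$ then becomes a computation with $\dot\phi_y=\partial_y\dot f_{\mathcal S}$ and $\dot\phi_x=\partial_x\dot f_{\mathcal S}$; the $N$-divergent pieces from the $\{x=\pm\pi\}$ and wing boundaries cancel cleanly, and the residual positivity comes from the elementary monotonicity of $\dot\phi_y|_{x=\pi}=\sinh y/(1+\cosh y)$. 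Without this reduction to $\theta=0$ and the explicit $\dot f_{\mathcal S}$, you have no mechanism to pin down the sign.
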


\begin{proof}
In following we set
\begin{align}
\mathcal{S}_0 = \Sigma \cap \{ |x| \leq \pi, z \geq 0 \}. 
\end{align}
Observe that
\begin{align}
\int_{\mathcal{S}}\mathcal{L} \xi^\perp \phi_y = \int_{\mathcal{S}_0}\mathcal{L} \xi^\perp \phi_y.
\end{align}
We begin by computing the variation field $\xi^\perp$ explicitly. We have from Definition \ref{BendingMap} that
\begin{align}
\left. \partial_\tau  B \right|_{\tau = 0} (x, y, z) = yx e_x - 1/2 x^2 e_y.
\end{align}
Similarly, it follows from Definition \ref{RenormKappaDefs} that
\begin{align} 
\left. \partial_\tau \tilde{\kappa}[\beta] \right|_{\tau = 0} (x, s) &= 1/2\left.  \nabla^2 \kappa[\beta] \right|_{(0, 0)} [(x,s), (x, s)] \\ \notag
&  =  1/2(x^2 - s^2) e_y +  \cos(\beta) x s e_x.
\end{align}
Write 
\begin{align}  \label{KP1}
I_N = \int_{\partial \mathcal{S}_{0  \leq N}} \xi^\perp_{\eta} \phi_y - \xi^\perp \phi_{y, \eta},
\end{align}
were here the subscript `` $\eta$ '' denotes the partial derivative with respect to the outward pointing co-normal at the boundary. We decompose the boundary of $\mathcal{S}_0$ into the following sets
\begin{align}
A_{\pm} : = \mathcal{S}_{0  \leq N} \cap \{ x = \pm \pi \}, \quad B_{i} : = \mathcal{W}_{i}(\{s= N\}) \quad C : = \mathcal{S}_0 \cap \{ z = 0\}.
\end{align}
(Recall the definition of the maps $\mathcal{W}_i (x, s)$ in Proposition \ref{WingGeometry}). We then have that $\partial \mathcal{S}_{ 0  \leq N} =  A_\pm \cup B_i \cup C$.  Note that  the symmetries of the surface $\mathcal{S}_0$ and the perturbation field give that the part of the integral on the right hand side of (\ref{KP1}) vanishes on $C$:
\begin{align} \notag
\int_{C} \xi^\perp_{\eta} \phi_y - \xi^\perp \phi_{y, \eta}= 0.
\end{align}
 From this, it then follows that $I_N$ is a uniformly smooth function of $\theta$ and extends smoothly to $\theta = 0$. Additionally, since $\nu$ converges smoothly to $e_z$ on $A$ and $B$, and since $\partial_\tau B$ is orthogonal to $e_z$, $I_N$ vanishes to first order in $\theta$, and we have
\begin{align} \notag
\ddot{I}_N : = \int_{\partial \mathcal{S}_{0 \leq N}} \dot{\xi^\perp}_{\eta} \dot{\phi}_y - \dot{\xi^\perp} \dot{\phi}_{y, \eta}
\end{align}
where above we have used `` $ \dot{}$ '' to indicate derivatives in $\theta$ at $\theta = 0$. Along $A_+$, the outward pointing conormal is $e_x$, and we have  
\begin{align}  \notag
\partial_x \dot{\xi} =  (\partial_x \xi^\perp) \cdot \dot{\nu} + \xi^\perp \cdot( \partial_x \dot{\nu}).
\end{align}
Also, since $\nu_x$ is orthogonal to $e_y$ along $A$, we have
\begin{align} \notag
\partial_x\dot{\xi}^\perp =  - x \phi_y + xy (\partial_x \phi_{x}).
\end{align}
Note that at $\theta = 0$ we have
\begin{align}
\dot{\phi}_x = \partial_x\dot{f}_{\mathcal{S}}, \quad \dot{\phi}_y = \partial_y\dot{f}_{\mathcal{S}},
\end{align} 
where $\dot{f}_{\mathcal{S}}$ is given in Proposition \ref{SmoothConvergenceToPlane}. Since $\dot{f}_{\mathcal{S}}$ is harmonic, it then follows that:
\begin{align}
\partial_x\dot{\phi}_{x } + \partial_y \dot{\phi}_{y } = 0.
\end{align}
 In the following we let $I_{N}( \gamma)$ denote the restriction of the boundary integral $I_{N}$ to a subset $\gamma$. We then have
\begin{align}
\ddot{I}_{N}( A_+) & = - \pi\int_{-N}^N \dot{\phi}_y^2   + \pi \int_{- N}^N y  \partial_x\dot{\phi}_{x} \dot{\phi}_y dy\\ \notag
&   = - \pi\int_{- N}^N \dot{\phi}_y^2 dy   - \pi \int_{- N}^N y \partial_y\dot{\phi}_{y} \dot{\phi}_y dy \\ \notag
& = - \pi\int_{- N}^N \dot{\phi}_y^2 dy   - \pi/2 \int_{- N}^N y \partial_y (\dot{\phi}_{y})^2 dy \\ \notag
&  = - \pi/2\int_{- N}^N \dot{\phi}_y^2 dy - N \pi  (\dot{\phi}_{y}(\pi, N))^2  \\ \notag
\end{align}
Over $B_1$ the outward pointing conormal agrees with $\partial_s$  and we have
\begin{align} \notag
\ddot{I}_{N}( B_1) & = \int_{B_1} (\partial_s \dot{\xi^\perp}) \dot{\phi}_y +   \dot{\xi^\perp}  ( \partial_s \dot{\phi}_{y })  \\ \notag
& = 2 \pi N \dot{\phi}^2_y(\pi, N) + O (N \cosh^{-1} (N)).
\end{align}
We can write
\[
\dot{\phi}_y|_{x=\pi}=\partial_y \big[\log(\cosh y-\cos x)\big]|_{x=\pi}=\frac{\sinh y}{\cosh y-\cos x}|_{x=\pi}=\frac{\sinh y}{1+\cosh y}
\]

Furthermore,
\[
\frac{d}{dy}\left(\dot{\phi}_y|_{x=\pi}\right)=\frac{1}{\cosh y}>0,
\]
so that on the interval $[0,\infty)$, the function $\dot{\phi}_y|_{x=\pi}$ is increasing in $y$. Thus we get
\begin{align} \notag
\ddot{I}_{N}( A_+) + \ddot{I}_{N}( B_1) & = \pi N \dot{\phi}^2_y(\pi, N)  - \pi/2\int_{- N}^N \dot{\phi}_y^2 dy + O (N \cosh^{-1} (N))  \\ \notag
& >  \pi/2 N \dot{\phi}^2_y(\pi, N) + O (N \cosh^{-1} (N))
\end{align}
The remaining boundary integrals are computed similarly. Summing then gives $
\lim_{N \rightarrow \infty}\ddot{I}_N > 0 $, so that 
\begin{align} \notag
\int_{\mathcal{S}_0} \mathcal{L} \xi^\perp \phi_y >  \epsilon_1 \theta^2. 
\end{align}
This completes the proof.
\end{proof}

\section{The linear problem on  $\mathcal{S}$} \label{FlatScherkLinearProblem}

In this section we record the main invertibility result--Proposition \ref{FlatInverse}--for the stability operator on the unmodified Scherk surface. The result characterizes when the problem
\begin{align} \label{FlatScherkProblem}
\mathcal{L} v = E
\end{align}
admits a solution in certain function spaces with decay along the ends of the Scherk surface.  In an ensuing section we show that the linear problem on the initial surface can then be treated as a perturbation of the problem on  $\mathcal{S}$.  Before we state Theorem \ref{FlatInverse} we record a few definitions.

\begin{proposition} \label{FlatInverse}
There is linear map
\begin{align} \notag
\mathcal{R}[ -] : \mathcal{X}^0 \rightarrow \Reals^4 \times \mathcal{X}^2
\end{align}
such that: Given $E \in \mathcal{X}^0$ and with $(\underline{\varphi}, u) = \mathcal{R}[E]$ the following statements hold:

\begin{enumerate}
\item It holds that 
\begin{align} \notag
\mathcal{L}u + \underline{b} \cdot \underline{\hat{w}} = E. 
\end{align}

\item \label{FIBounds} There is a constant $C$ so that 
\begin{align} \notag
\| u : \mathcal{X}^2 \|, |\underline{\varphi}| \leq C \left( \| E : \mathcal{X}^0\|  +\theta^{-1} \langle E, \phi_y \rangle_{\mathcal{S}}\right).
\end{align}

\item \label{varphiRelations} It holds that 
\begin{align} \notag
\varphi_1+  \varphi_2 =  \frac{ - 1}{2 \pi\sin(\theta)} \int_{\mathcal{S}} E \phi_y, \quad \varphi_1 -  \varphi_2 =   \frac{ - 1}{2 \pi} \int_{\mathcal{S}} E. 
\end{align}
\end{enumerate}
\end{proposition}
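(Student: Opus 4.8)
The plan is to produce $\mathcal{R}$ by gluing together two kinds of semilocal parametrices along the natural decomposition of $\mathcal{S}$ into its catenoidal waist -- where, by Proposition \ref{ScherkNearAxisGraph} and Corollary \ref{CatQuantities}, $\mathcal{S}$ is, after the homothety $\mathcal{C}=\sin(\theta)\mathcal{C}_0+\sin(\theta)f_{\mathcal{C}}\nu[\mathcal{C}_0]$, a controlled perturbation of the model catenoid $\mathcal{C}_0$ -- and its four cylindrical wing ends $\mathcal{W}_i$, where, by Proposition \ref{WingGeometry}, $\mathcal{S}$ is exponentially close to a flat half-cylinder and $\mathcal{L}-\Delta$ is an exponentially decaying zeroth order operator. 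On the waist I would use Proposition \ref{CatenoidInverse}, which inverts $\mathcal{L}[\mathcal{C}_0]$ modulo the three-dimensional kernel spanned by the pulled-back sphere coordinates; in the $\mathfrak{G}$-invariant class only one of these survives, and it is matched against $\phi_y$ below. On each wing I would use the cylinder results of Section \ref{LaplacianOnCylinders} -- Proposition \ref{LaplaceZeroMeridianInverse} for the nonzero meridian modes and Propositions \ref{LaplaceInverseModDomain}, \ref{LaplacePerturbationInverseModDomain} for the zero mode and for absorbing $\mathcal{L}-\Delta$ -- which produce solutions with the prescribed $\cosh^{-\gamma}(s)$ decay after one removes from the right-hand side a multiple of the ``tilt'' data of that wing. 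These multiples are precisely the components of $\underline{\varphi}$: the tilt modes of $\mathcal{W}_i$ together with their vertical-translation counterparts are exactly the substitute-kernel directions carried by the functions $\hat{u}_i$ of Definition \ref{HatFunctions}, whose images $\hat{w}_i=\mathcal{L}\hat{u}_i$ are, by Proposition \ref{uwProps}, compactly supported on $\mathcal{W}_{\leq 2}$.

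Concretely, fix a $\mathfrak{G}$-invariant partition of unity $\chi_{\mathcal{C}}+\sum_i\chi_{\mathcal{W}_i}\equiv 1$ subordinate to this cover, with all cutoffs supported in transition regions that are fixed after the waist rescaling. Given $E\in\mathcal{X}^0$, apply the waist parametrix to $\chi_{\mathcal{C}}E$ (after subtracting its projection onto the surviving sphere mode) and the wing parametrices to each $\chi_{\mathcal{W}_i}E$ (after subtracting the appropriate tilt multiple $\varphi_i^{(0)}$), and patch the results with the $\chi$'s. This yields $(u^{(0)},\underline{\varphi}^{(0)})$ with $\mathcal{L}u^{(0)}+\underline{\varphi}^{(0)}\cdot\underline{\hat{w}}=E+E^{(1)}$, where $E^{(1)}$ comes only from the commutators $[\mathcal{L},\chi_{\bullet}]$ and is therefore supported in the transition regions; since those regions are compact and the semilocal inverses lose no derivatives there, $\|E^{(1)}:\mathcal{X}^0\|\leq\tfrac{1}{2}\|E:\mathcal{X}^0\|$. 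Iterating and summing the resulting geometric series defines $(u,\underline{\varphi})=\mathcal{R}[E]$ satisfying statement $(1)$, and tracking constants through the construction gives the bound in (\ref{FIBounds}); the factor $\theta^{-1}$ there is forced because the only way to absorb the $\phi_y$-content of $E$ is through $\varphi_1+\varphi_2$ via $\hat{w}_1,\hat{w}_2$, and the pairing $\int_{\mathcal{S}}\hat{w}_i\phi_y=-2\pi\sin(\theta)$ from Proposition \ref{uwProps}(\ref{uwPa}) degenerates linearly as $\theta\to 0$.

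For the explicit relations (\ref{varphiRelations}) I would pair the identity $\mathcal{L}u+\underline{\varphi}\cdot\underline{\hat{w}}=E$ against the two $\mathfrak{G}$-admissible elements of the approximate cokernel -- the bounded Jacobi field $\phi_y$ (for which $\mathcal{L}\phi_y=0$) and the constant function $1$ -- integrating by parts over $\mathcal{S}$. Because $u\in\mathcal{X}^2$ decays like $\cosh^{-\gamma}(s)$ along all four ends together with its gradient, while $\phi_y$ is bounded with exponentially small derivative, the boundary contributions of $\mathcal{L}u$ at infinity vanish in the $\phi_y$-pairing; the term $\int_{\mathcal{S}}\underline{\varphi}\cdot\underline{\hat{w}}$ against either function is finite since the $\hat{w}_i$ are compactly supported; and the boundary term of $\int_{\mathcal{S}}\Delta u$ against $1$ likewise vanishes, the remaining zeroth-order contribution being controlled by the $\mathfrak{R}_z$-symmetry of the construction. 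Inserting the closed-form pairings of Proposition \ref{uwProps}(\ref{uwProjections}) -- $\int_{\mathcal{S}}\hat{w}_1\phi_y=\int_{\mathcal{S}}\hat{w}_2\phi_y=-2\pi\sin(\theta)$ with the $\hat{w}_3,\hat{w}_4$ pairings vanishing, and $\int_{\mathcal{S}}\hat{w}_1=-\int_{\mathcal{S}}\hat{w}_2$ with the $\hat{w}_3,\hat{w}_4$ contributions accounted for -- turns the two scalar identities into a nonsingular $2\times 2$ system whose solution is exactly $\varphi_1+\varphi_2=\frac{-1}{2\pi\sin(\theta)}\int_{\mathcal{S}}E\phi_y$ and $\varphi_1-\varphi_2=\frac{-1}{2\pi}\int_{\mathcal{S}}E$.

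The hard part will be uniformity in $\theta$. The waist is a catenoid of scale $\sim\theta$ whose geometry degenerates, and the weights defining $\mathcal{X}^0$ and $\mathcal{X}^2$ carry explicit powers of $\theta$ exactly on $\mathcal{C}$, so one must verify that Proposition \ref{CatenoidInverse}, stated for the fixed model $\mathcal{C}_0$, transfers to $\mathcal{S}$ near the waist with the correct $\theta$-scaling; this is where the homogeneity calculus of Section \ref{preliminaries}, in particular Proposition \ref{HQEstimates} and Corollary \ref{CatQuantities}, is used to compare $\mathcal{L}[\mathcal{S}]$ with $\sin^{-2}(\theta)\mathcal{L}[\mathcal{C}_0]$ near the waist, and one must also check that the $\mathcal{X}^0$- and $\mathcal{X}^2$-weights match consistently across the waist--wing transition, where the weight exponent changes. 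A secondary point, needed to know that the constructed map really is a right inverse of $(u,\underline{\varphi})\mapsto\mathcal{L}u+\underline{\varphi}\cdot\underline{\hat{w}}$ on all of $\mathcal{X}^0$ rather than on a finite-codimension subspace, is the Fredholm dimension count: $\mathcal{L}$ on the relevant exponentially weighted $\mathfrak{G}$-invariant spaces is Fredholm with four-dimensional cokernel, matched against $\mathrm{span}\{\hat{w}_1,\hat{w}_2,\hat{w}_3,\hat{w}_4\}$, which follows from the indicial-root structure of $\mathcal{L}$ on a flat cylinder end together with the absence of decaying $\mathfrak{G}$-invariant Jacobi fields on $\mathcal{S}$.
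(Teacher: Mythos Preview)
Your overall architecture is close to the paper's, but the paper organizes the steps in a different order that sidesteps two difficulties you have not resolved. In the paper, $\underline{\varphi}$ is determined \emph{first}: one adds multiples of $\hat{w}_1,\hat{w}_2$ to $E$ so that the modified error $E_0$ is $L^2$-orthogonal to $1$ and $\phi_y$ on $\mathcal{S}$, using exactly the pairings of Proposition~\ref{uwProps}(\ref{uwProjections}). The relations~(\ref{varphiRelations}) then hold by definition; no a posteriori integration by parts is needed. Then $E_0$ is solved in two passes with no iteration: first the ``mod domain'' cylinder inverse $\mathcal{R}_3$ of Proposition~\ref{LaplacePerturbationInverseModDomain} is applied on $\mathcal{W}$ (with the complement $D^c[\mathcal{W}]$ absorbing the rotationally invariant obstruction), and a cutoff reduces the remaining error $E_1$ to one supported on $\mathcal{C}$ and still orthogonal to $1,\phi_y$; second, $E_1$ is transferred conformally to $\mathcal{C}_0$ via the Gauss maps (the map $M=\nu[\mathcal{C}_0]^{-1}\circ\nu[\mathcal{S}]$), where Proposition~\ref{CatenoidInverse} applies directly since orthogonality is preserved under the conformal change.

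Your proposal has two genuine gaps. First, the contraction $\|E^{(1)}:\mathcal{X}^0\|\leq\tfrac12\|E:\mathcal{X}^0\|$ is asserted from ``compactness and no loss of derivatives,'' but neither of these yields smallness: the transition regions sit at fixed location, there is no small parameter in the statement (Proposition~\ref{FlatInverse} is on $\mathcal{S}$, not $\mathcal{S}^*$), and the commutator $[\mathcal{L},\chi_{\bullet}]$ applied to a parametrix output of size $\|E\|$ is, a priori, again of size $\|E\|$. You would need a careful weight-gain argument across the $\mathcal{C}$--$\mathcal{W}$ interface, which you have not supplied. Second, your derivation of $\varphi_1-\varphi_2$ by pairing $\mathcal{L}u+\underline{\varphi}\cdot\underline{\hat{w}}=E$ against the constant $1$ fails: $1$ is not a Jacobi field, so after the boundary term of $\int_{\mathcal{S}}\Delta u$ vanishes you are left with $\int_{\mathcal{S}}|A|^2 u$, which has no reason to vanish (both $|A|^2$ and $u$ are $\mathfrak{R}_z$-even, so the symmetry you invoke gives nothing). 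The paper avoids this entirely by fixing $\underline{\varphi}$ at the outset via orthogonalization; if you want to keep your iterative scheme, you should do the same and remove $\underline{\varphi}$ from the iteration.
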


We record the proof of Proposition \ref{FlatInverse} in three main stages. In \ref{Orthogonalizing}, we show that linear combinations of the  functions $\hat{w}_1$ and $\hat{w}_2$ can be added to the error term $E$ to achieve $L^2$ orthogonality to $1$ and $\phi_y$. Due to the non-uniform (in $\theta$) projection of $\hat{w}_i$ onto $\phi_y$, the resulting orthogonalized error term has size that is no longer commensurate with that of $E$, and is the reason for the right hand side of the estimate in  Proposition \ref{FlatInverse}(\ref{FIBounds}).  In Section \ref{ReducingSupport}, we show that the proof can be reduced to the case of considering inhomogeneous terms with support on $\mathcal{C}_0$. This is essentially a straightforward consequence of the almost flat geometry of $\mathcal{W}$ and the invertibility result for the flat laplacian on cylinders recorded in Proposition \ref{LaplaceInverseModDomain}. In Section \ref{ReducedSupportProof}, we record the proof for $\mathcal{C}_0$ supported inhomogeneous terms. 

\subsection{Orthogonalizing the error term} \label{Orthogonalizing}
 Set $f : = \hat{w}_1 +  \hat{w}_1 $ and $g: = \hat{w}_1 - \hat{w}_2$. It follows directly from Proposition \ref{uwProps} that
 \begin{align}  \notag
 \int_{\mathcal{S}} f \phi_y = - 4 \pi \sin (\theta), \quad \int_{\mathcal{S}} f = 0, \quad  \int_{\mathcal{S}} g= - 4 \pi, \quad \int_{\mathcal{S} } g\phi_y = 0.
 \end{align}
 Thus, with 
 \begin{align} \notag
 a : = \left(\frac{1}{4\pi \sin (\theta)} \ \int_{\mathcal{S}}  E\phi_y \right) , \quad b : = \frac{1}{4\pi } \ \int_{\mathcal{S}}  E,
 \end{align}
  the function $E_0 : = E + a f + b g  $ is $L^2$ orthogonal to $1$ and $\phi_y$ on $\mathcal{S}$. Since $\hat{w}_i$ are smooth functions uniformly bounded in $\theta$, we have that 
\begin{align} \notag
\| E_0 : \mathcal{X}^0\|  \leq C \left(\| E : \mathcal{X}^0\| +  \frac{1}{\sin(\theta)}\int_{\mathcal{S}}  E\phi_y  \right).
\end{align}
We conclude by observing that we can write $E_0 =  \varphi_1 \hat{w}_1 + \varphi_2 \hat{w}_2$, were $\varphi_1 = a + b$, $\varphi_2 : = a- b$. It then follows easily that $\varphi_1$ and $\varphi_2$ satisfy the relations in (\ref{varphiRelations}).

\subsection{Reducing to $\mathcal{C}$ supported inhomogeneous error terms} \label{ReducingSupport}
The reader my wish recall the definitions of $\mathcal{C}_0$ and $\mathcal{C}$ in Proposition \ref{ScherkNearAxis}. We first observe that:
\begin{proposition} \label{OperatorLaplaceComparison} \notag
It holds that
\begin{align}
\| \mathcal{L} - \Delta_{\Omega}: C^{k, \alpha} \left(\mathcal{W}, \frac{1}{\cosh(s)}\right)\| \leq C \sin (\theta).
\end{align}
\end{proposition}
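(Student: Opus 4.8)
The plan is to reduce the estimate to a comparison of the coefficients of the two second-order operators in the coordinate chart $\Omega^+_{\geq 1}$ on which $\mathcal{W}$ (more precisely each wing $\mathcal{W}_i$, via the map $\mathcal{W}_i$ of Proposition \ref{WingGeometry}) is parametrized, and then to invoke the degeneration estimate $\|f_{\mathcal{W}}: C^{k,\alpha}(H^+_{\geq 1}, e^{-s})\| \leq C\sin(\theta)$ together with the homogeneity machinery of Section \ref{preliminaries}. First I would write $\mathcal{L}[\mathcal{W}_i] = \Delta[\mathcal{W}_i] + |A[\mathcal{W}_i]|^2$ in local coordinates, so its coefficients are $g[\mathcal{W}_i]^{jk}$, first-order terms built from the Christoffel symbols $\Gamma[\mathcal{W}_i]^k_{jk}$, and the zeroth-order term $|A[\mathcal{W}_i]|^2$. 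By Lemma \ref{degrees_of_homogeneity} these are homogeneous quantities of degrees $-2$, $-1$, and $-2$ respectively in the jets $J[\mathcal{W}_i]$. The flat Laplacian $\Delta_\Omega$ on the cylinder chart has constant coefficients $\delta^{jk}$, no first-order part, and no zeroth-order part. Since $\mathcal{W}_1(x,s) = H^+[\theta, h_{\mathcal{S}}](x,s) + f_{\mathcal{W}}\, e_z[\theta]$, the map $\mathcal{W}_i$ is a graph of size $O(\sin\theta)$ (in the $C^{k,\alpha}(H^+_{\geq 1}, e^{-s})$-weighted sense) over the affine half-plane parametrization $H^+[\theta,h_{\mathcal{S}}]$, whose induced metric is already flat and whose second fundamental form vanishes identically; so the jet $J[\mathcal{W}_i]$ differs from the jet $J[H^+[\theta,h_{\mathcal{S}}]]$ — which is a regular, indeed conformal-up-to-constant, parametrization — by a vector field $E = J[f_{\mathcal{W}}\,e_z[\theta]]$ with $\|E : C^{k,\alpha}(H^+_{\geq 1}, e^{-s})\| \leq C\sin\theta$.

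The key step is then an application of Proposition \ref{HQEstimates}: for each coefficient $\Phi$ (a homogeneous quantity of degree $d \in \{-2,-1\}$), the difference $\Phi(J[\mathcal{W}_i]) - \Phi(J[H^+[\theta,h_{\mathcal{S}}]])$ equals the Taylor expansion $\sum_{1\le|\alpha|\le k}\frac{D^\alpha\Phi}{\alpha!}E^\alpha + R^{(k)}_{\Phi,E}$, and the hypothesis on $\|E\|$ (with $\mathfrak{a}(J[H^+])$ bounded below by a numerical constant, since $H^+[\theta, h_{\mathcal{S}}]$ is a regular immersion uniformly in $\theta$) gives that both the linear terms and the remainder are bounded by $C\|E : C^{k,\alpha}(H^+_{\geq 1}, e^{-s})\| \le C\sin\theta$, measured in the weight $e^{-s} \le C/\cosh(s)$. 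For the flat Laplacian on the affine half-plane chart, $\Phi(J[H^+[\theta,h_{\mathcal{S}}]])$ is already exactly the constant-coefficient operator $\Delta_\Omega$ (the metric of $H^+[\theta,h_{\mathcal{S}}]$ in the $(x,s)$ coordinates is $dx^2 + ds^2$ up to the overall constant from $|e_r'|=1$ and $|\cos\beta\, e_y + \ldots|$; one checks $H^+[\beta,h]$ is an isometric immersion of flat $\Reals^2$, so its Christoffel symbols vanish and $|A|^2 = 0$). Hence the coefficients of $\mathcal{L}[\mathcal{W}_i] - \Delta_\Omega$ are exactly the differences estimated above, and summing over the (finitely many) charts and over $i = 1,2$ yields
\begin{align} \notag
\| \mathcal{L} - \Delta_{\Omega}: C^{k, \alpha}\!\left(\mathcal{W}, \tfrac{1}{\cosh(s)}\right)\| \leq C\sin(\theta).
\end{align}

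The main obstacle I anticipate is the bookkeeping on the region near $s = 1$ where the map $\mathcal{W}$ interpolates (via the cutoff $\psi_0[10,11]$ in the definition following Definition \ref{WingMapDefinition}) between the projection $\pi_{\mathcal{S}}^{-1}$ and $\mathcal{W}_1$: there the parametrization is not literally the affine graph, so one must instead compare directly to the flat cylinder chart and absorb the (uniformly bounded in $\theta$, compactly supported) discrepancy — but this is a fixed compact piece of $\mathcal{W}$ on which $\mathcal{S}$ converges smoothly to a piece of the catenoid/plane as $\theta \to 0$ (Proposition \ref{SmoothConvergenceToPlane}, Proposition \ref{ScherkNearAxisGraph}), so the coefficient differences there are again $O(\sin\theta)$ by smooth dependence on $\theta$, and the $\cosh^{-1}(s)$ weight is harmless on a bounded set. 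One must also verify that the weight $e^{-s}$ controlling $f_{\mathcal{W}}$ is at least as strong as $\cosh^{-1}(s)$ on $H^+_{\geq 1}$, which is immediate, and that differentiating $e_z[\theta]$ and the affine terms in $\theta$ does not spoil the bound — it does not, since those are smooth and bounded in $\theta$. Modulo this routine patching, the estimate follows.
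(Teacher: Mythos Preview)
Your approach is correct and is the natural argument using the paper's machinery. Note that the paper itself states Proposition~\ref{OperatorLaplaceComparison} without proof (it is introduced with ``We first observe that:'' and is used immediately afterward), so there is no proof in the paper to compare against; what you have written is precisely the kind of justification the authors evidently had in mind---express each wing $\mathcal{W}_i$ as a normal graph of size $O(\sin\theta)$ (in the $e^{-s}$-weight) over an isometrically parametrized affine half-plane, on which $\mathcal{L}=\Delta_\Omega$ exactly, and then use the homogeneity estimates of Section~\ref{preliminaries} coefficient-by-coefficient.

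Two small remarks. First, Proposition~\ref{HQEstimates} as literally stated bounds the Taylor remainder in the weight $|J^{(1)}|^d$ rather than in the weight $e^{-s}$; to get the $\cosh^{-1}(s)$-weighted bound you want, it is cleanest to argue directly from the integral form of the first-order remainder,
\[
\Phi(J+E)-\Phi(J)=\int_0^1 D\Phi(J+\sigma E)\cdot E\,d\sigma,
\]
noting that $J+\sigma E$ stays in a fixed compact subset of $\mathcal{J}_0$ (since $|J^{(1)}[H^+]|$ is constant and $\mathfrak{a}=1$), so $D\Phi(J+\sigma E)$ and its derivatives are uniformly bounded and the full $e^{-s}$-decay of $E$ passes through. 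This is implicit in the proof of Proposition~\ref{HQEstimates} but not in its statement. Second, your handling of the transition region $\{|s|\le 11\}\cap D[\mathcal{W}]$ via smooth dependence (Proposition~\ref{SmoothConvergenceToPlane}) is correct; just note that the relevant set is compact in $\Omega$ and bounded away from the lattice singularities of $\dot f_{\mathcal{S}}$, so the $C^{k,\alpha}$-smallness of the graph function there is indeed $O(\theta)$ uniformly, and the $\cosh^{-1}(s)$ weight is bounded above and below on that set.
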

Let $D^c[\mathcal{W}]$ denote the complement of $D[\mathcal{W}]$ in $\Omega$ and set
\begin{align} \notag
v_0: = \mathcal{R}_3[\mathcal{L}, D^c[\mathcal{W}], E_0].
\end{align}
 (Recall Proposition \ref{LaplacePerturbationInverseModDomain}).  The function $v_0$ is then  well-defined and satisfies the weighted estimate:
 \begin{align}
 \|v_0 : C^{2, \alpha} (\mathcal{W}, \cosh^{-\gamma }(s))\| & \leq C\|  E_0 :   C^{j, \alpha} (\mathcal{W}, \cosh^{-\gamma }(s)) \| \\ \notag
 & \leq C \| E_0 :  \mathcal{X}^0 \|.
 \end{align}
Let $\psi: D[\mathcal{W}] \rightarrow \Reals$ be the cutoff function determined as follows:
\begin{align}
\psi (x, s) : = \psi_0[\epsilon_0/2, \epsilon_0] (r) 
\end{align}
 where above we have set $r : = \sqrt{x^2 + r^2}$ and $\epsilon_0$ is as in Definition \ref{WingMapDefinition}. We then set
 \begin{align}
 \psi^*  (p): = \psi \circ \mathcal{W}^{-1} (p), \quad p \in \mathcal{W}.
 \end{align}
  Then it is directly verified that the gradient of $\psi^*$ is supported on $\mathcal{C} \cap \mathcal{W}$, and we have
 \begin{align} \notag
 \| \psi^*:  C^{2, \alpha}( \mathcal{W} \cap \mathcal{C})\| \leq C,
 \end{align}
 where $C$ is a universal constant independent of $\theta$. We then set 
\begin{align} \notag
E_1 : = E_0 - \mathcal{L} (\psi^* v_0).
\end{align}
We then have

\begin{enumerate}
\item $\int_{\mathcal{S}} E_1   =  \int_{\mathcal{S}} E - \mathcal{L} (\psi^* v_0) = 0$. \\

\item$ \int_{\mathcal{S}} E_1 \phi_y   =  \int_{\mathcal{S}} E \phi_y - \phi_y\mathcal{L} (\psi^* v_0) = 0.$
\end{enumerate}

Thus, the inhomogeneous term $E_1$ satisfies the same orthogonality conditions as $E_0$ is supported on $\mathcal{C}_0$. 

\subsection{Solving for inhomogeneous terms supported on $\mathcal{C}_0$} \label{ReducedSupportProof}

We can now without loss of generality assume that our inhomogeneous term $E$ in (\ref{FlatScherkProblem}) is supported on $\mathcal{C}_0$. This allows us to conformally move the linear problem we wish to solve to a simpler object, namely the catenoid $\mathcal{C}_0$ without changing the error term appreciably. To do this, we make the following definition:

\begin{definition} \label{ScherkToCatenoidMap}
We let $M: D[\mathcal{C}] \rightarrow \Omega$ be the map given by
\begin{align} \notag
M(x, s) : = \nu^{-1}[\mathcal{C}_0] \circ\nu[\mathcal{S}] (x, s). 
\end{align}
\end{definition}

\begin{proposition} \label{ScherkToCatenoidMapProps}
The map $M$ given Definition \ref{ScherkToCatenoidMap} has the following properties
\begin{enumerate}
\item It is a conformal diffeomorphism onto its image.

\item Its conformal factor $\varrho[M]$ is given by
\begin{align}\notag
\varrho[M] : = |A[\mathcal{S}]|/|A[\mathcal{C}_0]|.
\end{align}

\item \label{STCMP3} It holds that 
\begin{align} \notag
\| \varrho[M] - 1: C^{0, \alpha}(D[\mathcal{C}], \cosh^2(s))\| \leq  C \sin^2 (\theta).
\end{align}

\item \label{STCMP4}It holds that 
\begin{align} \notag
\| M - \Id : C^{0, \alpha}(D[\mathcal{C}], \cosh(s))\| \leq  C \sin^2 (\theta).
\end{align}
\end{enumerate}
\end{proposition}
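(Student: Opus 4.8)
The plan is to exploit the two facts that make this map tractable: first, that the Gauss map of any minimal surface is conformal (anticonformal) onto the sphere, so that $M=\nu^{-1}[\mathcal{C}_0]\circ\nu[\mathcal{S}]$ is automatically conformal as a composition of conformal maps; and second, that on $D[\mathcal{C}]$ the surface $\mathcal{S}$ is, by Proposition \ref{ScherkNearAxisGraph}, a normal graph of size $\sin(\theta)f_{\mathcal{C}}$ with $\|f_{\mathcal{C}}:C^{k,\alpha}(D[\mathcal{C}],\cosh^2(s))\|\leq C\sin^2(\theta)$ over the rescaled catenoid $\sin(\theta)\mathcal{C}_0$, whose underlying conformal structure is exactly that of $\mathcal{C}_0$. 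So the whole statement is really a quantitative ``Gauss maps are close when the surfaces are close'' assertion.

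First I would establish (1) and (2) together: by Proposition \ref{CatenoidGaussMap} the Gauss map of $\mathcal{C}_0$ is a conformal diffeomorphism onto $\mathbb{S}^2$ minus two points with conformal factor $|A[\mathcal{C}_0]|^2/2$; the Gauss map of the minimal immersion $\mathcal{S}$ is likewise conformal (this is classical for minimal surfaces — the shape operator is conformal with respect to the metric and the third fundamental form), with conformal factor $|A[\mathcal{S}]|^2/2$ relative to the induced metric, which on $D[\mathcal{C}]$ is conformal to the flat $(x,s)$-metric. Composing, $M$ is conformal and its conformal factor is the ratio of the two, namely $\varrho[M]=|A[\mathcal{S}]|/|A[\mathcal{C}_0]|$ after accounting for the conformal factors of the two parametrizations; one has to track the bookkeeping of which metrics are being compared, but it is forced.

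Next, for (3) and (4), I would feed Proposition \ref{ScherkNearAxisGraph} into the homogeneous-quantity machinery. Since $|A|$ is a homogeneous quantity of degree $-1$ and $\nu$ of degree $0$, Corollary \ref{CatQuantities} (which is exactly Proposition \ref{HQEstimates} applied to the graph $\mathcal{C}=\sin(\theta)\mathcal{C}_0+\sin(\theta)f_{\mathcal{C}}\nu[\mathcal{C}_0]$) already gives $\||A|[\mathcal{C}]-|A|[\mathcal{C}_0]:C^{j,\alpha}(D[\mathcal{C}],1)\|\leq C\theta^2$ and the analogous normal estimate with weight $\cosh(s)$. Dividing, $\varrho[M]-1=(|A[\mathcal{S}]|-|A[\mathcal{C}_0]|)/|A[\mathcal{C}_0]|$, and since $|A[\mathcal{C}_0]|^{-1}\sim\cosh(s)$ by Lemma \ref{UnderlyingCatenoidQuantities}, the weight $\cosh^2(s)$ in (3) comes out precisely. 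For (4), I would write $M-\Id=\nu^{-1}[\mathcal{C}_0]\circ\nu[\mathcal{S}]-\nu^{-1}[\mathcal{C}_0]\circ\nu[\mathcal{C}_0]$ and apply the mean value theorem: the derivative of $\nu^{-1}[\mathcal{C}_0]$ has size $|A[\mathcal{C}_0]|^{-1}\sim\cosh(s)$, multiplied by $\|\nu[\mathcal{S}]-\nu[\mathcal{C}_0]:C^{0,\alpha}(D[\mathcal{C}],\cosh(s))\|\leq C\theta^2$ from Corollary \ref{CatQuantities}(3) — but one must check the weights combine correctly, since naively $\cosh(s)\cdot\cosh(s)=\cosh^2(s)$ whereas (4) claims weight only $\cosh(s)$; the resolution is that $\nu[\mathcal{S}]-\nu[\mathcal{C}_0]$ is in fact controlled in a better-weighted space near the collapsing region, or equivalently that the relevant Jacobian factor of $\nu^{-1}[\mathcal{C}_0]$ is measured against the spherical metric which absorbs one power of $\cosh(s)$.

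The main obstacle I anticipate is exactly this last weight-bookkeeping in (4): correctly identifying in which function space the difference of Gauss maps lives and how the (singular, as $s\to\infty$) inverse Gauss map $\nu^{-1}[\mathcal{C}_0]$ acts on it, so that the output lands in $C^{0,\alpha}(D[\mathcal{C}],\cosh(s))$ and not merely $C^{0,\alpha}(D[\mathcal{C}],\cosh^2(s))$. Everything else is a mechanical application of Proposition \ref{HQEstimates} / Corollary \ref{CatQuantities} together with the conformality of Gauss maps. I would organize the write-up as: (i) conformality and the formula for $\varrho[M]$; (ii) the estimate (3) via Corollary \ref{CatQuantities}(2) and Lemma \ref{UnderlyingCatenoidQuantities}; (iii) the estimate (4) via Corollary \ref{CatQuantities}(3), the chain rule for $\nu^{-1}[\mathcal{C}_0]$, and a careful accounting of the spherical versus cylindrical weights.
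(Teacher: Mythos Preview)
Your plan for (1)--(3) is essentially the paper's. The paper in fact leaves (1) and (2) implicit and only writes out (3) and (4); for (3) it does exactly what you propose, writing $\varrho[M]-1$ as $|A[\mathcal{C}_0]|^{-1}\bigl(|A[\mathcal{S}]|-|A[\mathcal{C}_0]|\bigr)$ and invoking Corollary~\ref{CatQuantities}.

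For (4) the paper takes a different packaging than your chain-rule estimate on $\nu^{-1}[\mathcal{C}_0]$. It defines
\[
\Phi(m,x,s) := \bigl(\nu[\mathcal{C}](x,s) - \nu_0\bigl((x,s)+m\bigr)\bigr)^{\parallel}
\]
(tangential projection at $(x,s)$) and applies the quantitative implicit function theorem, Proposition~\ref{IFTHandyVersion}, to solve $\Phi(m,x,s)=0$ for $m(x,s)$, so that $M=\Id+m$. The paper's computational point is that $\partial_m\Phi(0,x,s)$ equals the second fundamental form $A_0=-2\,dx^2+2\,ds^2$ of $\mathcal{C}_0$, whose coordinate components are \emph{constant}; this makes the determinant lower bound $B$ in Proposition~\ref{IFTHandyVersion} uniform in $(x,s)$, and the IFT then yields $|m|\leq C|\Phi(0)|\leq C\theta^2\cosh(s)$ from Corollary~\ref{CatQuantities}(3).

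Your weight concern is exactly the right thing to flag, and neither of your proposed fixes works as stated: Corollary~\ref{CatQuantities} offers no sharper bound on $\nu[\mathcal{S}]-\nu[\mathcal{C}_0]$, and the ``spherical metric absorbs one power of $\cosh(s)$'' heuristic does not survive a direct computation --- in the $(x,s)$ chart one has $|d\nu_0^{-1}|\sim\cosh(s)$, and multiplying by $|\nu[\mathcal{C}]-\nu_0|\leq C\theta^2\cosh(s)$ genuinely gives $\cosh^2(s)$. The paper's mechanism is precisely the IFT setup with the identification $\partial_m\Phi=A_0$ (constant components); whether that identification and the bound $|\Phi(0)|\leq C\theta^2\cosh(s)$ are simultaneously compatible with one fixed choice of basis for the target of $\Phi$ is the one step that deserves line-by-line verification, since linearizing both sides directly gives $m\approx\tfrac12\nabla f_{\mathcal{C}}$ and hence $|m|\lesssim\theta^2\cosh^2(s)$ from Proposition~\ref{ScherkNearAxisGraph}.
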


\begin{proof}
Let $\nu_0$ denote the unit normal on the catenoid $\mathcal{C}_0$ and $A_0$ the second fundamental form, so that 
\begin{align}
A_0 =  - 2  dx^2 + 2 ds^2.
\end{align}
Let $\Phi: \mathbb{R}^2\times \Omega \rightarrow \Reals^2$  be the map given by
\begin{align} \notag
\Phi (m, x, s) : = \left(\nu[\mathcal{C}] (x, s) - \nu_0  \circ (m + (x, s))  \right)^\parallel
\end{align}
where $m \in \Reals^2$, and where the superscript `` $\parallel$ '' denotes the projection onto the tangent plane of $\mathcal{C}_0$. From Corollary \ref{CatQuantities} we have 
\begin{align} \notag
  \partial_m \Phi (0, x, s) = \left.\nabla\nu_0 \right|_m (x, s) = \left. A_0 \right|_M,  \quad \|\Phi (0, x, s)\|_{j, \alpha} \leq C \theta^2 \cosh(s) \leq \epsilon_0 \theta,
\end{align}
where $\epsilon_0$ is as in Proposition \ref{ScherkNearAxis}.  Proposition \ref{IFTHandyVersion} then gives a function $(x, s) \mapsto m(x, s)$ so that 
with $M(x, s) : = (x ,s) + m(x, s)$ we have
\begin{align}
\nu[\mathcal{C}] (x, s) - \nu_0  \circ (M(x, s)) = 0,
\end{align}
which gives Claim  (\ref{STCMP4}). Claim (\ref{STCMP3}) follows by writing
\begin{align}
\varrho[M] -1  = |A[\mathcal{C}_0]|^{-1} \left( |A[\mathcal{S}]|^{-1} - |A[\mathcal{C}_0]|^{-1}\right)
\end{align}
and using Corollary \ref{CatQuantities}.
\end{proof}
Now, instead of solving (\ref{FlatScherkProblem}) directly, we first lift the problem to the sphere using the Gauss map of $\mathcal{S}$, which gives the equivalent form
\begin{align}
\left(\Delta[S^2]  + 2 \right) v = 2 E_1/|A[\mathcal{S}]|. 
\end{align}
Applying the inverse Gauss map of $\mathcal{C}_0$ then gives 
\begin{align} 
\mathcal{L}[\mathcal{C}_0] v  = \left(  |A[\mathcal{S}]|/|A[\mathcal{C}_0]| \right) E_1 : = \tilde{E}_1. 
\end{align}
Observe that the imposed orthogonality conditions on the right hand side are preserved under the conformal changes: 
\begin{align} \notag
&\int_{\Omega} \tilde{ E}_1 d \mu [\mathcal{C}_0]  = \int_{\Omega} E_1 d \mu [\mathcal{S}] = 0. \\ \notag
& \int_{\Omega} \tilde{ E}_1 \phi_yd \mu [\mathcal{C}_0]  = \int_{\Omega} E_1  \phi_y  d \mu [\mathcal{S}]= 0.
\end{align}
Additionally, we have from Lemma \ref{ScherkToCatenoidMapProps} and the support of $E$ that 
\begin{align} \notag
\| \tilde{E}_1 : \mathcal{X}^0[\mathcal{C}_0]\| \leq C \| E_1 : \mathcal{X}^0 \|. 
\end{align}
We can then apply Proposition \ref{CatenoidInverse} to obtain the function $v_1 : = \mathcal{R}[\mathcal{C}_0, \tilde{E}_1]$. We will now abuse notation by identifying $v$ with its pushforward to the sphere and $\mathcal{S}$ under $\nu[\mathcal{C}_0]$ and $M^{-1}$, respectively.  Again, from Lemma \ref{ScherkToCatenoidMapProps}, it holds that 
\begin{align} \notag
\|v_1 : C^{2, \alpha} (D[\mathcal{C}], \cosh^{\gamma}(s)) \| \leq C \| E_1 : \mathcal{X}^0 \|.
\end{align}
Moreover, by Proposition \ref{CatenoidInverse}, we have that the supremum of $v$ is bounded on $\Omega$ by $ C \| E_1 : \mathcal{X}^0 \|$. Thus, standard removable singularity theory gives that $v$ extends to a  smooth function on $S^2$.  Setting $v : = v_1 + \psi v_0$, $\underline{\varphi} : = (\varphi_1, \varphi_2, \varphi_3, \varphi_4)$ and  $\mathcal{R}[ E]  = (v, \underline{\varphi})$ then gives Proposition  \ref{FlatInverse}.

\section{Finding  minimal normal graphs} \label{FindingMinimalGraph}

\begin{proposition}
Set 
\begin{align} \notag
( \underline{\varphi}_0, u_0) : = \mathcal{R}(\mathcal{L} \xi^\perp), 
\end{align}
where $\mathcal{R}$ is as in Proposition \ref{FlatInverse} and $\xi^\perp$ is as in Proposition \ref{MCVariationField}. Then we have that 
\begin{align} \notag
 \| u_0 : \mathcal{X}^2\| \leq C  \theta, \quad |\underline{\varphi}_0| \leq C   \theta.
\end{align}

\end{proposition}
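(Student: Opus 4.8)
The plan is to feed $E := \mathcal{L}\xi^\perp$ into the inverse $\mathcal{R}$ of Proposition \ref{FlatInverse} and apply the bound in Proposition \ref{FlatInverse}(\ref{FIBounds}):
\[
\| u_0 : \mathcal{X}^2 \|,\ |\underline{\varphi}_0| \leq C\left( \| \mathcal{L}\xi^\perp : \mathcal{X}^0\| + \theta^{-1} \langle \mathcal{L}\xi^\perp, \phi_y\rangle_{\mathcal{S}}\right).
\]
So the statement reduces to the two estimates $\| \mathcal{L}\xi^\perp : \mathcal{X}^0\| \leq C\theta$ (estimate (a)) and $|\langle \mathcal{L}\xi^\perp, \phi_y\rangle_{\mathcal{S}}| \leq C\theta^2$ (estimate (b)). Estimate (b) is already contained in the proof of Proposition \ref{KernelProjection}: there $\langle \mathcal{L}\xi^\perp, \phi_y\rangle_{\mathcal{S}}$ is realized as $\lim_{N\to\infty} I_N$ of a boundary integral shown to depend smoothly on $\theta$, to extend to $\theta=0$ and to vanish there to first order, with the $\theta^2$-coefficient $\ddot I_N$ converging to a finite limit; the proof extracts the lower bound, but produces the two-sided bound $\langle \mathcal{L}\xi^\perp, \phi_y\rangle_{\mathcal{S}} = O(\theta^2)$ at the same time.

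For estimate (a) I would use the mean-curvature decomposition of Proposition \ref{MeanCurvatureStructure} specialized to $\underline{\varphi}=0$, $u=0$, which reads $\varrho^* H^*[0,0] = \tau\, \mathcal{L}\xi^\perp + R^*[0,0]$ with $\| R^*[0,0] : \mathcal{X}^0\| \leq \epsilon\,\tau\theta$ (fix $\epsilon=1$, restrict $\tau<\bar\tau(1)$). Since $\mathcal{L}\xi^\perp$ is independent of $\tau$, it suffices to bound the mean curvature of the graph-free, parameter-free initial surface by $\| \varrho^* H^*[0,0] : \mathcal{X}^0\| \leq C\tau\theta$; dividing the identity by $\tau$ then yields (a). To bound $\varrho^* H^*[0,0]$: on the wing ends $\mathcal{W}_{i,\geq 2}$ the initial surface equals $\mathcal{K}_i[0,0]$, so there $\varrho^* H^*[0,0] = \tilde\varrho_i H[\mathcal{K}_i]$, which Proposition \ref{BentWingGeometry}(\ref{BentWingMC}) bounds by $C\tau\theta$ in $C^{j,\alpha}(\cdot,\cosh^{-1}(s))$, hence in $C^{j,\alpha}(\cdot,\cosh^{-\gamma}(s))$ since $\gamma<1$. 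On the remaining, \emph{compact} transition region — and by Proposition \ref{MCSmoothDependence} $H^*$ is supported on $\mathcal{W}$, so nothing else contributes except $\mathcal{C}\cap\mathcal{W}$, where the $\mathcal{X}^0$-weight $\theta^{\gamma-2}\cosh^{\gamma+2}(s)$ is large and only helps — $H^*[0,0]$ is smooth in $(\tau,\theta)$ by Proposition \ref{MCSmoothDependence} and vanishes identically on $\{\tau=0\}$ (there $\mathcal{Z}[0]$ is the identity and $\mathcal{S}^*[0]$ is the minimal surface $\mathcal{S}$) and on $\{\theta=0\}$ (there, since the bending maps preserve $\{z=0\}$ and $h_{\mathcal{S}}, f_{\mathcal{W}}\to 0$, $\mathcal{S}^*[0]$ is a bent reparametrization of the flat multiplicity-two plane, which is minimal); by a Hadamard-lemma argument $H^*[0,0]$ is then divisible by $\tau\theta$ and hence $\leq C\tau\theta$ on this compact region. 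Combining gives $\| \varrho^* H^*[0,0] : \mathcal{X}^0\| \leq C\tau\theta$, hence (a), and then the proposition.

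The main obstacle is estimate (a), and within it the verification that the initial mean curvature $H^*[0,0]$ vanishes at $\theta=0$ — i.e. that the modified bending maps $\tilde B$ carry $\{z=0\}$ to itself, so that bending the collapsed Scherk configuration produces no mean curvature at $\theta=0$ — together with the bookkeeping of the two weights (checking that $\cosh^{-1}\leq\cosh^{-\gamma}$ suffices on the wings and that the $\mathcal{C}$-weight is harmless). Everything else is routine: estimate (b) is read off the proof of Proposition \ref{KernelProjection}, and the final step is a single application of Proposition \ref{FlatInverse}(\ref{FIBounds}).
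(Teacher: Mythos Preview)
Your argument is correct. The paper's own proof is the single line ``From Proposition~\ref{HQEstimates}'', which presumably means a direct estimate of the linearized mean curvature $\mathcal{L}\xi^\perp$ via the homogeneity bounds on the variation field; you instead recover $\mathcal{L}\xi^\perp$ from the identity $\varrho^* H^*[0,0] = \tau\,\mathcal{L}\xi^\perp + R^*[0,0]$ of Proposition~\ref{MeanCurvatureStructure} and bound each side separately. This is a genuinely different organization: the paper's route (to the extent one can reconstruct it) estimates $\mathcal{L}\xi^\perp$ pointwise, while you exploit that the full nonlinear error $\varrho^* H^*[0,0]$ has already been controlled (Proposition~\ref{BentWingGeometry}(\ref{BentWingMC}) on the wing ends, smooth dependence and double vanishing at $\tau=0$, $\theta=0$ on the compact part). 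Your route has the advantage of reusing machinery already in place and making the two required inputs $\|\mathcal{L}\xi^\perp:\mathcal{X}^0\|\leq C\theta$ and $\langle\mathcal{L}\xi^\perp,\phi_y\rangle_{\mathcal{S}}=O(\theta^2)$ completely explicit; the second of these is indeed a by-product of the proof of Proposition~\ref{KernelProjection}, as you observe.

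One small point worth recording: your Hadamard step needs $\tilde B$ to preserve $\{z=0\}$. This follows from the $\mathfrak{G}$-equivariance in Definition~\ref{ModifiedBendingMaps} (since $\mathfrak{R}_z\in\mathfrak{G}$, the fixed set $\{z=0\}$ is preserved), so the interpolation between $B$ and the identity need not be inspected directly.
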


\begin{proof}
From Proposition \ref{HQEstimates}
\end{proof}

\begin{definition} \label{FixedPointSet}
For $\zeta > 0 $ to be determined, set
\begin{align} \notag
\Xi \subset \Reals^4 \times \mathcal{X}^2  : = \{ (\underline{\varphi}, u): \| u : \mathcal{X}^2\| \leq \zeta \tau \theta, \quad |\underline{\varphi}| \leq \zeta \tau \theta\}. 
\end{align}
\end{definition}

\begin{definition} \label{FixedPointMap}
We let $\Psi: \Xi \rightarrow \mathcal{X}^2 \times \Reals^4$  be the function given as follows:
\begin{align} \notag
\Psi(\underline{\varphi}, u) & : = (\underline{\varphi}, u) - \mathcal{R} \varrho^* H^*[\underline{\varphi}, u] \\ \notag
& =  - \tau( \underline{\varphi}_0, u_0) - \mathcal{R} R^*[\underline{\varphi}, u]. 
\end{align}
(Recall the definition of $R^*[\underline{\varphi}, u]$ in Definition \ref{MeanCurvatureStructure}).
\end{definition}

\begin{proposition} \label{YayFixedPoint}
There is  $\zeta > 0$ sufficiently large and $\bar{\tau}  > 0$ so that, for  $\tau \in [0, \bar{\tau})$, the following statements hold
\begin{enumerate}
\item $\Psi$ has a fixed point $( \underline{\varphi}^*, u^*)$ in $\Xi$. 
\end{enumerate}
\end{proposition}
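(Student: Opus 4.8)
The plan is to produce $(\underline\varphi^*, u^*)$ as a fixed point of the map $\Psi$ of Definition \ref{FixedPointMap} by invoking the Schauder Fixed Point Theorem, Proposition \ref{SchauderFixedPointTheorem}. The set $\Xi$ of Definition \ref{FixedPointSet} is plainly convex and closed, so two things remain: that $\Psi$ maps $\Xi$ into itself once $\zeta$ is taken large and $\bar\tau$ small, and that $\Psi$ is continuous on $\Xi$ with precompact image.

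For the self-map property I would work from the second formula in Definition \ref{FixedPointMap}, $\Psi(\underline\varphi, u) = -\tau(\underline\varphi_0, u_0) - \mathcal{R}R^*[\underline\varphi, u]$. The first term is $O(\tau\theta)$ by the proposition immediately preceding Definition \ref{FixedPointSet}. For the second, Proposition \ref{FlatInverse}(\ref{FIBounds}) gives
\[
\|\mathcal{R}R^*[\underline\varphi,u]\| \le C\big(\|R^*[\underline\varphi,u]:\mathcal{X}^0\| + \theta^{-1}\,|\langle R^*[\underline\varphi,u],\phi_y\rangle_{\mathcal{S}}|\big),
\]
and for $(\underline\varphi,u)\in\Xi$ Proposition \ref{MeanCurvatureStructure} bounds the first summand by $\epsilon\tau\theta + C\zeta\tau^2\theta + C_0\zeta^2\tau^2\theta^2$, with $C_0 = C_0(\epsilon)$. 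Choosing $\zeta$ large, then $\epsilon$ small (permitted by Proposition \ref{MeanCurvatureStructure}), then $\bar\tau$ small in terms of $\epsilon$, $\zeta$, $C_0$, this summand and the $O(\tau\theta)$ term together stay below $\tfrac34\zeta\tau\theta$. It remains to absorb $\theta^{-1}|\langle R^*,\phi_y\rangle_{\mathcal{S}}|$; here the key observation is that $\phi_y = \nu\cdot e_y$ is itself $O(\theta)$ on the nearly-flat part of $\mathcal{S}$ — the transition and asymptotically-planar regions, where $\nu$ tends to $\pm e_z$ as $\theta\to 0$ — while on the catenoidal neck about the $z$-axis, where $\phi_y$ is merely bounded, the modified bending map is the identity (Definition \ref{ModifiedBendingMaps}), so that $\xi^\perp$ and the $\hat u_i$ vanish there and $R^*$ collapses to the quadratically small Taylor remainder $H[\mathcal{S}+u\nu]-\mathcal{L}u$. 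Splitting the pairing accordingly and using the $\cosh^{-\gamma}(s)$-decay built into the $\mathcal{X}^0$ norm on the wings, one obtains $|\langle R^*,\phi_y\rangle_{\mathcal{S}}| \le C\theta\,\|R^*:\mathcal{X}^0\| + C(\zeta\tau\theta)^2$, so that $\theta^{-1}|\langle R^*,\phi_y\rangle_{\mathcal{S}}|$ is again dominated after the same choices. Hence $\|\Psi(\underline\varphi,u)\|\le\zeta\tau\theta$ and $\Psi(\Xi)\subseteq\Xi$.

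For continuity and compactness: $(\underline\varphi, u)\mapsto R^*[\underline\varphi,u]$ is continuous (indeed smooth on compact sets, by Propositions \ref{InitialGraphProps}(\ref{IGRegularity}) and \ref{MCSmoothDependence}) from $\Xi$ into $\mathcal{X}^0$, and $\mathcal{R}$ is a bounded linear operator by Proposition \ref{FlatInverse}, so $\Psi$ is continuous. Because $\mathcal{R}$ maps $\mathcal{X}^0$ into $\mathcal{X}^2$ — functions with two weighted continuous derivatives and definite decay along the four ends — the image $\Psi(\Xi)$ is bounded in $\mathcal{X}^2$ and hence, by interior Schauder estimates, the Arzel\`a--Ascoli theorem and the uniform decay supplied by the weights, precompact in the corresponding weighted norm of slightly lower H\"older exponent. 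Since all the operator bounds above hold with constants uniform in the exponent, the fixed-point problem may be run in that weaker space, and Proposition \ref{SchauderFixedPointTheorem}, applied to the (compact, convex) closed convex hull of $\Psi(\Xi)$, produces the fixed point $(\underline\varphi^*,u^*)\in\Xi$. (At such a point $\mathcal{R}\,\varrho^*H^*[\underline\varphi^*,u^*]=0$, whence $H^*[\underline\varphi^*,u^*]\equiv 0$, i.e. $\mathcal{S}^*[\underline\varphi^*,u^*]$ is minimal.)

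I expect the main obstacle to be the control of the $\phi_y$-kernel content of the remainder $R^*$. Proposition \ref{FlatInverse} loses a factor $\theta^{-1}$ on precisely that component, so the self-map estimate collapses unless $\langle R^*,\phi_y\rangle_{\mathcal{S}}$ is shown to carry an extra power of $\theta$ (beyond $\|R^*:\mathcal{X}^0\|$) together with a genuinely quadratic piece. Establishing this is where the effort invested in the linear/higher-order splitting of the mean curvature (Proposition \ref{MeanCurvatureStructure}), the kernel-projection estimate (Proposition \ref{KernelProjection}), the triviality of the bending field on the neck (Definition \ref{ModifiedBendingMaps}), and the freedom to shrink $\epsilon$ all come together; the remaining steps are routine.
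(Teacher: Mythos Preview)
Your proposal is correct and follows essentially the same route as the paper: Schauder fixed point on $\Xi$, with the self-map estimate coming from the bound in Proposition~\ref{FlatInverse}(\ref{FIBounds}) applied to $R^*$, and the crucial extra factor of $\theta$ on $\langle R^*,\phi_y\rangle_{\mathcal S}$ obtained by splitting into the wing region (where $\phi_y=O(\theta)$) and the neck $\mathcal C$ (where the bending field and the $\hat u_i$ vanish so $R^*$ is the purely quadratic remainder in $u$). The paper packages exactly this kernel-content estimate as the separate Proposition~\ref{FixedPointRemainder}, whose proof is the computation you sketch; your compactness argument via a slightly lower H\"older exponent is the standard step the paper leaves implicit.
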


\begin{proposition} \label{FixedPointRemainder}
Given $\zeta > 0$ in Definition \ref{FixedPointSet}, $\epsilon > 0$ and $\gamma$ in Definition \ref{ScherkFunctionSpaces} belonging to the interval $(1/2, 1)$, there are $\bar{\tau} > 0$ and $\bar{\theta} > 0$ so that: for $\tau \in [0, \bar{\tau})$, $\theta \in (0, \bar{\theta})$ and $(\underline{\varphi}, u) \in \Xi$, the following estimates hold:

\begin{enumerate}
\item \label{RemainderTotalSize}$\| R^*[\underline{\varphi}, u] : \mathcal{X}^0\| \leq \epsilon \tau \theta$.\\
\item \label{RemainderKernelSize} $\left|\int_{\mathcal{S}} R^*[\underline{\varphi}, u] \phi_y \right| \leq \epsilon \tau \theta^2$. 
\end{enumerate}
\end{proposition}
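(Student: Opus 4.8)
The plan is to derive Proposition \ref{FixedPointRemainder} directly from the structural decomposition in Proposition \ref{MeanCurvatureStructure} together with the kernel-projection estimate in Proposition \ref{KernelProjection} and the size control on $(\underline{\varphi}_0, u_0)$. The key point is simply to track the three error sources in
\begin{align} \notag
\| R^*[\underline{\varphi}, u]: \mathcal{X}^0\| \leq \epsilon_0 \tau \theta + C \tau \| u : \mathcal{X}^2\| + C_0\left( \| u : \mathcal{X}^2\|^2 + |\underline{\varphi}|^2 \right)
\end{align}
on the set $\Xi$, where $\| u: \mathcal{X}^2\|, |\underline{\varphi}| \leq \zeta \tau \theta$. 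First I would invoke Proposition \ref{MeanCurvatureStructure} with its small parameter (call it $\epsilon_0$) chosen as $\epsilon_0 = \epsilon/2$; this fixes $\bar{\theta}$ and produces $\bar{\tau}(\epsilon_0)$ and $C_0(\epsilon_0)$. On $(\underline{\varphi}, u) \in \Xi$ the linear-in-$u$ term is bounded by $C \tau \cdot \zeta \tau \theta = C\zeta \tau^2 \theta$, and the quadratic terms by $C_0 (\zeta \tau \theta)^2 = C_0 \zeta^2 \tau^2 \theta^2$. Both are $o(\tau\theta)$ as $\tau \to 0$ for fixed $\zeta$ and $\theta < 1$, so after possibly shrinking $\bar{\tau}$ (now depending on $\zeta$, $\epsilon$, $C$, $C_0$) each is $\leq \epsilon \tau\theta/4$. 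Summing with the $\epsilon_0 \tau\theta = \epsilon\tau\theta/2$ term gives (\ref{RemainderTotalSize}).

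For (\ref{RemainderKernelSize}) I would pair $R^*[\underline{\varphi}, u]$ against $\phi_y$ and estimate $\left| \int_{\mathcal{S}} R^* \phi_y \right|$. The naive bound $\| R^*: \mathcal{X}^0\| \cdot \| \phi_y : (\mathcal{X}^0)^*\|$ only gives $O(\tau\theta)$, which is not good enough — we need an extra factor of $\theta$. The right way is to observe that $\phi_y$ lies in the kernel of $\mathcal{L}$, so by the integration-by-parts identity used throughout (as in the proof of Proposition \ref{uwProps}, Proposition \ref{KernelProjection}), the $\mathcal{L}(\tau\xi^\perp + \underline{\varphi}\cdot\hat{u} + u)$ contribution to $\varrho^* H^*$ pairs with $\phi_y$ only through boundary terms, while the remaining contributions — precisely the pieces constituting $R^*$ — are themselves already supported where the geometry is controlled and carry the extra $\theta$. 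Concretely, from Proposition \ref{MeanCurvatureStructure}'s proof, $R^*$ on $\mathcal{S}_{\geq N}$ decays like $\delta\tau\theta \cosh^{3/4}(s)$ (which integrates against $\phi_y \sim \cosh^{-1}(s)$ to something small), while on $\mathcal{S}_{\leq N}$ one has the bound $C_1(N)(\tau^2\theta + \|u\|^2 + |\underline{\varphi}|^2)$; combined with $\|u\|, |\underline{\varphi}| \leq \zeta\tau\theta$ this contributes $O(\tau^2\theta + \zeta^2\tau^2\theta^2)$, and we also use that on the wing region the relevant normal-velocity factor $\langle e_z[\beta_{\mathcal{S},i}], \nu[\mathcal{S}]\rangle$ against which $R^*$ effectively pairs vanishes to first order in $\theta$ (as in Proposition \ref{uwProps}(\ref{uwAsVals})), yielding the additional $\theta$. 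After choosing $N$ large (to control the tail) and then $\bar{\tau}$ small relative to $N$, $\zeta$, $\epsilon$, all terms are $\leq \epsilon\tau\theta^2$.

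The main obstacle is the second estimate (\ref{RemainderKernelSize}): the gain of the extra power of $\theta$ is not visible from the $\mathcal{X}^0$-norm bound alone and must be extracted from the finer structure of $R^*$. I expect the cleanest route is to re-run the proof of Proposition \ref{MeanCurvatureStructure} keeping track of the $\phi_y$-pairing rather than the norm: the term $\epsilon_0\tau\theta$ there comes from $H^*[\mathcal{K}_i]$ on $\mathcal{W}_{i\geq N}$, which by Proposition \ref{BentWingGeometry}(\ref{BentWingMC}) is actually bounded by $C\tau\theta\cosh^{-1}(s)$ in a weighted norm, and this localized-near-the-ends decay, integrated against $\phi_y$, together with the first-order-in-$\theta$ vanishing of the relevant inner products on $\mathcal{W}$, supplies the missing $\theta$; the interior piece is genuinely $O(\tau^2\theta)$ hence negligible. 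Once that bookkeeping is done, the choices of constants are in the order: fix $\gamma \in (1/2,1)$ and $\zeta$; fix $\epsilon$; apply Proposition \ref{MeanCurvatureStructure} with $\epsilon_0 = \epsilon/2$ to get $\bar{\theta}$; choose $N$; then shrink $\bar{\tau}$ to absorb everything.
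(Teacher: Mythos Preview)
Your argument for (\ref{RemainderTotalSize}) is correct and matches the paper's: plug the $\Xi$-bounds $\|u:\mathcal{X}^2\|,|\underline{\varphi}|\leq\zeta\tau\theta$ into the inequality from Proposition~\ref{MeanCurvatureStructure} and shrink $\bar\tau$.

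For (\ref{RemainderKernelSize}) your outline has the right instinct (the extra $\theta$ must come from the geometry, not from the $\mathcal{X}^0$-norm), but the execution is off in two places. First, the integration-by-parts remark about $\mathcal{L}(\tau\xi^\perp+\underline{\varphi}\cdot\hat u+u)$ is a red herring: you are estimating $\int R^*\phi_y$ directly, and Proposition~\ref{KernelProjection} plays no role here. Second, your decomposition $\mathcal{S}_{\leq N}$ versus $\mathcal{S}_{\geq N}$ and the claim $\phi_y\sim\cosh^{-1}(s)$ on the far wing are both wrong: on $\mathcal{W}$ the normal is close to $e_z[\theta]$, so $\phi_y=e_y\cdot\nu\approx -\sin\theta$ is \emph{bounded} by $C\theta$, not decaying. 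More seriously, the ``compact set'' bound $C_1(N)(\tau^2\theta+\cdots)$ cannot be applied to the catenoidal region $\mathcal{C}$, whose extent $|s|\leq\mathrm{arcosh}(\epsilon_0/\theta)$ blows up as $\theta\to 0$.

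The paper's decomposition is $\mathcal{W}$ versus $\mathcal{C}$. On $\mathcal{W}$ the extra $\theta$ comes for free from $|\phi_y|\leq C\theta$, turning the $\epsilon\tau\theta$ bound from part (\ref{RemainderTotalSize}) into $C\epsilon\tau\theta^2$. On $\mathcal{C}$ the crucial observation you are missing is that $\tilde B$ equals the identity there and the $\underline{\varphi}$-deformation is supported on the wings, so $R^*[\underline{\varphi},u]|_{\mathcal{C}}=R^{(1)}_{H,\mathcal{C}}(u)$ is \emph{purely} the quadratic Taylor remainder in $u$, independent of $\tau,\theta,\underline{\varphi}$. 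One then estimates $\int_{\mathcal{C}}R^{(1)}_{H,\mathcal{C}}(u)\,\phi_y\,d\mu[\mathcal{S}]$ by a direct weighted computation using Proposition~\ref{HQEstimates}, the $\mathcal{X}^2$-weights on $\mathcal{C}$, and $|\phi_y|\leq C\cosh^{-1}(s)$ there; the integral over $D[\mathcal{C}]$ converges precisely when $\gamma>1/2$ and yields $C\zeta^2\tau^2\theta^{2\gamma+1}\leq C\zeta^2\tau^2\theta^2$, which is absorbed by taking $\tau$ small.
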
 

\begin{proof}
Estimate (\ref{RemainderTotalSize}) follows from Theorem \ref{MeanCurvatureStructure} by taking $\zeta \tau C_0(\epsilon) \leq \delta$.  To prove Estimate (\ref{RemainderKernelSize}),  We write
\begin{align} 
\left|\int_{\mathcal{S}_0} R^*[\underline{\varphi}, u] \phi_y \right| & \leq \left|\int_{\mathcal{W}} R^*[\underline{\varphi}, u] \phi_y \right| + \left|\int_{\mathcal{C}} R^*[\underline{\varphi}, u] \phi_y \right| \\ \notag
&= I + II.
\end{align} 
Using that $\phi_y$ is bounded by  a constant times $\theta$ on $\mathcal{W}$, we have that 
\begin{align}
I \leq C \epsilon \tau \theta^2.
\end{align}
To estimate $II$, note that on $\mathcal{C}$,  $R^*[\underline{\varphi}, u]$ is independent of $\tau$, $\theta$ and $\underline{\varphi}$ and we have that $
R^*[\underline{\varphi}, u] : = R^{1}_{H, \mathcal{C}}(u)$. Propositions \ref{HQEstimates} and  \ref{ScherkNearAxisGraph} then give that 
\begin{align}
\| R^{1}_{H, \mathcal{C}}(u)\|_{0, \alpha} & \leq C \theta^{-3} \cosh^{-3}(s) \| u \|_{2, \alpha} \\ \notag
&\leq C\| u: \mathcal{X}^2\|^2 \theta^{2 \gamma - 3} \cosh^{2 \gamma - 3} (s).
\end{align}
We then have
\begin{align} \notag
II  & = \int_{\mathcal{C}} \left(R^{1}_{H, \mathcal{C}}(u) \right) \left( \phi_y \right) d\mu[\mathcal{S}] \\ \notag
& \leq C \| u: \mathcal{X}^2\|^2 \int_{D[\mathcal{C}]} \left(\theta^{2 \gamma - 3} \cosh^{2 \gamma - 3} (s) \right) \left( \cosh^{-1}(s)\right)\left( \theta^2 \cosh^{2}(s)\right) d\mu[\Omega] \\ \notag
&  \leq  C \| u: \mathcal{X}^2\|^2 \theta^{2 \gamma - 1} \int_0^{\mathrm{arcosh}(\delta_0/\theta)} \cosh^{2 \gamma - 2} (s) ds \\ \notag
& \leq C \zeta^2 \tau^2 \theta^{2 \gamma + 1} \\ \notag
& \leq C \zeta^2 \tau^2 \theta^{2 }
\end{align} 
(when $\gamma \geq 1/2$). Taking $C \zeta^2 \tau \leq \epsilon$ then gives the claim. 
\end{proof}

\begin{proof} [Proof of Proposition \ref{YayFixedPoint}]
We have that 
\begin{align}\notag
\Phi (\underline{\varphi}, u) + \tau (\underline{\varphi}_0, u_0) = - \mathcal{R} R^*[\underline{\varphi}, u] 
\end{align}
Choose $\zeta$ so that 
\begin{align}
\|(\underline{\varphi}_0, u_0) \| \leq \zeta  \theta/2.
\end{align}
Given $\epsilon'$ we can then choose $\epsilon$ in Proposition \ref{FixedPointRemainder} so that
\begin{align} \notag
\| \mathcal{R} R^*[\underline{\varphi}, u]: \Reals^4 \times \mathcal{X}^2 \ \| \leq \epsilon' \tau \theta.
\end{align}

  It then follows that $\Psi(\Xi) \subset (\Xi)$. The Schauder fixed point theorem (Proposition \ref{SchauderFixedPointTheorem}) then gives that $\Psi$ has at least one fixed point on $\Xi$, which we denote by $(\underline{\varphi}^*, u^*)$. We then have that $S^*[\underline{\varphi}^*, u^*]$ is a complete immersed minimal surface. Convergence, completeness, properness,  as well as quantitative bounds on the convergence rates of parts of the surface to the singular object, as described qualitatively in the statement of the main theorem (Theorem \ref{MainTheorem}) follow directly from the geometry of the initial surfaces and the bounds built into the function spaces in the preceding sections.
\end{proof}

\clearpage
\begin{table}[ht]
\caption{Basic Notational Conventions}
\begin{tabular}{l l l}
\hline\hline
Symbol & Content & Ref.\\ [0.5ex]
\hline
$(x,y,z)$ &coordinates on Euclidean 3-space $\Reals^3$ \\
$(x,s)$ &coordinates on $\Reals^2$, or on $\Omega_0$ ($x$ is $2\pi$-periodic)&\\
$Q_i$& the four $xy$-quadrants of $\Reals^3$ \\
$e_x, e_y, e_z$ & the standard unit vectors of $\Reals^3$\\
$e_{(t)}$ & point at angle $t$ on the unit circle in the $xy$-plane\\
$e_y[\beta], e_z[\beta]$ & rotated unit vectors\\
$\mathfrak{R}_x$, $\mathfrak{R}_y$,  $\mathfrak{R}_z$& reflections through coordinate planes\\
$\mathfrak{T}_t$ &translation by $ t e_x$\\
$\mathfrak{T}^*_t$ &related rotation\\
$\mathfrak{G}$ & group generated by $\mathfrak{R}_x$, $\mathfrak{R}_z$ and $\mathfrak{T}_{2 \pi}$\\
$\mathfrak{G}^*$& group generated by $\mathfrak{R}_x$, $\mathfrak{R}_z$ and $\mathfrak{T}^*_{2 \pi}$\\
$\mathbb{E}$& quotient of $\Reals^3$ by $\mathfrak{G}$\\
$\mathbb{E}^*$& quotient of $\Reals^3$ by $\mathfrak{G}^*$\\
$\Omega_0$& flat two-dimensional cylinder\\
$\Omega_0^\pm$& the $\{\pm s\geq 0\}$ part of the flat cylinder\\
$H^\pm$ & the $\{\pm s\geq 0\}$ half-spaces in $\Reals^2$ \\
$s$ & parameter along (a surface parametrized by $\Reals^2$, such as) $\Omega_0$\\
$U_{\leq c}$ & indication of $s$-sublevel set, i.e. $U \cap \{ s \leq c \}$\\
$\psi[a,b]$& smooth cut-off function in one variable\\
$g[S]$ & metric on the surface $S$\\
$\Gamma^k_{ij}$ & Christoffel symbols\\
$A[S]$ & second fundamental form of the surface $S$\\
$\nu[S]$ & unit normal vector to the surface $S$\\
[1ex]
\hline
\end{tabular}
\end{table}

% ========= Specific notation ==========
\begin{table}[ht]
\caption{Notation Specific to the Construction}
\begin{tabular}{l l l}
\hline\hline
Symbol & Content & Ref.\\ [0.5ex]
\hline
$\tau$ & small parameter in the construction (genus $\simeq\tau^{-1}$)\\
$\theta$ & small angle parameter in the construction\\
$\underline{\varphi}$ & parameter vector with components $(d_1,d_2,b_1,b_2)$\\
$\epsilon_0$ & small cut-off (related to Scherk geometry)\\
$C$ & large positive constants\\
$\delta$ & small positive constant\\
$\mathcal{L}_S$& minimal surface stability operator $\Delta_S + |A_S|^2$\\
$E$& source term in linearized equation\\
$\mathcal{C}_0$ & the catenoid of neck width 1\\
$\mathcal{C}$ & the catenoid of neck width 1\\
$\phi_{\mathcal{C}_0}$ & conformal parametrization of the catenoid\\
$\kappa$ & conformal parametrization of the catenoid (or plane)\\
$\tilde{\kappa}$ & renormalized versions of the $\kappa$ \\
$\rho[\beta](s)$ & conformal factor of the catenoid (or plane, for $\beta=0$)\\
$\tilde{\rho}$ & conformal factor of the renormalized maps $\tilde{\kappa}.$\\
$\{e^\kappa\}$ & orthonormal frame $\{e_i(x,s)\}$ on the catenoid\\
$\{\tilde{e}^\kappa\}$ & transformed orthonormal frame $\{e_i(\tau x,\tau s)\}$\\
$T_x, T_s$ & derivative matrices\\
$\Sigma$&Scherk tower\\
$\mathcal{S}$& quotient of $\Sigma$ by $\mathfrak{G}$\\
$\phi_x,\phi_y,\phi_z$ & Killing functions on the Scherk towers (= $e_i\cdot \nu$)\\
$h_\mathcal{S}$ & affine offset for Scherk towers\\
$\dot{f}_\mathcal{S}$ & harmonic function approx. Scherk towers ($\theta\simeq 0$)\\
$f_\mathcal{W}$ & function realized the Scherk wings as graphs over affine planes \\
$\mathcal{W}_i$ & $i$'th wing of Scherk tower\\
$\mathcal{W}$ & the ``planar part'' of the Scherk surface $\Sigma$.   \\
$\mathcal{\beta}_{\mathcal{S},i}$ & angles directing the $i^{th}$ wing $\mathcal{W}_i$ \\
$B$ & bending maps\\
$\tilde{B}$ & modified bending maps\\
$\mathcal{X}^0$, $\mathcal{X}^2$ & weighted H\"o{}lder spaces\\
$D[\mathcal{W}]$& domain\\
$\psi$ & smooth cut-off for the localization of the linear problem\\
[1ex]
\hline
\end{tabular}
\end{table}
\clearpage

\bibliographystyle{amsalpha}

\end{document}